\theoremstyle{plain}
\theoremstyle{plain}
\def\theenumi{\arabic{enumi}}
\def\theenumii{\alph{enumii}}
\def\p@enumii{\theenumi.}
\def\theenumiii{\arabic{enumiii}}
\def\p@enumiii{(\theenumi)(\theenumii)}
\def\p@enumiv{\p@enumiii.\theenumiii}
\newtheorem{theorem}{Theorem}[section]
\newtheorem{assumption}[theorem]{Assumption}\newtheorem{claim}[theorem]{Claim}\newtheorem{corollary}[theorem]{Corollary}\newtheorem{fact}[theorem]{Fact}\newtheorem{example}[theorem]{Example}\newtheorem{lemma}[theorem]{Lemma}\newtheorem{proposition}[theorem]{Proposition}\newtheorem{theo}[theorem]{Theorem}
\theoremstyle{definition}
\newtheorem{definition}[theorem]{Definition}\newtheorem{notation}[theorem]{Notation}\newtheorem{remark}[theorem]{Remark}
\providecommand{\theoremname}{Theorem}
\begin{document}
\title{Furstenberg entropy spectra of stationary actions of semisimple Lie
groups}

\author{J\'er\'emie Brieussel and Tianyi Zheng}
\date{}

\maketitle
\begin{abstract}
We determine Furstenberg entropy spectra of ergodic stationary actions
of $SL(d,\mathbb{R})$ and its lattices. The constraints on entropy
spectra are derived from a refinement of the Nevo-Zimmer projective
factor theorem. The realisation part is achieved by means of building
Poisson bundles over stationary random subgroups. 
\end{abstract}

\section{Introduction }

A compact metrizable space $X$ acted upon continuously by a locally
compact group $G$ equipped with a probability measure $\mu$ admits
a stationary probability measure $\eta$, which means a fixed point
of the convolution $\mu\ast\eta=\eta$. The system $G\curvearrowright(X,\eta)$
is referred to as a stationary action of $G$. The study of stationary actions
of semisimple Lie groups was initiated by Furstenberg~\cite{Furstenberg1,Furstenberg2}.
He showed in particular that stationary measures on $X$ are in bijection
with measures invariant under a minimal parabolic subgroup~$P$.
He also introduced a numerical invariant $h_{\mu}(X,\eta)$ nowadays
referred to as the Furstenberg entropy:

\begin{equation}
h_{\mu}(X,\eta)=-\int_{G}\int_{X}\log\frac{dg^{-1}\eta}{d\eta}(x)d\eta(x)d\mu(g).\label{eq:h-def}
\end{equation}
It is $0$ if and only if $\eta$ is a $G$-invariant measure.

A systematic study of ergodic stationary actions of semisimple Lie
groups was developed by Nevo and Zimmer in a series of articles. They
established that any such action admits a maximal projective factor
$(G/Q,\nu_{Q})$, where $Q$ is a parabolic subgroup of $G$; and
when $G$ is a higher rank simple Lie group,
this factor is trivial if and only if the stationary measure $\eta$
is actually invariant~\cite{NZ2}. Under a further mixing assumption
(called $P$-mixing), they proved that the stationary system is a relative 
measure-preserving extension of the maximal projective factor.
~\cite{NZ1}. This implies
in particular that $h_{\mu}(X,\eta)=h_{\mu}(G/Q,\nu_{Q})$ and it follows 
that the Furstenberg
entropy of $P$-mixing stationary $(G,\mu)$-spaces can take on only
finitely many values~\cite{NZ3}. These results no longer hold without higher rank hypothesis, as $PSL(2,\mathbb{R})$
admits infinitely many $P$-mixing stationary systems (in fact can
be taken to be smooth manifolds) with distinct entropy. Nor without
the $P$-mixing hypothesis, as groups with a parabolic subgroup mapping
onto $PSL(2,\mathbb{R})$ may have infinite entropy spectrum~\cite{NZ3}.

The purpose of the present article is to give a complete description
of the entropy spectrum of $SL(d,\mathbb{R})$ and of its lattices,
equipped with appropriate measures, see Theorem \ref{th:spectrum-sl}
below.

\begin{definition}

We say a step distribution $\mu$ on $G$ has \emph{finite boundary
entropy} if the Furstenberg entropy of the Poisson boundary of $(G,\mu)$
is finite. For such a measure $\mu$, we refer to the range of possible
Furstenberg entropy values over all ergodic $\mu$-stationary systems
as the\emph{ Furstenberg entropy spectrum} of $\left(G,\mu\right)$:

\[
{\rm EntSp}\left(G,\mu\right):=\left\{ h_{\mu}(X,\nu):(X,\nu)\mbox{ is an ergodic }(G,\mu)\mbox{-stationary system}\right\} .
\]

\end{definition}

Note that by \cite[Corollary 2.7]{Bader-Shalom}, $(X,\nu)$ is an
ergodic $\left(G,\mu\right)$-stationary system if and only if $\nu$
is extremal in the set of $\mu$-stationary measures on $X$.

\subsection{Structure of stationary systems and constraints on entropy values}

We will use:

\begin{notation}\label{notationf}

Let $\Delta$ denote simple roots of a semisimple Lie group $G$.
For $I\subseteq\Delta$, denote by $P_{I}$ the standard parabolic
subgroup that corresponds to $I$ (see Section~\ref{subsec:structure-of-parabolic}). 

Denote by $\mathsf{f}:2^{\Delta}\to2^{\Delta}$ the map where given
$I\subseteq\Delta$, $\mathsf{f}(I)$ is the largest subset $I'\subseteq I$
such that the Levi subgroup $L_{I'}$ has no $\mathbb{R}$-rank 1
noncompact simple factors. 

\end{notation}

From the proof of Nevo-Zimmer projective factor theorem in \cite{NZ2},
we extract the following statement. Plausibility of such a formulation is hinted in the remarks after \cite[Theorem 11.4]{NZ2}.

\begin{theorem}[A refinement of {\cite[Theorem 3]{NZ2}}]\label{invariant2}
Let $G$ be a connected semisimple real Lie group with finite center,
$\mu$ an admissible measure on $G$. Suppose $(X,\nu)$ is an ergodic
$(G,\mu)$-system where $\nu$ is not $G$-invariant. Let $\lambda$
be the corresponding $P$-invariant measure on $X$ provided by the
Furstenberg isomorphism. Suppose $\left(G/Q,\nu_{Q}\right)$ is the
maximal standard projective factor of $(X,\nu)$, where $Q=P_{I}$,
then the measure $\lambda$ is invariant under the parabolic subgroup
$P_{\mathsf{f}(I)}$.
\end{theorem}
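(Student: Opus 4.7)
The plan is to extract the strengthened invariance statement by revisiting the iterative proof of the Nevo--Zimmer projective factor theorem \cite[Theorem 3]{NZ2} and tracking precisely which opposite root subgroups $U_{-\alpha}$ end up fixing $\lambda$.

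\textbf{Setup.} Since $\pi:X\to G/Q$ is $G$-equivariant and $\pi_{*}\lambda$ is a $P$-invariant probability measure on $G/Q$, it equals $\delta_{eQ}$, and hence $\lambda$ is supported on the $Q$-invariant fiber $Y:=\pi^{-1}(eQ)$. The stabilizer $\mathrm{Stab}_G(\lambda)$ is a closed subgroup of $G$ containing $P$ and contained in $Q$. By Borel--Tits it is a standard parabolic $P_J$ with $J\subseteq I$; we must show $J\supseteq\mathsf{f}(I)$. Decomposing the semisimple part of $L_I$ as an almost-direct product of simple factors $H_1\cdots H_k$, compact factors lie in the anisotropic kernel $M\subseteq P$ and contribute automatic invariance. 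Hence it suffices to show that $\lambda$ is $H$-invariant for each rank-$\geq 2$ simple factor $H$ of $L_I$; combined with $P$-invariance this generates $P_{\mathsf{f}(I)}$-invariance.

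\textbf{Higher-rank factors and maximality.} Fix a rank-$\geq 2$ simple factor $H$ of $L_I$, so $\lambda$ is $P_H:=P\cap H$-invariant for the $H$-action on $Y$. Apply the dichotomy at the heart of the Nevo--Zimmer argument within the higher-rank simple group $H$: either $\lambda$ is $H$-invariant, or there is a nontrivial $H$-equivariant measurable factor $\psi:Y\to H/R$ onto a proper parabolic quotient $H/R$. Suppose the second alternative holds. Let $L'\subseteq L_I$ denote the almost-direct complement of $H$ inside $L_I$ (the product of the other semisimple factors together with the toral part of $L_I$), and set $\tilde R:=R\cdot L'\cdot N_I\subseteq Q$. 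Then $\tilde R$ is a standard parabolic $P_{J'}$ of $G$ with $J'\subsetneq I$, and the identification $Q/\tilde R\cong H/R$ makes $\psi$ into a $Q$-equivariant map (since $L'\cdot N_I$ acts trivially on $Q/\tilde R$). Using the $G$-action permuting the $Q$-fibers of $\pi$, $\psi$ extends uniquely to a $G$-equivariant factor $\Psi:X\to G/\tilde R$ strictly refining $G/Q$, contradicting the maximality of $Q$. Hence $\lambda$ is $H$-invariant.

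\textbf{Main obstacle.} The technical core is establishing the higher-rank dichotomy for the subgroup action: given a $P_H$-invariant probability measure on an $H$-space with $H$ simple of real rank $\geq 2$, producing either $H$-invariance or a measurable $H$-equivariant factor onto a proper flag variety of $H$. Merely knowing $\mathrm{Stab}_H(\lambda)$ is a proper parabolic does not by itself yield a pointwise $H$-factor $Y\to H/R$ --- one needs the disjointness-of-supports property that the Nevo--Zimmer inductive construction provides. Extracting this subgroup version of the projective factor theorem from the proof in \cite{NZ2} (as suggested by the remarks after their Theorem 11.4) is the substantive part of the refinement.
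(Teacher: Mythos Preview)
Your high-level strategy matches the paper's: reduce to showing each rank-$\geq 2$ simple factor $H$ of $L_I$ preserves $\lambda$, and if not, produce a strictly finer projective $G$-factor contradicting maximality of $Q$. You correctly acknowledge that establishing the dichotomy for each such $H$ is the substantive work and do not claim to have carried it out.

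Even granting the dichotomy as a black box, your extension step has a gap. You obtain an $H$-equivariant $\psi:Y\to H/R$ and assert it is $Q$-equivariant under $Q/\tilde R\cong H/R$ because $L'\cdot N_I$ acts trivially on the target. But $Q$-equivariance requires $\psi$ to be constant on $L'\cdot N_I$-orbits in the \emph{source} $Y$, which an abstract $H$-dichotomy does not supply; without it the induced $\Psi:X\to G/\tilde R$ is ill-defined. The paper circumvents this by never isolating $H$: it applies the Nevo--Zimmer contracting operation $\mathcal{E}_{\varrho,s}$ directly to the $Q$-system $(Y,\eta)$ set up in Proposition~\ref{Ysystem}. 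In the subtle Case~(II2) one passes to the $N_\varrho$-invariant factor $(Y',\eta')$; since $N_I<N_\varrho$, the unipotent radical acts trivially on $Y'$ by construction (Lemma~\ref{nontrivial}), and then the Gauss map $y'\mapsto[\mathrm{Lie}\,\mathrm{Stab}_Q(y')]$ is manifestly $Q$-equivariant, with the complementary Levi factors handled via the reductive decomposition (Lemma~\ref{nonideal}). The $Q$-equivariance you need is thus built into the construction rather than deducible from an $H$-level statement.
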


A probability measure $\mu$ on $G$ is called \emph{admissible} if
${\rm supp}\mu$ generates $G$ as a semigroup, and some convolution
power $\mu^{\ast k}$ is absolutely continuous with respect to Haar
measure on $G$. The Furstenberg isomorphism between $\mu$-stationary and $P$-invariant probability measures on X is described in Section~\ref{subsubsec:Furstenberg}. 

When the Levi subgroup $L_{I}$ has no rank one noncompact
factors, $\mathsf{f}(I)=I$, Theorem \ref{invariant2} implies the
following corollary. Compared to~\cite[Theorem 3 and 9.1]{NZ2}:
the $\lambda$-ergodicity assumption of $S$-action is dropped; instead
it is assumed that the Levi subgroup of $Q$ has no rank one factors.

\begin{corollary}\label{mp-ext}

Let $G$ be a connected semisimple real Lie group with finite center,
$\mu$ an admissible measure on $G$. Suppose $(X,\nu)$ is an ergodic
$(G,\mu)$-system where $\nu$ is not $G$-invariant. Let $\left(G/Q,\nu_{Q}\right)$
be the maximal standard projective factor of $(X,\nu)$. If the Levi
subgroup of $Q$ has no $\mathbb{R}$-rank $1$ non-compact simple
factors, then $\left(X,\nu\right)\to\left(G/Q,\nu_{Q}\right)$ is
a relative measure-preserving extension.

\end{corollary}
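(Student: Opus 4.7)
The plan is to deduce the corollary directly from Theorem~\ref{invariant2}, essentially by unpacking what $Q$-invariance of $\lambda$ means for the Furstenberg disintegration of $\nu$.

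Write $Q = P_{I}$. The hypothesis that the Levi subgroup $L_{I}$ has no $\mathbb{R}$-rank $1$ noncompact simple factors means, in the notation of \ref{notationf}, that $\mathsf{f}(I) = I$. Hence Theorem~\ref{invariant2} applies and yields that the $P$-invariant measure $\lambda$ on $X$ associated to $\nu$ via the Furstenberg isomorphism is actually invariant under $P_{\mathsf{f}(I)} = Q$. Next I would use this extra invariance to reshape the Furstenberg disintegration $\nu = \int_{G/P} g\lambda\, d\nu_{P}(gP)$: since $P \subseteq Q$ and $\lambda$ is $Q$-invariant, the measure $g\lambda$ depends only on the coset $gQ \in G/Q$, and pushing $\nu_{P}$ forward along the projection $G/P \to G/Q$ produces $\nu_{Q}$ (the unique $\mu$-stationary measure on $G/Q$, i.e.\ the measure on the maximal projective factor), so that $\nu = \int_{G/Q} g\lambda\, d\nu_{Q}(gQ)$.

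The main step is then to identify this formula with the disintegration of $\nu$ along the factor map $\pi:(X,\nu)\to (G/Q,\nu_{Q})$. By $G$-equivariance of $\pi$, it is enough to check that $\lambda$ is supported on $\pi^{-1}(eQ)$, i.e.\ that $\pi_{*}\lambda = \delta_{eQ}$. Applying $\pi_{*}$ to the displayed identity gives $\int_{G/P} g(\pi_{*}\lambda)\, d\nu_{P}(gP) = \nu_{Q}$, so $\pi_{*}\lambda$ is a $P$-invariant probability measure on $G/Q$ whose Furstenberg average on $(G/Q,\nu_{Q})$ is $\nu_{Q}$. By uniqueness in the Furstenberg correspondence on $G/Q$ (the point mass $\delta_{eQ}$ is the unique such measure, since $P\subseteq Q$ fixes $eQ$ and its orbit under $\nu_{P}$ recovers $\nu_{Q}$), one concludes $\pi_{*}\lambda=\delta_{eQ}$.

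Once this support identification is in hand, the conditional measure of $\nu$ over $eQ\in G/Q$ equals $\lambda$, and by $G$-equivariance the conditional over $gQ$ equals $g\lambda$. Since $\lambda$ is $Q$-invariant and $Q$ is the stabilizer of $eQ$ in $G$, the fiber measures are preserved by the corresponding point stabilizers, which is precisely the definition of a relative measure-preserving extension $(X,\nu)\to(G/Q,\nu_{Q})$. I expect the main obstacle to be the support identification $\pi_{*}\lambda=\delta_{eQ}$; everything else is a formal consequence of Theorem~\ref{invariant2} together with standard features of the Furstenberg correspondence.
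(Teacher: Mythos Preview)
Your proof is correct and follows essentially the same approach as the paper: both start from $\mathsf{f}(I)=I$ and Theorem~\ref{invariant2} to obtain $Q$-invariance of $\lambda$, then use that the only $P$-invariant probability on $G/Q$ is $\delta_{eQ}$ (this is Lemma~\ref{Qcoset} in the paper) to identify the fiber measures of $\nu$ over $G/Q$ as $gQ\mapsto g\lambda$. The only cosmetic difference is packaging: the paper routes the last step through the induced $Q$-system of Proposition~\ref{Ysystem}, whereas you argue directly with the Furstenberg disintegration; your version is slightly more self-contained and avoids invoking that proposition.
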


The conclusion of Theorem \ref{invariant2} can be formulated in terms
of the boundary map. Denote by $\mathcal{P}(X)$ the space of probability
measures on $X$ and let $\boldsymbol{\beta}_{v}:G/P\to\mathcal{P}(X)$
be the \emph{boundary map} associated with the stationary measure
$\nu$ (its definition is reviewed in Subsection \ref{subsec:boundary-map}).
Then the measure $\lambda$ is invariant under $P_{\mathsf{f}(I)}$
if and only if $\boldsymbol{\beta}_{v}$ factors through the
projection $G/P\to G/P_{\mathsf{f}(I)}$. In this formulation, we
can derive an analogous statement for lattices equipped with Furstenberg
measures, through an induction procedure for stationary actions in
\cite{BH21,BBHP}.

\begin{definition}\label{furstenbergmeas}

Let $\Gamma$ be a lattice in a semi-simple Lie group $G$. Denote
by $P$ a minimal parabolic subgroup of $G$. We say a non-degenerate
measure $\mu_0$ on $\Gamma$ is a \emph{Furstenberg measure} if
 the Poisson boundary of $(\Gamma,\mu_0)$ can be identified with $(G/P,\nu_P)$, where $\nu_P$ is in the same measure class as 
$\bar{m}_K$, the unique $K$-invariant probability measure on $G/P$.
Such measures on $\Gamma$ exist by \cite{Furstenberg3}.

\end{definition}

In this setting, a $(\Gamma,\mu_{0})$-stationary system $(X,\nu)$
gives rise to a $\Gamma$-boundary map $\boldsymbol{\beta}_{\nu}:G/P\to\mathcal{P}(X)$,
although $G$ does not necessarily act on $X$. We derive the following
from Theorem \ref{invariant2}. 

\begin{theorem}\label{lattice}

Let $\Gamma$ be a lattice in a connected semisimple real Lie group
$G$ with finite center. Equip $\Gamma$ with a Furstenberg measure $\mu_{0}$. Suppose $(X,\nu)$ is an ergodic $(\Gamma,\mu_{0})$-system
where $\nu$ is not $\Gamma$-invariant. Let $\left(G/Q,\nu_{Q}\right)$
be the maximal standard projective $\Gamma$-factor of $(X,\nu)$,
where $Q=P_{I}$. Then the $\Gamma$-boundary map $\boldsymbol{\beta}_{\nu}:G/P\to\mathcal{P}(X)$
factors through $G/P_{\mathsf{f}(I)}$, that is a.e., $\boldsymbol{\beta}_{\nu}(gP)$
depends only on the coset $gP_{\mathsf{f}(I)}$. 

\end{theorem}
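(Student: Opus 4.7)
The plan is to deduce Theorem~\ref{lattice} from Theorem~\ref{invariant2} by running the latter on an ambient $(G,\mu)$-system obtained from $(X,\nu)$ via the induction procedure for stationary actions developed in \cite{BH21,BBHP}. The setup of that procedure, given a lattice $\Gamma\leq G$, a Furstenberg measure $\mu_0$ on $\Gamma$ and a $(\Gamma,\mu_0)$-stationary system $(X,\nu)$, produces an admissible step distribution $\mu$ on $G$ and an ergodic $(G,\mu)$-stationary system $(\widetilde X,\widetilde\nu)$ together with a $\Gamma$-equivariant projection onto $(X,\nu)$. The crucial feature, which uses exactly the hypothesis that $\mu_0$ is Furstenberg, is that the Poisson boundaries on the two sides are both identified with $(G/P,\nu_P)$, so that the $\Gamma$-boundary map $\boldsymbol\beta_\nu:G/P\to\mathcal P(X)$ is recovered by post-composing the $G$-boundary map $\boldsymbol\beta_{\widetilde\nu}:G/P\to\mathcal P(\widetilde X)$ with the pushforward under $\widetilde X\to X$.

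Next I would verify that the maximal standard projective factors transfer through induction: the $\Gamma$-factor $(G/Q,\nu_Q)$ of $(X,\nu)$ is also naturally a $G$-factor of $(\widetilde X,\widetilde\nu)$, and any $G$-projective factor of $(\widetilde X,\widetilde\nu)$ restricts to a $\Gamma$-projective factor of $(X,\nu)$, so maximality is preserved and the same parabolic $Q=P_I$ parametrises both. With this in hand, Theorem~\ref{invariant2} applied to $(\widetilde X,\widetilde\nu)$ shows that the associated $P$-invariant measure $\widetilde\lambda$ on $\widetilde X$ is $P_{\mathsf f(I)}$-invariant, which as recalled in the excerpt is equivalent to $\boldsymbol\beta_{\widetilde\nu}$ factoring through the projection $G/P\to G/P_{\mathsf f(I)}$. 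Composing with the $\Gamma$-equivariant pushforward $\mathcal P(\widetilde X)\to\mathcal P(X)$ then yields that $\boldsymbol\beta_\nu$ itself factors through $G/P_{\mathsf f(I)}$, which is the desired conclusion.

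The main obstacle will be making precise the boundary-map compatibility under induction and the preservation of the maximal standard projective factor. One needs to ensure that the induced measure $\mu$ can be chosen admissible (so Theorem~\ref{invariant2} applies), that the induced system is ergodic and non-invariant whenever the original is, and that the identifications of Poisson boundaries on both sides with $(G/P,\nu_P)$ are genuinely $\Gamma$-equivariantly compatible so that the two boundary maps really are related by a pushforward. Once these structural points are established using \cite{BH21,BBHP}, the remainder is a direct transport of the conclusion of Theorem~\ref{invariant2} from $G$ down to $\Gamma$.
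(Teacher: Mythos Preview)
Your overall strategy---induce to a $G$-system via \cite{BH21,BBHP}, check that maximal projective factors are preserved, apply Theorem~\ref{invariant2}, then transfer the conclusion back to $\Gamma$---is exactly the paper's approach. Two points, however, deserve correction.

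First, the boundary-map compatibility is not what you describe. There is no natural $\Gamma$-equivariant projection $\widetilde X\to X$ along which one could push forward $\boldsymbol\beta_{\widetilde\nu}$ to recover $\boldsymbol\beta_\nu$. The actual relationship, from \cite[Lemma~4.6]{BBHP}, runs in the opposite direction: the $G$-boundary map is an average of translates of the $\Gamma$-boundary map,
\[
\int_{\widetilde X} f\, d\widetilde{\boldsymbol\beta}_{\widetilde\nu}(w)
=\int_{G/\Gamma}\int_X f_z\, d\boldsymbol\beta_\nu\bigl(\tau(z)^{-1}w\bigr)\, dm_{G/\Gamma}(z).
\]
Setting $w=gP$, this gives $g.\widetilde\lambda=\widetilde\lambda$ if and only if $\boldsymbol\beta_\nu(\tau(z)^{-1}gP)=\boldsymbol\beta_\nu(\tau(z)^{-1}P)$ for $m_{G/\Gamma}$-a.e.\ $z$; since $G=\tau(G/\Gamma)\Gamma$ and $\boldsymbol\beta_\nu$ is $\Gamma$-equivariant, this is exactly the statement that $\boldsymbol\beta_\nu(hgP)=\boldsymbol\beta_\nu(hP)$ for a.e.\ $h\in G$. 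So the transfer back to $\Gamma$ is a direct computation with this integral formula, not a pushforward argument.

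Second, you do not need to construct an induced admissible measure on $G$. The Furstenberg-measure hypothesis ensures $\nu_P$ is in the $K$-invariant measure class; since the induced $\widetilde\nu$ has the form $\nu_P\ast\widetilde\lambda$, one may replace $\widetilde\nu$ by the equivalent measure $\bar m_K\ast\widetilde\lambda$, which is stationary for any admissible $K$-invariant $\mu$ on $G$. This sidesteps the issue entirely.
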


Constraints on the Furstenberg entropy spectrum follow from the structure theorems. 
\begin{theorem}\label{constraint}

Let $G$ be a connected semisimple real Lie group with finite center
and denote by $\Delta$ simple restricted roots of $G$. Let $\mu$
be an admissible step distribution on $G$ with finite boundary entropy.
Then the Furstenberg entropy spectrum of $(G,\mu)$ satisfies 
\[
{\rm EntSp}(G,\mu)\subseteq\bigcup_{I\subseteq\Delta}\left[h_{\mu}\left(G/P_{I},\nu_{I}\right),h_{\mu}\left(G/P_{\mathsf{f}(I)},\nu_{\mathsf{f}(I)}\right)\right],
\]
where $\nu_{I}$ is the (unique) $\mu$-stationary measure on $G/P_{I}$.

\end{theorem}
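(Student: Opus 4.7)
The plan is to apply Theorem \ref{invariant2} to each ergodic $(G, \mu)$-system $(X, \nu)$ in order to locate $h_\mu(X, \nu)$ inside one of the prescribed intervals. If $\nu$ is $G$-invariant, then $h_\mu(X, \nu) = 0$, which sits inside the interval indexed by $I = \Delta$ since $G/P_\Delta$ is a single point. Otherwise, Theorem \ref{invariant2} produces $I \subseteq \Delta$ such that the maximal standard projective factor of $(X, \nu)$ is $(G/P_I, \nu_I)$ and the $P$-invariant measure $\lambda$ on $X$ corresponding to $\nu$ via the Furstenberg isomorphism is in fact $P_{\mathsf{f}(I)}$-invariant. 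With this $I$ fixed, the target is
\[
h_\mu(G/P_I, \nu_I) \le h_\mu(X, \nu) \le h_\mu(G/P_{\mathsf{f}(I)}, \nu_{\mathsf{f}(I)}).
\]

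The lower bound is immediate from monotonicity of Furstenberg entropy under factor maps. Rewriting (\ref{eq:h-def}) as $h_\mu(Y, \eta) = \int_G D_{\mathrm{KL}}(\eta \,\|\, g^{-1}\eta) \, d\mu(g)$ and applying the data-processing inequality for Kullback--Leibler divergence to the $G$-factor map $(X, \nu) \to (G/P_I, \nu_I)$ yields $h_\mu(G/P_I, \nu_I) \le h_\mu(X, \nu)$.

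For the upper bound, the $G$-equivariant boundary map $\boldsymbol{\beta}_\nu \colon G/P \to \mathcal{P}(X)$ sends $gP$ to $g\lambda$, and the $P_{\mathsf{f}(I)}$-invariance of $\lambda$ allows it to descend to a $G$-equivariant map $\tilde{\boldsymbol{\beta}} \colon G/P_{\mathsf{f}(I)} \to \mathcal{P}(X)$ satisfying $\nu = \int \tilde{\boldsymbol{\beta}}(y) \, d\nu_{\mathsf{f}(I)}(y)$. Introduce the coupling
\[
\tilde\nu \;:=\; \int_{G/P_{\mathsf{f}(I)}} \delta_y \otimes \tilde{\boldsymbol{\beta}}(y) \, d\nu_{\mathsf{f}(I)}(y)
\]
on $G/P_{\mathsf{f}(I)} \times X$. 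A short equivariance computation shows that for each $g \in G$, both $\tilde\nu$ and $g^{-1}\tilde\nu$ disintegrate over their first coordinate with the same conditional distribution $\tilde{\boldsymbol{\beta}}(y)$, with first marginals $\nu_{\mathsf{f}(I)}$ and $g^{-1}\nu_{\mathsf{f}(I)}$ respectively. The chain rule for KL divergence then collapses to
\[
D_{\mathrm{KL}}(\tilde\nu \,\|\, g^{-1}\tilde\nu) \;=\; D_{\mathrm{KL}}(\nu_{\mathsf{f}(I)} \,\|\, g^{-1}\nu_{\mathsf{f}(I)}),
\]
and the data-processing inequality for the projection onto $X$ gives $D_{\mathrm{KL}}(\nu \,\|\, g^{-1}\nu) \le D_{\mathrm{KL}}(\nu_{\mathsf{f}(I)} \,\|\, g^{-1}\nu_{\mathsf{f}(I)})$. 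Integrating against $\mu$ concludes the upper bound.

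The main obstacle is really just the equivariance bookkeeping that shows $\tilde\nu$ and $g^{-1}\tilde\nu$ share conditional distributions; once this is verified, the chain rule plus two applications of data processing finish the argument, under absolute-continuity conditions supplied by the finite boundary entropy hypothesis. A more conceptual but less self-contained alternative is to verify that $\tilde\nu$ is $\mu$-stationary with maximal standard projective factor $(G/P_{\mathsf{f}(I)}, \nu_{\mathsf{f}(I)})$ and invoke Corollary \ref{mp-ext}---whose Levi hypothesis holds by the very definition of $\mathsf{f}$---to conclude that $h_\mu(\tilde\nu)=h_\mu(G/P_{\mathsf{f}(I)},\nu_{\mathsf{f}(I)})$, then bound $h_\mu(X,\nu)$ from above by $h_\mu(\tilde\nu)$ via monotonicity; this route additionally requires an ergodic decomposition argument.
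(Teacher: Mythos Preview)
Your proof is correct and follows essentially the same strategy as the paper. The paper's Corollary \ref{constraint1} derives the two-sided bound from Theorem \ref{invariant2} by exhibiting $(X,\nu)$ as a $G$-factor of the induced system $\left(G\times_{P_{\mathsf{f}(I)}}X_0,\bar\nu_0\times\lambda\right)$, which is a measure-preserving extension of $\left(G/P_{\mathsf{f}(I)},\nu_{\mathsf{f}(I)}\right)$; your coupling $\tilde\nu$ is precisely this induced system, and your chain-rule/data-processing computation is the standard unpacking of ``measure-preserving extensions preserve Furstenberg entropy, factors decrease it'' (cf.\ the paper's Lemma \ref{barycenter}, which packages the same argument).
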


An analogous statement for lattices equipped with Furstenberg discretization measures
is stated in Theorem~\ref{constraint-lattice}.

\subsection{Realisation of entropy values}

For the free group $F_{k}$ on $k$-generators and step distribution
$\mu$ uniform on the generators and their inverses, Bowen shows in
\cite{Bowen} that ${\rm EntSp}\left(F_{k},\mu\right)$ is the full
interval $\left[0,{\bf h}_{\mu}\right]$, where ${\bf h}_{\mu}$ is
the Furstenberg entropy of the Poisson boundary of $\left(F_{k},\mu\right)$,
which is equal to the random walk asymptotic entropy. The proof is
based on a construction of Poisson bundles over invariant random subgroups
(IRSs) and analysis of associated random walks on the coset graphs.

For $G=SL(d,\mathbb{R})$ and its lattices, we realise Furstenberg
entropy values within the constraints of Theorem \ref{constraint}
and \ref{constraint-lattice} via construction of Poisson bundles
over stationary systems. Recall the map $I\mapsto\mathsf{f}(I)$ defined
in Notation~\ref{notationf}. We say a measure $\mu$ is $B_{\infty}$ if it is admissible on $G$, 
of compact support and bounded density with respect to the Haar measure.

\begin{theorem}\label{th:spectrum-sl}

Let $G=SL(d,\mathbb{R})$ and denote by $\Delta=\{1,\ldots,d-1\}$
its simple roots. Suppose 
\begin{itemize}
\item $\mu$ is in the $B_{\infty}$ class on $G$,
\item or $\mu$ is a Furstenberg measure on a lattice $\Gamma<G$ of finite
Shannon entropy. 
\end{itemize}
Write $S=\left\langle {\rm supp}\mu\right\rangle $ for the subgroup
generated by ${\rm supp}\mu$, then 
\[
{\rm EntSp}(S,\mu)=\bigcup_{I\subseteq\{1,\ldots,d-1\}}\left[h_{\mu}\left(G/P_{I},\nu_{I}\right),h_{\mu}\left(G/P_{\mathsf{f}(I)},\nu_{\mathsf{f}(I)}\right)\right],
\]
where $\nu_{I}$ is the $\mu$-stationary measure on $G/P_{I}$.

\end{theorem}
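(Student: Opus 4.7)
The inclusion $\subseteq$ in the theorem is immediate from Theorem \ref{constraint} in the Lie group case and from its lattice analogue Theorem \ref{constraint-lattice} in the Furstenberg-measure case, so the task reduces to realising every value in the right-hand union as $h_{\mu}(X,\nu)$ for some ergodic $(S,\mu)$-stationary system. For each $I\subseteq\{1,\ldots,d-1\}$ the two endpoints are attained tautologically, by the projective factors $(G/P_{I},\nu_{I})$ and $(G/P_{\mathsf{f}(I)},\nu_{\mathsf{f}(I)})$ respectively. The core of the proof is therefore to fill in the interior of each such interval.

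The strategy, following the Poisson-bundle-over-SRS philosophy announced in the abstract and modelled on Bowen's construction for free groups, is to build intermediate systems as $G$-equivariant bundles over $(G/P_{I},\nu_{I})$ whose fibres are stationary systems supported on the rank-one part of the Levi. The key structural input is that for $G=SL(d,\mathbb{R})$ the difference $I\setminus\mathsf{f}(I)$ encodes exactly the $SL(2,\mathbb{R})$-blocks of the Levi $L_{I}$, and the fibration $G/P_{\mathsf{f}(I)}\to G/P_{I}$ splits fibrewise into a product of their projective-line boundaries. Intermediate entropy values should arise by replacing each such boundary factor with a rank-one stationary system of arbitrary entropy in the rank-one range $[0,h_{\mu}(P_{I}/P_{\mathsf{f}(I)},\nu_{P_{I}/P_{\mathsf{f}(I)}})]$, where we know a continuous interval of values is available precisely because $\mathsf{f}$ preserves only higher-rank factors (cf.\ the flexibility of $PSL(2,\mathbb{R})$ noted in the introduction).

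The plan then breaks into four steps. First, given a target total entropy $t$ in the interval, split $t-h_{\mu}(G/P_{I},\nu_{I})$ across the rank-one factors indexed by $I\setminus\mathsf{f}(I)$. Second, on each such $SL(2,\mathbb{R})$ factor build an ergodic stationary system realising its prescribed piece of the entropy by means of a Poisson bundle over a stationary random subgroup of that factor, exploiting the rank-one freedom. Third, glue these fibre systems into a $G$-stationary bundle $(X_{t},\nu_{t})$ over $(G/P_{I},\nu_{I})$; here the Furstenberg isomorphism between $\mu$-stationary measures and $P$-invariant measures is used to check stationarity, and the additivity of Furstenberg entropy under relative measure-preserving extensions gives $h_{\mu}(X_{t},\nu_{t})=t$. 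Fourth, pass to the lattice case via the induction/restriction correspondence of \cite{BH21,BBHP}: a $G$-stationary system of entropy $t$ restricts to a $\Gamma$-stationary system of the same entropy because $\mu_{0}$ is a Furstenberg measure and the boundary behaviour of $G$ and $\Gamma$ agree on $G/P$.

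The main obstacle I anticipate is Step 3, namely verifying that the fibrewise construction genuinely yields a $G$-stationary (not merely $P$-stationary) ergodic system and that the entropy contributions assemble into $t$ with no loss; this is the point at which the Poisson-bundle-over-stationary-random-subgroup machinery is indispensable, and where one must argue a continuity/intermediate value principle to sweep out the full interval rather than a countable dense subset. Step 2 at the rank-one level is secondary but non-trivial: one needs rank-one stationary systems with sharply prescribed entropies compatible with the given $\mu$, which in turn requires control of random walks on coset graphs of $SL(2,\mathbb{R})$-subgroups under the restriction of $\mu$. Ergodicity of the resulting total system should follow from a standard skew-product argument combining ergodicity of the base $(G/P_{I},\nu_{I})$ with that of the constructed fibre.
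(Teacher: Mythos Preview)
Your constraint direction is correct and matches the paper. The realisation outline, however, diverges from the paper's actual argument in several ways, and one of those divergences hides the real difficulty.

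First, a structural simplification you are missing: the paper does \emph{not} treat all rank-one blocks of $L_I$ simultaneously. It realises only intervals of the form $[h_\mu(G/P_I),h_\mu(G/P_{I'})]$ where $I'=I\setminus\{j\}$ for a \emph{single} rank-one root $j$ (Proposition~\ref{prop-realization}). The full union in the theorem is then covered because these one-step intervals, taken over all $I$, chain together. Your proposed splitting of $t-h_\mu(G/P_I)$ across several rank-one factors is therefore unnecessary, and it would force you to control several fibre systems at once with no evident gain.

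Second, and more seriously, your Steps 1--3 describe building a system with a \emph{prescribed} entropy value, assembling fibre contributions additively. This is not how the argument works, and it is unclear how it could: there is no measure on the $SL(2,\mathbb{R})$ block to which $\mu$ restricts, so ``rank-one stationary systems with sharply prescribed entropies compatible with the given $\mu$'' has no direct meaning. The paper instead constructs a \emph{one-parameter family} of standard systems $(X,\eta_{\rho_{\ell,p}})$ over $(G/Q,\nu_Q)$, obtained by inducing Bowen's IRSs $\rho_{\ell,p}$ of the free group $F$ (embedded as the Sanov lattice in the chosen $SL(2,\mathbb{R})$ block) through $Q\to G_2$ and then up to $G$. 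One then takes the Poisson bundle $(Z,\lambda_{\ell,p})$ of the $\mu$-random walk over this base and proves that $p\mapsto h_\mu(Z,\lambda_{\ell,p})$ is \emph{continuous}, after which the intermediate value theorem and the endpoint computations ($p=1$ gives $G/P_{I'}$; $p=0$, $\ell\to\infty$ gives entropy $\le h_\mu(G/P_I)$) finish the job.

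The continuity claim is where the substance lies, and your proposal does not touch it beyond acknowledging it exists. Upper semi-continuity follows from the entropy formula (Proposition~\ref{entropy-formula}) expressing $h_\mu$ as an infimum of mutual informations. Lower semi-continuity is the hard part: it requires \emph{identifying} the Poisson bundle with a concrete model, namely the induced end-compactification bundle of tree-like Schreier graphs (Proposition~\ref{identification-sl}). This identification is achieved by passing through a Furstenberg discretisation $\kappa_F$ on $F$, so that the fibrewise ergodic components become Poisson boundaries of coset walks on $F$, which are then pinned down by the strip criterion (Proposition~\ref{identification2}). Only after this identification can one run the approximation argument (Proposition~\ref{relative-cont}, Corollary~\ref{lower-sc-sl}) that writes the fibrewise KL-divergence as a supremum of locally constant functions. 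None of this machinery is visible in your sketch.

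Finally, your Step~4 for lattices is off. There is no separate $G$-system to restrict: the measure $\mu$ already lives on $\Gamma$, and the Poisson bundle is built directly for the $\mu$-random walk. What \emph{does} require work in the lattice case is verifying that $\Gamma$ acts ergodically on the base $(X,\eta_{\rho_{\ell,p}})$; the paper handles this via Moore ergodicity and a relative weak-mixing argument, not via restriction from $G$.
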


In particular for $B_{\infty}$-measures, $SL(2,\mathbb{R})$ has full entropy
spectrum, and $SL(3,\mathbb{R})$ has entropy spectrum of the form
$\{0\}\cup[h_{\mu}(G/Q),h_{\mu}(G/P)]$ for some non-trivial parabolic
subgroup $P<Q<G$. A particular instance for $SL(4,\mathbb{R})$ is
illustrated in Figure~\ref{fig:sl4}.

\begin{remark}

It is natural to ask what are entropy spectra of other simple
Lie groups. Note that rank one Lie groups $Sp(n,1)$, $n\ge2$, and
$F_{4(-20)}$ have Kazhdan's property (T). Therefore by \cite{Nevo},
$\{0\}$ is an isolated point in their Furstenberg entropy spectra.
For these rank one groups, Theorem \ref{constraint} does not provide
sharp constraints on their entropy spectra.

\end{remark}

It is classical that for admissible $\mu$ considered in Theorem \ref{th:spectrum-sl},
the boundary entropy values $h_{\mu}\left(G/P_{I},\nu_{I}\right)$
can be expressed in terms of the Lyapunov spectrum of the $\mu$-random
walk, see the Furstenberg formula in Subsection \ref{subsec:lyapunov}.

\begin{figure}
\begin{centering}
\begin{tikzpicture}
  \node (max) at (0,4) {$P=P_\emptyset$};
  \node (a) at (-2,2.6) {$P_{\{1\}}$};
  \node (b) at (0,1.8) {$P_{\{2\}}$};
  \node (c) at (2,2.2) {$P_{\{3\}}$};
  \node (d) at (-2,-0.2) {$P_{\{1,2\}}$};
  \node (e) at (0,0.4) {$P_{\{1,3\}}$};
  \node (f) at (2,-0.6) {$P_{\{2,3\}}$};
  \node (min) at (0,-2) {$G$};
  \draw (min) -- (d) -- (a) -- (max) -- (b) -- (f)
  (e) -- (min) -- (f) -- (c) -- (max)
  (d) -- (b);
  \draw[very thick](max) -- (a) -- (max) -- (b) -- (max) -- (c) -- (max);
  \draw[preaction={draw=white, -,line width=6pt}] (a) -- (e) -- (c);
  \draw[very thick](a) -- (e) -- (c);
  
  \draw[thick,->] (3.5,1) -- (4.5,1); 
  
  \draw[very thick] (6,4)--(6,0.4);
  \draw[dashed] (6,0.4)--(6,-2);
  \draw (6,4) node{$\bullet$};
  \draw (6,4) node[right]{$h_\mu(G/P)$};
   \draw (6,2.6) node{$\bullet$};
   \draw (6,2.6) node[right]{$h_\mu(G/P_{\{1\}})$};
   \draw (6,2.2) node{$\bullet$};
    \draw (6,2.2) node[right]{$h_\mu(G/P_{\{3\}})$};
   \draw (6,1.8) node{$\bullet$};
    \draw (6,1.8) node[right]{$h_\mu(G/P_{\{2\}})$};
   \draw (6,0.4) node{$\bullet$};
   \draw (6,0.4) node[right]{$h_\mu(G/P_{\{1,3\}})$};
   \draw (6,-0.2) node{$\bullet$};
   \draw (6,-0.2) node[right]{$h_\mu(G/P_{\{1,2\}})$};
      \draw (6,-0.6) node{$\bullet$};
       \draw (6,-0.6) node[right]{$h_\mu(G/P_{\{2,3\}})$};
   \draw (6,-2) node{$\bullet$};
    \draw (6,-2) node[right]{$0$};
\end{tikzpicture} 
\par\end{centering}
\caption{The poset of parabolic subgroups of $SL(4,\mathbb{R})$. Inclusions
differing only by a rank 1 factor are drawn in bold. The particular
case pictured here occurs when the Lyapunov exponents satisfy $\lambda_{2}>0>\lambda_{3}$.}
\label{fig:sl4} 
\end{figure}
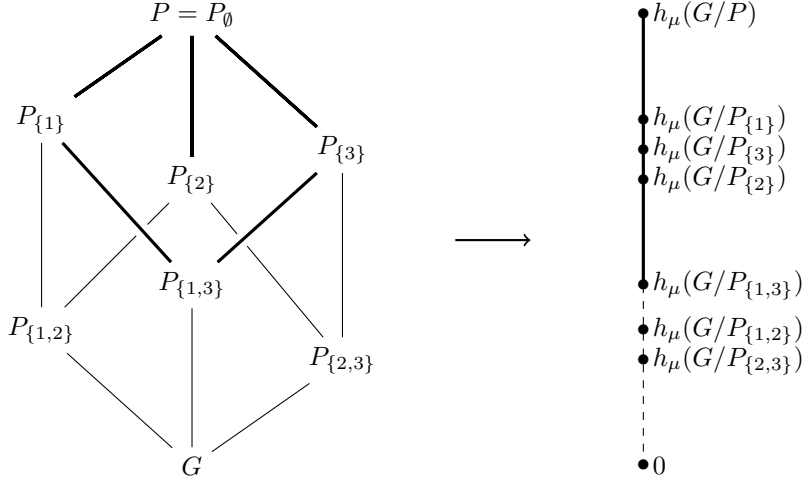

Using Poisson bundles over stationary systems permits to obtain intervals of entropy values.
Such an extension beyond the framework over measure-preserving
systems is necessary. The bundles over IRS considered in \cite{Bowen} can be described as
factors of a system of the form $\left(X\times B,m\times\nu_{B}\right)$,
where $\left(X,m\right)$ is equipped with an ergodic $G$-invariant
measure $m$ and $\left(B,\nu_{B}\right)$ is the Poisson boundary
of the $\mu$-random walk on $G$. For a higher rank connected simple
Lie group $G$, \cite[Theorem 1.2]{NZ4} implies that in this setting, for any $G$-factor
$\left(Z,\nu\right)$ of $\left(X\times G/P,m\times\nu_{B}\right)$,
there is a parabolic subgroup $Q$ of $G$, such that $\left(Z,\nu\right)$
is a measure preserving extension of $\left(G/Q,\bar{\nu}_{B}\right)$.
Therefore in this case Poisson bundles over measure preserving systems
provide at most $2^{r}$ Furstenberg entropy values, where $r$ is
the $\mathbb{R}$-rank of $G$.

For $Q'<Q$, two parabolic subgroups of $G=SL(d,\mathbb{R})$ whose
Levi subgroups differ only by a rank 1 factor, we consider stationary $G$-systems
induced from measure-preserving $Q$-actions. There is a large supply
of such systems where $Q$ acts through the quotient $Q\to PSL(2,\mathbb{R})$.
The associated Poisson bundles will be factors of the stationary joinings
with the Poisson boundary, rather than direct products. We remark that the random walk models that 
appear in the construction, based on stationary joinings,
are inherently different from random walks on stationary random graphs considered in \cite{BC12,CL16}. 

The proof of realisation of the interval $\left[h_{\mu}\left(G/Q,\nu_{Q}\right),h_{\mu}\left(G/Q',\nu_{Q'}\right)\right]$
is based on a continuity argument. As in~\cite{Bowen}, we
construct a family of Poisson bundles $(Z_{p},\lambda_{p})$, parametrised by $p\in[0,1]$.
The key point is to show that the entropy $h_{\mu}(Z_{p},\lambda_{p})$
depends continuously on~$p$. Upper and lower semi-continuity are
treated separately. Upper semi-continuity follows from standard entropy
formulae. More precisely, they provide expressions in terms of infimum of mutual information 
(or Shannon entropy) of time $n$ random walks. This allows to show
that entropy is an infimum of continuous functions. Our approach to lower semi-continuity
is based on identification of the Poisson bundles with concrete models. Then
the KL-divergence occurring in the definition of entropy can be expressed
as the supremum of relative entropies over finite partitions in the model. After verifying 
certain approximation properties, entropy is expressed as a supremum of continuous functions.

In view of an explicit identification of the Poisson bundles, it is
convenient to use measure preserving systems of $SL(2,\mathbb{R})$
induced from IRSs of a lattice $F$, taken for simplicity to be the
Sanov subgroup, free of rank 2. Their respective Poisson boundaries,
the boundary circle $\partial\mathbb{H}$ of the hyperbolic plane
and the space of ends $\partial F$, are $F$-measurably isomorphic, provided $F$
is endowed with a Furstenberg measure, which can be chosen to have
finite entropy and finite log-moment. 
We show that taking Poisson bundles interacts in a compatible way with 
inducing. These two
stages of induction reduce the proof of Theorem~\ref{th:spectrum-sl}
to the case of free groups, for which a key approximation property is shown in 
Proposition \ref{relative-cont}. Along the way we show full
entropy realisation for a large class of step distributions on $F$:

\begin{theo}\label{thm-free}

Let $F$ be a free group of finite rank, endowed with a non-degenerate probability measure
$\mu$ with finite entropy and finite logarithmic moment. Then $\rm{EntSp}(F,\mu)=\left[0,{\bf h}_{\mu}\right]$.

\end{theo}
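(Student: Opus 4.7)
The inclusion ${\rm EntSp}(F,\mu) \subseteq [0,{\bf h}_\mu]$ is standard: Furstenberg entropy is nonnegative for any ergodic $(F,\mu)$-stationary system, it is maximized on the Poisson boundary, and under our finite Shannon entropy and finite logarithmic moment hypotheses the Poisson boundary entropy ${\bf h}_\mu$ is finite by the Kaimanovich--Vershik entropy theorem. To prove the reverse inclusion, my plan is to construct an explicit one-parameter family of ergodic $(F,\mu)$-systems $(Z_p,\lambda_p)_{p \in [0,1]}$ such that $p \mapsto h_\mu(Z_p,\lambda_p)$ is continuous with $h_\mu(Z_0,\lambda_0)=0$ and $h_\mu(Z_1,\lambda_1)={\bf h}_\mu$, and then apply the intermediate value theorem.

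Each $(Z_p,\lambda_p)$ is taken to be the Poisson bundle of $(F,\mu)$ over an ergodic IRS $\theta_p$: conditionally on $H \sim \theta_p$, the fibre is the Poisson boundary of the $\mu$-random walk on the Schreier graph $F/H$, and $\lambda_p$ is the corresponding disintegrated $\mu$-stationary measure. The family $\{\theta_p\}_{p\in[0,1]}$ is built by a percolation-type construction on the Cayley tree of $F$ in the spirit of \cite{Bowen}, so that $p \mapsto \theta_p$ is weakly continuous with $\theta_0 = \delta_F$ (producing the trivial one-point $F$-action, hence entropy $0$) and $\theta_1 = \delta_{\{e\}}$ (producing the Poisson boundary of $(F,\mu)$, hence entropy ${\bf h}_\mu$). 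Ergodicity of $\theta_p$ transfers to ergodicity of $(Z_p,\lambda_p)$.

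The key analytic step is continuity of $p \mapsto h_\mu(Z_p,\lambda_p)$. Upper semicontinuity follows from a Kaimanovich--Vershik-type expression
\[
h_\mu(Z_p,\lambda_p) = \inf_{n\ge 1}\frac{1}{n}\, H(W_n \mid \mathcal{A}_p),
\]
where $W_n = g_1 \cdots g_n$ is the $\mu$-random walk on $F$ and $\mathcal{A}_p$ is the $\sigma$-algebra carrying the bundle/IRS data; each finite-$n$ term is continuous in $p$ because the joint law of $(W_n,H)$ depends continuously on the law of $\theta_p$, and an infimum of continuous functions is upper semicontinuous. Lower semicontinuity is the main obstacle, and the part where I expect to do real work: the plan is to identify $(Z_p,\lambda_p)$ with a concrete model on random rooted Schreier graphs so that the KL-divergence formulation
\[
h_\mu(Z_p,\lambda_p) = \int_F \mathrm{KL}\!\left(\lambda_p \,\|\, g^{-1}\lambda_p\right) d\mu(g)
\]
can be written as a supremum, over finite measurable partitions of the model, of quantities that are continuous in $p$. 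The needed approximation property is the content of the paper's Proposition~\ref{relative-cont}, and integrability at the level of the supremum is controlled by the finite logarithmic moment of $\mu$. With both semicontinuities established, the intermediate value theorem realizes every value in $[0,{\bf h}_\mu]$.
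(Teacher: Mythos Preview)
Your overall architecture is right (Poisson bundles over a continuous family of IRSs, upper semicontinuity from the entropy formula, lower semicontinuity via identification with end-compactification bundles and Proposition~\ref{relative-cont}, then the intermediate value theorem), but there is a genuine gap in the endpoint analysis, and it is not a cosmetic one.

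In Bowen's construction the path $(\rho_{\ell,p})_{p\in[0,1]}$ does \emph{not} have $\rho_{\ell,0}=\delta_F$. One has $\rho_{\ell,1}=\delta_{\{e\}}$ (so entropy ${\bf h}_\mu$ at $p=1$), but $\rho_{\ell,0}$ is the uniform measure on the conjugates of the subgroup $K_\ell$ of Example~\ref{exKell}, whose Schreier graph has infinitely many ends; the Poisson bundle over $\rho_{\ell,0}$ has \emph{strictly positive} entropy. So for fixed $\ell$ the continuity argument only realises the interval $[h_\mu(Z_0,\lambda_{\ell,0}),{\bf h}_\mu]$, not $[0,{\bf h}_\mu]$. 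There is a structural reason you cannot simply replace $\rho_{\ell,0}$ by $\delta_F$: the identification with the end-compactification bundle (Proposition~\ref{identification2}) and the uniform approximation (Proposition~\ref{relative-cont}) both require the IRS to be supported on ${\rm Tree}_F^{H_0}$ for a \emph{fixed} $H_0$ with quasi-transitive Schreier graph of infinitely many ends. Neither $F$ (Schreier graph a point) nor subgroups Chabauty-close to $F$ live in such a class, so the lower semicontinuity machinery would break down exactly at the endpoint you need.

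The paper closes this gap by a second limiting step that uses only upper semicontinuity. One lets $\ell\to\infty$: the measures $\rho_{\ell,0}$ converge weak$^\ast$ to $\kappa=\frac{1}{{\rm rank}(F)}\sum_i\delta_{A_i}$, where $A_i$ is the normal closure of $\langle a_i\rangle$, whose Schreier graph is $\mathbb{Z}$; the coset walk there has trivial Poisson boundary, so the Poisson bundle over $\kappa$ has entropy $0$. Upper semicontinuity (Corollary~\ref{upper-semi}), which holds without the ${\rm Tree}_F^{H_0}$ restriction, then gives $\limsup_{\ell\to\infty}h_\mu(Z_0,\lambda_{\ell,0})=0$, and the union over $\ell$ of the realised intervals is all of $(0,{\bf h}_\mu]$. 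You should incorporate this two-parameter structure; the single-path version as written does not go through.
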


This generalizes Bowen's original result for the case where $\mu$ is uniform on the generators and their inverses \cite{Bowen}. In the case of finitely supported $\mu$, full entropy realisation was known by \cite{HY}. For virtually free groups, full realisation
is known for symmetric 4th moment measures by \cite{RGY}, and the
existence of a gap is ruled out for first moment measures by \cite{HT}.

Similar arguments can be applied to other groups acting on trees;
for which we will investigate elsewhere.

\subsection{Organization of the article}

Section \ref{sec:Preliminaries} collects necessary preliminaries.
After that the article is divided in three parts, with some additional
details provided in two appendices.

\subsubsection{Part I: Constraints on entropy spectrum}

The first part consists of Section \ref{sec:first-steps} and \ref{sec:consequence}.
In Section \ref{sec:first-steps} we apply the line of arguments in
\cite{NZ2}, in particular an operation on continuous functions using
contracting dynamics and Gauss map considerations, to show a property of maximal projective factor,
stated in Theorem \ref{invariant-lambda}.
Consequences of the structure theorem, namely Theorems \ref{invariant2},
\ref{lattice} and \ref{constraint}, are derived in Section \ref{sec:consequence}.

\subsubsection{Part II: Poisson bundle over a stationary system }

The second part consists of Section \ref{sec:Poissonbundle} to \ref{sec:lower-argu},
where we develop some general theory on Poisson bundles over stationary
systems.

In Section \ref{sec:Poissonbundle}, we explain the definition of
such Poisson bundles, which starts with stationary joinings. The resulting
system $(Z,\lambda)$ is a $G$-factor that fits into 
\[
\left(X\times B,\eta\varcurlyvee\nu_{B}\right)\rightarrow\left(Z,\lambda\right)\rightarrow\left(X,\eta\right),
\]
where $\left(X,\eta\right)$ is a $(G,\mu)$-stationary system, $(B,\nu_{B})$
is the Poisson boundary of $\mu$-random walk, and $\eta\varcurlyvee\nu_{B}$
denotes the stationary joining of the two. For a stationary system
$\left(X,\eta\right)$ which is standard in the sense of Furstenberg-Glasner
\cite{FurstenbergGlasner}, we show that the Poisson bundle over $\left(X,\eta\right)$
can be described as a proximal extension where the fibers are Poisson
boundaries of coset Markov chains whose law is given by suitable Doob
transforms. A typical example of such $(X,\eta)$ is induced from
a measure-preserving action of $Q$, where $Q$ is a parabolic group.
The fiberwise Markov property is the key ingredient in deriving the
entropy formulae in Subsection \ref{subsec:formula}. It is standard
that the entropy formulae imply upper semi-continuity properties of
entropy, as explained in Section \ref{subsec:Upper-semi-continuity}.

There remains to obtain lower semi-continuity. As a starting point,
we formulate in Section~\ref{sec:entropy-criterion} entropy criteria
for identification of Poisson bundles, which are adapted from the
strip and ray criteria originally due to Kaimanovich \cite{KaimanovichHyperbolic}.
Here by identification, we mean explicitly describing a $\left(G,\mu\right)$-stationary
system $\left(M,\bar{\lambda}\right)$ and showing it is $G$-isomorphic
to $\left(Z,\lambda\right)$.

In Section \ref{sec:lower-argu} we formulate an approach to prove
lower semi-continuity of Furstenberg entropy for some specific systems
such as end-compactification bundles. The basic idea is that a symbolic
representation provides a natural sequence of finite partitions into
cylinder sets, which generate the $\sigma$-field on the fiber. We
may then write the fiberwise KL-divergence as the supremum of relative
entropy on the finite partitions. To ensure lower-semicontinuity,
it is sufficient to show that the relative entropy on the chosen finite
partitions varies continuously over the base. In Section \ref{subsec:fiber-uni}
we describe a technical condition, referred to as \textquotedbl locally
constant uniform approximation\textquotedbl , which implies such continuity.

\subsubsection{Part III: entropy realization for free groups, for $SL(d,\mathbb{R})$
and its lattices}

The third part consists of Sections \ref{Sec:identification-free}
and \ref{sec:Poisson-SL}. We apply the general framework of Part
II to free groups and to $SL(d,\mathbb{R})$.

In Section \ref{Sec:identification-free}, we revisit Poisson bundles
of free groups over IRSs constructed in Bowen~\cite{Bowen}. They
are supported on subgroups with \textquotedbl tree-like Schreier graphs\textquotedbl .
We apply the strip criterion Theorem~\ref{strip} to show that these
bundles are isomorphic to end compactification bundles over the same
base system. The finite partitions allowing approximations are simply
the shadows of vertices on a sphere of given radius. We conclude this
section with a proof of Theorem~\ref{thm-free}.

Section~\ref{sec:Poisson-SL} is devoted to the realization part
of Theorem~\ref{th:spectrum-sl}. Denote by $F$ the free group on two
generators. Take a parabolic subgroup $Q<SL(d,\mathbb{R})$ whose
Levi subgroup $L$ has a rank-1 factor. In matrix form it means that
$L$ has a $2\times2$ block on the diagonal. Since $F$ is a lattice
of $SL(2,\mathbb{R})$, we may induce an IRS of $F$ to a stationary
random subgroup (SRS) of $G=SL(d,\mathbb{R})$, see Subsection \ref{subsec:Construction-sl}.
The corresponding SRS is a measure-preserving extension of $\left(G/Q,\nu_{Q}\right)$.
Via a discretization argument, we transfer identification results
for the free group in Section \ref{Sec:identification-free} to identify
Poisson bundles over the (co-)induced SRS of $G$ in Subsection \ref{subsec:Identification-sl}.
Roughly speaking, in such a Poisson bundle, a fiber can be described
as the space of ends of a tree-like graph equipped with a suitable
measure, where the graph and measure depend on the base point. One
may view such an identification result as providing a symbolic representation
fiberwise for the Poisson bundle. Approximations on the tree-like
fibers of the bundle are integrated to obtain lower semi-continuity
via Fatou's lemma. Together with Section~\ref{subsec:Upper-semi-continuity},
we conclude the continuity argument.

We mention that the SRS in the construction are supported on non-discrete
subgroups of $SL(d,\mathbb{R})$ for $d\ge3$. This is necessary:
by a result of Fraczyk and Gelander \cite{FG}, every discrete SRS
of $SL(d,\mathbb{R})$, $d\ge3$, is an IRS. 

\subsubsection{Appendices}

Appendix A reviews the Nevo-Zimmer operation on continuous functions
from \cite{NZ2} based on contracting dynamics. Operations in the
expanding direction were considered earlier: it is first used by Margulis
in the proof of the Normal Subgroup Theorem \cite[Chapter IV]{MargulisBook};
and by Nevo-Zimmer in structure theorem under $P$-mixing assumption
\cite{NZ1}.

In Appendix B we include proofs of mutual information and entropy
formulae for Furstenberg entropy of Poisson bundles, which are stated
in Subsection \ref{subsec:formula}. These follow from classical arguments
being adapted to our setting.

{\it Acknowledgments.} J.B. acknowledges support of the ANR-22-CE40-0004 GoFR. T.Z. was partially supported by 
a Sloan research fellowship. It is a pleasure to thank Yair Hartman for interesting discussions at various stages of this work.

\section{Preliminaries\label{sec:Preliminaries}}

\subsection{Induced actions}

We recall basic facts about induced actions, see Zimmer's book~\cite{ZimmerBook}
for detailed treatment.

Let $G$ be a locally compact group and $H$ a closed subgroup of
$G$. Let $m_{G}$ be a left Haar measure on $G$. Let $(S,\eta)$
be an $H$-space. Let $H$ act from the right on the product $G\times S$
by $(g,s).h=(gh,h^{-1}.s)$. Denote by $X=G\times_{H}S$ the space
of $H$-orbits in $G\times S$ and $p:G\times S\to X$ the natural
projection. There is an action of $G$ on the quotient $X$ induced
from $G\curvearrowright G\times S$ by $g.(g',s)=(gg',s)$. The space
$X$ with the quotient Borel structure and quotient measure from $\left(G\times S,m_{G}\times\eta\right)$
is called the $G$-space induced from $H\curvearrowright(S,\eta)$.
When no ambiguity arises, we write $[g,s]$ for the $H$-orbit $p(g,s)$.

Another way to describe the induced $G$-action is through a cocycle.
Choose a Borel section $\theta:G/H\to G$ of the natural projection
$G\to G/H$, such that $\theta([e])=e$. Let $\alpha:G\times G/H\to H$
be the cocycle defined as $\alpha(g,[g'])=\theta([gg'])^{-1}g\theta([g'])$.
Denote by $\left(G/H\times_{\alpha}S,p_{\ast}m_{G}\times\eta\right)$
the $G$-space where $G$ acts by $g.([g'],s)=\left([gg'],\alpha(g,[g']).s\right)$.
As $G$-spaces, $G/H\times_{\alpha}S$ is isomorphic to $X$ via the
map 
\[
([g],s)\to p(\theta([g]),s),
\]
see \cite[Proposition 2.2]{ZimmerInduced}.

\subsection{$G$-spaces and factor maps }

We follow the preliminaries in \cite[Section 2]{Bader-Shalom}. Let
$G$ be a locally compact second countable group. We say a Lebesgue
space $\left(X,\eta\right)$ is a $G$-space if $G$ acts measurably
on $X$ and the probability measure $\eta$ is quasi-invariant with
respect to the $G$-action. Given a probablity measure $\mu$ on $G$,
we say $\left(X,\eta\right)$ is a stationary $\left(G,\mu\right)$-space
if in addition $\mu\ast\eta=\eta$.

A measure $\mu$ on $G$ is admissible if its support generates $G$
as a semi-group and some convolution power is absolutely continuous
with respect to Haar measure. It is in the $B_{\infty}$ class if
it is furthermore absolutely continuous with respect to Haar measure and admits
a bounded density with compact support.

By \cite{Varadarajan}, we may take a \emph{compact model} for $(X,\eta)$,
that is, a compact metric space $Z$ on which $G$ acts continuously,
equipped with a probability measure $\eta_{Z}$ on its Borel $\sigma$-algebra,
such that $\left(X,\eta\right)$ and $\left(Z,\eta_{Z}\right)$ are
measurably isomorphic $G$-spaces. The space of probability measures
on a compact metric space $Z$ is denoted by $P(Z)$. Equip $P(Z)$
with the weak$^{\ast}$-topology.

For a $G$-map $\pi:(X,\eta)\to(Y,\nu)$ between two compact $G$-spaces,
there exists a Borel map $\sigma:X\to Y$ such that $\sigma=\pi$,
$\eta$-a.e.. We say $\pi$ is a $G$-factor map if $Y=\pi(X)$ and
$\nu=\pi_{\ast}\eta$; the set $\pi^{-1}\left(\left\{ y\right\} \right)$
is called the fiber over $y$. Denote by $D_{\pi}:Y\to P(X)$ the
disintegration map, which is the unique map with the property that
for $\nu$-a.e. $y$, $D_{\pi}(y)$ is supported on the fiber $\pi^{-1}\left(\left\{ y\right\} \right)$,
and $\int_{Y}D_{\pi}(y)d\nu(y)=\eta$. We will often write $\eta^{y}:=D_{\pi}(y)$.
A $G$-factor map $\pi:(X,\eta)\to(Y,\nu)$ is called a \emph{measure
preserving extension} if $D_{\pi}$ is $G$-equivariant, that is,
$g.\eta^{y}=\eta^{g.y}$ for all $g\in G$ and a.e. $y\in Y$.

By Mackey's point realization theorem, $G$-factors of $(X,\nu)$
correspond to $G$-invariant sub-$\sigma$-algebras on $X$, modulo
zero measure subsets.

\subsubsection{The boundary map\label{subsec:boundary-map}}

Denote by $\left(B,\nu_{B}\right)$ for the Poisson boundary of $(G,\mu)$
and ${\rm bnd}:\left(G^{\mathbb{N}},\mathbb{P}_{\mu}\right)\to\left(B,\nu_{B}\right)$
the map from the trajectory space to the Poisson boundary.

Let $(X,\eta)$ be a $(G,\mu)$-stationary system, $\mu\ast\eta=\eta$.
By the martingale convergence theorem, $\mathbb{P}_{\mu}$-a.s., 
\[
\eta_{\omega}=\lim_{n\to\infty}\omega_{n}.\eta\mbox{ exists}.
\]
Since the map $\omega\mapsto\eta_{\omega}$ is measurable with respect
to the invariant $\sigma$-field of the random walk, it factorizes
through the Poisson boundary of $(G,\mu)$. That is, we have a $G$-measurable
map 
\begin{align*}
\boldsymbol{\beta}_{\eta}:B & \to P(X)\\
{\rm bnd}(\omega) & \mapsto\eta_{\omega},
\end{align*}
where $P(X)$ is the space of probability measures on $X$. The map
$\boldsymbol{\beta}_{\eta}$ is called the boundary map, it is the
essentially unique measurable $G$-map $B\to P(X)$ which satisfies
the barycenter property that 
\[
\eta=\int_{B}\boldsymbol{\beta}_{\eta}(b)d\nu_{B}(b),
\]
see for instance \cite[Theorem 2.16]{Bader-Shalom}.

Recall the following terminologies. 
\begin{itemize}
\item The $(G,\mu)$-space $(X,\eta)$ is a $\mu$-\emph{boundary} (equivalently
$\mu$-\emph{proximal}) if $\mathbb{P}_{\mu}$-a.s., the measures $\eta_{\omega}\in\mathcal{M}(X)$
are point masses. In other words, if $(X,\eta)$ is a $G$-factor
of the Poisson boundary $(B,\nu_{B})$. 
\item A $G$-factor map $\pi:(X,\eta)\to(Y,\nu)$ is called a $\mu$-\emph{proximal
extension} if $\mathbb{P}_{\mu}$-a.s., the extension $\left(X,\eta_{\omega}\right)\to(Y,\nu_{\omega})$
is a.s. one-to-one. 
\item We call a $(G,\mu)$-stationary system $\left(X,\eta\right)$ \emph{standard}\textbf{
}if there exists a $G$-factor map $\pi:(X,\eta)\to(Y,\nu)$ with
$(Y,\nu)$ a $\mu$-proximal system and $\pi$ a measure preserving
extension. By \cite[Proposition 4.2]{FurstenbergGlasner}, the structure
of a standard system as a measure preserving extension of a proximal
system is unique. 
\end{itemize}

\subsubsection{Furstenberg isomorphism}\label{subsubsec:Furstenberg}

Let $G$ be a lcsc group equipped with an admissible probability measure
$\mu$ on $G$. Assume that the Poisson boundary of the $\mu$-random
walk can be identified with a homogenous space $G/H$. Following \cite{Furstenberg1,Furstenberg2},
in this situation the boundary map can be interpreted as what is now
called the \emph{Furstenberg isomorphism/correspondence}. 

Since $\mu$ is admissible, then the stationary measure $\nu$ is
in the quasi-invariant measure class on $G/H$. Since the action of
$G$ on its Poisson boundary is amenable in the sense of Zimmer, we
have that the subgroup $H$ is necessarily amenable.

Given a locally compact $G$-space $X$, denote by $\mathcal{P}_{\mu}(X)$
the space of $\mu$-stationary probability measures on $X$, and $\mathcal{P}_{H}(X)$
the space of $H$-invariant probability measures on $X$. Then by
\cite[Lemma 2.1]{Furstenberg2}, there is an isomorphism between the
affine spaces $\mathcal{P}_{\mu}(X)$ and $\mathcal{P}_{H}(X)$, implemented
by 
\begin{align*}
\psi_{\mu} & :\mathcal{P}_{\mu}(X)\to\mathcal{P}_{H}(X)\\
\eta & \mapsto\lambda=\boldsymbol{\beta}_{\eta}(H),
\end{align*}
where $\boldsymbol{\beta}_{\eta}:G/H\to\mathcal{P}(X)$ is the $G$-boundary
map associated with $\eta$. Denote by $\nu_{H}$ the $\mu$-harmonic
measure on $G/H$, we have that the barycenter map is implemented
by 
\[
\eta=\int_{G/H}\boldsymbol{\beta}_{\eta}(w)d\nu_{H}(w)=\int_{G/H}g.\lambda d\nu_{H}(gH).
\]

\begin{notation}\label{convolution-lambda}

Let $X$ be a $G$-space, $H$ a closed subgroup of $G$. Given a
measure $\nu_{0}$ on $G/H$ and $H$-invariant measure $\lambda$
on $X$, we write $\nu_{0}\ast\lambda:=\tilde{\nu}_{0}\ast\lambda=\int_{G}g.\lambda d\tilde{\nu}_{0}(g)$,
where $\tilde{\nu}_{0}$ is any lift of $\nu_{0}$ to ${\rm Prob}(G)$.
Since $\lambda$ is $H$-invariant, it does not depend on the choice
of $\tilde{\nu}$. In this setting, denote by $X_{0}$ the support
of $\lambda$. We can then view $G\curvearrowright\left(X,\nu\ast\lambda\right)$
as a factor of the induced system $\left(G/H\times_{\alpha}X,\nu\times\lambda\right)$,
see \cite[Proposition 2.5]{NZ1}.

\end{notation}

We will refer to the map $\psi_{\mu}:\mathcal{P}_{\mu}(X)\to\mathcal{P}_{H}(X)$
as the Furstenberg isomorphism. This isomorphism is continuous with
respect to the weak$^{\ast}$ topology. As a consequence of the isomorphism,
we have uniqueness of $\mu$-stationary measure on $X$ is equivalent
to uniqueness of $H$-invariant measure on $X$.

Recall that the action of $G$ on a compact space $X$ is said to
be \emph{strongly proximal} if for any probability measure $\eta$
on $X$, there exists a sequence of elements $\left(g_{n}\right)$
in $G$ such that $g_{n}.\eta$ converges weakly to a $\delta$-mass.
When $G\curvearrowright X$ is strongly proximal, by \cite[Theorem 2.3]{Furstenberg2},
the $G$-space $X\times X$ supports a unique stationary measure for
$\mu$; and this measure is concentrated on the diagonal of $X\times X$.
It follows in particular that if $G\curvearrowright X$ is strongly
proximal, then $H$-invariant measure on $X$ is unique.

\subsubsection{Furstenberg entropy}

Recall that given two probability measures $P$ and $Q$ on the same
space $(\Omega,\mathcal{B})$, if $P$ is absolutely continuous with
respect to $Q$, the relative entropy of $P$ with respect to $Q$,
also known as their Kullback-Leibler divergence, is defined as the
integral (possibly infinite): 
\[
\mathrm{D}\left(P\bigparallel Q\right):=\int_{\Omega}\left(\log\frac{dP}{dQ}\right)dP.
\]
The Furstenberg entropy, as defined in (\ref{eq:h-def}) can be written
as $h_{\mu}(X,\nu)=\int_{G}D\left(\nu\parallel g^{-1}\nu\right)d\mu(g)=\int_{G}D\left(g.\nu\parallel\nu\right)d\mu(g)$.
We refer to \cite[Section 1]{NZ3} for a detailed account on Furstenberg
entropy, and only recall here a few well-known properties.

Furstenberg entropy is monotone under factors: when $\pi:(X,\eta)\to(Y,\nu)$
is a $G$-map, we have $h_{\mu}(Y,\nu)\le h_{\mu}(X,\eta)$ with equality
if and only if $\pi$ is measure preserving. The Furstenberg entropy
of a $(G,\mu)$-space is maximal for the Poisson boundary $0\le h_{\mu}(X,\eta)\le h_{\mu}(B,\nu_{B})$.

When $\eta'$ is a probability measure in the measure class of $\eta$,
then by \cite[Lemma 8.9]{Furstenberg2}, we have 
\[
h_{\mu}\left(X,\eta\right)=-\int_{G}\int_{X}\log\frac{dg^{-1}\eta'}{d\eta'}(x)d\eta(x)d\mu(g).
\]
This allows to view the Furstenberg entropy as a cohomology invariant.
It permits to change the stationary measure $\eta$ in the Radon-Nikodym
derivative in order to compute the entropy.

We also record the following property of the barycenter map.

\begin{lemma}\label{barycenter}

Let $\left(C,\nu_{C}\right)$ and $\left(X,\eta\right)$ be two nonsingular
$G$-spaces. Suppose there is a $G$-map $\beta:C\to\mathcal{P}(X)$
such that $\eta$ is the barycenter of $\beta_{\ast}(\nu_{C})$. Then
\[
h_{\mu}\left(C,\nu_{C}\right)\ge h_{\mu}\left(X,\eta\right).
\]

\end{lemma}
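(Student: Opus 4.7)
The plan is to recognize the map $\beta \colon C \to \mathcal{P}(X)$ as a Markov kernel from $C$ to $X$, and to reduce the desired inequality to the data processing inequality (equivalently, joint convexity) for Kullback--Leibler divergence. From this viewpoint, the hypothesis that $\eta$ is the barycenter of $\beta_{*}\nu_{C}$ simply asserts that the pushforward of $\nu_{C}$ through the kernel $\beta$ equals $\eta$. Note that this is more general than the factor-map situation, where entropy monotonicity was already recorded; here $\beta$ need not come from a measurable map $C \to X$ at all.

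The first step would be to verify, using $G$-equivariance of $\beta$, that for every $g \in G$ the pushforward of $g.\nu_{C}$ through the kernel $\beta$ is $g.\eta$. This is the short computation
\[
\int_{C} \beta(c) \, d(g.\nu_{C})(c) = \int_{C} \beta(g.c) \, d\nu_{C}(c) = \int_{C} g.\beta(c) \, d\nu_{C}(c) = g.\eta,
\]
where the last equality uses the hypothesis and the fact that the $G$-action on $\mathcal{P}(X)$ commutes with the barycenter.

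The second step is to invoke the data processing inequality for KL-divergence: for any Markov kernel $K$ and any two probability measures $P \ll Q$ on the source,
\[
\mathrm{D}(K_{*}P \parallel K_{*}Q) \leq \mathrm{D}(P \parallel Q).
\]
Since $(C,\nu_{C})$ and $(X,\eta)$ are nonsingular $G$-spaces, the absolute continuities $g.\nu_{C} \ll \nu_{C}$ and $g.\eta \ll \eta$ hold for every $g$. Applying the inequality with $K = \beta$, $P = g.\nu_{C}$, $Q = \nu_{C}$ and using the identification of the first step gives
\[
\mathrm{D}(g.\eta \parallel \eta) \leq \mathrm{D}(g.\nu_{C} \parallel \nu_{C}) \quad \text{for every } g \in G.
\]
Integrating against $\mu(dg)$ and using the identity $h_{\mu}(X,\eta) = \int_{G} \mathrm{D}(g.\eta \parallel \eta) \, d\mu(g)$ recalled just before the lemma, and similarly for $(C,\nu_{C})$, yields the desired inequality.

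The argument is essentially routine and I do not anticipate a serious obstacle. The only point that requires minor attention is the measurability needed to apply the integral form of the data processing inequality, but this is automatic from the assumption that $\beta$ is a $G$-measurable map into $\mathcal{P}(X)$ with the weak-$*$ Borel structure. If desired, one may verify the DPI concretely by writing $\frac{d(g.\eta)}{d\eta}(x) = \mathbb{E}\!\left[\frac{d(g.\nu_{C})}{d\nu_{C}}(c) \,\middle|\, x\right]$ with respect to the conditional distribution on $C$ given $x$ induced by $\beta$, and then applying Jensen's inequality to the convex function $t \mapsto t \log t$.
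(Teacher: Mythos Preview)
Your proof is correct. The paper takes a closely related but structurally different route: it builds the joint $G$-space $(C\times X,\lambda)$ with $\lambda$ defined by $\int f\,d\lambda=\int_{C}\int_{X}f(c,x)\,d\beta_{c}(x)\,d\nu_{C}(c)$, observes that the projection $C\times X\to C$ is a measure-preserving extension (by equivariance of $\beta$) so that $h_{\mu}(C\times X,\lambda)=h_{\mu}(C,\nu_{C})$, and that the projection $C\times X\to X$ is a $G$-factor map onto $(X,\eta)$ so that $h_{\mu}(X,\eta)\le h_{\mu}(C\times X,\lambda)$. Your approach bypasses the auxiliary space and appeals directly to the data processing inequality for KL-divergence, which is of course what underlies factor monotonicity; the Jensen argument you sketch at the end is in fact the same joint-distribution computation the paper carries out. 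The paper's version has the small advantage of staying entirely within the factor/extension language already set up in the preliminaries, while yours is marginally more direct and makes the information-theoretic content explicit.
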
 
\begin{proof}
Consider the product space $C\times X$ on which $G$ acts diagonally,
and equip it with the measure $\lambda$ such that 
\[
\int_{C\times X}fd\lambda=\int_{C}\int_{X}f(c,x)d\beta_{c}(x)d\nu_{C}(c).
\]
The map $C\times X\to C$ with $(c,x)\mapsto c$ is a measure-preserving
extension as $\beta$ is equivariant. It follows that $h_{\mu}(C\times X,\lambda)=h_{\mu}(C,\nu_{C})$.
Since $\eta=\mathrm{bar}\left(\beta_{\ast}(\nu_{C})\right)$, the
coordinate projection $C\times X\to X$ pushes forward the measure
$\lambda$ to $\eta$. Therefore $h_{\mu}(X,\eta)\le h_{\mu}(C\times X,\lambda)$. 
\end{proof}

\subsection{Structure of parabolic subgroups\label{subsec:structure-of-parabolic}}

Let $G$ be a semisimple real Lie group, we recall some structure
theory, see \cite{KnappBook} and also \cite[Section 2]{NZ2}.
Denote by $\mathfrak{g}$ the Lie algebra of $G$. Let $\theta:\mathfrak{g}\to\mathfrak{g}$
be a Cartan involution on $\mathfrak{g}$. We have the Cartan decomposition
$\mathfrak{g}=\mathfrak{k}\oplus\mathfrak{p}$, where $\mathfrak{k}$
($\mathfrak{p}$ resp.) is the $+1$ ($-1$ resp.) eigenspace of $\theta$.
Let $\mathfrak{a}$ be a maximal commutative subalgebra of $\mathfrak{p}$.
Denote by $\Phi=\Phi(\mathfrak{a},\mathfrak{g})$ the set of restricted
roots. For a fixed ordering on $\Phi$, denote by $\Delta$ simple
roots. For $\alpha\in\Phi$, denote by $\mathfrak{g}_{\alpha}$ the
restricted root space $\mathfrak{g}_{\alpha}=\{x\in\mathfrak{g}:[h,x]=\alpha(h)x\mbox{ for all }h\in\mathfrak{a}\}$, and set $\mathfrak{n}=\oplus_{\alpha >0}\mathfrak{g}_{\alpha}$.
Let $G=KAN$ denote the Iwasawa decomposition, where $K,A,N$ are
the analytic subgroups of $G$ corresponding to $\mathfrak{k},\mathfrak{a},\mathfrak{n}$.

Conjugacy classes of parabolic subalgebras of $\mathfrak{g}$ are
parametrized by subsets of $\Delta$. For $I\subseteq\Delta$, let
\[
\mathfrak{a}_{I}=\bigcap_{\alpha\in I}\ker\alpha.
\]
Let $\mathfrak{z}(\mathfrak{h})$ denote the centralizer of the
subalgebra $\mathfrak{h}$ in $\mathfrak{g}$. Let $\mathfrak{m}_{I}$
be the orthogonal complement of $\mathfrak{a}_{I}$ in $\mathfrak{z}(\mathfrak{a}_{I})$
with respect to the restriction of the Killing form, $\mathfrak{z}(\mathfrak{a}_{I})=\mathfrak{m}_{I}\oplus\mathfrak{a}_{I}$. 

Let $[I]$ denote the set of roots in $\Phi$
expressible as integral linear combination of elements in $I$. Let
\[
\mathfrak{n}^{I}=\oplus_{\alpha>0,\alpha\notin[I]}\mathfrak{g}_{\alpha},\ \mathfrak{n}^{-I}=\oplus_{\alpha<0,\alpha\notin[I]}\mathfrak{g}_{\alpha}.
\]
Then the parabolic subalgebra $\mathfrak{p}_{I}$ admits the decomposition
\[
\mathfrak{p}_{I}=\mathfrak{m}_{I}\oplus\mathfrak{a}_{I}\oplus\mathfrak{n}^{I}.
\]

The parabolic subgroup $P_{I}$ is the normalizer of the parabolic
subalgebra $\mathfrak{p}_{I}$. Denote by $A_{I}=\exp(\mathfrak{a}_{I})$,
$N_{I}=\exp\left(\mathfrak{n}^{I}\right)$, $\bar{N}_{I}=\exp\left(\mathfrak{n}^{-I}\right)$
and $L_{I}=Z_{G}\left(A_{I}\right)$ the centralizer of $A_{I}$ in
$G$. Then $P_{I}$ admits the Levi decomposition $P_{I}=L_{I}\rtimes N_{I}$,
and the Levi subgroup $L_{I}$ is reductive : it is the product of $M_{I}$ with its split component $A_{I}$. The decomposition $P_{I}=M_{I}A_{I}N_{I}$ is
called the Langlands decomposition of $P_{I}$, see \cite[Propositions 7.82 and 7.83]{KnappBook}.

The minimal parabolic subgroup $P$ corresponds to the empty subet
$I=\emptyset$. Write the corresponding decompositions as $\mathfrak{p}=\mathfrak{p}_{\emptyset}=\mathfrak{m}\oplus\mathfrak{a}\oplus\mathfrak{n}$
and $P=P_{\emptyset}=MAN$. By \cite[Proposition 7.82]{KnappBook},
we have that $M_{I}=M_{I}^{0}M$, where $M_{I}^{0}$ is the identity
component of $M_{I}$. It follows that $P_{I}=M_{I}^{0}P$.

For $I\subseteq\Delta$, let 
\begin{align}
R_{I} & :=\left\{ \exp(s):s\in\mathfrak{a},\alpha_{1}(s)\le0,\alpha_{2}(s)<0\mbox{ for all }\alpha_{1}\in\Delta\mbox{ and }\alpha_{2}\in\Delta\setminus I\right\} ,\nonumber \\
D_{I} & :=R_{I}\cap A_{I}=\left\{ \exp(s):s\in\mathfrak{a}_{I},\alpha(s)<0\mbox{ for all }\alpha\in\Delta\setminus I\right\} .\label{eq:D_I}
\end{align}
The set $D_{I}$ is nonempty if and only if $I\neq\Delta$ (\cite[Prop I.2.4.2]{MargulisBook}).
For $I\subseteq\Delta$, $s\in R_{I}$, by \cite[Lemma 3.1]{MargulisBook}
the automorphisms ${\rm Int}(s)|_{N_{I}}$ and ${\rm Int}(s^{-1})|_{\bar{N}_{I}}$
are contracting, where ${\rm Int}(g).x=gxg^{-1}$ is conjugation by
$g$.

\begin{example}
For $G=SL(5,\mathbb{R})$ and $I=\{1,3,4\}$, we have
\[
L_I=\left(\begin{array}{ccccc}
\ast & \ast & 0 & 0 & 0 \\ 
\ast & \ast & 0 & 0 & 0 \\
0 & 0 & \ast & \ast & \ast \\
0 & 0 & \ast & \ast & \ast \\
0 & 0 & \ast & \ast & \ast \\
\end{array} \right), 
N_I=\left(\begin{array}{ccccc}
1 & 0 & \ast & \ast & \ast \\ 
0 & 1 &  \ast & \ast & \ast \\
0 & 0 & 1& 0 & 0 \\
0 & 0 & 0& 1 & 0 \\
0 & 0 & 0& 0 & 1 \\
\end{array} \right),
\bar{N}_I=\left(\begin{array}{ccccc}
1 & 0 & 0 & 0 & 0 \\ 
0 & 1 &  0 & 0 & 0 \\ 
\ast & \ast & 1& 0 & 0 \\
\ast & \ast & 0& 1 & 0 \\
\ast & \ast & 0& 0 & 1 \\
\end{array} \right),
\]
\[
D_I=\left\{\left(\begin{array}{cc}
e^{-t_1} \mathrm{I}_2 & 0  \\ 
0 & e^{-t_2} \mathrm{I}_3 \\ 
\end{array} \right), t_1>t_2\right\}, \quad \textrm{ where } \mathrm{I}_k \textrm{ is the $k\times k$ identity matrix.}
\]
\end{example}

\section{Properties of the Nevo-Zimmer maximal projective factor \label{sec:first-steps}}

\subsection{Statement }

Let $G$ be a connected semisimple real Lie group with finite center
and $P$ be a minimal parabolic subgroup of $G$. Suppose $\nu_{P}$
is a probability measure in the $G$-quasi-invariant measure class
on $G/P$. For a parabolic subgroup $Q>P$, denote by $\nu_{Q}$ the
pushforward of $\nu_{P}$ under the projection $G/P\to G/Q$.

We are given a $G$-system $(X,\nu)$, where $X$ is taken to be a
compact model. Suppose there is a $P$-invariant measure $\lambda$
on $X$ such that $\nu=\nu_{P}\ast\lambda$, where the convolution
is explained in Notation~\ref{convolution-lambda}. 

\begin{example}

This is the setting of \cite[Theorem 1]{NZ2}. The measure $\nu_{P}$ is the $\mu$-harmomic
measure on $G/P$ for some admissible step distribution $\mu$ on
$G$; $(X,\nu)$ is a $(G,\mu)$-stationary system and $\lambda$
is provided by the Furstenberg isomorphism.

\end{example}

Denote by $\Delta$ simple roots of $G$. Recall that a standard parabolic
subgroup $P_{I}$, $I\subseteq\Delta$, admits the Levi decomposition
$P_{I}=L_{I}\rtimes N_{I}$, where $N_{I}$ is the unipotent radical
of $P_{I}$, the Levi subgroup $L_{I}=Z_{G}(A_{I})$ is reductive.
Our goal in this section is to prove the following.

\begin{theorem}\label{invariant-lambda}

Let $G$, $\nu_{P}$, $(X,\nu)$ be as above, where $\nu=\nu_{P}\ast\lambda$
for a $P$-invariant, $P$-ergodic measure $\lambda$ on $X$. Suppose
$\left(G/Q,\nu_{Q}\right)$ is a maximal projective factor of $(X,\nu)$,
where $Q=P_{I}$ is a parabolic subgroup of $G$. Then each connected
simple non-compact factor with $\mathbb{R}$-rank $\ge2$ of the Levi
subgroup $L_{I}$ of $Q$ preserves the measure $\lambda$.

\end{theorem}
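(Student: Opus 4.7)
My plan is to apply the Nevo--Zimmer contracting-dynamics argument (as reviewed in Appendix~A, following \cite{NZ2}) internally to each simple non-compact factor $L'$ of $L_I$ with $\mathbb{R}$-rank $\ge 2$. The rank $\ge 2$ hypothesis is essential: it guarantees that $L'$ admits a proper parabolic strictly between its minimal parabolic and $L'$ itself, which corresponds to a standard parabolic $\widetilde Q \subsetneq Q$ of $G$ and thus provides room to contradict maximality of the projective factor $G/Q$.

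I would argue by contradiction. Fix such an $L'$ and suppose $\lambda$ is not $L'$-invariant. Since $P':=P\cap L'$ is a minimal parabolic of $L'$, the measure $\lambda$ is $P'$-invariant. Let $Q^*$ be the smallest standard parabolic of $L'$ containing $P'$ under which $\lambda$ is invariant; by our assumption and the rank $\ge 2$ hypothesis, one can arrange $P'\subsetneq Q^*\subsetneq L'$. Writing $I_j\subseteq I$ for the simple roots of $L'$ and $J_j\subsetneq I_j$ for those of $Q^*$, set $\widetilde I:=(I\setminus I_j)\cup J_j$ and $\widetilde Q:=P_{\widetilde I}$, so that $\widetilde Q\subsetneq Q$ and $\widetilde Q\cap L'=Q^*$.

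Pick an element $s$ in the interior of the Weyl chamber of $L'$ associated with $Q^*$, so that $\mathrm{Int}(s)$ contracts the unipotent radical of $Q^*$ inside $L'$. For each $f\in C(X)$, define the right-$P$-invariant function $F_f(g):=\int_X f\,d(g.\lambda)$, viewed in $L^\infty(G/P)$. Apply the Nevo--Zimmer operator: average iterates $s^n\cdot F_f$ against an absolutely continuous measure along the contracted unipotent (the Gauss-map weighting of Appendix~A) and extract a subsequential weak$^*$-limit $\widetilde F_f$. The combination of contracting dynamics and the minimality in the choice of $Q^*$ forces $\widetilde F_f$ to descend to $G/\widetilde Q$. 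As $f$ ranges over $C(X)$, the functions $\{\widetilde F_f\}$ assemble into a $G$-equivariant map $G/\widetilde Q\to\mathcal P(X)$ whose barycenter recovers $\nu$; via the Furstenberg isomorphism this yields a projective factor $(G/\widetilde Q,\nu_{\widetilde Q})$ of $(X,\nu)$ strictly refining $(G/Q,\nu_Q)$, contradicting maximality. Hence $\lambda$ must be $L'$-invariant.

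The main obstacle will be executing the Nevo--Zimmer averaging entirely inside $L'$: the contracting elements of $D_I$ used in the original argument lie in $A_I\subset Z(L_I)$ and act trivially by conjugation on $L'$, so one must work with contracting elements intrinsic to $L'$, which only exist when $\mathrm{rank}(L')\ge 2$. The delicate step is to verify that the weak$^*$-limit $\widetilde F_f$ is genuinely non-constant on the fibers of $G/\widetilde Q\to G/Q$ --- this uses the $P$-ergodicity of $\lambda$ together with the failure of $L'$-invariance --- and that it truly factors through $G/\widetilde Q$ rather than merely through $G/Q$, which requires a careful coordination of the Weyl chambers of $L'$ and $G$ to track precisely which root directions are newly contracted under $\mathrm{Ad}(s^n)$.
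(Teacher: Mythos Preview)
Your overall strategy --- run the Nevo--Zimmer contracting operation inside the higher-rank factor $L'$ to produce a projective factor strictly below $Q$ and contradict maximality --- matches the paper's. But there is a genuine gap at the step you flag as ``delicate'': the claim that the averaged function $\tilde F_f$ descends to $G/\tilde Q$ and simultaneously gives a factor of $(X,\nu)$ is not justified, and in general it fails without an additional argument.

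The issue is this. Your function $F_f(g)=\int_X f\,d(g.\lambda)$ already lives on $G/P$; averaging it under $s$ keeps it on $G/P$. But to obtain a projective \emph{factor of $(X,\nu)$} you need a nontrivial function lying in $\tilde L^\infty(X)\cap \tilde L^\infty(G/P)$, i.e.\ one that is simultaneously a lift from $X$ \emph{and} from $G/P$. The paper's operation $\mathcal E_s$ acts on $f\circ\xi$ as a function of $(u,v,x_0)$ and produces $\mathcal E_s f(u,v,x_0)=\mathbb E_\lambda[\tilde f(u,\cdot)\mid\mathcal F^s](x_0)$, which is guaranteed to be in $\tilde L^\infty(X)$ by Proposition~\ref{contract1}. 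Whether this function also lies in $\tilde L^\infty(G/P)$ (i.e.\ is $\lambda$-constant in $x_0$) is precisely the dichotomy between Cases (II1) and (II2) in Subsection~\ref{subsec:three-cases}. In Case~(II1) it does, and \cite[Proposition~9.2]{NZ2} gives the projective factor directly. In Case~(II2) it does not: the conditional expectation genuinely depends on $x_0$, and one must instead pass to the $N_\varrho$-invariant quotient $(X_0',\lambda')$ and invoke the Gauss map argument (Lemmas~\ref{nontrivial}, \ref{nonideal}, Proposition~\ref{II2-conclusion}) --- using that orbits in the Grassmannian are locally closed and that the stabilizer, containing $N_\varrho$ but not $\bar U_\varrho$, cannot be normal in the simple group $M_i$ --- to produce the projective factor. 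Your proposal collapses this dichotomy, and the ``minimality of $Q^*$'' you invoke does not substitute for it: minimality of the parabolic preserving $\lambda$ says nothing about whether the $\mathcal F^s$-conditional expectation is constant along $X_0$. (Incidentally, your $Q^*$ should be the \emph{largest} standard parabolic of $L'$ preserving $\lambda$, not the smallest, and there is no reason it is strictly larger than $P'$; but this is secondary.)

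The paper's proof instead runs the operation for \emph{every} proper nonempty $\varrho\subset I_i$: since Cases~(II1) and (II2) both contradict maximality, Case~(I) must hold for each $\varrho$, i.e.\ each $\bar U_\varrho$ preserves $\lambda$; these generate $\bar N_i$, and together with $M_i\cap P$ generate $M_i$.
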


The existence of a (unique) maximal projective factor follows from
\cite[Lemma 0.1]{NZ2}. Consequences of Theorem \ref{invariant-lambda}
will be derived in Section \ref{sec:consequence}. Throughout the
rest of this section, we assume the setting of Theorem \ref{invariant-lambda}.

\subsection{$Q$-system and disintegration of the Haar measure $m_{K}$ }

\subsubsection{Notations for Borel sections and cocycles \label{subsec:Notations}}

We first set some notations for the cocycles that appear in the induced
systems. Fix a choice of Borel sections $\theta:Q/P\to Q$ and $\tau:G/Q\to G$
with the property that $\theta([P])=e$ and $\tau([Q])=e$. For later
convenience, we also require that $\theta(Q/P)\subseteq Q\cap K$
and $\tau\left(G/Q\right)\subseteq K$: this is possible because $K$
acts transitively on $G/P$. Denote by $\beta:G/Q\times G\to Q$ the
cocycle associated with the section $\tau$. Then we have a Borel
section 
\begin{align}
\vartheta:G/P & \to G\nonumber \\
gP & \mapsto\tau(gQ)\theta\left(\tau(gQ)^{-1}gP\right).\label{eq:section}
\end{align}
Denote by $\alpha$ the associated cocycle $G/P\times G\to G$ with
this section. Since $\tau(Q)=e$, we have that $\vartheta$ restricted
to $Q/P$ agrees with $\theta$. Then $\alpha$ restricted to $Q/P\times Q$
is the cocycle associated with $\theta$. In the notation for a $Q$-system
$Q/P\times_{\alpha}X_{0}$, it is understood that the cocycle $\alpha$
is restricted to $Q/P\times Q$.

By the \textquotedbl inducing in stages\textquotedbl{} property (see
e.g., \cite[Proposition 2.4]{ZimmerInduced}), the two systems $G/Q\times_{\beta}\left(Q/P\times_{\alpha}X_{0}\right)$
and $G/P\times_{\alpha}X_{0}$ are isomorphic, via a $G$-isomorphism
\begin{align}
j_{P}^{Q}:G/Q\times_{\beta}\left(Q/P\times_{\alpha}X_{0}\right) & \to G/P\times_{\alpha}X_{0}\nonumber \\
\left(gQ,qP,x_{0}\right) & \mapsto\left(\tau\left(gQ\right)\alpha\left(qP\right)P,x_{0}\right).\label{eq:jPQ}
\end{align}

\subsubsection{Change to $K$-invariant measures and decomposition of Haar measure\label{subsec:Preliminary-step}}

For our purposes it is convenient to have that the harmonic measure
on $G/P$ is $K$-invariant, such that disintegration of Haar measures
can be applied. Recall that $\nu_{0}$ denotes the $\mu$-harmonic
measure on $G/P$. Denote by $\lambda$ the $P$-invariant measure
on $X$ given by the Furstenberg isomorphism, $\nu=\nu_{P}\ast\lambda$.
Equivalently, the boundary map $\boldsymbol{\beta}_{\nu}:G/P\to\mathcal{P}(X)$
sends $gP$ to $g.\lambda$.

Denote by $G=KAN$ the Iwasawa decomposition of $G$ and $m_{K}$
the normalized Haar measure on the compact subgroup $K$. Since $\nu_{P}$
is assumed to be in the $G$-quasi-invariant measures on $G/P$, it
follows that $\nu=\nu_{P}\ast\lambda$ and $\nu'=m_{K}\ast\lambda$
are in the same measure class on $X$ as well, see \cite[Corollary 1.5]{NZ1}.
The boundary map associated with $\nu'$ sends $gP$ to $g.\lambda$,
that is, $\boldsymbol{\beta}_{\nu'}=\boldsymbol{\beta}_{\nu}$. Denote
by $\bar{m}_{K}$ the pushforward of $m_{K}$ under the natural projection
$G\to G/Q$. Then $\left(G/Q,\bar{m}_{K}\right)$ is a maximal projective
factor of $\left(X,\nu'\right)$ if and only if $\left(G/Q,\nu_{Q}\right)$
is a maximal projective factor of $(X,\nu)$. Therefore to prove Theorem
\ref{invariant-lambda}, we may replace $(X,\nu)$ by $(X,\nu')$
where $\nu'=m_{K}\ast\lambda$.

Consider the decomposition of the Haar measure on $K$ over the closed
subgroup $K\cap Q$. Since $K$ is compact, both $K$ and $K\cap Q$
are unimodular. Denote by $\bar{m}_K$ the (unique) $K$-invariant
probability measure on $K/K\cap Q$ and $m_{K\cap Q}$ the Haar measure
on $K\cap Q$ normalized to have total mass $1$. Recall that in Subsection
\ref{subsec:Notations} we have chosen a Borel section $\tau:G/Q\to G$
such that $\tau(Q)=e$ and $\tau(G/Q)\subseteq K$. Then the decomposition
of Haar measure (see e.g., \cite[Theorem 8.36]{KnappBook}) implies
the disintegration 
\begin{equation}
m_{K}=\int_{G/Q}\tau(y).m_{K\cap Q}d\bar{m}_{K}(y).\label{eq:disintegration}
\end{equation}

\subsubsection{Structure of the $Q$-system}

We start in the same way as the proof of \cite[Theorem 11.4]{NZ2}.
The assumptions of Theorem \ref{invariant-lambda} imply that $(X,\nu)$
fits into the sequence of $G$-spaces: 
\begin{equation}
G/P\times_{\alpha}X_{0}\overset{\xi}{\to}X\overset{\varphi}{\to}G/Q.\label{eq:G-factors}
\end{equation}
Here $\varphi:X\to G/Q$
is the $G$-map to the maximal projective factor, $X_0$ is the support of the measure $\lambda=\psi_{\mu}(\nu)=\boldsymbol{\beta}_{\nu}(P)$ and  the factor map $\xi:G/P\times_{\alpha}X_{0}\to X$ is given
by $\xi\left(gP,x_{0}\right)=\vartheta(gP).x_{0}$, where $\vartheta$ is the section map defined in (\ref{eq:section}). Also note  $\xi_{\ast}(p_{\ast}m_K \times \lambda)=\nu$ where $p:G\to G/P$ is the quotient map.

By \cite[Theorem 2.5]{ZimmerInduced}
we have that $X$ is induced from an ergodic action of $Q$. Next
we derive some information on the $Q$-system that arises this way.
The main property we will use is that such a $Q$-system is induced
from the $P$-system $\left(X_{0},\lambda\right)$ as well, see Proposition
\ref{Ysystem} below.

The following lemma is well-known, it is based on the fact that the
only $P$-invariant measure on $G/Q$ is the $\delta$-mass at the
identity coset $Q$.

\begin{lemma}\label{Qcoset}

Suppose $(X,\nu)$ fits into the sequence of $G$-spaces (\ref{eq:G-factors}).
Then we have for $\lambda$-a.e. $x_{0}\in X_{0}$, 
\[
\varphi\circ\xi\left(g,x_{0}\right)=gQ\mbox{ for all }g\in G.
\]

\end{lemma}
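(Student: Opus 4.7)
The plan is to reduce the statement to the key fact the paper highlights right before the lemma: the only $P$-invariant probability measure on $G/Q$ is the $\delta$-mass $\delta_{[Q]}$ at the identity coset. Granting this, the lemma becomes a short equivariance computation, with no serious obstacle.

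First I would verify that $\varphi_{\ast}\lambda=\delta_{[Q]}$. Since $\lambda$ is $P$-invariant on $X$ and $\varphi:X\to G/Q$ is $G$-equivariant, the pushforward $\varphi_{\ast}\lambda$ is a $P$-invariant probability measure on $G/Q$, hence must equal $\delta_{[Q]}$. To justify the uniqueness, one may either invoke the Furstenberg isomorphism of Section~\ref{subsubsec:Furstenberg} applied to the strongly proximal action $G\curvearrowright G/Q$, or use the contracting action on $G/Q$ of any element of the strictly negative chamber $D_{\emptyset}\subseteq A\subseteq P$: such an element sweeps any probability measure on the open dense Bruhat cell $\bar{N}Q/Q$ toward $\delta_{[Q]}$, so an $A$-invariant (hence $P$-invariant) probability measure has no choice but to equal $\delta_{[Q]}$.

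Second, after fixing a Borel $G$-equivariant representative of $\varphi$, the set $\Omega:=\{x_{0}\in X_{0}:\varphi(x_{0})=[Q]\}$ has $\lambda(\Omega)=1$. For $x_{0}\in\Omega$ and any $g\in G$ I would then compute, using the definition $\xi(gP,x_{0})=\vartheta(gP)\cdot x_{0}$ and the $G$-equivariance of $\varphi$,
\[
\varphi\circ\xi(gP,x_{0})\;=\;\varphi\bigl(\vartheta(gP)\cdot x_{0}\bigr)\;=\;\vartheta(gP)\cdot\varphi(x_{0})\;=\;\vartheta(gP)\cdot[Q]\;=\;gQ,
\]
where the last equality uses $\vartheta(gP)\in gP\subseteq gQ$, so that $\vartheta(gP)\cdot Q=gQ$. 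Note that the conclusion holds for \emph{all} $g\in G$ simultaneously because, once $x_{0}$ is fixed in $\Omega$, the right-hand side $gQ$ and the left-hand side $\vartheta(gP)\cdot[Q]$ are deterministic functions of $g$ and agree identically.

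The only nontrivial ingredient is the uniqueness of $P$-invariant probability measure on $G/Q$, which is classical and already signalled by the authors. A minor technical point is the selection of an exactly (not merely almost everywhere) $G$-equivariant Borel version of $\varphi$, which is standard for Borel factor maps between Polish $G$-spaces and is already used implicitly in the compact-model framework of Section~\ref{sec:Preliminaries}.
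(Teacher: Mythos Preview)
Your proof is correct and follows essentially the same approach as the paper: both first show $\varphi_\ast\lambda=\delta_{[Q]}$ via uniqueness of the $P$-invariant measure on $G/Q$, then conclude by equivariance. The only cosmetic difference is that you use the explicit section formula $\xi(gP,x_0)=\vartheta(gP)\cdot x_0$ together with equivariance of $\varphi$ directly, whereas the paper phrases the same computation through the $G$-action on the induced space $G/P\times_\alpha X_0$ and equivariance of the composite $\varphi\circ\xi$; your route is arguably slightly cleaner since it avoids tracking the cocycle $\alpha(g,P)$.
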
 
\begin{proof}
Since $G/Q$ is a strongly proximal boundary, by \cite[Theorem 2.2]{Furstenberg2},
there is a unique $P$-invariant probability measure on $G/Q$. The
point mass at the identity coset $Q$ is invariant under $P$, thus
it is the unique $P$-invariant measure on $G/Q$.

The measure $\lambda$ on $X_{0}$, also viewed as a measure supported
on the set $\left\{ \left(P,x_{0}\right):x_{0}\in X_{0}\right\} $
in $G/P\times_{\alpha}X_{0}$, is $P$-invariant. Therefore its pushforward
under $\varphi\circ\xi$ is a $P$-invariant measure on $G/Q$, which
must be $\delta_{Q}$. By $G$-equivariance, we have then for any
$g\in G$ and $x_{0}\in X_{0}'$ 
\[
\varphi\circ\xi\left(gP,x_{0}\right)=\varphi\circ\xi\left(g.(P,\alpha(g,P)^{-1}.x_{0})\right)=g.\left(\varphi\circ\xi\left(P,\alpha(g,P)^{-1}.x_{0}\right)\right)=gQ.
\]
\end{proof}
Next we use the disintegration (\ref{eq:disintegration}) to specify
a $Q$-system, which will be denoted as $(Y,\eta)$. Recall
that $\beta$ is the cocycle associated with the section $\tau:G/Q\to G$.

\begin{proposition} \label{Ysystem}

Assume (\ref{eq:G-factors}). Define $Y$ as the subset of $X$ 
\[
Y:=\xi\left(Q/P\times_{\alpha}X_{0}\right),
\]
equipped with the measure $\eta:=m_{K\cap Q}\ast\lambda$. Then the
induced system $\left(G/Q\times_{\beta}Y,\bar{m}_{K}\times\eta\right)$
is $G$-isomorphic to $(X,\nu)$ via the map 
\begin{align*}
\phi & :G/Q\times_{\beta}Y\to X\\
 & \left(gQ,y\right)\mapsto\tau\left(gQ\right).y.
\end{align*}

\end{proposition}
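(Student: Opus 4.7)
The plan is to check four things in sequence: that $\phi$ is well-defined, $G$-equivariant, bijective with a Borel inverse, and that it transports $\bar{m}_{K}\times\eta$ to $\nu$. None of these is technically heavy on its own; the novelty is assembling Lemma~\ref{Qcoset} with the Haar disintegration (\ref{eq:disintegration}) to tie the $Q$-system $(Y,\eta)$ back to the original system $(X,\nu)$.

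For well-definedness and $G$-equivariance, I would identify $G/Q\times_{\beta}Y$ with the Mackey quotient $G\times_{Q}Y$ via the section $\tau$. Under this identification $\phi$ reads simply as $[g,y]\mapsto g.y$, which is manifestly well-defined on $Q$-orbits and $G$-equivariant. Equivalently, using the defining identity $\tau(ghQ)\beta(g,hQ)=g\tau(hQ)$ of the cocycle $\beta$, one checks directly that $\phi(g.(hQ,y))=\tau(ghQ)\beta(g,hQ).y=g.\phi(hQ,y)$.

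For bijectivity, the key step is Lemma~\ref{Qcoset}: since every $y\in Y$ has the form $y=\xi(qP,x_{0})$ with $q\in Q$ and $x_{0}\in X_{0}$, $G$-equivariance yields $\varphi(y)=qQ=Q$. Consequently $\varphi\circ\phi(hQ,y)=hQ$, which makes injectivity immediate. For surjectivity, write an arbitrary $x=\xi(hP,x_{0})$ using surjectivity of $\xi$, and set $h_{0}:=\tau(hQ)^{-1}h\in Q$; then $\tau(hQ)^{-1}.x=\xi(h_{0}P,x_{0})\in Y$ because $h_{0}P\in Q/P$, and $\phi(hQ, \tau(hQ)^{-1}.x) = x$. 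The inverse is $x\mapsto(\varphi(x),\tau(\varphi(x))^{-1}.x)$, which is Borel since $\tau$ is a Borel section.

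The measure statement is a direct calculation. After the reduction in Subsection~\ref{subsec:Preliminary-step} we may take $\nu=m_{K}\ast\lambda$, and then
\[
\phi_{\ast}\bigl(\bar{m}_{K}\times\eta\bigr)=\int_{G/Q}\tau(hQ).\eta\,d\bar{m}_{K}(hQ)=\int_{G/Q}\bigl(\tau(hQ).m_{K\cap Q}\bigr)\ast\lambda\,d\bar{m}_{K}(hQ)=m_{K}\ast\lambda,
\]
where the middle equality expands $\eta=m_{K\cap Q}\ast\lambda$ and uses linearity of convolution (on the left) by $\lambda$, and the last uses (\ref{eq:disintegration}). This is precisely where the choice $\tau(G/Q)\subseteq K$ from Subsection~\ref{subsec:Notations} is crucial. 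I do not expect a serious obstacle in this argument; the main point of care is keeping straight which cocycles and sections are being used, and leveraging Lemma~\ref{Qcoset} to identify $Y$ measurably with the fiber $\varphi^{-1}(Q)$.
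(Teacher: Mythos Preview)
Your proposal is correct and follows essentially the same approach as the paper: both use Lemma~\ref{Qcoset} to show $\varphi\circ\phi(gQ,y)=gQ$ (hence injectivity), and both compute $\phi_{\ast}(\bar m_K\times\eta)=\nu$ via the Haar disintegration~(\ref{eq:disintegration}). The only organizational difference is that the paper establishes $G$-equivariance of $\phi$ indirectly, by fitting it into a commutative square with the inducing-in-stages isomorphism $j_P^Q$ and the known factor map $\xi$, whereas you verify equivariance directly from the cocycle identity and construct the inverse $x\mapsto(\varphi(x),\tau(\varphi(x))^{-1}.x)$ explicitly; your route is slightly more elementary but the content is the same.
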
 
\begin{proof}
Define $\tilde{\xi}$ to be the map 
\begin{align*}
\tilde{\xi} & :G/Q\times_{\beta}\left(Q/P\times_{\alpha}X_{0}\right)\to G/Q\times_{\beta}Y\\
 & \left(gQ,\left(qP,x_{0}\right)\right)\mapsto\left(gQ,\vartheta(gP).x_{0}\right).
\end{align*}
Recall the notations that $p:G\to G/P$ is the natural projection,
and $\bar{m}_{K}$ denotes the pushforward of $m_{K}$ under the projection
$G\to G/Q$. Write $Z=G/Q\times_{\beta}\left(Q/P\times_{\alpha}X_{0}\right)$.
By inducing in stages, we have a $G$-isomorphism $j_{P}^{Q}:Z\to G/P\times_{\alpha}X_{0}$
as in (\ref{eq:jPQ}). 
\begin{claim}
Let $\varphi:X\to G/Q$ be the $G$-factor map in (\ref{eq:G-factors}).
Then we have a sequence of $G$-factors:

\[
\left(Z,\bar{m}_{K}\times\left(p_{\ast}\left(m_{K\cap Q}\right)\times\lambda\right)\right)\overset{\tilde{\xi}}{\to}\left(G/Q\times_{\beta}Y,\bar{m}_{K}\times\eta\right)\overset{\phi}{\to}\left(X,\nu\right)\overset{\varphi}{\to}\left(G/Q,\bar{m}_{K}\right).
\]
\end{claim}

\begin{proof}[Proof of the claim]

The measurability and $G$-equivariance of the maps $\tilde{\xi}$
and $\phi$ are clear by their definitions. Also by the definitions
of the maps the following diagram commute:

\[
\xymatrix{Z\ar[r]^{\tilde{\xi}}\ar[d]_{j_{P}^{Q}} & G/Q\times_{\beta}Y\ar[d]^{\phi}\\
G/P\times_{\alpha}X_{0}\ar[r]^{\xi} & X.
}
\]
Since $j_{P}^{Q}$ is a $G$-isomorphism and $\xi$ is a $G$-factor
map, we see that $\tilde{\xi}$ and $\phi$ are $G$-factor maps as
well.

We need to verify that the measures follow the maps. The measure $\eta$
is defined as $m_{K\cap Q}\ast\lambda$. Since $k^{-1}\vartheta(kP)\in P$
and $\lambda$ is $P$ invariant, we have $k.\lambda=\vartheta(kP).\lambda$
and then 
\[
\xi_{\ast}\left(p_{\ast}\left(m_{K\cap Q}\right)\times\lambda\right)=\int_{K\cap Q}\vartheta(kP).\lambda dm_{K\cap Q}(k)=\int_{K\cap Q}k.\lambda dm_{K\cap Q}(k)=m_{K\cap Q}\ast\lambda=\eta.
\]
Therefore for the map $\tilde{\xi}$, 
\[
\tilde{\xi}_{\ast}\left(\bar{m}_{K}\times\left(p_{\ast}\left(m_{K\cap Q}\right)\times\lambda\right)\right)=\bar{m}_{K}\times\xi_{\ast}\left(p_{\ast}\left(m_{K\cap Q}\right)\times\lambda\right)=\bar{m}_{K}\times\eta.
\]
Next we verify that $\phi_{\ast}\left(\bar{m}_{K}\times\eta\right)=\nu$:
\begin{align*}
\phi_{\ast}\left(\bar{m}_{K}\times\eta\right) & =\int_{G/Q}\int_{K\cap Q}\tau(y)k.\lambda dm_{K\cap Q}(k)d\bar{m}_{K}(y)\\
 & =\left(\int_{G/Q}\tau(y).m_{K\cap Q}d\bar{m}_{K}(y)\right)\ast\lambda=m_{K}\ast\lambda=\nu.
\end{align*}
In the second line we plugged in the decomposition formula (\ref{eq:disintegration}). 
\end{proof}
For the sequence of $G$-factor maps in the Claim, we have $\phi\circ\tilde{\xi}=\xi\circ j_{P}^{Q}$
and by Lemma \ref{Qcoset}, $\varphi\circ\phi\circ\tilde{\xi}\left(gQ,qP,x_{0}\right)=\tau(gQ)\vartheta(qP)Q=gQ.$
To show that the map $\phi$ is indeed a measurable $G$-isomorphism,
it remains to verify that $\phi$ is injective almost everywhere.
Since $\varphi\circ\phi\left(gQ,y\right)=gQ$, necessarily $\phi^{-1}(\{x\})\subseteq\{\varphi(x)\}\times Y$.
When restricted to this fiber, we have $\phi(\varphi(x),y)=\theta(\varphi(x)).y$
which is injective in $y\in Y$. We conclude that $\phi$ is an isomorphism. 
\end{proof}
As shown in the proof of \cite[Theorem 11.4]{NZ2}, suppose the $Q$-system
$\left(Y,\eta\right)$ admits a projective factor $\left(Q/Q_{1},\bar{\eta}\right)$,
where $Q_{1}<Q$ is a proper closed subgroup of $Q$, then the $G$-system
$\left(X,\nu\right)$ admits $\left(G/Q_{1},\nu_{Q_{1}}\right)$ as
a projective factor. To proceed, we will carry out the inductive step
which applies the Nevo-Zimmer arguments to the $Q$-systems 
\[
\left(Q/P\times_{\alpha}X_{0},\bar{m}_{K\cap Q}\times\lambda\right)\overset{\xi}{\to}(Y,\eta).
\]
The goal is to show that if a higher-rank factor of $L_{I}=Z_{G}(S_{I})$
does not preserve the measure $\lambda$, then we will be able to
find a nontrivial projective factor $\left(Q/Q_{1},\bar{\eta}\right)$
of $(Y,\eta)$, contradicting the assumption that $G/Q$ is the maximal
projective factor. This will be carried out in the next subsections.

\subsection{The Nevo-Zimmer operation applied to parabolic subgroups\label{subsec:The-Nevo-Zimmer-operation}}

Throughout this subsection, we assume the setting of Theorem \ref{invariant-lambda}
and (\ref{eq:G-factors}). We have a $Q$-system $(Y,\eta)$ described
in Proposition \ref{Ysystem}, which fits in the setting of Appendix~\ref{sec:operation}
with $Q=P_{I}$ the lcsc group, its closed subgroup $P=P_{\emptyset}$
and 
\[
\xi_{0}:\left(Q\times_{P}X_{0},m_{K\cap Q}\times\lambda\right)\to\left(Y,\eta\right).
\]

We use notations for parabolic subgroups as in Subsection \ref{subsec:structure-of-parabolic}.
Take the Langlands decomposition $Q=M_{I}A_{I}N_{I}$. Denote by $M_{1},\ldots,M_{\ell}$
the noncompact simple factors of the connected component $M_{I}^{0}$,
and $\mathfrak{m}_{1},\ldots,\mathfrak{m}_{\ell}$ the corresponding
Lie algebras. \textcolor{black}{For each noncompact simple factor
$M_{j}$, let $\mathfrak{a}_{j}=\mathfrak{m}_{j}\cap\mathfrak{a}$.
Denote by $I_{i}$ be the subset of $I$ that consists of $\alpha\in I$
such that $\alpha$ vanishes on all $\mathfrak{a}_{k}$, $k\neq i$.}

Denote by $p:G\to G/P$ the natural projection. Define $\bar{U}_{I}=Q\cap\bar{N}$,
where $\bar{N}=\bar{N}_{\emptyset}$. Restricted to $Q$, we have
that $p$ maps $\bar{U}_{I}$ diffeomorphically onto an open dense
conull set in $\left(Q/P,\bar{m}_{K\cap Q}\right)$.

Suppose $M_{i}$ is a simple factor of $M_{I}^{0}$ with $\mathbb{R}$-rank
at least $2$, which is fixed in what follows. We need to show that
$M_{i}$ preserves the measure $\lambda$. \textcolor{black}{Take
$\varrho\subset I_{i}$ to be a nonempty proper subset of $I_{i}$.
}Take $s\in D_{\varrho}$, where $D_{\varrho}$ is defined as in (\ref{eq:D_I}):

\[
D_{\varrho}=\left\{ s\in\cap_{\alpha\in\varrho}\ker(\alpha):\alpha(s)<0\mbox{ for all }\alpha\in \Delta-\varrho\right\} .
\]
\textcolor{black}{Then $\bar{U}_{I}$ admits a semi-direct product
decomposition as $\bar{U}_{I}=\bar{U}_{\varrho}\ltimes\bar{V}_{\varrho,I}$,
where $\bar{U}_{\varrho}=\bar{N}\cap P_{\varrho}$ and $\bar{V}_{\varrho,I}=\bar{N}_{\varrho}\cap L_{I}$}.
Note that since $\emptyset\neq\varrho\subsetneq I_{i}$, $\bar{U}_{\varrho}$
is a nontrivial subgroup of $M_{i}$.

\begin{example}
For $G=SL(5,\mathbb{R})$, with $I=\{1,3,4\}$ and $\varrho=\{3\}$, we have
\[
\bar{U}_{I}=\left(\begin{array}{ccccc}
1 & 0 & 0 & 0 & 0 \\ 
\ast & 1 & 0 & 0 & 0 \\
0 & 0 & 1& 0 & 0 \\
0 & 0 & \ast & 1 & 0 \\
0 & 0 & \ast & \ast & 1 \\
\end{array} \right),
\bar{U}_{\varrho}=\left(\begin{array}{ccccc}
1 & 0 & 0 & 0 & 0 \\ 
0 & 1 & 0 & 0 & 0 \\
0 & 0 & 1& 0 & 0 \\
0 & 0 & \ast & 1 & 0 \\
0 & 0 & 0& 0 & 1 \\
\end{array} \right),
\bar{V}_{\varrho,I}=\left(\begin{array}{ccccc}
1 & 0 & 0 & 0 & 0 \\ 
\ast & 1 & 0 & 0 & 0 \\
0 & 0 & 1& 0 & 0 \\
0 & 0 & 0 & 1 & 0 \\
0 & 0 & \ast & \ast & 1 \\
\end{array} \right),
\]
\[
N_{\varrho}=\left(\begin{array}{ccccc}
1 & \ast & \ast &\ast & \ast  \\ 
0 & 1 & \ast & \ast & \ast \\
0 & 0 & 1& 0 & \ast \\
0 & 0 & 0 & 1 & \ast \\
0 & 0 & 0 & 0 & 1 \\
\end{array} \right),
D_{\varrho}=\left\{\left(\begin{array}{ccccc}
e^{-t_1} & 0 & 0 & 0 & 0 \\ 
0 & e^{-t_2} & 0 & 0 & 0 \\
0 & 0 & e^{-t_3}& 0 & 0 \\
0 & 0 & 0 & e^{-t_3} & 0 \\
0 & 0 & 0& 0 & e^{-t_4} \\
\end{array} \right), t_1>t_2>t_3>t_4 \right\}.
\]
\end{example}

We have a parametrization 
\begin{align*}
\xi & :\bar{U}_{\varrho}\times\bar{V}_{\varrho,I}\times X_{0}\to Y\\
 & (u,v,x_{0})\mapsto uv.x_{0}.
\end{align*}
By the choice that $s\in D_{\varrho}$, $\left(s,\bar{U}_{\varrho},\bar{V}_{\varrho,I},N_{\varrho}\right)$
satisfies the conditions in Assumption \ref{NZ-assum} for the Nevo-Zimmer
operation as reviewed in Appendix~\ref{sec:operation}. More precisely, the following properties hold:
\begin{description}
\item [{(i)}] the map 
\begin{align*}
p:\bar{U}_{\varrho}\times\bar{V}_{\varrho,I} & \to Q/P\\
(u,v) & \mapsto uvP
\end{align*}
takes $\bar{U}_{\varrho}\times\bar{V}_{\varrho,I}$ homeomorphically
to a $\bar{m}_{K\cap Q}$-conull set in $Q/P$, and moreover the pushforward
$p_{\ast}\left(m_{\bar{U}_{\varrho}}\times m_{\bar{V}_{\varrho,I}}\right)$
is in the same measure class as $\bar{m}_{K\cap Q}$. 
\item [{(ii)}] ${\rm Int}(s)$ acts trivially on $\bar{U}_{\varrho}=\bar{N}\cap P_{\varrho}$. 
\item [{(iii)}] ${\rm Int}(s^{-1})$ acts as a contracting automorphism
on $\bar{V}_{\varrho,I}=\bar{N}_{\varrho}\cap L_{I}$; ${\rm Int}(s)$
acts as a contracting automorphism on $N_{\varrho}$. 
\end{description}
Denote by $\tilde{L}^{\infty}\left(Y\right)$ the lifts of functions
in $L^{\infty}(Y,\eta)$ to $\bar{U}_{\varrho}\times\bar{V}_{\varrho,I}\times X_{0}$
via $\xi$. The Nevo-Zimmer operation provides a map 
\begin{align*}
\mathcal{E}_{\varrho,s}:C(Y) & \to\tilde{L}^{\infty}\left(Y\right),\\
f & \mapsto\mathcal{E}_{\varrho,s}f,\mbox{ where }\left(\mathcal{E}_{\varrho,s}f\right)(\bar{u},\bar{v},\cdot)=\mathbb{E}_{\lambda}\left[\tilde{f}\left(\bar{u},\cdot\right)|\mathcal{F}^{s}\right],
\end{align*}
where $\mathcal{F}^{s}$ is the $s$-invariant sub-$\sigma$-algebra
of $\mathcal{B}(X_{0})$, $\tilde{f}\left(\bar{u},\cdot\right):X_{0}\to\mathbb{R}$
is defined as $\tilde{f}\left(\bar{u},x_{0}\right)=f\left(\bar{u}.x_{0}\right)$.
For more explanation of this operation we refer to \cite[Section 7]{NZ2}.

As discussed in Appendix \ref{subsec:three-cases}, there are three
possible situations for $\mathcal{E}_{\varrho,s}$. 
\begin{description}
\item [{(I)}] The subgroup $\bar{U}_{\varrho}$ preserves the measure $\lambda$.
This is exactly what we are aiming to prove. 
\item [{(II1)}] there exists $f\in C(Y)$ and $\bar{u}\in\bar{U}_{\varrho}$
such that $\int fd\lambda\neq\int fd\bar{u}.\lambda$; and for a.e.
$\bar{u}'\in\bar{U}_{\varrho}$, the function $x_{0}\mapsto\mathcal{E}_{\varrho,s}f\left(\bar{u}',x_{0}\right)$
is $\lambda$-constant. In this case by \cite[Proposition 9.2]{NZ2},
$(Y,\eta)$ has a nontrivial homogeneous factor of the form $\left(Q/Q_{1},\bar{\eta}\right)$,
which can be taken as the Mackey realization of $\tilde{\mathcal{L}}(Y)\cap\tilde{\mathcal{L}}(Q/P)$. 
\item [{(II2)}] The negation of $({\bf I})\vee({\bf II}1)$, see the next
subsection. 
\end{description}

\subsection{The Gauss map argument and Case (II2)\label{subsec:Gauss-map}}

In this subsection we assume that for some $\emptyset\neq\varrho\subseteq I_{i}$
and $s\in D_{\varrho}$, the operation $\mathcal{E}_{\varrho,s}$
is in Case (II2). Following \cite[Section 4]{NZ2}, take $X_{0}'$
to be the Mackey realization of the $N_{\varrho}$-invariant sub-$\sigma$-field
of $\mathcal{B}(X_{0})$, equipped with the measure $\lambda'$ from
the restriction of $\lambda$ to $\mathcal{B}_{N_{\varrho}}\left(X_{0}\right)$.
We have now a $Q$-system $\left(Y',\eta'\right)$ that is the largest
common $Q$-factor of $Y$ and $Q\times_{P}X_{0}'$,

\[
\xymatrix{Q\times_{P}X_{0}\ar[r]^{\xi_{0}}\ar[d] & Y\ar[d]\\
Q\times_{P}X_{0}'\ar[r] & Y'.
}
\]
By constructions, we have the following properties.

\begin{lemma}\label{nontrivial}

Assume that $\mathcal{E}_{\varrho,s}$ is in Case (II2). The $Q$-system
$\left(Y',\eta'\right)$ as above satisfies: 
\begin{description}
\item [{(a)}] The unipotent radical $N_{I}$ of $Q$ acts trivially on
$(Y',\eta')$. 
\item [{(b)}] The measure $\lambda'$, viewed as a measure on $Y'$, is
not preserved by $\bar{U}_{\varrho}$. 
\end{description}
\end{lemma}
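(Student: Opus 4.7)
The plan splits along the two clauses. For \textbf{(a)}, the inclusion $\varrho \subseteq I$ reverses on unipotent radicals: since $[\varrho] \subseteq [I]$, we have $\mathfrak n^I \subseteq \mathfrak n^\varrho$ and hence $N_I \subseteq N_\varrho$. Therefore every $N_\varrho$-invariant function on $X_0$ is $N_I$-invariant, so by the definition of $X_0'$ as the Mackey realization of $\mathcal B_{N_\varrho}(X_0)$, the group $N_I$ acts trivially on $X_0'$. Now $N_I$ is the unipotent radical of $Q$, hence normal in $Q$ and contained in $P$. A direct cocycle computation for the induced action on $Q \times_P X_0'$ gives, for $n \in N_I$ and $q \in Q$,
\[
n.(qP,\, x_0') \;=\; \bigl(nqP,\, \alpha(n,qP).x_0'\bigr) \;=\; \bigl(qP,\, (q^{-1}nq).x_0'\bigr),
\]
and $q^{-1}nq \in N_I$ acts trivially on $X_0'$. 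The $Q$-equivariant factor map $Q \times_P X_0' \to Y'$ transports this triviality to $Y'$.

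For \textbf{(b)}, the key ingredient is the Gauss map argument from \cite[Section~4]{NZ2}: because $s \in D_\varrho$ acts trivially on $\bar U_\varrho$ and ${\rm Int}(s)$ is a contracting automorphism on $N_\varrho$, the $s$-invariant sub-$\sigma$-field $\mathcal F^s$ coincides with $\mathcal B_{N_\varrho}(X_0)$ modulo $\lambda$-null sets. Hence $\mathcal E_{\varrho,s}f(\bar u',\, x_0) = \mathbb{E}_\lambda[\tilde f(\bar u', \cdot) \mid \mathcal B_{N_\varrho}](x_0)$ is an $N_\varrho$-invariant function that descends to a function $\bar F_{\bar u'}$ on $X_0'$. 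Case (II2), being the negation of (I) and (II1), yields from the failure of (I) some $(f, \bar u)$ with $\int f\,d\lambda \ne \int f\,d\bar u.\lambda$, and from the failure of (II1) a positive $m_{\bar U_\varrho}$-measure set of $\bar u'$ for which $\bar F_{\bar u'}$ is non-constant on $X_0'$.

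To finish, I would argue by contradiction: assume $\bar u'.\lambda' = \lambda'$ for every $\bar u' \in \bar U_\varrho$. Since $\bar U_\varrho \subseteq L_\varrho$ normalizes $N_\varrho$ and $\lambda$ is $N_\varrho$-invariant, each $\bar u'.\lambda$ is itself $N_\varrho$-invariant and projects to $\bar u'.\lambda'$ on $X_0'$. The assumption then forces the continuous map $\bar u' \mapsto \bar u'.\lambda'$ in $\mathcal P(X_0')$ to be constant. Testing against the $N_\varrho$-invariant functions $\bar F_{\bar u'}$ supplied by Case (II2), and using the pairing $\int \bar F_{\bar u'}\,d\lambda' = \int f\,d\bar u'.\lambda$ together with the normalization identity $\bar u' N_\varrho \bar u'^{-1} = N_\varrho$, one can show that $\bar u' \mapsto \int f\,d\bar u'.\lambda$ is then forced to be constant in $\bar u'$, contradicting the choice of $(f, \bar u)$. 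The result on $Y'$ follows by pushing forward via the $Q$-factor $Q \times_P X_0' \to Y'$, along which $\lambda'$ is the canonical lift. The main obstacle is precisely this last step: verifying the Gauss-map identification $\mathcal F^s \sim \mathcal B_{N_\varrho}$ $\lambda$-a.e., and then carefully tracking conditional expectations under the family of measures $\bar u'.\lambda$ so that non-constancy of $\bar F_{\bar u'}$ genuinely obstructs $\bar U_\varrho$-invariance of $\lambda'$; the other steps are formal once this is in place.
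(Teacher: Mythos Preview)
Your argument for part (a) is correct and matches the paper's proof essentially line for line: $N_I \subseteq N_\varrho$ gives trivial action on $X_0'$, and the cocycle computation using normality of $N_I$ in $Q$ transports this to $Q \times_P X_0'$ and hence to $Y'$.

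For part (b), however, you have overcomplicated the argument and misidentified the obstacle. The paper proves (b) by a direct appeal to its Lemma~\ref{II2-nonpreserv}, and that proof uses \emph{only} the failure of Case (I) --- the existence of a single pair $(f,\bar u)$ with $\int f\,d\lambda \ne \int f\,d\bar u.\lambda$ --- and \emph{only} the Mautner inclusion $\mathcal F^s \subseteq \mathcal B_{N_\varrho}(X_0)$, not any reverse inclusion. The full identification $\mathcal F^s \sim \mathcal B_{N_\varrho}$ that you flag as the main obstacle is neither needed nor, in general, true; likewise, the non-constancy of $\bar F_{\bar u'}$ coming from the failure of (II1) plays no role. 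The actual contradiction runs as follows: suppose $\bar u.\lambda' = \lambda'$ for the specific $\bar u$ above. Then $\bar u.\operatorname{supp}\lambda' = \operatorname{supp}\lambda'$, hence $\bar u.X_0 = X_0$, and Lemma~\ref{contract2}(ii) gives $\mathcal E_{\varrho,s}f(\bar u,e,x_0) = \mathcal E_{\varrho,s}f(e,e,\bar u.x_0)$. Writing $\phi$ for the descent of $\mathcal E_{\varrho,s}f(e,e,\cdot)$ to $X_0'$ (which exists because $\mathcal F^s \subseteq \mathcal B_{N_\varrho}$), one then has
\[
\int f\,d\bar u.\lambda = \int_{X_0}\mathcal E_{\varrho,s}f(\bar u,e,\cdot)\,d\lambda = \int_{X_0'}\phi\,d\bar u.\lambda' = \int_{X_0'}\phi\,d\lambda' = \int f\,d\lambda,
\]
a contradiction. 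Your pairing identity $\int \bar F_{\bar u'}\,d\lambda' = \int f\,d\bar u'.\lambda$ is the right ingredient, but the missing idea is that under $\bar u.\lambda'=\lambda'$ the function $\bar F_{\bar u}$ is simply $\bar u^{-1}.\phi$, so the assumed invariance of $\lambda'$ immediately forces $\int \bar F_{\bar u}\,d\lambda' = \int \phi\,d\lambda'$ without any further analysis.
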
 
\begin{proof}
To see (a), note that since $\varrho\subset I$, we have that $N_{I}<N_{\varrho}$.
By construction $N_{\varrho}$ acts trivially on $X_{0}'$, thus its
subgroup $N_{I}$ acts trivially on $X_{0}'$. Since $N_{I}$ is contained
in $P$ and it is a normal subgroup of $Q$, we have that for $v\in N_{I}$,
$g\in Q$, $vgP=g(g^{-1}vg)P=gP$, that is, $N_{I}$ acts trivially
on $Q/P$. Then in the cocycle $\beta:Q\times Q/P\to P$, for $v\in N_{I}$,
$\beta(v,gP)=\theta(vgP)^{-1}v\theta(gP)=\theta(gP)^{-1}v\theta(gP)\in N_{I}$.
That is, $\beta(N_{I}\times Q/P)\subseteq N_{I}$. We conclude that
$N_{I}$ acts trivially on $Q/P\times_{\beta}X_{0}'$, and as a consequence
it also acts trivially on the factor $(Y',\eta')$.

Part (b) is Lemma \ref{II2-nonpreserv}. 
\end{proof}
We now recall some facts about Gauss maps from \cite{NZ2,Stuck}.
Let $G$ be a real algebraic group. Denote by $\mathfrak{g}$ or ${\rm Lie}(G)$
the Lie algebra of $G$, and $G^{0}$ the identity component of $G$.
If $V$ is a $k$-dimensional subspace of $\mathfrak{g}$, denote
by $[V]$ the corresponding element of ${\rm Gr}_{k}(\mathfrak{g})$,
where ${\rm Gr}_{k}(\mathfrak{g})$ is the Grassmannian of $k$-planes
in $\mathfrak{g}$. Let ${\rm Gr}(\mathfrak{g})=\cup_{k=1}^{{\rm dim}\mathfrak{g}}{\rm Gr}_{k}(\mathfrak{g})$.
The group $G$ acts on ${\rm Gr}(\mathfrak{g})$ by adjoint action,
and we write $g.[V]=\left[{\rm Ad}(g).V\right]$.

Following the reasoning in \cite[Section 3]{NZ2}, for the action
$Q\curvearrowright(Y',\eta')$, consider the stabilizer map 
\begin{align*}
\psi:Y' & \mapsto{\rm Sub}(Q),\\
y' & \mapsto{\rm Stab}_{Q}(y'),
\end{align*}
and the associated Gauss map 
\begin{align*}
d\psi:Y' & \mapsto{\rm Gr}(\mathfrak{q}),\\
y' & \mapsto\left[{\rm Lie}\left({\rm Stab}_{Q}\left(y'\right)\right)\right].
\end{align*}
The map $d\psi$ is a $Q$-equivariant Borel map.

Since the parabolic subgroup $Q$ is a connected real algebraic group,
its action on ${\rm Gr}\left({\rm Lie}(Q)\right)$ is algebraic. Therefore
every orbit is locally closed (\cite[Proposition 2.1.4]{MargulisBook})
and the measure $\left(d\psi\right)_{\ast}\eta'$ is supported on
a single orbit (\cite[Proposition 2.1.10]{ZimmerBook}). Let $y_{0}\in Y'$
be a point such that $\left(d\psi\right)_{\ast}\eta'$ is supported
on the orbit of $\left[{\rm Lie}\left({\rm Stab}_{Q}\left(y_{0}\right)\right)\right]$.
Since $Y'$ is a $Q$-factor of $Q\times_{P}X_{0}'$, every $Q$-orbit
in $Y'$ meets $X_{0}'$. Replacing
$y_{0}$ by another point on $Q.y_{0}\cap X_{0}'$ if necessary, we
may assume that $y_{0}\in X_{0}'$. By Lemma \ref{nontrivial} (b), $\bar{U}_{\varrho}$ is not
contained in ${\rm Stab}_{Q}(y_{0})$. On the other hand, ${\rm Stab}_{Q}(y_{0})$
contains $N_{\varrho}$, which is positive dimensional.

Recall that we are given a higher-rank factor $M_{i}$ of $L_{I}$
as in the statement of Theorem \ref{invariant-lambda}.

\begin{lemma}\label{nonideal}

Assume that $\mathcal{E}_{\varrho,s}$ is in Case (II2). Write $H={\rm Stab}_{Q}(y_{0})$,
where $y_{0}\in X_{0}'$ is as above. Then we have $Q$-factor maps
\[
\left(Y',\eta'\right)\to Q/N_{Q}(H)\to M_{i}/N_{M_{i}}\left(H\cap M_{i}\right),
\]
where $Q$ acts on $M_{i}/N_{M_{i}}(H\cap M_{i})$ through the factor
$M_{i}$. The normalizer $N_{M_{i}}\left(H\cap M_{i}\right)$ is a
proper subgroup of $M_{i}$.

\end{lemma}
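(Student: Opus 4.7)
My plan proceeds in three steps: (i) extract the first factor map from the Gauss map $d\psi$ using algebraicity of the $Q$-action on $\mathrm{Gr}(\mathfrak{q})$; (ii) build the second map via a surjective homomorphism $\pi \colon Q \twoheadrightarrow M_i/Z(M_i)$ coming from the Langlands decomposition, and verify its equivariance with respect to the normalizers using the inclusion $N_I \subseteq H$ from Lemma~\ref{nontrivial}(a); (iii) deduce $N_{M_i}(H \cap M_i) \subsetneq M_i$ from simplicity of $M_i$ together with Lemma~\ref{nontrivial}(b).

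For (i), since $Q$ is a real algebraic group acting algebraically on $\mathrm{Gr}(\mathfrak{q})$, $(d\psi)_\ast \eta'$ concentrates on the single orbit $Q.[\mathrm{Lie}(H)]$ by the discussion just preceding the lemma, and this orbit is $Q$-equivariantly identified with $Q/\mathrm{Stab}_Q([\mathrm{Lie}(H)]) = Q/N_Q(H^0)$. Writing $N_Q(H)$ for this group (or passing to a further quotient when $H$ is disconnected), $d\psi$ induces the desired $Q$-factor map $(Y',\eta') \to Q/N_Q(H)$.

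For (ii), let $\pi \colon Q \twoheadrightarrow M_i/Z(M_i)$ be the composition of the quotient $Q \to Q/N_I = L_I = M_I A_I$ with the projection onto the simple factor $M_i$ of $M_I^0$. Since $Z(M_i) \subseteq N_{M_i}(H \cap M_i)$, the left $M_i$-action on $M_i/N_{M_i}(H \cap M_i)$ descends to a $Q$-action through $\pi$, which makes sense of ``$Q$ acting through the factor $M_i$''. The second map will be $[q] \mapsto [\pi(q)]$, well-defined once $\pi(N_Q(H)) \subseteq N_{M_i}(H \cap M_i)$. To check this, take $q = man \in N_Q(H)$ with $m \in M_I$, $a \in A_I$, $n \in N_I$, and $h \in H \cap M_i$. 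A direct computation using $A_I \subseteq Z(L_I)$, the fact that $L_I$ normalizes $N_I$, and that distinct simple factors of $M_I^0$ commute, yields
\[
q h q^{-1} = \pi(q)\, h\, \pi(q)^{-1} \cdot \nu, \qquad \nu \in N_I.
\]
Since $qhq^{-1} \in H$ and $\nu \in N_I \subseteq H$ by Lemma~\ref{nontrivial}(a), we obtain $\pi(q) h \pi(q)^{-1} \in H$; together with $\pi(q) h \pi(q)^{-1} \in M_i$ this gives $\pi(q) h \pi(q)^{-1} \in H \cap M_i$. Applying the same argument to $q^{-1}$ establishes $\pi(q) \in N_{M_i}(H \cap M_i)$.

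For (iii), since $\varrho$ is a nonempty proper subset of $I_i$, there exists $\alpha \in I_i \setminus \varrho$, and the corresponding positive root subgroup of $M_i$ lies in $N_\varrho \cap M_i \subseteq H \cap M_i$ but not in the finite center $Z(M_i)$; moreover $\bar{U}_\varrho \subseteq M_i$ and $\bar{U}_\varrho \not\subseteq H$ (as noted in the excerpt via Lemma~\ref{nontrivial}(b)) prevent $H \cap M_i = M_i$. Since $M_i$ is a connected noncompact simple Lie group, any closed normal subgroup is either all of $M_i$ or contained in $Z(M_i)$, so $H \cap M_i$ cannot be normal in $M_i$, giving $N_{M_i}(H \cap M_i) \subsetneq M_i$. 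The main obstacle is the equivariance identity in (ii); the inclusion $N_I \subseteq H$ provided by Lemma~\ref{nontrivial}(a) is precisely the input that makes the projection onto $M_i$ compatible with the normalizer, and is the reason for replacing $(Y,\eta)$ by $(Y',\eta')$ before invoking the Gauss map.
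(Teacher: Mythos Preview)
Your proof is correct and follows essentially the same route as the paper: both extract the first map from the Gauss map, both use Lemma~\ref{nontrivial}(a) (in the form $N_I\subseteq H$) together with the almost-direct-product structure of $L_I$ to pass from $Q/N_Q(H)$ to $M_i/N_{M_i}(H\cap M_i)$, and both conclude properness from simplicity of $M_i$ using $N_\varrho\cap M_i\subseteq H\cap M_i$ and $\bar U_\varrho\not\subseteq H$. The only difference is organizational: where you verify $\pi(N_Q(H))\subseteq N_{M_i}(H\cap M_i)$ by a direct elementwise Langlands-decomposition computation, the paper first records $H=(H\cap L_I)\rtimes N_I$ to obtain $Q/N_Q(H)\cong L_I/N_{L_I}(H\cap L_I)$, and then quotients further to $L_I/N_{L_I}(H\cap M_i)\cong M_i/N_{M_i}(H\cap M_i)$ using that the remaining factors of $L_I$ centralize $M_i$.
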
 
\begin{proof}
Let $L=L_{I}$ be the Levi subgroup of $Q$. By Lemma \ref{nontrivial}
(a), we have that ${\rm Stab}_{Q}(y)={\rm Stab}_{L}(y)\rtimes N_{I}$.
It follows that $N_{Q}(H)=N_{L}(H\cap L)\rtimes N_{I}$. Therefore
$Q/N_{Q}(H)$ and $L/N_{L}(H\cap L)$ are isomorphic.

Since $M_{i}$ is a normal subgroup of $L$, we have that $N_{L}(H\cap M_{i})>N_{L}(H\cap L)$.
The Levi subgroup $L$ is reductive, it can be written as a product
of $M_{i}$ with other almost simple factors and the split component
$A_{I}$, all the latter components commute with $M_{i}$, thus are
contained in $N_{L}(H\cap M_{i})$. It follows that $M_{i}$ acts
transitively on $L/N_{L}(H\cap M_{i})$, which is isomorphic to $M_{i}/N_{M_{i}}(H\cap M_{i})$
by the second isomorphism theorem.

Recall that the closed subgroup $H\cap M_{i}$ contains $N_{\varrho}\cap M_{i}$,
which is of positive dimension since $\varrho\subsetneq I_{i}$. Recall
also that $\bar{U}_{\varrho}\le M_{i}$. Then $H\cap M_{i}$ does
not contain $\bar{U}_{\varrho}$ by Lemma \ref{nontrivial} (a). Thus
the Lie algebra of $H\cap M_{i}$ cannot be $\{0\}$ or $\mathfrak{m}_{i}$.
Since $M_{i}$ is almost simple, it follows that $H\cap M_{i}$ is
not normal in $M_{i}$. 
\end{proof}
Lemma \ref{nonideal} allows us to apply \cite[Proposition 3.2]{NZ2}
to deduce that in Case (II2), $(Y',\eta')$ admits a nontrivial projective
factor.

\begin{proposition}\label{II2-conclusion}

In Case (II2), $(Y',\eta')$ admits a nontrivial projective factor
$\left(Q/Q_{1},\bar{\eta}\right)$, where $Q_{1}$ is a parabolic
subgroup of $G$ and $Q_{1}$ is a proper subgroup of $Q$.

\end{proposition}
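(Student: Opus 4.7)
The plan is to apply \cite[Proposition 3.2]{NZ2} to the composed $Q$-factor map delivered by Lemma~\ref{nonideal},
\[
(Y',\eta') \longrightarrow Q/N_Q(H) \longrightarrow M_i/H'', \qquad H'':=N_{M_i}(H\cap M_i),
\]
where $Q$ acts on the target through its higher-rank almost simple factor $M_i$. I would first push the $P$-invariant measure $\lambda'$ on $X_0'$ (obtained by restricting $\lambda$, itself $P$-invariant, to the $N_\varrho$-invariant sub-$\sigma$-field) forward through this composition to a $P$-invariant measure $\sigma$ on $M_i/H''$. By construction, the action of $P$ on $M_i/H''$ factors through a minimal parabolic of $M_i$; combined with the properness of $H''$ in $M_i$ and the non-normality of $H\cap M_i$ in $M_i$ (both from Lemma~\ref{nonideal}), this puts us precisely in the setting of \cite[Proposition 3.2]{NZ2}.

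Invoking that proposition yields a proper parabolic subgroup $R_i$ of $M_i$ containing $H''$, and therefore an $M_i$-equivariant factor map $M_i/H'' \to M_i/R_i$. Composing produces a nontrivial $Q$-factor map $(Y',\eta') \to M_i/R_i$. To package this as a projective factor of $G$, I would set
\[
Q_1 := \pi^{-1}(R_i),
\]
where $\pi: Q \twoheadrightarrow L_I \twoheadrightarrow M_i$ is the projection onto the distinguished almost simple factor (with kernel incorporating $N_I$ and the remaining factors of $L_I$). Then $Q_1$ contains $P$, since $R_i$ contains the image of $P$ in $M_i$, so $Q_1$ is a standard parabolic subgroup of $G$; it is proper in $Q$ because $R_i \subsetneq M_i$; and $M_i/R_i \simeq Q/Q_1$ as $Q$-spaces, giving the required $(Q/Q_1,\bar\eta)$.

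The main obstacle is checking the precise hypotheses of \cite[Proposition 3.2]{NZ2} for our measure $\sigma$, in particular confirming that the push-forward of $\lambda'$ along the Gauss-map chain has the $P$-invariance and concentration properties on the algebraic $M_i$-variety $M_i/H''$ that the proposition requires. The non-normality clause isolated in Lemma~\ref{nonideal} is tailor-made to play this role, so once the measure tracking is done cleanly, the conclusion should follow directly from the cited proposition.
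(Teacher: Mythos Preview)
Your approach is essentially the same as the paper's: both use Lemma~\ref{nonideal} to pass to $M_i/H''$ with $H''=N_{M_i}(H\cap M_i)$, invoke \cite[Proposition~3.2]{NZ2} to land inside a proper parabolic of $M_i$, and then pull back to a parabolic $Q_1\subsetneq Q$ of $G$.

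Two small remarks where your write-up diverges from the paper. First, the paper applies \cite[Proposition~3.2]{NZ2} as a purely algebraic statement---input a connected simple Lie group and a proper algebraic subgroup $F$, output that $F$ sits in a proper parabolic---so your ``main obstacle'' of tracking the pushed-forward measure $\sigma$ and its $P$-invariance is not actually needed; the properness of $H''$ from Lemma~\ref{nonideal} is the only hypothesis to check. Second, the parabolic $R_i$ produced by that proposition need not be \emph{standard}, i.e.\ need not contain the image of $P$ in $M_i$; the paper handles this by conjugating $F$ inside $M_i$ so that the containing parabolic is $P_{I_i\setminus J}$ for some nonempty $J\subseteq I_i$, and then sets $Q_1=P_{I\setminus J}$. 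Your claim that ``$Q_1$ contains $P$ since $R_i$ contains the image of $P$'' skips this conjugation step and is not automatic as written.
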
 
\begin{proof}
Apply \cite[Proposition 3.2]{NZ2} to the simple Lie group $M_{i}$
and its algebraic subgroup $F=N_{M_{i}}\left(H\cap M_{i}\right)$,
where $F$ is a proper subgroup by Lemma \ref{nonideal}. We deduce
that $F$ is contained in a proper parabolic subgroup of $M_{i}$.
Conjugate $F$ by an element of $M_{i}$ if necessary, there is a
non-empty subset $J\subseteq I_{i}$ such that $F$ is contained in
the parabolic group of $M_{i}$ parametrized by $I_{i}-J$. Let $Q_{1}=P_{I-J}$
be the corresponding parabolic subgroup of $G$. Then $Q_{1}\cap M_{i}=P_{I_{i}-J}$,
$Q/Q_{1}$ is isomorphic to $M_{i}/P_{I_{i}-J}$, thus a factor of
$M_{i}/F$. The statement then follows from Lemma~\ref{nonideal}. 
\end{proof}

\subsection{Concluding that $M_{i}$ preserves the measure $\lambda$}
\begin{proof}[End of the proof of Theorem \ref{invariant-lambda}]

Suppose $\left(G/Q,\nu_{Q}\right)$ is a maximal projective factor
of $\left(X,\nu\right)$, where $Q=P_{I}$ is a proper parabolic subgroup
of $G$. Let $\lambda=\psi_{\mu}(\nu)$. As explained in Subsection
\ref{subsec:Preliminary-step}, we may replace $\mu$ by $\mu'=m_{K}\ast\mu$
and $\nu$ by $m_{K}\ast\lambda$: since $\nu$ and $m_{K}\ast\lambda$
are in the same measure class, we have that $(G/Q,\bar{m}_{K})$ is
the maximal projective factor of $\left(X,m_{K}\ast\lambda\right)$.
Then $\left(X,m_{K}\ast\lambda\right)$ is induced from the $Q$-system
$(Y,\eta)$ specified in Proposition \ref{Ysystem}.

Suppose $M_{i}$ is a simple factor of the Levi subgroup $L_{I}$
with $\mathbb{R}$-rank $\ge2$. \textcolor{black}{Denote by $I_{i}$
the set of simple roots corresponding to $M_{i}$. Take} \textcolor{black}{$\varrho\subset I_{i}$
to be a nonempty proper subset of in $I_{i}$} and take $s\in D_{\varrho}$,
where $D_{\varrho}$ is specified in (\ref{eq:D_I}). Applying the
Nevo-Zimmer operation $\mathcal{E}_{\varrho,s}$ with respect to $\left(s,\bar{U}_{\varrho},\bar{V}_{\varrho,I},N_{\varrho}\right)$
as explained in Subsection \ref{subsec:The-Nevo-Zimmer-operation}.
Then in three cases that can arise, we have 
\begin{itemize}
\item in Case (I), $\bar{U}_{\varrho}$ preserves the $P$-invariant measure
$\lambda$; 
\item in Case (II1), $Q\curvearrowright(Y,\eta)$ admits a nontrivial projective
factor by \cite[Proposition 9.2]{NZ2}; 
\item in Case (II2), $Q\curvearrowright(Y,\eta)$ admits a nontrivial projective
factor by Proposition \ref{II2-conclusion}. 
\end{itemize}
Since $G/Q$ is assumed to be a maximal projective factor of $\left(X,m_{K}\ast\lambda\right)$,
the situations in (II1) or (II2) cannot occur for $\left(\varrho,s\right)$.
Indeed, otherwise $\left(X,m_{K}\ast\lambda\right)$ would admit a
projective factor $\left(G/Q_{1},\bar{m}_{Q_{1}}\right)$ with $Q_{1}$
a proper subgroup of $Q$, contradicting the maximality assumption
on $G/Q$, see the proof of \cite[Theorem 11.4]{NZ2}. It follows
that for any $\varrho$ such that $\emptyset\neq\varrho\subsetneqq I_{i}$,
$\bar{U}_{\varrho}$ preserves $\lambda$. This implies $\lambda$
is invariant under $M_{i}$, see the last paragraph of the proof of
\cite[Theorem 1]{NZ2}. Indeed, since the $\mathbb{R}$-rank of $M_{i}$
is at least $2$, such subgroups $\bar{U}_{\varrho}$ generate the
opposite unipotent subgroup $\bar{N}_{i}$ of $M_{i}$. It follows
that $\bar{N}_{i}$ preserves the measure $\lambda$. Since $M_{i}$
is generated by $\bar{N}_{i}$ and $M_{i}\cap P$, and $P$ preserves
the measure $\lambda$, we conclude that $\lambda$ is invariant under
$M_{i}$. 
\end{proof}

\section{Consequences of the structure theorem\label{sec:consequence}}

\subsection{Proof of Theorem \ref{invariant2} and Corollary \ref{mp-ext}}

By the structure of parabolic subgroups, we derive Theorem \ref{invariant2}
and Corollary \ref{mp-ext} stated in the Introduction from Theorem
\ref{invariant-lambda}. 
\begin{proof}[Proof of Theorem \ref{invariant2}]

Write $I'=\mathsf{f}(I)$ and $Q'=P_{\mathsf{f}(I)}$. Let $Q'=M_{I'}A_{I'}N_{I'}$
be the Langlands decomposition. The disconnectedness of $M_{I'}$
is controlled by that of $P$. More precisely, there is a finite subgroup
$F<P$ such that $M_{I'}=M_{I'}^{0}F$, see \cite[Proposition 7.82]{KnappBook}.
Denote by $E$ the maximal normal compact subgroup of $M_{I'}^{0}$,
and $M_{1},\ldots,M_{\ell'}$ the noncompact simple factors. Then
$M_{I}^{0}$ is an almost direct product of $E,M_{1},\ldots,M_{\ell'}$.
By the definition of $\mathsf{f}(I)$, each $M_{k}$ where $k\in\{1,\ldots,\ell\}$,
has $\mathbb{R}$-rank $\ge2$, and $M_{k}$ is a factor of $M_{I}<Q$.
Theorem \ref{invariant-lambda} then implies that $M_{k}$ preserves
the measure $\lambda$.

Next note that from the decomposition of Lie algebras, see \cite[(7.77) and Proposition 7.78]{KnappBook},
the compact factor $E$ of $M_{I'}^{0}$ is contained in $M=Z_{K}(\mathfrak{s})<P$.
In the decomposition $Q'=M_{I'}^{0}FA_{I'}N_{I'}$, we have $EFA_{I'}N_{I'}<P$
thus preserves the measure $\lambda$; and the noncompact factors
$M_{1},\ldots,M_{\ell'}$ preserves $\lambda$ by Theorem \ref{invariant-lambda}.
We conclude that $Q'$ preserves the measure $\lambda$. 
\end{proof}
In the case that $L_{I}$ has no rank-$1$ non-compact factors, we
have $\mathsf{f}(I)=I$, thus the statement of Corollary \ref{mp-ext}
follows: 
\begin{proof}[Proof of Corollary \ref{mp-ext}]

Under the assumptions, $I'=\mathsf{f}(I)=I$, thus by Theorem \ref{invariant2},
$Q$ preserves the $P$-invariant measure. Recall Proposition~\ref{Ysystem} that $(X,\nu)$
fits into the $G$-factors sequence 
\[
\left(G/Q\times_{\beta}Y,\nu_{Q}\times\eta\right)\to(X,\nu)\to\left(G/Q,\nu_{Q}\right),
\]
and $\eta=\bar{m}_{K\cap Q}\ast\lambda$. Since $\lambda$ is invariant
under $Q$, it follows that $\eta=\lambda$. Thus $\left(G/Q\times_{\beta}Y,\nu_{Q}\times\eta\right)=\left(G/Q\times_{\beta}Y,\nu_{Q}\times\lambda\right)$
is a measure-preserving extension of $\left(G/Q,\nu_{Q}\right)$. 
\end{proof}

\subsection{Proof of Theorem \ref{constraint}: constraints on Furstenberg entropy
spectrum \label{subsec:constraints}}

Theorem \ref{invariant2} implies the following bound on Furstenberg entropy.

\begin{corollary}\label{constraint1}

Suppose $\left(G/P_{I},\bar{\nu}_{0}\right)$ is the maximal standard
projective factor of $(X,\nu)$. Then 
\[
h\left(G/P_{I},\bar{\nu}_{0}\right)\le h(X,\nu)\le h\left(G/P_{\mathsf{f}(I)},\bar{\nu}_{0}\right).
\]

\end{corollary}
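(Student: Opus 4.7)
The corollary follows directly from monotonicity of Furstenberg entropy under $G$-factors for the lower bound, and from Theorem~\ref{invariant2} combined with Lemma~\ref{barycenter} for the upper bound. Here is how I would organize it.

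\textbf{Lower bound.} The lower inequality $h_\mu(G/P_I,\bar\nu_0)\le h_\mu(X,\nu)$ is immediate from the standard fact recalled in the preliminaries: if $\pi:(X,\eta)\to(Y,\nu)$ is a $G$-factor map, then $h_\mu(Y,\nu)\le h_\mu(X,\eta)$. Apply this to the factor map $(X,\nu)\to(G/P_I,\bar\nu_0)$ given by the maximal standard projective factor.

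\textbf{Upper bound.} Let $\lambda=\psi_\mu(\nu)$ be the $P$-invariant measure provided by the Furstenberg isomorphism, so that the boundary map $\boldsymbol{\beta}_\nu:G/P\to\mathcal{P}(X)$ sends $gP\mapsto g.\lambda$. By Theorem~\ref{invariant2}, $\lambda$ is in fact invariant under $P_{\mathsf{f}(I)}$. Consequently, for $h\in P_{\mathsf{f}(I)}$ one has $gh.\lambda=g.\lambda$, so $\boldsymbol{\beta}_\nu$ factors through a $G$-equivariant measurable map
\[
\bar{\boldsymbol{\beta}}:G/P_{\mathsf{f}(I)}\to\mathcal{P}(X),\qquad gP_{\mathsf{f}(I)}\mapsto g.\lambda.
\]
Pushing the barycenter identity $\nu=\int_{G/P}g.\lambda\,d\nu_P(gP)$ through the projection $G/P\to G/P_{\mathsf{f}(I)}$ yields
\[
\nu=\int_{G/P_{\mathsf{f}(I)}}\bar{\boldsymbol{\beta}}(w)\,d\bar\nu_0(w),
\]
that is, $\nu$ is the barycenter of $\bar{\boldsymbol{\beta}}_\ast\bar\nu_0$.

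\textbf{Conclusion.} Apply Lemma~\ref{barycenter} with $(C,\nu_C)=(G/P_{\mathsf{f}(I)},\bar\nu_0)$, $(X,\eta)=(X,\nu)$ and $\beta=\bar{\boldsymbol{\beta}}$. This gives
\[
h_\mu(X,\nu)\le h_\mu\bigl(G/P_{\mathsf{f}(I)},\bar\nu_0\bigr),
\]
completing the proof. There is no genuine obstacle here: once Theorem~\ref{invariant2} is available, the only substantive point is translating the enhanced $P_{\mathsf{f}(I)}$-invariance of $\lambda$ into the factorization of the boundary map, after which Lemma~\ref{barycenter} closes the argument.
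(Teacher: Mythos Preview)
Your proof is correct. The paper's own argument for the upper bound is organized slightly differently: instead of invoking Lemma~\ref{barycenter}, it explicitly realizes $(X,\nu)$ as a $G$-factor of the induced system $\bigl(G\times_{P_{\mathsf{f}(I)}}X_0,\bar\nu_0\times\lambda\bigr)$ via \cite[Proposition 2.5]{NZ1}, and then uses that this induced system is a measure-preserving extension of $\bigl(G/P_{\mathsf{f}(I)},\bar\nu_0\bigr)$. But this is exactly the mechanism behind the proof of Lemma~\ref{barycenter} itself, and in fact the paper uses your barycenter-map formulation verbatim when proving the analogous constraint for lattices (Theorem~\ref{constraint-lattice}). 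So the two routes are the same argument in different packaging.
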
 
\begin{proof}
Since it is assumed that $\left(G/P_{I},\bar{\nu}_{0}\right)$ is
a $G$-factor of $\left(X,\nu\right)$, the first inequality $h\left(G/P_{I},\bar{\nu}_{0}\right)\le h(X,\nu)$
follows. For the second inequality, by the Fursterberg isomorphism,
we have that $\nu=\nu_{0}\ast\lambda$, where $\lambda$ is $P$-invariant.
By Theorem \ref{invariant2}, the parabolic subgroup $P_{\mathsf{f}(I)}$
preserves the measure $\lambda$. Then we may express $\nu=\nu_{0}\ast\lambda$
as 
\[
\nu=\int_{G/P}g.\lambda d\nu_{0}(g)=\int_{G/P_{\mathsf{f}(I)}}g.\lambda d\bar{\nu}_{0}(g).
\]
The proof of \cite[Proposition 2.5]{NZ1} applied to $P_{\mathsf{f}(I)}$
instead of $P$ shows that $\left(X,\nu\right)$ can be viewed as
a system induced from the measure-preserving $P_{\mathsf{f}(I)}$-system
$\left(X_{0},\lambda\right)$, with a $G$-factor map 
\[
\left(G\times_{P_{\mathsf{f}(I)}}X_{0},\bar{\nu}_{0}\times\lambda\right)\to(X,\nu).
\]
Since $\left(G\times_{P_{\mathsf{f}(I)}}X_{0},\bar{\nu}_{0}\times\lambda\right)$
is a measure-preserving extension of $\left(G/P_{\mathsf{f}(I)},\bar{\nu}_{0}\right)$,
we have that 
\[
h\left(G/P_{\mathsf{f}(I)},\bar{\nu}_{0}\right)=h\left(G\times_{P_{\mathsf{f}(I)}}X_{0},\bar{\nu}_{0}\times\lambda\right)\ge h(X,\nu).
\]
\end{proof}

Theorem \ref{constraint} follows directly from Corollary \ref{constraint1}.

\subsection{Lattices equipped with Furstenberg measures \label{subsec:Lattices}}

In this subsection, we apply the induction procedure for stationary
systems in \cite{BH21,BBHP} to deduce statements on a lattice $\Gamma<G$,
equipped with a Furstenberg measure $\mu_{0}$ as described in Definition~\ref{furstenbergmeas}.
Indeed in what follows, it is sufficient to assume that $\left(G/P,\nu_{P}\right)$
is the Poisson boundary of $\left(\Gamma,\mu_{0}\right)$, and $\nu_P$ is in the $G$-quasi-invariant measure class.

Consider an ergodic $(\Gamma,\mu_{0})$-space $\left(X,\nu\right)$.
 Since $\left(G/P,\nu_{P}\right)$
is the Poisson boundary of $\left(\Gamma,\mu_{0}\right)$, we have
the $\Gamma$-equivariant boundary map $\boldsymbol{\beta}_{\nu}:G/P\to\mathcal{P}(X)$
associated with the stationary measure $\nu$ on $X$. Define for
$g\in G$, the measure $\phi_{g}\in\mathcal{P}(X)$ as the barycenter
of $\left(\boldsymbol{\beta}_{\nu}\right){}_{\ast}(g\nu_{P})$. Note
that when restricted to $\Gamma$, $\gamma\mapsto\phi_{\gamma}$ is
equivariant as $\phi_{\gamma}=\gamma.\nu$.

Fix a measurable section $\tau:G/\Gamma\to G$ and denote by $c:G\times G/\Gamma\to\Gamma$
the corresponding cocycle. Denote by $\tilde{X}=G/\Gamma\times_{c}X$
the induced space. For a function $f\in L^{\infty}\left(G/\Gamma\times_{c}X,m_{G/\Gamma}\times\nu\right)$,
write $f_{z}(\cdot)=f(z,\cdot)$ for $z\in G/\Gamma$, and regard
$f_{z}\in L^{\infty}(X,\nu)$. As in \cite[Theorem 4.3]{BH21}, define
a measure $\tilde{\nu}$ on $\tilde{X}$ as follows: 
\begin{align}
\int_{\tilde{X}}fd\tilde{\nu} & =\int_{G/\Gamma}\int_{X}f_{z}d\phi_{\tau(z)}dm_{G/\Gamma}(z)\nonumber \\
 & =\int_{G/\Gamma}\int_{G/P}\int_{X}f_{z}d\boldsymbol{\beta}_{\nu}(w)d\nu_{P}(\tau(z).w)dm_{G/\Gamma}(z).\label{eq:def-induced}
\end{align}
The formula (\ref{eq:def-induced}) implies that $\tilde{\nu}$ admits
a decomposition as in the Furstenberg isomorphism that $\tilde{\nu}=\nu_{P}\ast\tilde{\lambda}$,
where $\tilde{\lambda}$ is $P$-invariant. Indeed, by \cite[Lemma 4.6 and Remark 4.7]{BBHP},
the map $\tilde{\boldsymbol{\beta}}_{\tilde{\nu}}:G/P\to\mathcal{P}(\tilde{X})$,
which is related to the $\Gamma$-boundary map $\boldsymbol{\beta}_{\nu}:G/P\to\mathcal{P}(X)$
by 
\begin{equation}
\int_{\tilde{X}}fd\tilde{\boldsymbol{\beta}}_{\tilde{\nu}}(w)=\int_{G/\Gamma}\int_{X}f_{z}d\boldsymbol{\beta}_{\nu}\left(\tau(z)^{-1}.w\right)dm_{G/\Gamma}(z),\mbox{ where }w\in G/P,f\in L^{\infty}\left(\tilde{X}\right),\label{eq:Theta}
\end{equation}
is $G$-equivariant; and the measure $\tilde{\nu}$ defined in (\ref{eq:def-induced})
is the barycenter of $\left(\tilde{\boldsymbol{\beta}}_{\tilde{\nu}}\right)_{\ast}\left(\nu_{P}\right)$.
Write $\tilde{\lambda}=\tilde{\boldsymbol{\beta}}_{\tilde{\nu}}(P)$,
then $\tilde{\lambda}$ is $P$-invariant and satisfies $\tilde{\nu}=\nu_{P}\ast\tilde{\lambda}$.

An important property of the induction procedure, shown in \cite[Proof of Theorem B]{BH21} is the following.
\begin{fact}\label{fact-max} $\left(G/Q,\nu_{Q}\right)$
is a $\Gamma$-factor of $\left(X,\nu\right)$ if and only if it is
a $G$-factor of $\left(\tilde{X},\tilde{\nu}\right)$. 
\end{fact}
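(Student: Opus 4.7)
The plan is to exploit the general correspondence between $G$-equivariant maps from the induced space $\tilde{X}$ to a $G$-space $Y$, and $\Gamma$-equivariant maps from $X$ to $Y$. Concretely, a $\Gamma$-map $\pi:X\to Y$ extends to the $G$-map $\tilde{\pi}(z,y):=\tau(z).\pi(y)$, whose $G$-equivariance follows immediately from the cocycle identity $\tau(gz)\,c(g,z)=g\,\tau(z)$ together with $\Gamma$-equivariance of $\pi$. Conversely, a $G$-map $\tilde{\pi}$ restricts to a $\Gamma$-map on the fiber of $\tilde{X}$ over $[\Gamma]\in G/\Gamma$: choosing the section so that $\tau([\Gamma])=e$, the stabilizer of $[\Gamma]$ in $G$ is $\Gamma$ and the cocycle satisfies $c(\gamma,[\Gamma])=\gamma$, so this fiber is $\Gamma$-invariant with the original $\Gamma$-action. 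The content of the fact is that, taking $Y=G/Q$, this bijection preserves the property of being a factor map, i.e.\ pushing $\tilde{\nu}$ (resp.\ $\nu$) to $\nu_{Q}$.

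The key ingredient common to both directions is to identify the $\Gamma$-boundary map of $(G/Q,\nu_{Q})$ as the point-mass map $w\mapsto\delta_{\pi_{PQ}(w)}$, where $\pi_{PQ}:G/P\to G/Q$ denotes the natural $G$-equivariant projection. This follows from uniqueness of the boundary map: $\nu_{Q}$ is the unique $\mu_{0}$-stationary measure on the strongly proximal $\Gamma$-space $G/Q$, so a $\Gamma$-equivariant map $G/P\to\mathcal{P}(G/Q)$ with barycenter $\nu_{Q}$ is unique, and both $\delta_{\pi_{PQ}(\cdot)}$ and (when $\pi:X\to G/Q$ is a $\Gamma$-factor map) $\pi_{\ast}\boldsymbol{\beta}_{\nu}$ satisfy these two properties.

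The direction ``$G$-factor $\Rightarrow$ $\Gamma$-factor'' is then essentially immediate: the fiber of $\tilde{X}$ over $[\Gamma]$ has conditional measure $\phi_{e}=\nu$ by (\ref{eq:def-induced}), so the restriction pushes $\nu$ to some $\mu_{0}$-stationary measure on $G/Q$, which must equal $\nu_{Q}$ by uniqueness. For the converse, one must compute $\tilde{\pi}_{\ast}\tilde{\nu}=\nu_{Q}$. I would do this fiberwise via (\ref{eq:Theta}): the conditional of $\tilde{\boldsymbol{\beta}}_{\tilde{\nu}}(w)$ on the fiber over $z$ is $\boldsymbol{\beta}_{\nu}(\tau(z)^{-1}.w)$, whose image under $\tilde{\pi}(z,\cdot)=\tau(z)\circ\pi$ is
\[
\tau(z)_{\ast}\,\delta_{\pi_{PQ}(\tau(z)^{-1}.w)}\;=\;\delta_{\pi_{PQ}(w)},
\]
using $G$-equivariance of $\pi_{PQ}$. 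The right-hand side is independent of $z$, so integration over $G/\Gamma$ and then barycentering against $\nu_{P}$ yields $\tilde{\pi}_{\ast}\tilde{\nu}=(\pi_{PQ})_{\ast}\nu_{P}=\nu_{Q}$.

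The main obstacle, and the only place where the specifically projective structure of $G/Q$ enters (as opposed to an arbitrary $\Gamma$-factor of $X$), is precisely this cancellation of the double $\tau(z)$ in the fiberwise computation. It holds exactly because $\pi_{PQ}$ is genuinely $G$-equivariant, which requires the target $G/Q$ to be a $G$-space and not merely a $\Gamma$-space; everything else is formal bookkeeping with the cocycle $c$ and the barycenter identity $\tilde{\nu}=\int\tilde{\boldsymbol{\beta}}_{\tilde{\nu}}(w)\,d\nu_{P}(w)$.
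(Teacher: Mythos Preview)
Your proposal is correct and follows essentially the same approach as the paper. For the converse ($G$-factor $\Rightarrow$ $\Gamma$-factor), both you and the paper restrict a $G$-factor map to the fiber over $[\Gamma]$ and invoke uniqueness of the $\mu_0$-stationary measure on $G/Q$; the paper additionally cites \cite{BH21} for the ``standard techniques'' needed to promote the a.e.\ $G$-equivariance to a genuine family $(p_z)_{z\in G/\Gamma}$ so that restriction to the null fiber $z=[\Gamma]$ makes sense---you should acknowledge this point. For the forward direction the paper argues slightly more structurally, factoring the induced map through the intermediate $G$-space $G/\Gamma\times_{c}G/Q$ and observing that the latter is a measure-preserving extension of $(G/Q,\nu_Q)$, whereas you compose directly and verify $\tilde{\pi}_\ast\tilde{\nu}=\nu_Q$ via the boundary-map formula~(\ref{eq:Theta}); these are the same map unpacked differently, and your computation is a clean alternative.
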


Here we consider $G$-factors up to measure classes only. As $\nu_P$ and $\bar{m}_K$ are in the same measure class, the measure $\tilde{\nu}_1=\bar{m}_K\ast \tilde{\lambda}$ is in the same measure class as $\tilde{\nu}$, thus $(\tilde{X},\tilde{\nu}_1)$ is a $\mu$-stationary $G$-space for any admissible $K$-invariant probability measure $\mu$ on $G$.

As $(\tilde{X},\tilde{\nu}_1)$ admits a maximal projective factor for $(G,\mu)$, by \cite[Lemma 0.1]{NZ2}, the fact implies that $(X,\nu)$ admits the same maximal projective factor for $(\Gamma,\mu_0)$.


\begin{proof}[Proof of Fact \ref{fact-max}]
If $(G/Q,\nu_Q)$ is a $\Gamma$-factor of $(X,\nu)$, we obtain by induction that $(\tilde{X},\tilde{\nu})$ admits as a $G$-factor the space $G/\Gamma \times_c G/Q$, which is a measure-preserving extension of $(G/Q,\nu_Q)$, where by assumption on $\mu_0$, $\nu_Q$ is in the same measure class as the $K$-invariant probability measure on $G/Q$.

Conversely, if $(G/Q,\nu_Q)$ is a $G$-factor of $\tilde{X}=G/\Gamma \times_c X$, there is a family of maps $(p_z:X\to G/Q)$ for $z \in G/\Gamma$ such that for every $g \in G$ and for $\tilde{\nu}$ almost every $(z,x)$ in $G/\Gamma \times_c X$, one has $p_{gz}(c(g,z)x)=gp_z(x)$. By standard technics (see \cite[Proof of Theorem B]{BH21} for details), one can assume that it actually holds for all $z \in G/\Gamma$ and all $x\in X$. Taking $z=\Gamma$ and restricting to $g=\gamma \in \Gamma$, one gets a $\Gamma$-factor map $p_{\Gamma}:X \to G/Q$, with $\nu_Q$ the unique $\mu_0$-stationary measure.
\end{proof}

\begin{proof}[Proof of Theorem \ref{lattice}]

Suppose $\left(G/Q,\nu_{Q}\right)$ is the maximal standard projective
$\Gamma$-factor of $(X,\nu)$, where $Q=P_{I}$ is a parabolic subgroup.
By Fact~\ref{fact-max}, $\left(G/Q,\nu_{Q}\right)$ is also the maximal standard
projective $G$-factor of the induced $G$-system $\left(\tilde{X},\tilde{\nu}\right)$.
Apply Theorem \ref{invariant2} to $\left(\tilde{X},\tilde{\nu}\right)$,
where $\tilde{\nu}=\nu_{P}\ast\tilde{\lambda}$ as above, we have
that $Q'=P_{\mathsf{f}(I)}$ preserves the measure $\tilde{\lambda}$.

To lighten notations, in what follows write $\boldsymbol{\beta}=\boldsymbol{\beta}_{\nu}$
and $\tilde{\boldsymbol{\beta}}=\tilde{\boldsymbol{\beta}}_{\nu}$.
We claim that the $P$-invariant measure $\tilde{\lambda}$ is $Q'$-invariant
if and only if the $\Gamma$-boundary map $\boldsymbol{\beta}:G/P\to\mathcal{P}(X)$
satisfies that $\boldsymbol{\beta}\left(hqP\right)=\boldsymbol{\beta}\left(hP\right)$
for all $q\in Q'$ and $m$-a.e. $h\in G$. Indeed, for $g\in G$,
by (\ref{eq:Theta}) and $G$-invariance of $m_{G/\Gamma}$ we have
that 
\[
\int_{\tilde{X}}g^{-1}.fd\tilde{\lambda}=\int_{G/\Gamma}\int_{X}f_{z}d\boldsymbol{\beta}(\tau(z)^{-1}gP)dm_{G/\Gamma}(z).
\]
It follows that $g.\tilde{\lambda}=\tilde{\lambda}$ if and only if
for $m_{G/\Gamma}$-a.e. $z$, $\boldsymbol{\beta}(\tau(z)^{-1}gP)=\boldsymbol{\beta}(\tau(z)^{-1}P)$.
By $\Gamma$-equivariance of $\boldsymbol{\beta}$, this is equivalent
to that $\boldsymbol{\beta}(\gamma^{-1}\tau(z)^{-1}gP)=\boldsymbol{\beta}(\gamma^{-1}\tau(z)^{-1}P)$
for all $\gamma\in\Gamma$. Since $\tau:G/\Gamma\to G$ is a section,
$G=\tau(G/\Gamma)\Gamma$, the claim is verified. We conclude that
the property $Q'=P_{\mathsf{f}(I)}$ preserves the measure $\tilde{\lambda}$
implies the $\Gamma$-boundary map $\theta:G/P\to\mathcal{P}(X)$
factors through $\bar{\theta}:G/Q'\to\mathcal{P}(X)$, where $\bar{\theta}(gQ')=\theta(gP)$
is well-defined almost everywhere. 
\end{proof}
Next we derive constraints on the Furstenberg entropy spectrum of
$(\Gamma,\mu)$ in the same manner as in Subsection \ref{subsec:constraints}.

\begin{theorem}\label{constraint-lattice}

Let $\mu_{0}$ be a Furstenberg measure of finite Shannon entropy
on a lattice $\Gamma<G$, where $G$ is a connected semisimple real
Lie group with finite center. Denote by $\Delta$ simple restricted
roots of $G$. The Furstenberg entropy spectrum of $\left(\Gamma,\mu_{0}\right)$
satisfies 
\[
{\rm EntSp}\left(\Gamma,\mu_{0}\right)\subseteq\bigcup_{I\subseteq\Delta}\left[h_{\mu_{0}}\left(G/P_{I},\nu_{I}\right),h_{\mu_{0}}\left(G/P_{\mathsf{f}(I)},\nu_{\mathsf{f}(I)}\right)\right].
\]

\end{theorem}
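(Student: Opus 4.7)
The plan is to transport the proof of Theorem~\ref{constraint} to the lattice setting, substituting Theorem~\ref{lattice} for Theorem~\ref{invariant2} as the structural input. Given an ergodic $(\Gamma,\mu_0)$-stationary system $(X,\nu)$, first dispose of the case where $\nu$ is $\Gamma$-invariant: then $h_{\mu_0}(X,\nu)=0$, which sits in the degenerate interval indexed by $I=\Delta$ (where $G/P_\Delta$ is a point and both endpoints equal $0$). Henceforth assume $\nu$ is not $\Gamma$-invariant, and let $(G/Q,\nu_Q)$ with $Q=P_I$ be the maximal standard projective $\Gamma$-factor of $(X,\nu)$; its existence follows from Fact~\ref{fact-max} together with \cite[Lemma 0.1]{NZ2} applied to the induced $G$-system $(\tilde X,\tilde\nu)$.

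Theorem~\ref{lattice} guarantees that the $\Gamma$-equivariant boundary map $\boldsymbol{\beta}_\nu\colon G/P\to\mathcal{P}(X)$ descends to a $\Gamma$-equivariant map $\bar{\boldsymbol{\beta}}\colon G/P_{\mathsf{f}(I)}\to\mathcal{P}(X)$. Pushing the barycenter identity for $\nu$ through this factoring yields
\[
\nu=\int_{G/P}\boldsymbol{\beta}_\nu(w)\,d\nu_P(w)=\int_{G/P_{\mathsf{f}(I)}}\bar{\boldsymbol{\beta}}(v)\,d\nu_{\mathsf{f}(I)}(v),
\]
exhibiting $\nu$ as the barycenter of $\bar{\boldsymbol{\beta}}_\ast(\nu_{\mathsf{f}(I)})$.

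The lower bound $h_{\mu_0}(G/P_I,\nu_I)\le h_{\mu_0}(X,\nu)$ is immediate from monotonicity of Furstenberg entropy under factor maps, since $(G/P_I,\nu_I)$ is a $\Gamma$-factor of $(X,\nu)$. For the upper bound, apply Lemma~\ref{barycenter} to the $\Gamma$-equivariant map $\bar{\boldsymbol{\beta}}$ (its proof, built from the product $G/P_{\mathsf{f}(I)}\times X$ with the disintegration against $\bar{\boldsymbol{\beta}}$, only uses measurable equivariance and transfers verbatim from $G$ to~$\Gamma$) to obtain $h_{\mu_0}(X,\nu)\le h_{\mu_0}(G/P_{\mathsf{f}(I)},\nu_{\mathsf{f}(I)})$. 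Combining the two bounds places $h_{\mu_0}(X,\nu)$ in the interval indexed by $I$, proving the claimed inclusion.

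There is no real obstacle once Theorem~\ref{lattice} is in hand; the main point worth verifying explicitly is that $(G/P_{\mathsf{f}(I)},\nu_{\mathsf{f}(I)})$ is a $(\Gamma,\mu_0)$-stationary space, which is automatic: by the Furstenberg measure hypothesis $(G/P,\nu_P)$ is the Poisson boundary of $(\Gamma,\mu_0)$, so $\nu_P$ is $\mu_0$-stationary under the $\Gamma$-action, and $\nu_{\mathsf{f}(I)}$ is its pushforward under a $\Gamma$-equivariant projection. The finite Shannon entropy assumption is used only insofar as it is invoked by Theorem~\ref{lattice}.
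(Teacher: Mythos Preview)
Your argument is correct and follows the same route as the paper: monotonicity for the lower bound, Theorem~\ref{lattice} plus Lemma~\ref{barycenter} for the upper bound, with the barycenter identity pushed down along $G/P\to G/P_{\mathsf{f}(I)}$. One small inaccuracy in your closing remark: Theorem~\ref{lattice} does not itself require finite Shannon entropy; that hypothesis is present so that the Poisson boundary $(G/P,\nu_P)$ has finite Furstenberg entropy and the entropy spectrum is well-defined.
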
 
\begin{proof}
Suppose $\left(X,\nu\right)$ is an ergodic $\left(\Gamma,\mu_{0}\right)$-space
and $\left(G/Q,\nu_{Q}\right)$ is its maximal projective $\Gamma$-factor,
$Q=P_{I}$. Then $h_{\mu_{0}}(X,\nu)\ge h_{\mu_{0}}\left(G/Q,\nu_{Q}\right)$.
Let $Q'=P_{\mathsf{f}(I)}$. Then by Theorem \ref{lattice}, the $\Gamma$-boundary
map $\theta:G/P\to\mathcal{P}(X)$ factors through the projection
$G/P\to G/Q'$, that is, $\theta(gP)=\theta(gqP)$ for $q\in Q'$.
It follows that $\nu$ is the barycenter of $\bar{\theta}_{\ast}\left(\nu_{Q'}\right)$.
Apply Lemma \ref{barycenter} to the $\left(\Gamma,\mu_{0}\right)$-spaces
$\left(G/Q',\nu_{Q'}\right)$ and $\left(X,\nu\right)$, we conclude
that $h_{\mu_{0}}(X,\nu)\le h_{\mu_{0}}\left(G/Q',\nu_{Q'}\right)$.
We have shown that in this case 
\[
h_{\mu_{0}}\left(G/Q,\nu_{Q}\right)\le h_{\mu_{0}}(X,\nu)\le h_{\mu_{0}}\left(G/Q',\nu_{Q'}\right).
\]
The statement follows. 
\end{proof}
It is worth emphasizing that in the statements above, it is crucial
that the $\mu_{0}$-harmonic measure on $G/P$ is in the quasi-invariant
measure class. For a general step distribution $\mu$ on $\Gamma$,
the $\mu$-harmonic measure may be singular with respect to $\bar{m}_{K}$.
In such a case one can not derive constraints on ${\rm EntSp}\left(\Gamma,\mu\right)$
via the inducing procedure as above.

\section{Poisson bundle over a stationary system\label{sec:Poissonbundle}}

In this section we define the $\mu$-Poisson bundle over a stationary
system $(X,\nu)$ and study its basic properties. Throughout, we assume
that $\mu$ is nondegenerate in the sense that ${\rm supp}\mu$ generates
$G$ as a semigroup.

\subsection{Stationary joining\label{subsec:Stationary-joining}}

For a more detailed reference on stationary joining, see \cite{FurstenbergGlasner}.
Suppose we are in the setting of Subsection \ref{subsec:boundary-map}.
Denote by $\left(G^{\mathbb{N}},\mathbb{P}_{\mu}\right)$ the random
walk trajectory space and $(B,\nu_{B})$ the Poisson boundary of $\left(G,\mu\right)$.
For a $(G,\mu)$-stationary system $(X,\eta)$, denote by $\omega\mapsto\eta_{\omega}$
the almost sure limit of $\omega_{n}.\eta$ provided by the martingale
convergence theorem.

Let $(X,\eta)$ and $(Y,\lambda)$ be two $(G,\mu)$-stationary systems.
Let the group $G$ act on the product space $X\times Y$ diagonally.
The \textbf{stationary joining} of the two, denoted by $\left(X\times Y,\eta\varcurlyvee\lambda\right)$
is the system with measure 
\[
\eta\varcurlyvee\lambda=\int_{G^{\mathbb{N}}}\eta_{\omega}\times\lambda_{\omega}d\mathbb{P}_{\mu}(\omega)=\int_{B}\boldsymbol{\beta}_{\eta}(b)\times\boldsymbol{\beta}_{\lambda}(b)d\nu_{B}(b).
\]

In our notation $\mathbb{P}_{\mu}$ denotes the law of $\mu$-random
walk trajectories from the identity $e$. Then $g.\mathbb{P}_{\mu}=\mathbb{P}_{\mu}^{g}$
is the law of the trajectories starting from $g$. We use the same
notation as stationary joining for the measure on $X\times G^{\mathbb{N}}$
given by 
\[
\eta\varcurlyvee\mathbb{P}_{\mu}=\int_{G^{\mathbb{N}}}\eta_{\omega}\times\delta_{\omega}d\mathbb{P}_{\mu}(\omega).
\]
When $\eta$ is $G$-invariant, $\eta\varcurlyvee\mathbb{P}_{\mu}=\eta\times\mathbb{P}_{\mu}$.

On the space $X\times G^{\mathbb{N}}$ we have a skew transformation
\[
T:\left(x,\left(\omega_{1},\omega_{2},\ldots\right)\right)\mapsto\left(\omega_{1}^{-1}.x,\left(\omega_{1}^{-1}\omega_{2},\omega_{1}^{-1}\omega_{3},\ldots\right)\right).
\]
The arguments of \cite[Theorem 3.1]{Furman} (see also \cite[Theorem I.2.1]{Kifer}) immediately imply that: 
\begin{fact}
\label{pmp} The transformation $T$ preserves the measure $\eta\varcurlyvee\mathbb{P}_{\mu}$
on $X\times G^{\mathbb{N}}$. If $G\curvearrowright(X,\eta)$ is ergodic,
then $T$ is an ergodic transformation on $\left(X\times G^{\mathbb{N}},\eta\varcurlyvee\mathbb{P}_{\mu}\right)$. 
\end{fact}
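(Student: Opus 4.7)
The plan is to verify the two claims separately: (i) measure preservation by a direct change of variables computation, and (ii) ergodicity by reducing $T$-invariant functions to $G$-invariant functions on $(X,\eta)$. The first is routine; the second is the real content and is where the non-product nature of $\eta\varcurlyvee\mathbb{P}_{\mu}$ (stemming from $\eta$ being only stationary, not invariant) becomes delicate.

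For measure preservation, the key identity is the \emph{cocycle relation}
\[
\eta_{\omega}=\omega_{1}.\eta_{\sigma\omega},
\]
where $\sigma\omega=(\omega_{1}^{-1}\omega_{2},\omega_{1}^{-1}\omega_{3},\ldots)$. This follows directly from the defining limit $\eta_{\omega}=\lim_{n}\omega_{n}.\eta$ by factoring out $\omega_{1}$. Given this, for any bounded measurable $F$ on $X\times G^{\mathbb{N}}$:
\[
\int F\circ T\,d(\eta\varcurlyvee\mathbb{P}_{\mu})=\int_{G^{\mathbb{N}}}\int_{X}F(\omega_{1}^{-1}.x,\sigma\omega)\,d\eta_{\omega}(x)\,d\mathbb{P}_{\mu}(\omega).
\]
Substituting $y=\omega_{1}^{-1}.x$, so that the pushforward measure $\omega_{1}^{-1}.\eta_{\omega}$ equals $\eta_{\sigma\omega}$ by the cocycle relation, and then using that the distribution of $\sigma\omega$ under $\mathbb{P}_{\mu}$ is again $\mathbb{P}_{\mu}$ (since $(\omega_{1}^{-1}\omega_{n+1})_{n\ge1}$ has the same law as $(\omega_{n})_{n\ge 1}$), one obtains $\int F\,d(\eta\varcurlyvee\mathbb{P}_{\mu})$. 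This is how I would execute the first half.

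For ergodicity, I plan to follow the strategy of Furman and Kifer: show that the $T$-invariant $\sigma$-algebra on $X\times G^{\mathbb{N}}$ corresponds, modulo null sets, to the $G$-invariant $\sigma$-algebra on $(X,\eta)$. The easy direction is that if $A\subseteq X$ is $G$-invariant then $A\times G^{\mathbb{N}}$ is $T$-invariant; here non-degeneracy of $\mu$ (with ${\rm supp}\,\mu$ generating $G$ as a semigroup) ensures that $T$-invariance of $A\times G^{\mathbb{N}}$ forces $G$-invariance of $A$. For the converse, let $F$ be a bounded $T$-invariant function and consider the fiberwise average $\phi(\omega)=\int_{X}F(x,\omega)d\eta_{\omega}(x)$. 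A short calculation using the cocycle relation and $T$-invariance of $F$ shows that $\phi\circ\sigma=\phi$, so $\phi$ is constant $\mathbb{P}_{\mu}$-a.s.\ by Kolmogorov's 0-1 law. To upgrade constancy of the averages $\phi$ to constancy of $F$ itself, I would iterate the $T$-invariance $F(x,\omega)=F(\omega_{n}^{-1}.x,\sigma^{n}\omega)$ and combine a martingale convergence argument on the filtration generated by $\omega_{1},\ldots,\omega_{n}$ with the fact that the fiber measures $\omega_{n}.\eta$ converge to $\eta_{\omega}$, to conclude that $F$ is pulled back from a $G$-invariant function on $(X,\eta)$, which must be constant by the assumed ergodicity.

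The main obstacle is precisely this last step: unlike the case when $\eta$ is $G$-invariant (where $\eta\varcurlyvee\mathbb{P}_{\mu}=\eta\times\mathbb{P}_{\mu}$ and Kakutani's classical skew-product criterion applies immediately), here the fiber measures $\eta_{\omega}$ are typically singular for different $\omega$, and one cannot work with a fixed reference measure on $X$. Controlling how $F$ transports between fibers requires using the boundary map $\boldsymbol{\beta}_{\eta}$ and the martingale convergence $\omega_{n}.\eta\to\eta_{\omega}$ in a genuinely asymmetric way; this is the content of Furman's Theorem 3.1 and Kifer's Theorem I.2.1, which I would invoke for the technical core rather than reprove in detail.
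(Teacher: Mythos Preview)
Your proposal is correct and aligns with the paper's treatment: the paper does not give a proof but simply states that the arguments of \cite[Theorem 3.1]{Furman} and \cite[Theorem I.2.1]{Kifer} immediately imply the fact, and you invoke precisely these same references for the technical core while providing a correct sketch of the measure-preservation computation via the cocycle relation $\eta_{\omega}=\omega_{1}.\eta_{\sigma\omega}$.
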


\subsection{Definition of the Poisson bundle\label{subsec:Definition}}

Denote by ${\rm Sub}(G)$ the space of closed subgroup of $G$, equipped
with the Chabauty topology. We assume that our stationary system $(X,\eta)$
comes together with a $G$-equivariant measurable map $L:X\to{\rm Sub}(G)$,
denoted by $x\mapsto L_{x}$. For example, $L_{x}={\rm Stab}_{G}(x)$.
The pushforward of $\eta$ under this map is a $\mu$-stationary measure
on ${\rm Sub}(G)$, often referred to as a \emph{stationary random
subgroup} (in short SRS) of $G$. Denote~by 
\[
W_{\Omega}:=\left\{ \left(x,\left(L_{x}\omega_{1},L_{x}\omega_{2},\dots\right)\right):x\in X,(\omega_{1},\omega_{2}\dots)\in G^{\mathbb{N}}\right\} =\bigsqcup_{x\in X}\{x\}\times(L_{x}\backslash G)^{\mathbb{N}}
\]
the space of trajectories in coset spaces. The group $G$ acts on
$W_{\Omega}$ by $g.\left(x,\left(L_{x}\omega_{n}\right)\right)=\left(g.x,\left(L_{g.x}g\omega_{n}\right)\right)$.
Consider the map 
\begin{align*}
\vartheta & :X\times G^{\mathbb{N}}\to W_{\Omega}\\
 & (x,\omega)\mapsto\left(x,\left(L_{x}\omega_{1},L_{x}\omega_{2},\ldots\right)\right),
\end{align*}
which is a $G$-equivariant. Write 
\[
\overline{\mathbb{P}}_{\mu}:=\vartheta_{\ast}\left(\eta\varcurlyvee\mathbb{P}_{\mu}\right)
\]
for the pushforward of the measure $\eta\varcurlyvee\mathbb{P}_{\mu}$.

On the space $W_{\Omega}$ we have a time shift operator $\mathcal{S}$
defined as 
\[
\mathcal{S}\left(x,\left(L_{x}\omega_{1},L_{x}\omega_{2},\ldots\right)\right)=\left(x,\left(L_{x}\omega_{2},L_{x}\omega_{3},\ldots\right)\right),
\]
which commutes with the $G$ action on $W_{\Omega}$. Consider the
invariant $\sigma$-field $\mathcal{I}$ under $\mathcal{S}$, that
is, 
\[
\mathcal{I}:=\left\{ A\in\mathcal{B}(W_{\Omega}):\mathcal{S}^{-1}(A)=A\right\} .
\]

\begin{definition} Let $\left(Z,\lambda\right)$ be the Mackey point
realisation of (the completion of) the invariant $\sigma$-field $\mathcal{I}$
equipped with the measure $\overline{\mathbb{P}}_{\mu}$. We call
$\left(Z,\lambda\right)$ a $(G,\mu)$-Poisson bundle over the stationary
system $\left(X,\eta\right)$. \end{definition}

\begin{remark}\label{ergodic}

The Poisson bundle $\left(Z,\lambda\right)$ is $G$-ergodic if $(X,\eta)$
is $G$-ergodic. Indeed, $\left(Z,\lambda\right)$ is a $G$-factor
of the stationary joining of $\left(X,\eta\right)$ and the Poisson
boundary of $\left(G,\mu\right)$. The ergodicity of the latter follows
from Fact \ref{pmp}.

\end{remark}

Denote by $\theta:(W_{\Omega},\overline{\mathbb{P}}_{\mu})\to(Z,\lambda)$
the factor map which induces an isomorphism of measured $G$-spaces
between $\left(W_{\Omega},\mathcal{I},\overline{\mathbb{P}}_{\mu}|_{\mathcal{I}}\right)$
and $(Z,\mathcal{B}(Z),\lambda)$, where $\overline{\mathbb{P}}_{\mu}|_{\mathcal{I}}$
denotes the restriction of the probability measure $\overline{\mathbb{P}}_{\mu}$
to the invariant $\sigma$-field $\mathcal{I}$.

In each fiber $(L_{x}\backslash G)^{\mathbb{N}}$ we obtain an invariant
$\sigma$-field $\mathcal{I}_{x}$, which almost surely coincides
with the invariant $\sigma$-field of the shift operator restricted
to this fiber. Therefore the fiber over $x$ in $Z$ is (almost surely)
identified with the Poisson boundary $\rho^{-1}(x)=B_{L_{x}\backslash G}$,
i.e. the space of ergodic components of the shift operator. Up to
measure zero, $Z=\bigsqcup_{x\in X}\{x\}\times B_{L_{x}\backslash G}$.
Fiberwise, $\theta(x,\cdot):\left(L_{x}\backslash G\right)^{\mathbb{N}}\to B_{L_{x}\backslash G}$
can be viewed as a map which sends a trajectory on the coset space
to its image in the boundary $B_{L_{x}\backslash G}$. It plays a
role analogous to the map ${\rm bnd}:G^{\mathbb{N}}\to B$.

\begin{figure}[!t]
\centering
\xymatrix{
 & &(X\times G^\mathbb{N},\eta \varcurlyvee \mathbb{P}_\mu) \ar[dd]^{\tilde{\psi}} \ar[ld]_{\vartheta} \ar[rd]^{\mathrm{id}\times {\rm bnd}} \ar@/_3pc/[lldd]_{\xi_n}  & \\
& (W_\Omega,\overline{\mathbb{P}}_\mu) \ar[rd]^\theta \ar[ld]_{\theta_n} & & (X\times B, \eta \varcurlyvee \nu_B) \ar[ld]_\psi \\
 (W,\xi_{n\ast} (\eta \varcurlyvee \mathbb{P}_\mu)) \ar[drr]&& (Z,\lambda)\ar[d]^\rho &\\
& & (X,\eta)\ar[d]^\pi & \\
& & (Y,\nu) &
}
\caption{Commutative diagram of $G$-spaces.}
\label{fig:diag}
\end{figure}
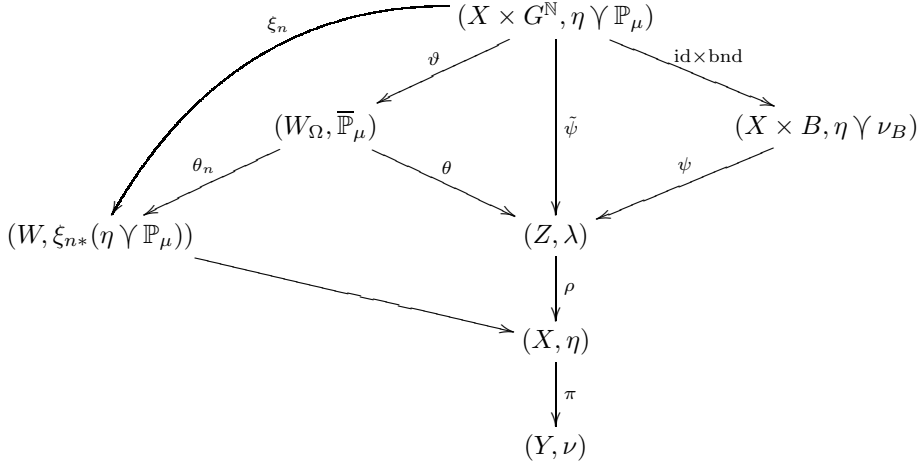

\begin{remark}

When $\eta$ is a $G$-invariant measure, we have $\eta\varcurlyvee\mathbb{P}_{\mu}=\eta\times\mathbb{P}_{\mu}$.
Thus over a measure preserving system $(X,\eta)$, the bundle $\left(Z,\lambda\right)$
is the same Poisson bundle as considered in \cite{Bowen}.

\end{remark}

Denote by $\overline{\mathbb{P}}_{\mu}=\int_{X}\overline{\mathbb{P}}_{\mu,x}d\eta(x)$
the disintegration of the measure $\overline{\mathbb{P}}_{\mu}$ over
the factor map $W_{\Omega}\to X$, that is, for $x\in X$, the distribution
of $\left(L_{x}\omega_{n}\right)_{n=1}^{\infty}$ is $\overline{\mathbb{P}}_{\mu,x}$.
When $\eta$ is not an invariant measure, in general the fiberwise
process $\left(L_{x}\omega_{n}\right)_{n=1}^{\infty}$ is not a Markov
chain. To ensure fiberwise Markov property, we will consider the special
case of standard systems in the next subsection.

\subsection{The special case of standard systems\label{sec:standard}}

Let $(X,\eta)$ be a standard system in the sense of \cite{FurstenbergGlasner},
that is, $\pi:(X,\eta)\to(Y,\nu)$ a measure preserving extension
and $(Y,\nu)$ a $\mu$-boundary. Then $\nu_{\omega}$ is a point
mass $\mathbb{P}_{\mu}$-a.s. In this case write $\nu_{\omega}=\delta_{\boldsymbol{\beta}_{Y}(\omega)}$
where $\boldsymbol{\beta}_{Y}:G^{\mathbb{N}}\to Y$ factors through
the Poisson boundary of $(G,\mu)$. In the notation of the boundary
map we have $\boldsymbol{\beta}_{\nu}({\rm bnd}(\omega))=\delta_{\boldsymbol{\beta}_{Y}(\omega)}$.
We use the same symbol $\boldsymbol{\beta}$ here, understanding that
for a $\mu$-boundary $(Y,\nu)$, $\boldsymbol{\beta}_{Y}$ and $\boldsymbol{\beta}_{\nu}$
are consistent when identifying points with $\delta$-masses.

Denote by $y\mapsto\eta^{y}$ the disintegration of $\eta$ over $\pi:(X,\eta)\to(Y,\nu)$.
A useful property is that for a standard system, disintegration measures
coincide almost surely with conditional measures:

\begin{proposition}[{\cite[Prop 2.19]{Bader-Shalom}}]\label{disintegration}

Let $(X,\eta)$ be a standard system with the structure $\pi:(X,\eta)\to(Y,\nu)$.
Then 
\[
\eta_{\omega}=\eta^{\boldsymbol{\beta}_{Y}(\omega)}\mbox{ for }\mathbb{P}_{\mu}\mbox{-a.e. }\omega.
\]

\end{proposition}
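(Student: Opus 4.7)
The plan is to invoke the essential uniqueness of the boundary map recalled in \cite[Theorem 2.16]{Bader-Shalom}, which asserts that $\boldsymbol{\beta}_\eta : B \to \mathcal{P}(X)$ is, up to null sets, the only measurable $G$-equivariant map from the Poisson boundary to $\mathcal{P}(X)$ having $\eta$ as its barycenter. Since $\omega \mapsto \eta_\omega$ is by definition $\boldsymbol{\beta}_\eta \circ \mathrm{bnd}$, it suffices to exhibit $\omega \mapsto \eta^{\boldsymbol{\beta}_Y(\omega)}$ as another map with these properties and conclude that the two agree almost surely.

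To this end, I would first verify that the formula $\tilde{\boldsymbol{\beta}}(\mathrm{bnd}(\omega)) := \eta^{\boldsymbol{\beta}_Y(\omega)}$ well-defines an element of $\mathcal{P}(X)$ almost everywhere on $B$. The map $\omega \mapsto \boldsymbol{\beta}_Y(\omega)$ already factors through $B$ by the discussion preceding the proposition (it is the boundary map of the $\mu$-proximal system $(Y,\nu)$), and $y \mapsto \eta^y$ is the measurable disintegration of $\eta$ over $\pi$. The barycenter property is immediate from $(\boldsymbol{\beta}_Y)_\ast \nu_B = \nu$:
\[
\int_B \tilde{\boldsymbol{\beta}}(b)\, d\nu_B(b) \;=\; \int_Y \eta^y\, d\nu(y) \;=\; \eta.
\]

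The $G$-equivariance of $\tilde{\boldsymbol{\beta}}$ is where the standing hypothesis is used. Because $\pi : (X,\eta) \to (Y,\nu)$ is a \emph{measure-preserving} extension, the disintegration is $G$-equivariant in the sense $g.\eta^y = \eta^{g.y}$ for $g \in G$ and $\nu$-a.e.\ $y$. Combined with the equivariance of $\boldsymbol{\beta}_Y$ as a map $B \to Y$, this yields $g.\tilde{\boldsymbol{\beta}}(b) = \tilde{\boldsymbol{\beta}}(g.b)$ for a.e.\ $b \in B$. Applying the uniqueness part of \cite[Theorem 2.16]{Bader-Shalom} gives $\tilde{\boldsymbol{\beta}} = \boldsymbol{\beta}_\eta$, $\nu_B$-a.e., and composing with $\mathrm{bnd}$ transports this equality to the statement $\eta_\omega = \eta^{\boldsymbol{\beta}_Y(\omega)}$ for $\mathbb{P}_\mu$-a.e.\ $\omega$.

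The one delicate point is passing from equivariance that holds pointwise in $g$ and a.e.\ in $y$ to a genuine $G$-map structure on $\tilde{\boldsymbol{\beta}}$; this is the main obstacle but is a standard Fubini argument in the Lebesgue-space category, given that $G$ acts measurably on $(X,\eta)$ and $(Y,\nu)$ and that the disintegration is Borel in a compact model. No new ingredients beyond the definition of a measure-preserving extension and the existence of the boundary map for the $\mu$-proximal base $(Y,\nu)$ are required.
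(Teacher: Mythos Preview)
Your argument is correct and is essentially the standard proof of this fact: construct the candidate $\tilde{\boldsymbol{\beta}}(b)=\eta^{\boldsymbol{\beta}_Y(b)}$, verify equivariance from the measure-preserving hypothesis on $\pi$ together with equivariance of $\boldsymbol{\beta}_Y$, check the barycenter condition via the disintegration formula, and invoke the uniqueness part of \cite[Theorem~2.16]{Bader-Shalom}. Note, however, that the paper does not give its own proof of this proposition at all---it is simply quoted from \cite[Prop.~2.19]{Bader-Shalom}---so there is no in-paper argument to compare against; your sketch is in fact a faithful reconstruction of the Bader--Shalom proof.
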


Consider disintegration $\mathbb{P}_{\mu}=\int_{Y}\mathbb{P}_{\mu}^{y}d\nu(y)$
over the map $\boldsymbol{\beta}_{Y}:\left(G^{\mathbb{N}},\mathbb{P}_{\mu}\right)\to(Y,\nu)$.
Since the measurable spaces we consider are all Borel spaces, regular
conditional distributions exist. By uniqueness of disintegration,
we have that $\mathbb{P}_{\mu}^{y}$ is the conditional distribution
of $\omega$ given $\{\boldsymbol{\beta}_{Y}(\omega)=y\}$. It is
known that this conditional measure is the law of the Doob transformed
random walk determined by the Radon-Nikodym derivative $\varphi_{g}(y)=\frac{dg\nu}{d\nu}(y)$,
see \cite[Section 3]{KaimanovichHyperbolic}. Explicitly, a trajectory
$(\omega_{1},\omega_{2},\dots)$ with law $\mathbb{P}_{\mu}^{y}$
is a Markov chain with transition kernel 
\begin{equation}
\mathbb{P}_{\mu}^{y}(g,A)=\int_{G}{\bf 1}_{A}\left(gs\right)\frac{\varphi_{gs}(y)}{\varphi_{g}(y)}d\mu(s).\label{eq:Doob-Markov}
\end{equation}
Recall that $\vartheta_{\ast}\left(\eta\varcurlyvee\mathbb{P}_{\mu}\right)=\overline{\mathbb{P}}_{\mu}=\int_{X}\overline{\mathbb{P}}_{\mu,x}d\eta(x)$
denotes the disintegration over $W_{\Omega}\to X$.

\begin{lemma}\label{standard} When $(X,\eta)$ is standard with
the structure $\pi:(X,\eta)\to(Y,\nu)$, we have 
\[
\eta\varcurlyvee\mathbb{P}_{\mu}=\int_{Y}\eta^{y}\times\mathbb{P}_{\mu}^{y}d\nu(y)=\int_{X}\eta^{\pi(x)}\times\mathbb{P}_{\mu}^{\pi(x)}d\eta(x).
\]
The coset process with law $\overline{\mathbb{P}}_{\mu,x}$ can be
sampled as follows. First sample $x\in X$ according to the stationary
measure $\eta$; then take the Doob transformed random walk $\omega$
on $G$ conditioned on $\left\{ \boldsymbol{\beta}_{Y}(\omega)=\pi(x)\right\} $.
The projected trajectory $\left(x,\left(L_{x}\omega_{1},L_{x}\omega_{2},\ldots\right)\right)$
on the coset space $L_{x}\backslash G$ has distribution $\overline{\mathbb{P}}_{\mu,x}=\vartheta_{\ast}\mathbb{P}_{\mu}^{\pi(x)}$.
\end{lemma}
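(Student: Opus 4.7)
The plan is to unwind the definition of the stationary joining using Proposition~\ref{disintegration} and then disintegrate over the boundary map $\boldsymbol{\beta}_Y:G^{\mathbb{N}}\to Y$.

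First I would start from the definition
\[
\eta\varcurlyvee\mathbb{P}_{\mu}=\int_{G^{\mathbb{N}}}\eta_{\omega}\times\delta_{\omega}\,d\mathbb{P}_{\mu}(\omega).
\]
Since $(X,\eta)$ is standard over $(Y,\nu)$, Proposition~\ref{disintegration} gives $\eta_{\omega}=\eta^{\boldsymbol{\beta}_{Y}(\omega)}$ for $\mathbb{P}_{\mu}$-a.e.\ $\omega$. Substituting and then disintegrating $\mathbb{P}_{\mu}=\int_{Y}\mathbb{P}_{\mu}^{y}\,d\nu(y)$ along $\boldsymbol{\beta}_{Y}$, one has $\boldsymbol{\beta}_{Y}(\omega)=y$ for $\mathbb{P}_{\mu}^{y}$-a.e.\ $\omega$, so that
\[
\eta\varcurlyvee\mathbb{P}_{\mu}=\int_{Y}\int_{G^{\mathbb{N}}}\eta^{\boldsymbol{\beta}_{Y}(\omega)}\times\delta_{\omega}\,d\mathbb{P}_{\mu}^{y}(\omega)\,d\nu(y)=\int_{Y}\eta^{y}\times\mathbb{P}_{\mu}^{y}\,d\nu(y).
\]
This is the first displayed equality. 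The second equality
\[
\int_{Y}\eta^{y}\times\mathbb{P}_{\mu}^{y}\,d\nu(y)=\int_{X}\eta^{\pi(x)}\times\mathbb{P}_{\mu}^{\pi(x)}\,d\eta(x)
\]
then follows by a routine Fubini computation: test against a bounded measurable $F$, expand $\int\eta^{y}(dx)=\int\int F\,d\mathbb{P}_{\mu}^{y}\,d\eta^{y}(x)$, and use $\eta=\int\eta^{y}d\nu(y)$ together with $\pi_{\ast}\eta^{y}=\delta_{y}$ to see that $\mathbb{P}_{\mu}^{\pi(x)}=\mathbb{P}_{\mu}^{y}$ for $\eta^{y}$-a.e.\ $x$.

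Next I derive the description of $\overline{\mathbb{P}}_{\mu,x}$. The map $\vartheta:X\times G^{\mathbb{N}}\to W_{\Omega}$ fixes the first coordinate, so pushing forward the decomposition above under $\vartheta$ and comparing with $\overline{\mathbb{P}}_{\mu}=\int_{X}\overline{\mathbb{P}}_{\mu,x}\,d\eta(x)$ yields, by uniqueness of disintegration,
\[
\overline{\mathbb{P}}_{\mu,x}=\vartheta(x,\cdot)_{\ast}\mathbb{P}_{\mu}^{\pi(x)}\qquad\text{for }\eta\text{-a.e. }x.
\]
The sampling description is then a direct translation: conditional on $\pi(x)=y$, the trajectory $\omega$ has the Doob-transformed law $\mathbb{P}_{\mu}^{y}$ given by (\ref{eq:Doob-Markov}), and the fiber process is obtained by projecting coordinatewise to $L_{x}\backslash G$.

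The main point requiring care is the identification in the first step: one must justify using $\eta_\omega=\eta^{\boldsymbol{\beta}_Y(\omega)}$ inside the integral after disintegrating along $\boldsymbol{\beta}_Y$. This is essentially bookkeeping once Proposition~\ref{disintegration} is available, because the exceptional $\mathbb{P}_\mu$-null set is also $\mathbb{P}_\mu^y$-null for $\nu$-a.e.\ $y$ by standard properties of regular conditional probabilities on Borel spaces. No further obstacle arises.
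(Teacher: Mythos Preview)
Your proposal is correct and follows essentially the same approach as the paper: start from the definition of the stationary joining, replace $\eta_\omega$ by $\eta^{\boldsymbol{\beta}_Y(\omega)}$ via Proposition~\ref{disintegration}, disintegrate $\mathbb{P}_\mu$ over $\boldsymbol{\beta}_Y$, and use $\mathbb{P}_\mu^y(\boldsymbol{\beta}_Y(\omega)=y)=1$ to pull $\eta^y$ out of the inner integral. The paper phrases this by testing against product sets $A\times B$ rather than working at the level of measures, and it leaves the second equality and the identification $\overline{\mathbb{P}}_{\mu,x}=\vartheta(x,\cdot)_\ast\mathbb{P}_\mu^{\pi(x)}$ implicit, whereas you spell these out; but there is no substantive difference.
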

\begin{proof}
Take a product set $A\times B\subseteq X\times G^{\mathbb{N}}$. By
Proposition \ref{disintegration} and as $\mathbb{P}_{\mu}^{y}(\boldsymbol{\beta}_{Y}(\omega)=y)=1$,
we have 
\begin{align*}
\eta\varcurlyvee\mathbb{P}_{\mu}(A\times B) & =\int_{G^{\mathbb{N}}}\eta_{\omega}(A)\delta_{\omega}(B)d\mathbb{P}_{\mu}(\omega)=\int_{Y}\int_{G^{\mathbb{N}}}\eta^{\boldsymbol{\beta}_{Y}(\omega)}(A){\bf 1}_{B}(\omega)d\mathbb{P}_{\mu}^{y}(\omega)d\nu(y)\\
 & =\int_{Y}\eta^{y}(A)\int_{G^{\mathbb{N}}}{\bf 1}_{B}(\omega)d\mathbb{P}_{\mu}^{y}(\omega)d\nu(y)=\int_{Y}\eta^{y}(A)\mathbb{P}_{\mu}^{y}(B)d\nu(y).
\end{align*}
It follows that $\overline{\mathbb{P}}_{\mu}=\vartheta_{\ast}\left(\eta\varcurlyvee\mathbb{P}_{\mu}\right)$
can be described as in the statement. 
\end{proof}
Note that the Doob transformed random walk on $G$ is a Markov chain.
In order to retain the Markov property when projected to $L_{x}\backslash G$,
we impose the following assumption on the map $x\mapsto L_{x}$.
\begin{description} 
\item [{\hypertarget{AssumpS}{(S)}}] Stabilizer assumption to ensure fiberwise Markov property.
Suppose $(X,\eta)$ is a standard $(G,\mu)$-system with the structure
$\pi:(X,\eta)\to(Y,\nu)$ a measure preserving extension and $(Y,\nu)$
a $\mu$-proximal system. We assume that $L$ is a $G$-equivariant
map $X\to{\rm Sub}(G)$ such that for every $x\in X$, 
\begin{equation}
L_{x}<{\rm Stab}_{G}(\pi(x)),\label{eq:contain}
\end{equation}
that is, $L_{x}$ is contained in the $G$-stabilizer of the point
$\pi(x)\in Y$. 
\end{description}
This assumption is satisfied for instance when $L_{x}={\rm Stab}_{G}(x)$:
since $Y$ is a $G$-factor of $X$, we have $L_{x}<{\rm Stab}_{G}(\pi(x))$.
In particular, assumption \hyperlink{AssumpS}{(\textbf{S})} is always satisfied when $(X,\eta)$
is $G$-invariant.

\begin{proposition} \label{markov}

Under \hyperlink{AssumpS}{(\textbf{S})}, for any $x\in X$, the coset trajectory $(L_{x}\omega_{1},L_{x}\omega_{2},\dots)\in(L_{x}\backslash G)^{\mathbb{N}}$
of law $\overline{\mathbb{P}}_{\mu,x}$ follows a Markov chain whose
transition kernel is given by 
\[
P_{\mu,x}:\left(L_{x}\backslash G\right)\times\mathcal{B}\left(L_{x}\backslash G\right)\to[0,1]
\]
\begin{equation}
P_{\mu,x}\left(L_{x}g,A\right)=\int_{G}{\bf 1}_{A}\left(L_{x}gs\right)\frac{\varphi_{gs}(\pi(x))}{\varphi_{g}(\pi(x))}d\mu(s),\label{eq:transition}
\end{equation}
where $\varphi_{g}(y)=\frac{dg\nu}{d\nu}(y)$ is the Radon-Nikodym
derivative. \end{proposition}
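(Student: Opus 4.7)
By Lemma~\ref{standard}, the coset trajectory law $\overline{\mathbb{P}}_{\mu,x}$ equals the pushforward $\vartheta_{\ast}\mathbb{P}_{\mu}^{\pi(x)}$ of the Doob-transformed random walk on $G$ conditioned to exit to $\pi(x)$. Since $\mathbb{P}_\mu^{\pi(x)}$ is already Markov on $G$ with transition kernel (\ref{eq:Doob-Markov}), the plan is to verify that this kernel descends to a stochastic kernel on $L_x \backslash G$. By the standard criterion for Markov functions, if the projection of $\mathbb{P}_\mu^y(g,\cdot)$ under $g \mapsto L_x g$ depends on $g$ only through the coset $L_x g$, then the image process $(L_x \omega_n)_{n \geq 1}$ is Markov with the descended kernel; this is checked on finite cylinders by induction using the tower property of conditional expectation.

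The key computation uses the cocycle identity
\[
\varphi_{g_1 g_2}(y) = \varphi_{g_1}(y)\,\varphi_{g_2}(g_1^{-1}.y)
\]
for the Radon--Nikodym derivative $\varphi_g(y) = \frac{dg\nu}{d\nu}(y)$. Applied to both numerator and denominator of the Doob ratio, it yields for any $\ell \in G$
\[
\frac{\varphi_{\ell g s}(\pi(x))}{\varphi_{\ell g}(\pi(x))} = \frac{\varphi_{gs}(\ell^{-1}.\pi(x))}{\varphi_g(\ell^{-1}.\pi(x))}.
\]
Assumption \hyperlink{AssumpS}{(\textbf{S})}, $L_x < \mathrm{Stab}_G(\pi(x))$, gives $\ell^{-1}.\pi(x) = \pi(x)$ for every $\ell \in L_x$, hence the ratio is invariant under $g \mapsto \ell g$. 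This shows simultaneously that formula (\ref{eq:transition}) defines a well-defined stochastic kernel $P_{\mu,x}$ on $L_x \backslash G$ and that the projection of the Markov kernel (\ref{eq:Doob-Markov}) is exactly $P_{\mu,x}$.

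The main (modest) obstacle is bookkeeping: one must orient the cocycle identity in the direction that turns left multiplication by $\ell \in L_x$ into evaluation at $\ell^{-1}.\pi(x)$, so that the stabilizer assumption \hyperlink{AssumpS}{(\textbf{S})} is exactly what the cancellation requires. Measurability of $x \mapsto P_{\mu,x}$ and the null-set handling needed to pass from $\eta$-a.e. $x$ to a fiberwise statement are routine, following the conventions already established in Subsection~\ref{subsec:Definition}. Once the compatible descent of the transition kernel is established, the Markov property of $(L_x \omega_n)_{n \geq 1}$ under $\overline{\mathbb{P}}_{\mu,x}$ with kernel $P_{\mu,x}$ follows mechanically from the Markov property of $(\omega_n)$ under $\mathbb{P}_\mu^{\pi(x)}$.
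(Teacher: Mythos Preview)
Your proposal is correct and follows essentially the same approach as the paper: use Lemma~\ref{standard} to identify $\overline{\mathbb{P}}_{\mu,x}$ as the projection of the Doob-transformed walk $\mathbb{P}_\mu^{\pi(x)}$, then verify via assumption~\hyperlink{AssumpS}{(\textbf{S})} that the Doob transition ratio is constant on left $L_x$-cosets so that the Markov property descends. Your cocycle computation showing invariance of the \emph{ratio} $\varphi_{gs}(\pi(x))/\varphi_g(\pi(x))$ under $g\mapsto \ell g$ is in fact slightly more careful than the paper's one-line claim that $g\mapsto \varphi_g(\pi(x))$ itself is constant on $L_xg$ (the latter would additionally need $\varphi_\ell(\pi(x))=1$ for $\ell\in L_x$, which is not used and not needed for the conclusion).
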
 
\begin{proof}
The Doob transformed trajectory $(\omega_{1},\omega_{2},\dots)\in G^{\mathbb{N}}$
with law $\mathbb{P}_{\mu}^{\pi(x)}$ is a Markov chain with transition
kernel given by (\ref{eq:Doob-Markov}) with $y=\pi(x)$. The containment
condition (\ref{eq:contain}) implies that the function 
\[
g\mapsto\varphi_{g}(\pi(x))=\frac{dg\nu}{d\nu}(\pi(x))
\]
is constant on the coset $L_{x}g$, for any $g\in G$. Therefore the
Markov chain of the group trajectory induces the claimed Markov chain
of the coset trajectory. The proposition follows as Lemma \ref{standard}
gives 
\[
\int_{X}\overline{\mathbb{P}}_{\mu,x}d\eta(x)=\overline{\mathbb{P}}_{\mu}=\vartheta_{\ast}\left(\eta\varcurlyvee\mathbb{P}_{\mu}\right)=\int_{X}\vartheta_{\ast}\left(\eta^{\pi(x)}\times\mathbb{P}_{\mu}^{\pi(x)}\right)d\eta(x).
\]
\end{proof}
On the Poisson boundary $\left(B,\nu_{B}\right)$ of the $\mu$-random
walk, since $g.\nu_{B}={\rm bnd}_{\ast}\left(g.\mathbb{P}_{\mu}\right)$
for $g\in G$, the measure $g.\nu_{B}$ can be regarded as the harmonic
measure of the random walk starting from $g$. A similar property
holds in the current setting. Denote by $\overline{\mathbb{P}}_{\mu,x,g}$
the law of the Markov chain on $L_{x}\backslash G$ with transition
kernel $P_{\mu,x}$ as in (\ref{eq:transition}), starting from the
coset $L_{x}g$. Recall the fiberwise boundary maps $\theta(x,\cdot):\left(L_{x}\backslash G\right)^{\mathbb{N}}\to B_{L_{x}\backslash G}$
from the definition of the bundle $(Z,\lambda)$.

\begin{lemma}\label{translate-1}

Under \hyperlink{AssumpS}{(\textbf{S})}, we have for disintegration over $W_{\Omega}\to X$, 
\[
\left(g.\overline{\mathbb{P}}_{\mu}\right)_{x}=\overline{\mathbb{P}}_{\mu,x,g},
\]
and for disintegration over $Z\to X$, 
\[
(g.\lambda)_{x}=\theta(x,\cdot)_{\ast}\left(\overline{\mathbb{P}}_{\mu,x,g}\right).
\]

\end{lemma}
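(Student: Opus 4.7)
The plan is to deduce both identities from the fiberwise Markov description of $\overline{\mathbb{P}}_{\mu}$ (Proposition \ref{markov}) combined with the $G$-equivariance of the maps $\vartheta : X \times G^{\mathbb{N}} \to W_{\Omega}$ and $\theta : W_{\Omega} \to Z$. Lemma \ref{standard} and Proposition \ref{markov} give the disintegration
\[
\overline{\mathbb{P}}_{\mu} = \int_{X} \overline{\mathbb{P}}_{\mu,x}\, d\eta(x),
\]
where $\overline{\mathbb{P}}_{\mu,x} = \vartheta(x,\cdot)_{\ast} \mathbb{P}_{\mu}^{\pi(x)}$ is the Markov chain on $L_{x}\backslash G$ with kernel $P_{\mu,x}$ started from the coset $L_{x}$.

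For the first identity, I apply $g$ and use the equivariance $g.\vartheta(x,\omega) = \vartheta(g.x, g\omega)$ to get
\[
g.\overline{\mathbb{P}}_{\mu} = \int_{X} \vartheta(g.x, \cdot)_{\ast}\bigl(g.\mathbb{P}_{\mu}^{\pi(x)}\bigr)\, d\eta(x).
\]
The key point is to identify $\vartheta(g.x, \cdot)_{\ast}(g.\mathbb{P}_{\mu}^{\pi(x)})$ with $\overline{\mathbb{P}}_{\mu,g.x,g}$. The measure $g.\mathbb{P}_{\mu}^{\pi(x)}$ is the law of the translated sequence $(g\omega_{n})$. Using the Radon--Nikodym cocycle identity $\varphi_{gh}(z) = \varphi_{g}(z)\,\varphi_{h}(g^{-1}.z)$ in the Doob transition formula (\ref{eq:Doob-Markov}), together with $g^{-1}.\pi(g.x) = \pi(x)$, one checks
\[
\frac{\varphi_{ghs}(\pi(g.x))}{\varphi_{gh}(\pi(g.x))} = \frac{\varphi_{hs}(\pi(x))}{\varphi_{h}(\pi(x))},
\]
so $(g\omega_{n})$ is the Doob Markov chain on $G$ with kernel $P^{\pi(g.x)}$ issued from $g$. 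Under \hyperlink{AssumpS}{(\textbf{S})}, $L_{g.x} < \mathrm{Stab}_{G}(\pi(g.x))$, hence the weight $\varphi_{h}(\pi(g.x))$ is constant on cosets $L_{g.x}h$ and the chain descends to $L_{g.x}\backslash G$ with kernel $P_{\mu,g.x}$ issued from $L_{g.x}g$. This is $\overline{\mathbb{P}}_{\mu,g.x,g}$. Renaming $z = g.x$ (so $d\eta(x)$ becomes $d(g.\eta)(z)$) exhibits this as the disintegration of $g.\overline{\mathbb{P}}_{\mu}$ over $W_{\Omega} \to X$, proving $(g.\overline{\mathbb{P}}_{\mu})_{z} = \overline{\mathbb{P}}_{\mu,z,g}$.

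The second identity follows. Since $\lambda = \theta_{\ast}\overline{\mathbb{P}}_{\mu}$ and $\theta$ is $G$-equivariant, $g.\lambda = \theta_{\ast}(g.\overline{\mathbb{P}}_{\mu})$. As $\theta$ sends each fiber $\{x\} \times (L_{x}\backslash G)^{\mathbb{N}}$ into $\{x\} \times B_{L_{x}\backslash G}$ via $\theta(x,\cdot)$, the disintegration over $Z \to X$ is obtained by applying $\theta(x,\cdot)_{\ast}$ fiberwise to the disintegration over $W_{\Omega} \to X$, giving $(g.\lambda)_{x} = \theta(x,\cdot)_{\ast}\overline{\mathbb{P}}_{\mu,x,g}$.

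I expect the main obstacle to be the cocycle manipulation and the careful tracking of base points when passing between fibers over $x$ and over $g.x$; once the cocycle identity is in hand and the containment $L_{x} < \mathrm{Stab}_{G}(\pi(x))$ is used to factor through cosets, the identification of transition kernels is routine.
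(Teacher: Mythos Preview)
Your proof is correct and follows essentially the same route as the paper's. Both arguments hinge on the Radon--Nikodym cocycle identity (you write it as $\varphi_{gh}(z)=\varphi_g(z)\varphi_h(g^{-1}.z)$; the paper uses the equivalent form $\frac{dg_1g_2\nu}{d\nu}(y)=\frac{dg_2\nu}{dg_1^{-1}\nu}(g_1^{-1}.y)$) to show that the $g$-translated Doob chain coincides with the Doob chain over $\pi(g.x)$ started at $g$, and then invoke equivariance of $\vartheta$ and $\theta$. The only organizational difference is that you identify the translated chain first on $G$ and then descend to cosets, while the paper works directly at the coset level by establishing the transition-probability identity $P_{\mu,x}^n(L_xg^{-1},A)=P_{\mu,g.x}^n(L_{g.x},g.A)$ before integrating; this is a matter of presentation, not substance.
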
 
\begin{proof}
We first verify that the transition probabilities satisfy that for
$A\subseteq\{x\}\times L_{x}\backslash G\subseteq W$, 
\begin{equation}
P_{\mu,x}^{n}\left(L_{x}g^{-1},A\right)=P_{\mu,g.x}^{n}\left(L_{g.x},g.A\right).\label{eq:invariance}
\end{equation}
By Proposition \ref{markov}, 
\begin{equation}
P_{\mu,x}^{n}\left(L_{x}g^{-1},A\right)=\int_{G}{\bf 1}_{A}\left(x,L_{x}g^{-1}s\right)\frac{\varphi_{g^{-1}s}(\pi(x))}{\varphi_{g^{-1}}(\pi(x))}d\mu^{(n)}(s),\label{eq:step n}
\end{equation}
where $\varphi_{g}$ is the Radon-Nikodym derivative $\frac{dg\nu}{d\nu}$
on $(Y,\nu)$. If $\left(x,L_{x}g^{-1}s\right)\in A$ then $g.\left(x,L_{x}g^{-1}s\right)\in g.A$,
where $g.\left(x,L_{x}g^{-1}s\right)=\left(g.x,gL_{x}g^{-1}s\right)=\left(g.x,L_{g.x}s\right).$
Therefore ${\bf 1}_{A}\left(x,L_{x}g^{-1}s\right)={\bf 1}_{g.A}\left(g.x,L_{g.x}s\right)$.
Recall the general formula that $\frac{dg_{1}g_{2}\nu}{d\nu}(y)=\frac{dg_{2}\nu}{dg_{1}^{-1}\nu}(g_{1}^{-1}.y)$.
We have then 
\[
\frac{\varphi_{g^{-1}s}(\pi(x))}{\varphi_{g^{-1}}(\pi(x))}=\frac{dg^{-1}s\nu}{dg^{-1}\nu}(\pi(x))=\frac{ds\nu}{d\nu}(g.\pi(x))=\frac{ds\nu}{d\nu}(\pi(g.x)).
\]
Plugging back in (\ref{eq:step n}), we have that 
\[
P_{\mu,x}^{n}\left(L_{x}g^{-1},A\right)=\int_{G}{\bf 1}_{g.A}\left(g.x,L_{g.x}s\right)\frac{ds\nu}{d\nu}(\pi(g.x))d\mu^{(n)}(s)=P_{\mu,g.x}^{n}\left(L_{g.x},g.A\right).
\]
It follows then from the Markov property that for $A\subseteq\{x\}\times\left(L_{x}\backslash G\right)^{\mathbb{N}}\subseteq W_{\Omega}$,
we have 
\begin{equation}
\mathbb{\overline{P}}_{\mu,x,g^{-1}}(A)=\overline{\mathbb{P}}_{\mu,g.x,id}\left(g.A\right).\label{eq:P-translate}
\end{equation}

Next we verify the first identity. Take a subset $C\subseteq W_{\Omega}$,
we have 
\begin{align*}
\left(g.\vartheta_{\ast}\left(\eta\varcurlyvee\mathbb{P}_{\mu}\right)\right)(C) & =\vartheta_{\ast}\left(\eta\varcurlyvee\mathbb{P}_{\mu}\right)\left(g^{-1}.C\right)\\
 & =\int_{X}\delta_{x}\otimes\overline{\mathbb{P}}_{\mu,x,id}\left(g^{-1}.C\right)d\eta(x)\\
 & =\int_{X}\delta_{g.x}\otimes\overline{\mathbb{P}}_{\mu,g.x,g}\left(C\right)d\eta(x)\mbox{ (by }(\ref{eq:P-translate}))\\
 & =\int_{X}\delta_{x}\otimes\overline{\mathbb{P}}_{\mu,x,g}\left(C\right)d\eta(g^{-1}.x)\mbox{ (by change of variable }x\to g.x).
\end{align*}
By uniqueness of disintegration, we conclude that $\left(g.\vartheta_{\ast}\left(\eta\varcurlyvee\mathbb{P}_{\mu}\right)\right)_{x}=\overline{\mathbb{P}}_{\mu,x,g}.$
The second identity follows from equivariance of the map $\theta:W_{\Omega}\to Z$. 
\end{proof}
We now explain another description of the Poisson bundle over a standard
system $(X,\eta)$. Under \hyperlink{AssumpS}{(\textbf{S})}, consider the Doob-transformed random
walk on $G$ with law $\mathbb{P}_{\mu}^{\pi(x)}$. Recall that $\left(Y,\nu\right)$
is assumed to be a quotient of the Poisson bounary $\left(B,\nu_{B}\right)$
of $(G,\mu)$ and we denote by $\beta_{Y}:(B,\nu_{B})\to(Y,\nu)$
the factor map. Let $\nu_{B}=\int_{Y}\nu_{B}^{y}d\nu(y)$ be the disintegration.
The shift operator $\mathcal{S}$ maps $\left(\omega_{1},\omega_{2},\ldots\right)$
to $\left(\omega_{2},\omega_{3},\ldots\right)$. Take the $\mathcal{S}$-invariant
sub-$\sigma$-field $\mathcal{I}^{y}$ in $\left(G^{\mathbb{N}},\mathbb{P}_{\mu}^{y}\right)$.
By Proposition \ref{disintegration}, we have that the fiber $\left(\beta_{Y}^{-1}(\{y\}),\nu_{B}^{y}\right)$
from the disintegration is a model for $\mathcal{I}^{y}$ equipped
with conditional measure $\mathbb{P}_{\mu}^{y}$. Note that (\ref{eq:contain})
implies that $L_{x}$ preserves $\beta_{Y}^{-1}(\{\pi(x)\})$.

\begin{proposition}\label{ergodiccom}

In the notation introduced above, under \hyperlink{AssumpS}{(\textbf{S})}, in the Poisson bundle
$\left(Z,\lambda\right)\to(X,\eta)$ the fiber over $x\in X$ can
be described as the ergodic components of $L_{x}\curvearrowright\left(\beta_{Y}^{-1}(\{y\}),\nu_{B}^{y}\right)$,
where $y=\pi(x)$. \end{proposition}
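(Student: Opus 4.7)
The plan is to identify the shift-invariant $\sigma$-field on the coset trajectory space fiberwise, and interpret it as invariant sets under the $L_x$-action on a fiber of the boundary map. Fix $x\in X$ and set $y=\pi(x)$. By Proposition~\ref{markov}, the coset process $(L_x\omega_n)_{n\geq 1}$ of law $\overline{\mathbb{P}}_{\mu,x}$ is the image under the quotient map $q_x\colon G\to L_x\backslash G$ of the Doob-transformed random walk on $G$ of law $\mathbb{P}_\mu^{y}$. The fiber over $x$ in $Z$ is, by construction, the Mackey realization of the shift-invariant $\sigma$-field $\mathcal{I}_x$ of this projected chain, equipped with its conditional measure.

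First I would record the following identification for the Doob-transformed walk on $G$: the shift-invariant $\sigma$-field $\mathcal{J}^y$ of $(G^\mathbb{N},\mathbb{P}_\mu^y)$ is (up to null sets) generated by the boundary map restricted to this conditional law. Indeed the disintegration $\mathbb{P}_\mu=\int_Y\mathbb{P}_\mu^y\,d\nu(y)$ is precisely the disintegration of $(G^\mathbb{N},\mathbb{P}_\mu)\to (Y,\nu)$ with respect to $\boldsymbol{\beta}_Y$, so $\boldsymbol{\beta}_Y(\omega)=y$ almost surely under $\mathbb{P}_\mu^y$. The boundary map $\mathrm{bnd}\colon(G^\mathbb{N},\mathbb{P}_\mu)\to(B,\nu_B)$ thus restricts $\mathbb{P}_\mu^y$-a.s.\ to the fiber $\boldsymbol{\beta}_Y^{-1}(\{y\})$ carrying the conditional measure $\nu_B^y$; since $(B,\nu_B)$ is the Mackey realization of the full shift-invariant $\sigma$-field of $(G^\mathbb{N},\mathbb{P}_\mu)$, the fiber $(\boldsymbol{\beta}_Y^{-1}(\{y\}),\nu_B^y)$ is a model for $\mathcal{J}^y$. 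This is the content stated right before the proposition.

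Next I would relate $\mathcal{I}_x$ (for the projected chain on $L_x\backslash G$) to $\mathcal{J}^y$. The infinite coset trajectory is obtained by the componentwise map $q_x^\mathbb{N}\colon(\omega_n)\mapsto(L_x\omega_n)$, which commutes with the shift. Therefore a shift-invariant measurable subset $A\subseteq(L_x\backslash G)^\mathbb{N}$ corresponds to its preimage $\widetilde{A}=(q_x^\mathbb{N})^{-1}(A)\subseteq G^\mathbb{N}$, which is both shift-invariant and invariant under diagonal left multiplication by every $\ell\in L_x$ (since $q_x(\ell g)=q_x(g)$). Conversely, any shift-invariant and $L_x$-invariant subset of $G^\mathbb{N}$ is of this form. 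Under the identification of $\mathcal{J}^y$ with $(\boldsymbol{\beta}_Y^{-1}(\{y\}),\nu_B^y)$, the $L_x$-action on trajectories corresponds to the natural $L_x$-action on $B$ (by $G$-equivariance of $\mathrm{bnd}$), and condition \hyperlink{AssumpS}{(\textbf{S})} guarantees $L_x$ preserves the fiber $\boldsymbol{\beta}_Y^{-1}(\{y\})$. Hence $\mathcal{I}_x$ is identified with the sub-$\sigma$-field of $L_x$-invariant Borel sets in $(\boldsymbol{\beta}_Y^{-1}(\{y\}),\nu_B^y)$, whose Mackey realization is the space of ergodic components of $L_x\curvearrowright(\boldsymbol{\beta}_Y^{-1}(\{y\}),\nu_B^y)$.

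The main obstacle I expect is the bookkeeping around the two identifications used in the second step: that the conditional law $\mathbb{P}_\mu^y$ has Poisson boundary equal to the fiber $\boldsymbol{\beta}_Y^{-1}(\{y\})$ rather than some strictly coarser quotient, and the matching of the $L_x$-action on trajectories with the $G$-action on $B$. Both issues are avoided by working at the level of $\sigma$-fields via the disintegration of $\mathbb{P}_\mu$ over $\nu$ and by invoking $G$-equivariance of $\mathrm{bnd}$; the containment $L_x<\mathrm{Stab}_G(\pi(x))$ is what keeps everything inside the correct fiber.
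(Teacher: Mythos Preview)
Your approach is the right one and is exactly what the paper points to (it cites Cannizzo's Proposition~4.1.2 and the commutative diagram in Bowen without further detail). The two-step structure --- first identifying $\mathcal{J}^y$ with $(\boldsymbol{\beta}_Y^{-1}(\{y\}),\nu_B^y)$, then reading $\mathcal{I}_x$ inside it as the $L_x$-invariant part --- is the standard argument.

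There is, however, a genuine imprecision in the ``Conversely'' step. You assert that any shift-invariant and diagonally $L_x$-invariant subset of $G^{\mathbb{N}}$ is of the form $(q_x^{\mathbb{N}})^{-1}(A)$. But $q_x^{\mathbb{N}}$-preimages are invariant under the full componentwise action of $L_x^{\mathbb{N}}$, not merely the diagonal action of $L_x$; diagonal invariance together with shift invariance does not force componentwise invariance at the level of sets. As written, this only gives the inclusion $\mathcal{I}_x\subseteq\{L_x\text{-invariant sets in }B_y\}$, hence only that $Z_x$ is a \emph{factor} of the ergodic component space.

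The clean way to close the reverse inclusion is to phrase it via harmonic functions (which is what lies behind the Bowen/Cannizzo argument). An $L_x$-invariant $\phi\in L^\infty(\boldsymbol{\beta}_Y^{-1}(\{y\}),\nu_B^y)$ gives, by the Poisson formula, a bounded harmonic function $h(g)=\int\phi\,d(g\nu_B)^y$ for the Doob kernel $\mathbb{P}_\mu^y$; since $h(\ell g)=\int(\ell^{-1}.\phi)\,d(g\nu_B)^y=h(g)$, this $h$ is left-$L_x$-invariant and so descends to a function on $L_x\backslash G$. The transition formula~\eqref{eq:transition} shows the descended function is harmonic for $P_{\mu,x}$, hence represented by an element of $\mathcal{I}_x$. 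This establishes the missing surjection and completes the identification.
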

\begin{proof}
The proof is the same as the statement for Poisson boundary of random
walks on Schreier graphs, see \cite[Prop 4.1.2.]{Cannizzo}, also
the commutative diagram on \cite[P. 489]{Bowen}. 
\end{proof}

\subsection{Tail $\sigma$-field\label{subsec:tail} }

In preparation for the random walk entropy formulae, we now consider
the relation between the invariant and tail $\sigma$-fields. We denote
the bundle of coset spaces by 
\[
W:=\left\{ \left(x,L_{x}\omega\right):x\in X,\omega\in G\right\} =\bigsqcup_{x\in X}\{x\}\times L_{x}\backslash G.
\]
On the space $\left(X\times G^{\mathbb{N}},\eta\varcurlyvee\mathbb{P}_{\mu}\right)$,
we have a sequence of random variables 
\begin{align*}
\xi_{n} & :X\times G^{\mathbb{N}}\to W\\
 & (x,\omega)\mapsto(x,L_{x}\omega_{n}).
\end{align*}
In other terms, $\xi_{n}=\theta_{n}\circ\vartheta$, where $\theta_{n}$
is taking time $n$ position of a coset trajectory in a fiber of $W_{\Omega}\to X$.
See commutative diagram in Figure~\ref{fig:diag}.

Let $\mathcal{T}$ be the tail $\sigma$-field of $\left(\xi_{n}\right)_{n=1}^{\infty}$:
\[
\mathcal{T}:=\cap_{n=1}^{\infty}\sigma\left(\xi_{n},\xi_{n+1},\xi_{n+2},\ldots\right).
\]
It is clear from the definitions that $\mathcal{I}\subseteq\mathcal{T}$.

The restriction of the map $\xi_{n}$ defined above to the fiber over
a given $x\in X$ is the map 
\[
\xi_{n}(x,\cdot):G^{\mathbb{N}}\to L_{x}\backslash G.
\]
We write $\xi_{n}^{x}$ for the random variable taking values in $L_{x}\backslash G$
with law $\xi_{n}(x,\cdot)_{\ast}\left(\eta\varcurlyvee\mathbb{P}_{\mu}\right)=\xi_{n\ast}\mathbb{P}_{\mu}^{\pi(x)}$
by Lemma~\ref{standard}. (With a slight abuse, we identify $L_{x}\backslash G$
with $\{x\}\times L_{x}\backslash G$.) By Proposition \ref{markov},
$\left(\xi_{n}^{x}\right)_{n=0}^{\infty}$ is the Markov chain with
transition probabilities $P_{\mu,x}$ starting at the identity coset.
Let $\mathcal{T}_{x}$ be the tail $\sigma$-field of this Markov
chain. It is clear that $\mathcal{T}_{x}$ almost surely coincides
with the restriction of the tail $\sigma$-field $\mathcal{T}$ to
the fiber over $x$.

In order to relate the Furstenberg entropy of the Poisson bundle to
the entropy of the random walk, we need to identify the invariant
and tail $\sigma$-fields, up to null sets. For general Markov chains,
the tail $\sigma$-field and the invariant $\sigma$-field do not
necessarily agree modulo null sets, see \cite{Kaimanovich0-2}. To
ensure that fiberwise $\mathcal{I}_{x}\equiv\mathcal{T}_{x}$ modulo
null sets, we assume 
\begin{description}
\item [{\hypertarget{AssumpT}{(T)}}] Tail assumption. The random walk step distribution $\mu$
on $G$ satisfies that there exists $n\in\mathbb{N}$ and $\epsilon>0$
such that 
\[
d_{{\rm TV}}\left(\mu^{(n)},\mu^{(n+1)}\right)<1-\epsilon.
\]
\end{description}
This assumption is satisfied for example by admissible measures on
a locally compact group, and for $\mu$ on a countable group with
$\mu(e)>0$. As for any $(G,\mu)$-space we have $h_{\mu^{(n)}}(X,\eta)=nh_{\mu}(X,\eta)$,
we will assume without further mention that this tail assumption is
satisfied when we are concerned with Furstenberg entropy realization
problem.

By \cite[Theorem 2.7]{Kaimanovich0-2}, assumption \hyperlink{AssumpT}{(\textbf{T})} is sufficient
to guarantee $\mathcal{I}_{x}\equiv\mathcal{T}_{x}$ modulo null sets:

\begin{corollary}\label{cor:tail=00003Dinv} Suppose the step distribution
$\mu$ on $G$ satisfies assumption \hypertarget{AssumpT}{(T)}. Under \hyperlink{AssumpS}{(\textbf{S})}, for any $x\in X$,
the fiberwise invariant and tail $\sigma$-fields are equal up to
null sets : $\mathcal{I}_{x}\equiv\mathcal{T}_{x}$. A fortiori, the
invariant and tail $\sigma$-fields agree $\mathcal{I}\equiv\mathcal{T}$
modulo null sets. \end{corollary}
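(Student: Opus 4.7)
The strategy is to invoke Kaimanovich's 0--2 law \cite[Theorem 2.7]{Kaimanovich0-2} fiberwise and then assemble. For each $x\in X$, Proposition~\ref{markov} gives that $(\xi^x_n)$ is the Markov chain on $L_x\backslash G$ with transition kernel $P_{\mu,x}$, starting at the identity coset. Kaimanovich's dichotomy says that either $d_{TV}(P_{\mu,x}^{n}(L_x,\cdot), P_{\mu,x}^{n+1}(L_x,\cdot))=1$ for every $n$, or this total variation tends to $0$; in the latter case the fiberwise tail and invariant $\sigma$-fields $\mathcal{T}_x$ and $\mathcal{I}_x$ agree modulo $\overline{\mathbb{P}}_{\mu,x}$-null sets. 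The task is therefore to rule out the singular alternative for $\eta$-a.e.\ $x$.

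Unwinding \eqref{eq:transition}, the distribution $P_{\mu,x}^n(L_x,\cdot)$ is the pushforward under $s\mapsto L_x s$ of the probability measure $\varphi_s(\pi(x))\,d\mu^{(n)}(s)$ on $G$, where $\varphi_g(y)=\frac{dg\nu}{d\nu}(y)$. Since projection does not increase $d_{TV}$, it suffices to produce a nonzero common minorant of $\varphi_\cdot(\pi(x))\,d\mu^{(n)}$ and $\varphi_\cdot(\pi(x))\,d\mu^{(n+1)}$ on $G$. Assumption \hyperlink{AssumpT}{(\textbf{T})} supplies $\beta:=\mu^{(n)}\wedge\mu^{(n+1)}$ with $\|\beta\|\ge\epsilon$. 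Quasi-invariance of $\nu$ means $\varphi_g(y)>0$ for $\nu$-a.e.\ $y$; applying Fubini to the null set $\{(g,y):\varphi_g(y)=0\}$ with respect to $\mu^{(n)}\times\nu$, and using $\pi_\ast\eta=\nu$, one obtains for $\eta$-a.e.\ $x$ that $\varphi_s(\pi(x))>0$ for $\beta$-a.e.\ $s$. Hence $\varphi_\cdot(\pi(x))\,d\beta$ is a nonzero common minorant, giving $d_{TV}(P_{\mu,x}^{n}(L_x,\cdot),P_{\mu,x}^{n+1}(L_x,\cdot))<1$, and therefore $\mathcal{T}_x\equiv\mathcal{I}_x$ by the 0--2 law.

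The inclusion $\mathcal{I}\subseteq\mathcal{T}$ on $(W_\Omega,\overline{\mathbb{P}}_\mu)$ is clear from the definitions. Conversely, given $A\in\mathcal{T}$, its trace $A\cap(\{x\}\times(L_x\backslash G)^\mathbb{N})$ lies in $\mathcal{T}_x$, which equals $\mathcal{I}_x$ modulo $\overline{\mathbb{P}}_{\mu,x}$-null sets by the previous paragraph; a standard measurable selection argument using the Borel structure of $W_\Omega$ glues these fiberwise modifications into an $\mathcal{I}$-measurable set equal to $A$ modulo $\overline{\mathbb{P}}_\mu$-null sets. The delicate step is the fiberwise $d_{TV}<1$ bound, which rests on combining \hyperlink{AssumpT}{(\textbf{T})} with the strict positivity of $\varphi$; the fibers-to-total passage is then routine disintegration.
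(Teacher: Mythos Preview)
Your proof is correct and follows the same route as the paper: verify the hypothesis of Kaimanovich's 0--2 law for the fiberwise Markov chain $(\xi_n^x)$, then assemble. The one substantive difference is in the total variation step. The paper asserts the clean inequality
\[
d_{TV}\bigl(P_{\mu,x}^{n}(L_xg,\cdot),\,P_{\mu,x}^{n+1}(L_xg,\cdot)\bigr)\le d_{TV}\bigl(\mu^{(n)},\mu^{(n+1)}\bigr)
\]
as a direct consequence of the transition formula. Your argument is more cautious: you exhibit the common minorant $\varphi_{\cdot}(\pi(x))\,d\beta$ with $\beta=\mu^{(n)}\wedge\mu^{(n+1)}$ and use quasi-invariance plus Fubini to ensure it is nonzero for $\eta$-a.e.\ $x$, yielding only $d_{TV}<1$. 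This caution is warranted: reweighting two probability measures by a common nonconstant density can \emph{increase} their total variation distance, so the paper's displayed inequality is not obvious as stated, whereas your weaker conclusion is exactly what the 0--2 law requires. Your explicit handling of the fibers-to-global passage via disintegration is also something the paper leaves to the phrase ``a fortiori''.
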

\begin{proof}
By Proposition \ref{markov}, the fiberwise process $\left(\xi_{n}^{x}\right)$
is a Markov chain. Moreover, the formula for the Markov kernel implies
that 
\[
d_{{\rm TV}}\left(P_{\mu,x}^{n}\left(L_{x}g,\cdot\right),P_{\mu,x}^{n+1}\left(L_{x}g,\cdot\right)\right)\le d_{{\rm TV}}\left(\mu^{(n)},\mu^{(n+1)}\right).
\]
Apply \cite[Theorem 2.7]{Kaimanovich0-2} to $\left(\xi_{n}^{x}\right)$,
we conclude that under assumption \hypertarget{AssumpT}{(T)}, $\mathcal{I}_{x}\equiv\mathcal{T}_{x}$
modulo null sets. 
\end{proof}

\subsection{Formulae for Furstenberg entropy\label{subsec:formula}}

Consider a Poisson bundle $(Z,\lambda)$ over a standard system $(X,\eta)$
satisfying assumption \hyperlink{AssumpS}{(\textbf{S})}, together with the fiberwise Markov chain
$(\xi_{n}^{x})$ and the tail $\sigma$-field $\mathcal{T}_{x}$,
as defined above.

We denote the conditional mutual information of $\xi_{n}^{x}$ and
$\mathcal{T}_{x}$ by 
\begin{align*}
\mathrm{I}\left(\xi_{1},\mathcal{T}|X,\eta\right) & :=\int_{X}\mathrm{I}\left(\xi_{1}^{x},\mathcal{T}_{x}\right)d\eta(x).
\end{align*}
Similarly, when $G$ is countable, we denote the conditional Shannon
entropy by 
\[
H\left(\xi_{n}|X,\eta\right)=\int_{X}H\left(\xi_{n}^{x}\right)d\eta(x).
\]
We refer to Appendix~\ref{app:info-ent} for definitions and basic
properties of mutual information and entropy.

The next proposition shows that for a Poisson bundle $(Z,\lambda)$
over a standard stationary system $(X,\eta)$, its Furstenberg entropy
can by expressed as the sum of the Furstenberg entropy of the base
$(X,\eta)$ and mutual information from fiberwise Markov chains. It
will be useful for showing upper-semi continuity properties.

\begin{proposition}\label{entropy-formula} Let $(Z,\lambda)$ be
the Poisson bundle over a standard system $(X,\eta)$ satisfying \hyperlink{AssumpS}{(\textbf{S})}.
Then 
\[
h_{\mu}(Z,\lambda)=h_{\mu}(Y,\nu)+\mathrm{I}\left(\xi_{1},\mathcal{T}|X,\eta\right).
\]
\end{proposition}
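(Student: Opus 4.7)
The plan is to combine the chain rule for KL divergence along the projection $\rho:(Z,\lambda)\to(X,\eta)$ with a Kaimanovich-type identity for the fiber Poisson boundaries.

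First, apply the chain rule for KL divergence to get
\[
D(g.\lambda\,\|\,\lambda)=D(g.\eta\,\|\,\eta)+\int_{X}D\bigl((g.\lambda)_{x}\,\|\,\lambda_{x}\bigr)\,dg.\eta(x).
\]
Integrating against $\mu$ and using that $\pi:(X,\eta)\to(Y,\nu)$ is a measure-preserving extension, whence $h_{\mu}(X,\eta)=h_{\mu}(Y,\nu)$, it reduces to showing that the fiber integral equals $\mathrm{I}(\xi_{1},\mathcal{T}\,|\,X,\eta)$.

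Second, identify the fibers. By $G$-equivariance of the disintegration one has $(g.\lambda)_{x}=g.\lambda_{g^{-1}.x}$, and the change of variable $x\mapsto g.x$ turns the fiber integral into $\int_{G}\int_{X}D(g.\lambda_{x}\,\|\,\lambda_{g.x})\,d\eta(x)\,d\mu(g)$. Lemma~\ref{translate-1} gives $g.\lambda_{x}=\theta(g.x,\cdot)_{*}\overline{\mathbb{P}}_{\mu,g.x,g}$ and $\lambda_{g.x}=\theta(g.x,\cdot)_{*}\overline{\mathbb{P}}_{\mu,g.x,\mathrm{id}}$. Using $D(\theta_{*}P\,\|\,\theta_{*}Q)=D(P|_{\sigma(\theta)}\,\|\,Q|_{\sigma(\theta)})$ together with Corollary~\ref{cor:tail=00003Dinv} to identify the $\sigma$-field generated by $\theta(x,\cdot)$ with the tail $\sigma$-field $\mathcal{T}_{x}$ (up to null sets), and then substituting $y=g.x$, the fiber integral rewrites as
\[
\int_{G}\int_{X}D\bigl(\overline{\mathbb{P}}_{\mu,y,g}|_{\mathcal{T}_{y}}\,\|\,\overline{\mathbb{P}}_{\mu,y,\mathrm{id}}|_{\mathcal{T}_{y}}\bigr)\,dg.\eta(y)\,d\mu(g).
\]

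Third, conclude via Kaimanovich. Since $\pi$ is measure-preserving, $\frac{dg.\eta}{d\eta}(y)=\varphi_{g}(\pi(y))$ with $\varphi_{g}(w)=\frac{dg.\nu}{d\nu}(w)$; this is exactly the Radon-Nikodym factor appearing in the Doob-transformed kernel $P_{\mu,y}$ of Proposition~\ref{markov}, so $P_{\mu,y}(L_{y},L_{y}g)\,d\mu(g)=\varphi_{g}(\pi(y))\,d\mu(g)$. Applying Bayes' rule to the joint law of $(\xi_{1}^{y},\mathcal{T}_{y})$ under $\overline{\mathbb{P}}_{\mu,y,\mathrm{id}}$ yields the Kaimanovich identity
\[
\mathrm{I}(\xi_{1}^{y},\mathcal{T}_{y})=\int_{G}D\bigl(\overline{\mathbb{P}}_{\mu,y,g}|_{\mathcal{T}_{y}}\,\|\,\overline{\mathbb{P}}_{\mu,y,\mathrm{id}}|_{\mathcal{T}_{y}}\bigr)\,\varphi_{g}(\pi(y))\,d\mu(g).
\]
Integrating over $y\in(X,\eta)$ and exchanging the order of integration matches the fiber contribution, yielding the claimed formula.

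The main obstacle is the Kaimanovich-type identity on fibers: it crucially relies on the fiberwise Markov property (Proposition~\ref{markov}) and on the coincidence $\mathcal{I}_{x}=\mathcal{T}_{x}$ under assumption~\hyperlink{AssumpT}{(\textbf{T})} (Corollary~\ref{cor:tail=00003Dinv}), so that the data-processing reduction to the tail $\sigma$-field is available. The other delicate point is bookkeeping in the disintegration chain rule: one must carefully track how the starting points of the fiber Markov chains transform under the equivariance relations of Lemma~\ref{translate-1} when the base point $x$ is translated by $g$.
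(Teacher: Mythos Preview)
Your proof is correct and essentially the same as the paper's: both split $h_\mu(Z,\lambda)$ into a base term $h_\mu(Y,\nu)$ plus a fiberwise KL-divergence, and then identify the latter with the conditional mutual information via Lemma~\ref{translate-1}, Corollary~\ref{cor:tail=00003Dinv}, and the Derriennic formula (packaged in the paper as Lemma~\ref{derriennic-basic}(iv)). The only cosmetic differences are that the paper routes the splitting through the stationary joining (Lemma~\ref{standard-entropy} and Lemma~\ref{standard}) rather than the KL chain rule, and that your step~2 contains a harmless but redundant change-of-variables round-trip --- you can apply Lemma~\ref{translate-1} directly to $(g.\lambda)_x$ and land immediately on $\int_G\int_X D(\overline{\mathbb{P}}_{\mu,x,g}|_{\mathcal{T}_x}\,\|\,\overline{\mathbb{P}}_{\mu,x,e}|_{\mathcal{T}_x})\,dg.\eta(x)\,d\mu(g)$.
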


The proof of this proposition, which follows classical arguments of
Derriennic \cite{Derriennic} is given for completeness at the end
of Appendix~\ref{sec:entropy-formulae}.

In the case where $(X,\eta)=(Y,\nu)$ is a $\mu$-boundary together
with $L:Y\to{\rm Sub}(G)$ the trivial map that $L_{y}=\{id\}$ for
all $y$, then the Poisson bundle over $(Y,\nu)$ is the Poisson boundary
$\left(B,\nu_{B}\right)$ of the $\mu$-random walk. Proposition \ref{entropy-formula}
recovers the known formula: 
\[
h(B,\nu_{B})=h\left(Y,\nu\right)+\int_{Y}{\rm I}(\mathbb{P}_{\mu,1}^{y},\mathcal{T}_{y})d\nu(y)=h\left(Y,\nu\right)+\int_{Y}\inf_{n\in\mathbb{N}}{\rm I}(\mathbb{P}_{\mu,1}^{y},\mathbb{P}_{\mu,n}^{y})d\nu(y).
\]
which follows from combining \cite[Theorem 4.5]{KaimanovichHyperbolic}
and \cite{Derriennic}.

For countable groups, we have the following formulae for Furstenberg
entropy of the Poisson bundle in terms of Shannon entropy of the fiberwise
random walks. This can be viewed as a generalization of the formula
for Poisson bundle over an IRS in \cite{Bowen}.

\begin{theo}[Random walk entropy formula]\label{thm:entropy} Assume
$G$ is a countable group endowed with a probability measure $\mu$
of finite Shannon entropy. Let $(X,\eta)$ be a standard system over
$(Y,\nu)$, with $x\mapsto L_{x}$ satisfying \hyperlink{AssumpS}{(\textbf{S})}. Then the Poisson
bundle $\left(Z,\lambda\right)$ over $\left(X,\eta\right)$ satisfies
\begin{align*}
h_{\mu}(Z,\lambda) & =h_{\mu}(Y,\nu)+\lim_{n\to\infty}\int_{X}\left(H\left(\xi_{n}^{x}\right)-H\left(\xi_{n-1}^{x}\right)\right)d\eta(x)\\
 & =h_{\mu}(Y,\nu)+\lim_{n\to\infty}\frac{1}{n}H\left(\xi_{n}|X,\eta\right).
\end{align*}
In both lines $\lim$ can be replaced by $\inf_{n\in\mathbb{N}}$.
\end{theo}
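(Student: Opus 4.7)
The plan is to start from Proposition \ref{entropy-formula} and replace the fiberwise mutual information $\mathrm{I}(\xi_1^x, \mathcal{T}_x)$ by Shannon-entropy increments of the fiberwise Markov chain, then integrate over $(X,\eta)$ by dominated convergence.

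First, I would fix $x \in X$ and analyze the Markov chain $(\xi_n^x)_{n \ge 0}$ on the countable coset space $L_x \backslash G$ provided by Proposition \ref{markov}, starting from the identity coset. By Corollary \ref{cor:tail=00003Dinv} (using the standing tail assumption \textbf{(T)}, which is automatic on a countable group up to passing to a convolution power of $\mu$), the fiberwise tail and invariant $\sigma$-fields coincide modulo null sets. Running Derriennic's classical Markov-chain entropy computation fiberwise, the identity
\[
H(\xi_n^x) - H(\xi_{n-1}^x) = H(\xi_n^x \mid \xi_{n-1}^x) - H(\xi_{n-1}^x \mid \xi_n^x)
\]
together with the Markov property shows that the increments $\alpha_n(x) := H(\xi_n^x) - H(\xi_{n-1}^x)$ form a nonincreasing sequence, so $\lim_n \alpha_n(x) = \inf_n \alpha_n(x)$. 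A reverse-martingale argument applied to $H(\xi_1^x \mid \xi_n^x, \xi_{n+1}^x, \ldots)$, in which the tail--invariant identification replaces the decreasing family of $\sigma$-fields by $\mathcal{I}_x$ in the limit, identifies this common value with $\mathrm{I}(\xi_1^x, \mathcal{T}_x)$. A Ces\`aro average then yields $\lim_n \tfrac{1}{n} H(\xi_n^x) = \mathrm{I}(\xi_1^x, \mathcal{T}_x)$, and the $\inf$-form of this second limit follows from subadditivity of $n \mapsto H(\xi_n \mid X, \eta)$, itself a consequence of the Markov property after integration against the stationary measure $\eta$.

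Second, I would integrate these fiberwise identities against $\eta$. Since $\alpha_n(x) \ge 0$ is nonincreasing in $n$ for every $x$, monotone (or dominated) convergence applies once the initial integral is controlled: it suffices to check that $\int_X H(\xi_1^x) d\eta(x) \le H(\mu) < \infty$. This bound comes from viewing the joint law of $(x, \xi_1^x)$ as the image under $\vartheta$ of the joint law of $(x, \omega_1)$ with $x \sim \eta$ and $\omega_1 \sim \mu$ under $\eta \varcurlyvee \mathbb{P}_\mu$ (Lemma \ref{standard}), so that the Shannon entropy of the projection to the coset bundle is dominated by that of $\omega_1$. Combining this with Proposition \ref{entropy-formula} gives both claimed equalities and the replacement of $\lim$ by $\inf$.

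The main obstacle is the integrability and domination step. The fiberwise kernel $P_{\mu,x}$ in \eqref{eq:transition} carries the Doob weight $\varphi_{gs}(\pi(x))/\varphi_g(\pi(x))$ relative to the projection of $\mu$, so its conditional entropy can well exceed $H(\mu)$ pointwise in $x$. The required $H(\mu)$-bound emerges only after averaging over the stationary measure $\eta$: using the disintegration $\eta = \int_Y \eta^y d\nu(y)$ together with stationarity, the Radon--Nikodym factors are absorbed into a change of variables, reducing the integral of the fiberwise entropy to the Shannon entropy of a single $\mu$-distributed step. Once this integrability is secured, the remainder is the standard random-walk entropy computation applied fiberwise, combined with Fubini and Proposition \ref{entropy-formula}.
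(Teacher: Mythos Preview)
There is a genuine gap in your fiberwise argument. You assert that for each fixed $x$ the increments $\alpha_n(x)=H(\xi_n^x)-H(\xi_{n-1}^x)$ are nonnegative, nonincreasing, and converge to $\mathrm{I}(\xi_1^x,\mathcal{T}_x)$. This would follow from the pointwise identity $\mathrm{I}(\xi_1^x,\xi_n^x)=H(\xi_n^x)-H(\xi_{n-1}^x)$, but that identity requires $H(\xi_n^x\mid \xi_1^x)=H(\xi_{n-1}^x)$, which is exactly where the Doob transform breaks the random-walk computation. The equivariance relation \eqref{eq:invariance} gives $P_{\mu,x}^{\,n-1}(L_xg,\cdot)=P_{\mu,g^{-1}.x}^{\,n-1}(L_{g^{-1}.x},\cdot)$ up to relabeling, so $H(\xi_n^x\mid \xi_1^x=L_xg)=H(\xi_{n-1}^{\,g^{-1}.x})$, the $(n-1)$-step entropy in a \emph{different} fiber. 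Hence $H(\xi_n^x\mid \xi_1^x)$ is an average of $H(\xi_{n-1}^{\,g^{-1}.x})$ over $g$, not $H(\xi_{n-1}^x)$, and neither monotonicity nor nonnegativity of $\alpha_n(x)$ is available pointwise. Your reverse-martingale step identifies $\lim_n \mathrm{I}(\xi_1^x,\xi_n^x)$, not $\lim_n\alpha_n(x)$.

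The paper's proof avoids this by keeping the two sides separate until integration. Fiberwise one works only with $\mathrm{I}(\xi_1^x,\xi_n^x)$, which \emph{is} nonincreasing for any Markov chain (Lemma~\ref{derriennic-basic}(iii)) and is dominated by $H(\xi_1^x)\in L^1(\eta)$; dominated convergence then gives $\mathrm{I}(\xi_1,\mathcal{T}\mid X,\eta)=\lim_n\int_X\mathrm{I}(\xi_1^x,\xi_n^x)\,d\eta$. The link to entropy increments is made \emph{only after} integrating: Lemma~\ref{lem-mutual} uses exactly the change of variables you describe in your last paragraph (equivariance plus stationarity of $\eta$) to prove the \emph{integrated} identity $\int_X\mathrm{I}(\xi_1^x,\xi_n^x)\,d\eta=H(\xi_n\mid X,\eta)-H(\xi_{n-1}\mid X,\eta)$. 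From there the first formula follows, and the Ces\`aro/subadditivity step gives the second; the $\inf$ forms come from monotonicity of the integrated sequence. Your plan becomes correct if you replace the fiberwise monotonicity claim by this integrated route.
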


A proof of Theorem \ref{thm:entropy} is provided in Appendix~\ref{subsec:random-walk-entropy}.

\section{Upper semi-continuity of Furstenberg entropy\label{subsec:Upper-semi-continuity}}

In this section, we consider a family of standard stationary $(G,\mu)$-systems
$(X,\eta_{p})$, for $p\in[0,1]$, over the same $\mu$-boundary $(Y,\nu)$,
independent of $p$. As in Section~\ref{sec:Poissonbundle}, we denote
by $(Z_{p},\lambda_{p})$ the associated Poisson bundles, where $\lambda_{p}=\vartheta(\eta_{p}\varcurlyvee\mathbb{P}_{\mu})$
are given by the diagram in Subsection \ref{subsec:Definition}. Let
$L:X\to{\rm Sub}(G)$ be a $G$-equivariant map satisfying the assumption
\hyperlink{AssumpS}{(\textbf{S})}.

Our goal is to obtain upper-semi-continuity of the map $p\mapsto h_{\mu}(Z_{p},\lambda_{p})$.
We prove it under two further assumptions on the family of measures
measures $(\eta_{p})$.

\subsection{Two assumptions on a path of systems\label{subsec:Two-assumptions}}

We equip ${\rm Sub}(G)$ with the Chabauty topology and ${\rm Prob}\left({\rm Sub}(G)\right)$
with the weak$^{\ast}$ topology. 
\begin{description} 
\item [{\hypertarget{AssumpC}{(C)}}] Fiberwise continuity assumption. Let $\eta_{p}=\int_{Y}\eta_{p}^{y}d\nu(y)$
denote the disintegration over $(Y,\nu)$. We assume that for $\nu$-a.e.
$y\in Y$, the union of supports $\cup_{p\in[0,1]}\mathrm{supp}\left(L_{\ast}\eta_{p}^{y}\right)$
is included in a closed subset $S_{y}\subset{\rm Sub}(G)$ and the
map $p\mapsto L_{\ast}\eta_{p}^{y}$ is continuous with respect to
the weak$^{\ast}$ topology on $\mathrm{Prob}\left(S_{y}\right)$. 
\end{description}
For Poisson bundles over an IRS, the space $Y$ is a point. In this
case, the fiberwise continuity assumption \hyperlink{AssumpC}{(\textbf{C})} is simply continuity
of the map $p\mapsto L_{\ast}\eta_{p}$.

Our second assumption, slightly technical, is designed to obtain continuity
of the maps $p\mapsto\mathrm{I}(\xi_{1},\xi_{n}|X,\eta_{p})$. It
is possible that this map is necessarily continuous under \hyperlink{AssumpC}{(\textbf{C})} (this
is the case for $G$ discrete), which would permit to avoid Lemma~\ref{info-conti}
below.

\begin{definition} A subset $S\subset{\rm Sub}(G)$ has the property
of \emph{local coincidence in Chabauty topology} if a sequence $(H_{k})_{k\in\mathbb{N}}$
of subgroups in $S$ converges to $\bar{H}\in S$ in Chabauty topology
if and only if for any exhaustion $(K_{n})_{n=1}^{\infty}$ of $G$
by compact subsets, we have 
\[
\sup\left\{ n:H_{k}\cap K_{n}=\bar{H}\cap K_{n}\right\} \underset{k\to\infty}{\longrightarrow}\infty.
\]
\end{definition} 

This property means that two subgroups $H_{1},H_{2}$ in $S$ are
close in Chabauty topology if and only if they coincide on large subsets
$H_{1}\cap K_{n}=H_{2}\cap K_{n}$.

Clearly ${\rm Sub}(G)$ has this property when $G$ is discrete. It
also holds when $S$ is the collection of subgroups of a fixed discrete
subgroup of $G$. Some non-discrete examples appear in Subsection
\ref{subsec:Construction-sl}. The space of one-dimensional subgroups
of $\mathbb{R}^{2}$ does not have local coincidence property.
\begin{description}
\item [{\hypertarget{AssumpL}{(L)}}] Local coincidence assumption. Under \hyperlink{AssumpC}{(\textbf{C})}, we assume that for
$\nu$-a.e. $y\in Y$, the set $S_{y}$ has the local coincidence
property. 
\end{description} 
Assumption \hyperlink{AssumpL}{(\textbf{L})} is empty when the group $G$ is discrete.

\begin{lemma}\label{info-conti} Let $G$ be a locally compact group
with a probability measure $\mu$ of compact support and finite boundary
entropy. Consider a path of systems $(X,\eta_{p})$ as above satisfying
\hyperlink{AssumpS}{(\textbf{S})}, \hyperlink{AssumpC}{(\textbf{C})} and \hyperlink{AssumpL}{(\textbf{L})}. Then the map $p\mapsto\mathrm{I}(\xi_{1},\xi_{n}|X,\eta_{p})$
is continuous. \end{lemma}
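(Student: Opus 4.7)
The strategy is to express $\mathrm{I}(\xi_1, \xi_n | X, \eta_p)$ as an iterated integral over $(Y,\nu)$ and $\mathrm{Sub}(G)$, prove fiberwise (in $y$) continuity of the subgroup integral in $p$ using the local coincidence property, and conclude by dominated convergence.

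By Proposition~\ref{markov}, the joint law of $(\xi_1^x,\ldots,\xi_n^x)$ depends on $x$ only through $(L_x,\pi(x))$: the Doob transform is determined by $y=\pi(x)$ and the coset projection by $L_x$. Hence there exists a measurable $\Phi_n:\mathrm{Sub}(G)\times Y\to[0,+\infty]$ with $\mathrm{I}(\xi_1^x,\xi_n^x)=\Phi_n(L_x,\pi(x))$, and disintegrating $\eta_p$ over $\pi$ yields
\[
\mathrm{I}(\xi_1,\xi_n|X,\eta_p)=\int_Y\int_{\mathrm{Sub}(G)}\Phi_n(H,y)\,d\bigl(L_*\eta_p^y\bigr)(H)\,d\nu(y).
\]

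The key geometric point is that $H\mapsto\Phi_n(H,y)$ is continuous on $S_y$ for $\nu$-a.e.\ $y$. Since $\mu$ has compact support, $\omega_j\in K_n:=(\mathrm{supp}\,\mu)^n$ almost surely for all $j\le n$. Given $H_k\to H$ in $S_y$, the local coincidence property~\hyperlink{AssumpL}{(\textbf{L})} applied to the compact set $K_n K_n^{-1}$ produces $k_0$ with $H_k\cap K_n K_n^{-1}=H\cap K_n K_n^{-1}$ for all $k\ge k_0$. For $g,g'\in K_n$ one then has $H_k g=H_k g'\iff Hg=Hg'$, so the coset equivalence relations on $K_n$ induced by $H_k$ and by $H$ coincide for $k\ge k_0$. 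Pushing the joint law of $(\omega_1,\omega_n)$, concentrated on $K_1\times K_n$, through these matching equivalence relations identifies the joint distributions of $(\xi_1^{H_k},\xi_n^{H_k})$ and $(\xi_1^H,\xi_n^H)$ via a natural bijection; consequently $\Phi_n(H_k,y)=\Phi_n(H,y)$ for $k\ge k_0$. Thus $\Phi_n(\cdot,y)$ is locally constant along convergent sequences in $S_y$.

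To translate the fiberwise weak-$\ast$ continuity from~\hyperlink{AssumpC}{(\textbf{C})} into convergence of the inner integral, I need $\Phi_n(\cdot,y)$ bounded. Data processing applied to the coarsening $L_x\backslash G\to\{e\}\backslash G$ (coarser quotients decrease mutual information) gives $\Phi_n(H,y)\le\Phi_n(\{e\},y)=\mathrm{I}(\omega_1^y,\omega_n^y)$, which further serves as the dominating envelope in $y$ for the outer integral. Its $\nu$-integrability---$\int_Y\mathrm{I}(\omega_1^y,\omega_n^y)\,d\nu(y)=\mathrm{I}(\omega_1;\omega_n\mid Y)<\infty$---follows from the compact support of $\mu$ together with finite boundary entropy: the Markov property gives $\mathrm{I}(\omega_1;\omega_n\mid Y)\le\mathrm{I}(\omega_1;\omega_2\mid Y)$ for $n\ge 2$, and the latter is controlled through the Radon-Nikodym derivatives $\varphi_g(y)$ that govern the Doob transform on the compact one-step support. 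Combining fiberwise continuity with dominated convergence then produces continuity of $p\mapsto\mathrm{I}(\xi_1,\xi_n|X,\eta_p)$. The main obstacle is precisely this last bound on $\mathrm{I}(\omega_1^y,\omega_n^y)$ outside the countable setting, where one cannot simply appeal to finite Shannon entropy of $\mu$ and must instead exploit compactness of the support together with integrability of the Doob weights; everything else is a clean consequence of the local coincidence property~\hyperlink{AssumpL}{(\textbf{L})} and the weak-$\ast$ continuity~\hyperlink{AssumpC}{(\textbf{C})}.
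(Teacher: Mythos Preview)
Your approach is essentially identical to the paper's: disintegrate over $Y$, use the local coincidence property~\hyperlink{AssumpL}{(\textbf{L})} together with compact support to show $H\mapsto\mathrm{I}(\xi_1^H,\xi_n^H)$ is locally constant on $S_y$, invoke the fiberwise weak-$\ast$ continuity~\hyperlink{AssumpC}{(\textbf{C})}, and finish by dominated convergence with the envelope $\mathrm{I}(\mathbb{P}_{\mu,1}^y,\mathbb{P}_{\mu,n}^y)$. The paper handles the step you flag as the ``main obstacle'' by citing Lemma~\ref{derriennic-basic}: part~(i) gives your data-processing bound $\mathrm{I}(\xi_1^x,\xi_n^x)\le\mathrm{I}(\mathbb{P}_{\mu,1}^{\pi(x)},\mathbb{P}_{\mu,n}^{\pi(x)})$, and part~(ii) provides the identity $\int_Y\mathrm{I}(\mathbb{P}_{\mu,1}^y,\mathbb{P}_{\mu,n}^y)\,d\nu(y)=\mathrm{I}(\mathbb{P}_{\mu,1},\mathbb{P}_{\mu,n})-h_\mu(Y,\nu)$, which packages the integrability cleanly rather than arguing ad~hoc through Doob weights.
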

\begin{proof}
Let us denote $\xi_{n}^{L}$ the image in $L\backslash G$ of the
time $n$ position of the Doob transformed random walk of law $\mathbb{P}_{\mu}^{y}$.
Let $K$ denote the support of the measure $\mu$, then time $n$
position belongs to $K^{n}$. By the local coincidence assumption\hyperlink{AssumpL}{(\textbf{L})},
if two subgroups $L,L'\in S_{y}$ are close enough in Chabauty topology,
then the two random variables $(\xi_{1}^{L},\xi_{n}^{L})$ and $(\xi_{1}^{L'},\xi_{n}^{L'})$
have the same law, so $\mathrm{I}(\xi_{1}^{L},\xi_{n}^{L})=\mathrm{I}(\xi_{1}^{L'},\xi_{n}^{L'})$.
It implies that the map $L\mapsto\mathrm{I}(\xi_{1}^{L},\xi_{n}^{L})$
is continuous on $S_{y}$. By compactness of $S_{y}$, the map $P(S_{y})\to\mathbb{R}$
given by $\kappa\mapsto\int_{S_{y}}{\rm I}\left(\xi_{1}^{L},\xi_{n}^{L}\right)d\kappa(L)$
is weak$^{\ast}$ continuous. Composing with the map $p\mapsto L_{\ast}\eta_{p}^{y}$,
the fiberwise continuity assumption \hyperlink{AssumpC}{(\textbf{C})} gives continuity of $p\mapsto\int_{S_{y}}\mathrm{I}(\xi_{1}^{x},\xi_{n}^{x})d\eta_{p}^{y}(x)$
for $\nu$-a.e. $y$. Now by disintegration 
\[
\mathrm{I}(\xi_{1},\xi_{n}|X,\eta_{p})=\int_{X}\mathrm{I}(\xi_{1}^{x},\xi_{n}^{x})d\eta_{p}(x)=\int_{Y}\int_{S_{y}}\mathrm{I}(\xi_{1}^{x},\xi_{n}^{x})d\eta_{p}^{y}(x)d\nu(y)
\]
so the result follows as the convergence is dominated by $\mathrm{I}\left(\mathbb{P}_{\mu,1}^{y},\mathbb{P}_{\mu,n}^{y}\right)$
which belongs to $L^{1}(Y,\nu)$ by Lemma~\ref{derriennic-basic}
in Appendix~\ref{sec:entropy-formulae}. 
\end{proof}

\subsection{For locally compact groups}

We assume here that $G$ is a locally compact group, endowed with
a probability measure $\mu$ of compact support and finite boundary
entropy $h_{\mu}(B,\nu_{B})<\infty$. The following restrictive setting
will be sufficient for our construction later. 

\begin{corollary} \label{cor-mutual} Let $G$ be a locally compact
group with a probability measure $\mu$ of compact support and finite
boundary entropy. Consider a path $(X,\eta_{p})_{p\in[0,1]}$ of standard
stationary $(G,\mu)$-systems over $(Y,\nu)$, satisfying \hyperlink{AssumpS}{(\textbf{S})}, \hyperlink{AssumpC}{(\textbf{C})}
and \hyperlink{AssumpL}{(\textbf{L})}. Then the map $p\mapsto h_{\mu}(Z_{p},\lambda_{p})$ is upper
semi-continuous. \end{corollary}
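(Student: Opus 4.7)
The plan is to rewrite $h_\mu(Z_p,\lambda_p)$ via the mutual information formula of Proposition~\ref{entropy-formula} and then express it as an infimum of continuous functions in $p$. By Proposition~\ref{entropy-formula},
\[
h_\mu(Z_p,\lambda_p) = h_\mu(Y,\nu) + \mathrm{I}(\xi_1,\mathcal{T}\mid X,\eta_p).
\]
The term $h_\mu(Y,\nu)$ does not depend on $p$, so it suffices to establish upper semi-continuity of the map $p\mapsto \mathrm{I}(\xi_1,\mathcal{T}\mid X,\eta_p)$.

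The key step is to identify the fiberwise mutual information with a decreasing limit. Under assumption \hyperlink{AssumpS}{(\textbf{S})}, Proposition~\ref{markov} tells us that for every $x\in X$ the fiberwise process $(\xi_n^x)_{n\ge 0}$ is a Markov chain. By the data-processing inequality applied along the chain, the sequence $n\mapsto \mathrm{I}(\xi_1^x,\xi_n^x)$ is non-increasing, and the Markov property furthermore gives
\[
\mathrm{I}\bigl(\xi_1^x,\sigma(\xi_n^x,\xi_{n+1}^x,\dots)\bigr) = \mathrm{I}(\xi_1^x,\xi_n^x).
\]
Since $\mathcal{T}_x = \bigcap_{n\ge 1}\sigma(\xi_n^x,\xi_{n+1}^x,\dots)$ is a decreasing intersection, continuity of mutual information under decreasing $\sigma$-fields yields
\[
\mathrm{I}(\xi_1^x,\mathcal{T}_x) = \inf_{n\in\mathbb{N}} \mathrm{I}(\xi_1^x,\xi_n^x).
\]

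Integrating this identity over $(X,\eta_p)$ and exchanging infimum with the integral by monotone convergence gives
\[
\mathrm{I}(\xi_1,\mathcal{T}\mid X,\eta_p) = \inf_{n\in\mathbb{N}}\mathrm{I}(\xi_1,\xi_n\mid X,\eta_p).
\]
The exchange is justified by the fact that the integrands are non-negative and non-increasing in $n$; for finiteness note that each $\mathrm{I}(\xi_1^x,\xi_n^x)$ is dominated by $\mathrm{I}(\mathbb{P}_{\mu,1}^{\pi(x)},\mathbb{P}_{\mu,n}^{\pi(x)})$, which is integrable over $(Y,\nu)$ by the finite boundary entropy assumption (Lemma~\ref{derriennic-basic}). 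By Lemma~\ref{info-conti}, under \hyperlink{AssumpS}{(\textbf{S})}, \hyperlink{AssumpC}{(\textbf{C})} and \hyperlink{AssumpL}{(\textbf{L})} each map $p\mapsto \mathrm{I}(\xi_1,\xi_n\mid X,\eta_p)$ is continuous. Being the pointwise infimum of a family of continuous functions, $p\mapsto \mathrm{I}(\xi_1,\mathcal{T}\mid X,\eta_p)$ is upper semi-continuous, and hence so is $p\mapsto h_\mu(Z_p,\lambda_p)$.

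The only delicate point is the interchange of infimum and integration; this is straightforward here thanks to the Markov monotonicity in $n$ and the $L^1$ domination provided by finite boundary entropy. Everything else is an assembly of the previously established identities.
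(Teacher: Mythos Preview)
Your proof is correct and follows essentially the same route as the paper's: both use Proposition~\ref{entropy-formula} to reduce to the mutual information term, express $\mathrm{I}(\xi_1,\mathcal{T}\mid X,\eta_p)$ as $\inf_n \mathrm{I}(\xi_1,\xi_n\mid X,\eta_p)$, and conclude via Lemma~\ref{info-conti}. The only cosmetic difference is that you spell out the monotonicity argument for $\mathrm{I}(\xi_1^x,\mathcal{T}_x)=\inf_n\mathrm{I}(\xi_1^x,\xi_n^x)$ and the exchange of infimum with integral, whereas the paper packages these as Lemma~\ref{derriennic-basic}(iii) together with the evident monotone (dominated) convergence.
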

\begin{proof}
We use Proposition \ref{entropy-formula}. By Lemma~\ref{derriennic-basic},
mutual information satisfies 
\[
\mathrm{I}(\xi_{1},\mathcal{T}|X,\eta_{p})=\inf_{n}\int_{X}\mathrm{I}(\xi_{1}^{x},\xi_{n}^{x})d\eta_{p}(x)=\inf_{n}\mathrm{I}(\xi_{1},\xi_{n}|X,\eta_{p}).
\]
As an infimum of continuous functions is upper semi-continuous, Lemma~\ref{info-conti}
gives the corollary. 
\end{proof}

\subsection{For countable groups}

In the countable setting, we show upper semi-continuity of $p\mapsto h_{\mu}\left(Z_{p},\lambda_{p}\right)$
for the more general class of step distributions with finite Shannon
entropy.


\begin{corollary}\label{upper-semi} Assume $G$ is a countable discrete
group endowed with a probability measure $\mu$ of finite Shannon
entropy. Let $(X,\eta_{p})$ be a path of standard systems satisfying
\hyperlink{AssumpS}{(\textbf{S})} and \hyperlink{AssumpC}{(\textbf{C})}. Then the map $p\mapsto h_{\mu}(Z_{p},\lambda_{p})$ is
upper semi-continuous. \end{corollary}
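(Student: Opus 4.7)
The plan is to mirror Corollary~\ref{cor-mutual}, but using the Shannon entropy formula of Theorem~\ref{thm:entropy} in place of the mutual information formula of Proposition~\ref{entropy-formula}, since $\mu$ has finite Shannon entropy rather than compact support. By Theorem~\ref{thm:entropy},
\[
h_{\mu}(Z_p,\lambda_p) = h_{\mu}(Y,\nu) + \inf_{n\ge 1} \tfrac{1}{n} H(\xi_n | X, \eta_p).
\]
As an infimum of continuous functions is upper semi-continuous, it suffices to prove that for each fixed $n \ge 1$ the map $p \mapsto H(\xi_n | X, \eta_p)$ is continuous. First I would disintegrate over $(Y,\nu)$: by Proposition~\ref{markov} the law of $\xi_n^x$ depends on $x$ only through $L_x$ and $\pi(x)$, so
\[
H(\xi_n | X, \eta_p) = \int_Y \int_{\mathrm{Sub}(G)} \Phi_n^y(L)\, d(L_* \eta_p^y)(L)\, d\nu(y),
\]
where $\Phi_n^y(L) := H(\pi_{L,*}\nu_n^y)$ is the Shannon entropy of the time-$n$ $L$-coset under the Doob transformed walk of law $\mathbb{P}_\mu^{y}$, and $\nu_n^y$ denotes the law on $G$ of its time-$n$ position. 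Monotonicity of Shannon entropy under coarsening gives $\Phi_n^y(L) \le H(\nu_n^y)$, while $\int_Y H(\nu_n^y)\, d\nu(y) \le n H(\mu) < \infty$ furnishes a $\nu$-integrable dominant in $y$.

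The heart of the argument is to show that $L \mapsto \Phi_n^y(L)$ is continuous on $\mathrm{Sub}(G)$ in the Chabauty topology. For $G$ countable discrete, Chabauty convergence $L_k \to L$ is equivalent to pointwise convergence of indicator functions: for every finite $F \subset G$, eventually $L_k \cap F = L \cap F$. Consequently, for any finite $A \subset G$ the $L_k$-coset partition of $A$ coincides with the $L$-coset partition of $A$ once $k$ is large enough (namely once $L_k \cap (AA^{-1}) = L \cap (AA^{-1})$). Combined with the standard continuity of the Shannon entropy functional under truncation of a finite-entropy distribution to an exhausting sequence of finite subsets, this yields $\Phi_n^y(L_k) \to \Phi_n^y(L)$.

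Given this continuity, boundedness of $\Phi_n^y$ on $S_y$ together with assumption~\textbf{(C)} (weak-$*$ continuity of $p \mapsto L_* \eta_p^y$) gives continuity of the inner integral in $p$ for $\nu$-a.e. $y$, and dominated convergence with dominant $H(\nu_n^y) \in L^1(Y,\nu)$ concludes continuity of $p \mapsto H(\xi_n | X, \eta_p)$; taking the infimum over $n$ yields the upper semi-continuity claimed. The main obstacle is precisely the continuity of $\Phi_n^y$: in the locally compact case of Corollary~\ref{cor-mutual}, compact support of $\mu$ made the time-$n$ walk finitely supported and $\Phi_n^y$ locally constant on $S_y$ via assumption~\textbf{(L)}, whereas here the potentially infinite support of the Doob transformed walk forces the truncation argument above to stand in for the now-vacuous assumption~\textbf{(L)}.
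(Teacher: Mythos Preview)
Your proposal is correct and follows essentially the same approach as the paper: the paper also invokes Theorem~\ref{thm:entropy} to write $h_\mu(Z_p,\lambda_p)$ as $h_\mu(Y,\nu)+\inf_n \frac{1}{n}H(\xi_n|X,\eta_p)$ and reduces to proving continuity of $p\mapsto H(\xi_n|X,\eta_p)$, which it carries out (in a separate Lemma~\ref{lem:continuous}) via exactly your truncation-to-finite-sets argument, using the key uniform bound $|H(\theta_{L*}\zeta_{|K})-H(\theta_{L*}\zeta)|\le |H(\zeta_{|K})-H(\zeta)|$ to make the approximation uniform in $L$. The only difference is organizational: the paper first shows the truncated functionals $H_{n,K}(p)$ converge uniformly in $p$ and then proves each is continuous, whereas you establish continuity of $L\mapsto\Phi_n^y(L)$ directly and then integrate; both rest on the same truncation estimate.
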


When $\mu$ has finite support, the statement is also covered by Corollary
\ref{cor-mutual}. We first record the following lemma.

\begin{lemma}\label{lem:continuous} For each $n\ge1$, the map $p\mapsto H(\xi_{n}|X,\eta_{p})$
is continuous. \end{lemma}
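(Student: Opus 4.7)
The plan is to reduce continuity of $p\mapsto H(\xi_n|X,\eta_p)$ to a fiberwise continuity statement plus two applications of bounded convergence. Since $G$ is countable, Chabauty convergence $L_k\to L$ in ${\rm Sub}(G)$ simply means $L_k\cap F=L\cap F$ eventually for every finite $F\subset G$, so assumption \hyperlink{AssumpL}{(\textbf{L})} is automatic here.

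First I disintegrate. By Proposition~\ref{markov} and Lemma~\ref{standard}, the law of $\xi_n^x$ depends on $x$ only through the pair $(L_x,\pi(x))$: writing $Q_n^y$ for the law of $\omega_n$ under the Doob measure $\mathbb{P}_\mu^y$ and $p_L:G\to L\backslash G$ for the quotient map, $\xi_n^x$ has law $(p_{L_x})_*Q_n^{\pi(x)}$. Setting $H_n(L,y):=H((p_L)_*Q_n^y)$, we obtain
$$H(\xi_n|X,\eta_p)=\int_Y\int_{S_y}H_n(L,y)\,d(L_*\eta_p^y)(L)\,d\nu(y).$$
The key technical step is continuity of $L\mapsto H_n(L,y)$ on $S_y$ for each fixed $y$. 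For every $g\in G$, pointwise convergence of coset indicators together with summability of $Q_n^y$ gives $Q_n^y(L_kg)\to Q_n^y(Lg)$. To turn this into convergence of the infinite entropy sum I use $H(Q_n^y)\le nH(\mu)<\infty$: given $\varepsilon>0$, pick a finite $F\subset G$ with $\sum_{g\notin F}-Q_n^y(g)\log Q_n^y(g)<\varepsilon$. For $k$ large the $L$- and $L_k$-coset partitions agree on $F$, so the contribution from the finitely many cosets meeting $F$ converges by continuity of $-x\log x$ on $[0,1]$. The remaining cosets form a partition of $A^c$, where $A$ is the union of cosets meeting $F$; their contribution is controlled by the estimate
$$\sum_{c\subset A^c}-Q_n^y(c)\log Q_n^y(c)\le\sum_{g\in A^c}-Q_n^y(g)\log Q_n^y(g)<\varepsilon,$$
which I obtain by writing the tail sum as $q\log(1/q)+q\tilde H$ (with $q=Q_n^y(A^c)$ and $\tilde H$ the normalized pushforward entropy on the tail cosets) and then using $\tilde H\le H(Q_n^y|_{A^c}/q)$ since pushforward reduces entropy.

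Granted fiberwise continuity, the remainder is formal. For each $y$ the function $L\mapsto H_n(L,y)$ is continuous and bounded by $H(Q_n^y)$ on the compact Chabauty-space $S_y$, so \hyperlink{AssumpC}{(\textbf{C})} yields continuity of $p\mapsto\int_{S_y}H_n\,d(L_*\eta_p^y)$. The integrable dominant $H(Q_n^y)$ satisfies $\int_YH(Q_n^y)\,d\nu(y)\le H(\omega_n)\le nH(\mu)<\infty$, so dominated convergence concludes. The main obstacle is the tail estimate just described: the coset sum is a priori infinite and naïve pointwise convergence of $-x\log x$ terms does not suffice; the finite Shannon entropy of $\mu$ is essential to bound the tail uniformly in $L$ along the Chabauty-convergent sequence.
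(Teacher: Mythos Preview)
Your proof is correct and follows essentially the same strategy as the paper: disintegrate over $Y$, establish continuity of $L\mapsto H_n(L,y)$ on $S_y$ using that $H(Q_n^y)<\infty$ for $\nu$-a.e.\ $y$, then integrate using weak$^*$ continuity from \hyperlink{AssumpC}{(\textbf{C})} and dominated convergence with dominant $H(Q_n^y)$. The only difference is cosmetic: the paper first truncates $Q_n^y$ to a finite set $K$, invokes the inequality $|H(\theta_{L*}\zeta_{|K})-H(\theta_{L*}\zeta)|\le|H(\zeta_{|K})-H(\zeta)|$ to bound the error uniformly in $L$, and then observes that $L\mapsto H(\theta_{L*}\zeta_{|K})$ is locally constant; you instead split the entropy sum into cosets meeting a finite set $F$ and those that do not, use dominated convergence to get $Q_n^y(L_kg)\to Q_n^y(Lg)$ for the head, and bound the tail via $-p\log p\le\sum_j -q_j\log q_j$. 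Both arguments exploit the same finite-entropy tail control, so this is a minor variation rather than a genuinely different route.
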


The proof is in two steps. First approximate by measures with finite
support, then show continuity in this case. The second step is similar
to Lemma~\ref{info-conti}.
\begin{proof}
Let $\mathbb{P}_{\mu,n}^{y}$ denote the law of step $n$ of the Doob
transformed random walk started at identity. Then $\mu^{(n)}=\int_{Y}\mathbb{P}_{\mu,n}^{y}d\nu(y)$.
By concavity of entropy $\int_{Y}H(\mathbb{P}_{\mu,n}^{y})d\nu(y)\le H(\mu^{(n)})$,
so $\mathbb{P}_{\mu,n}^{y}$ has finite entropy for $\nu$-a.e. $y$.

Given a subset $K\subset G$, an arbitrary probability measure $\zeta\in P(G)$
can be decomposed as $\zeta=\zeta(K)\zeta_{|K}+\zeta(K^{c})\zeta_{|K^{c}}$
where $K^{c}$ denotes the complement of $K$ in $G$, and $H(\zeta_{|K_{j}})\to H(\zeta)$
for any exhaustion $(K_{j})$ of $G$. Moreover for $L\in{\rm Sub}(G)$,
we have 
\[
\left|H(\theta_{L\ast}\zeta_{|K})-H(\theta_{L\ast}\zeta)\right|\le\left|H(\zeta_{|K})-H(\zeta)\right|
\]
where $\theta_{L}:G\to L\backslash G$ denotes the quotient map. It
follows that for a given $\varepsilon>0$, we can find a large enough
finite set $K$ and $Y_{1}\subset Y$ such that 
\[
\forall y\in Y_{1},\quad\left|H(\mathbb{P}_{\mu,n|K}^{y})-H(\mathbb{P}_{\mu,n}^{y})\right|\le\varepsilon H(\mathbb{P}_{\mu,n}^{y})\quad\textrm{and}\quad\int_{Y\setminus Y_{1}}H(\mathbb{P}_{\mu,n}^{y})d\nu(y)\le\varepsilon H(\mu^{(n)})
\]
The above inequalities show that the map 
\[
p\mapsto H(\xi_{n}|X,\eta_{p})=\int_{X}H(\xi_{n}^{x})d\eta_{p}(x)=\int_{Y}\int_{\pi^{-1}(y)}H(\theta_{L_{x}\ast}\mathbb{P}_{\mu,n}^{y})d\eta_{p}^{y}(x)d\nu(y)
\]
is the uniform limit of a sequence of maps of the form 
\[
p\mapsto H_{n,K}(p):=\int_{Y}\int_{\pi^{-1}(y)}H(\theta_{L_{x}\ast}\mathbb{P}_{\mu,n|K}^{y})d\eta_{p}^{y}(x)d\nu(y).
\]

There remains to show that the maps $H_{n,K}(p)$ are continuous.
Observe that the map $L\mapsto H(\theta_{L\ast}\mathbb{P}_{\mu,n|K}^{y})$
is continuous. Indeed, if $L,L'\in{\rm Sub}(G)$ are close enough
in Chabauty topology, their coset partitions have the same intersection
with $K$, and so $\theta_{L\ast}\mathbb{P}_{\mu,n|K}^{y}=\theta_{L'\ast}\mathbb{P}_{\mu,n|K}^{y}$.
Then for any $y\in Y$, the map $P(G)\to\mathbb{R}$ given by $\kappa\mapsto\int_{X}H(\theta_{L_{x}\ast}\mathbb{P}_{\mu,n|K}^{y})d\kappa(L)$
is continuous in weak$^{\ast}$ topology. We compose with $p\mapsto L_{\ast}\eta_{p}^{y}$
, which is continuous for $\nu$-a.e. $Y$ by \hyperlink{AssumpC}{(\textbf{C})}, and get continuity
of $p\mapsto\int_{\pi^{-1}(y)}H(\theta_{L_{x}\ast}\mathbb{P}_{\mu,n|K}^{y})d\eta_{p}^{y}(x)$.
As these maps are dominated by $H(\mathbb{P}_{\mu,n}^{y})$ in $L^{1}(Y,\nu)$,
we conclude that $H_{n,K}(p)$ is continuous for each $K$. 
\end{proof}
\begin{proof}[Proof of Corollary \ref{upper-semi}]
By Theorem~\ref{thm:entropy}, we have $h_{\mu}(Z_{p},\lambda_{p})=h_{\mu}(X,\eta_{p})+\inf_{n}\frac{1}{n}H(\xi_{n}|X,\eta_{p})$.
By Lemma~\ref{lem:continuous}, $\frac{1}{n}H(\xi_{n}|X,\eta_{p})$
is a continuous function of $p$. We conclude as an infimum of continuous
functions is upper semi-continuous. 
\end{proof}

\section{Tools for identification of Poisson bundles\label{sec:entropy-criterion}}

Throughout this section assume that $G$ is a discrete countable group
and we are under assumption \hyperlink{AssumpS}{(\textbf{S})} as in Subsection \ref{sec:standard}.
The goal is to show that the entropy criteria for identification of
Poisson boundaries, originally due to Kaimanovich \cite{KaimanovichHyperbolic},
can be adapted to the current setting. Identification of Poisson bundles
is the starting point of the lower semicontinuity argument for Furstenberg
entropy in later sections.

Suppose we have a system, denoted by $\left(M,\bar{\lambda}\right)$,
which is a $G$-factor of the Poisson bundle $\left(Z,\lambda\right)$
that fits into the sequence of $G$-factors 
\begin{equation}
\left(X\times B,\eta\varcurlyvee\nu_{B}\right)\to\left(Z,\lambda\right)\to\left(M,\bar{\lambda}\right)\to\left(X,\eta\right),\label{eq:M-sequence}
\end{equation}
where the composition $\left(X\times B,\eta\varcurlyvee\nu_{B}\right)\to\left(X,\eta\right)$
is the coordinate projection $X\times B\to X$. Since the Poisson
bundle $\left(Z,\lambda\right)$ is a proximal extension of $\left(X,\eta\right)$,
it is a proximal extension of $\left(M,\bar{\lambda}\right)$ as well,
by~\cite[Prop. 4.1]{FurstenbergGlasner}. By~\cite[Prop. 1.9]{NZ3},
it follows that $(Z,\lambda)$ is $G$-measurable isomorphic to $\left(M,\bar{\lambda}\right)$
if and only if $h_{\mu}(Z,\lambda)=h_{\mu}\left(M,\bar{\lambda}\right)$.

Under \hyperlink{AssumpS}{(\textbf{S})}, over $x\in X$, we have that the coset random walk $\left(L_{x}\omega_{n}\right)$
is the projection of the Doob transformed random walk $\mathbb{P}_{\mu}^{\pi(y)}$
to the coset space $L_{x}\backslash G$. Since $(M,\bar{\lambda})$
fits into (\ref{eq:M-sequence}), the fiber of $M$ over a point $x\in X$
is covered by the Poisson boundary of the coset random walk $\left(L_{x}\omega_{n}\right)$.
Denote by $\theta_{M}:W_{\Omega}\to M$ the lift of the map $Z\to M$,
where $W_{\Omega}$ is the space of coset trajectories defined in
subsection \ref{subsec:Definition}. In this setting we have that
in the disintegration of $\overline{\mathbb{P}}_{\mu}$ over $\theta_{M}$,
the fiber measure $\left(\mathbb{\overline{P}}_{\mu}\right)_{(x,\zeta)}$,
considered as a distribution on $G^{\mathbb{N}}$, is the law of a
Markov chain $(L_{x}\omega_{n})$ conditioned on $\theta_{M}(x,L_{x}\omega)=\left(x,\zeta\right)$.
To summarize, we have: 
\begin{fact}
In the setting above, the Doob transform of the coset Markov chain
$\left(L_{x}\omega_{n}\right)$ conditioned on $\theta_{M}(x,L_{x}\omega)=\left(x,\zeta\right)$
has transition kernel 
\[
P_{\mu,x}^{\zeta}\left(L_{x}g,A\right)=\sum_{s\in G}{\bf 1}_{A}\left(L_{x}gs\right)\frac{dgs.\bar{\lambda}}{dg.\bar{\lambda}}(x,\zeta)d\mu(s).
\]
\end{fact}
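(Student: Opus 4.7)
The plan is to derive the formula by combining Proposition~\ref{markov} with the standard Doob $h$-transform for a Markov chain conditioned on a tail event, using the $G$-equivariance of the factor map $(Z,\lambda)\to(M,\bar\lambda)$ to identify the relevant harmonic measures.

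First, I would recall from Proposition~\ref{markov} that under \hyperlink{AssumpS}{(\textbf{S})} the coset trajectory $(L_x\omega_n)$ under $\overline{\mathbb{P}}_{\mu,x}$ is a Markov chain on the countable state space $L_x\backslash G$, with transition kernel $P_{\mu,x}(L_xg,L_xgs)\,d\mu(s)=\tfrac{\varphi_{gs}(\pi(x))}{\varphi_{g}(\pi(x))}\,d\mu(s)$. Writing $\overline{\mathbb{P}}_{\mu,x,g}$ for the law of this chain started at the coset $L_xg$, Lemma~\ref{translate-1} (applied to the $G$-factor $(Z,\lambda)\to(M,\bar\lambda)$ in place of $(Z,\lambda)$ itself) identifies the fiber over $x$ of the translated measure $g.\bar\lambda$ as $(g.\bar\lambda)^{x}=(\theta_{M}(x,\cdot))_{\ast}\overline{\mathbb{P}}_{\mu,x,g}$, i.e.\ the hitting distribution of that coset chain on the fiber of $M$ over $x$.

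Next, I would invoke the standard Doob $h$-transform on a countable state space: if a Markov chain with kernel $P$ has hitting distribution $\mu_{y}$ on its tail quotient from state $y$, then conditioning on the tail value being $\zeta$ yields a Markov chain with kernel $P(y,y')\cdot\tfrac{d\mu_{y'}}{d\mu_{y}}(\zeta)$. Combined with the previous step this gives
\[
P_{\mu,x}^{\zeta}(L_xg,L_xgs)\,d\mu(s)=\frac{\varphi_{gs}(\pi(x))}{\varphi_{g}(\pi(x))}\cdot\frac{d(gs.\bar\lambda)^{x}}{d(g.\bar\lambda)^{x}}(\zeta)\,d\mu(s).
\]

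It remains to match this with the stated formula. Using the disintegration $\bar\lambda=\int_{X}\bar\lambda^{x}\,d\eta(x)$ and the fact that $\pi:(X,\eta)\to(Y,\nu)$ is a measure-preserving extension (so $\tfrac{dg.\eta}{d\eta}(x)=\varphi_{g}(\pi(x))$), a short chain-rule computation yields $\tfrac{dg.\bar\lambda}{d\bar\lambda}(x,\zeta)=\varphi_{g}(\pi(x))\cdot\tfrac{d(g.\bar\lambda)^{x}}{d\bar\lambda^{x}}(\zeta)$, so the ratio for $gs$ over $g$ collapses precisely into $\tfrac{dgs.\bar\lambda}{dg.\bar\lambda}(x,\zeta)$. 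The main obstacle is the identification of $(g.\bar\lambda)^{x}$ with the hitting distribution of the chain started at $L_xg$ on the fiber of $M$: Lemma~\ref{translate-1} delivers the analogous statement on $Z$, and one must check that the factor map $Z\to M$ commutes with taking harmonic measures from arbitrary starting cosets, after which the remaining steps are routine $h$-transform and Radon--Nikodym bookkeeping.
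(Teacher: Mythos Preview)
Your argument is correct and is precisely the reasoning the paper leaves implicit: the Fact is stated in the paper without a formal proof, as a summary of the preceding paragraph on disintegrating $\overline{\mathbb{P}}_{\mu}$ over $\theta_{M}$. Your three steps --- the Markov property from Proposition~\ref{markov}, the identification of $(g.\bar\lambda)^{x}$ as the hitting distribution via Lemma~\ref{translate-1} pushed through the $G$-factor map $Z\to M$, and the standard Doob $h$-transform combined with the Radon--Nikodym chain rule --- are exactly what is needed, and the ``main obstacle'' you flag (that Lemma~\ref{translate-1} is stated for $Z$ rather than $M$) is resolved immediately since $\theta_{M}$ factors through $\theta$ and the map $Z\to M$ is $G$-equivariant.
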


Applying Shannon's theorem, see Proposition \ref{shannon}, to the
extension $(Z,\lambda)\to(M,\bar{\lambda})$, we have that

\begin{corollary}\label{doob-zero} The difference between Furstenberg
entropy of $\left(Z,\lambda\right)$ and $\left(M,\bar{\lambda}\right)$
is the $\eta\varcurlyvee\mathbb{P}_{\mu}$-a.s. limit 
\begin{equation}
h\left(Z,\lambda\right)-h\left(M,\bar{\lambda}\right)=\lim_{n\to\infty}-\frac{1}{n}\log P_{\mu,x}^{\zeta,n}\left(L_{x},L_{x}\omega_{n}\right).\label{eq:entropy difference}
\end{equation}
In particular, $h\left(Z,\lambda\right)=h\left(M,\bar{\lambda}\right)$
if and only if for $\bar{\lambda}$-a.e. $(x,\zeta)$, the Doob transformed
coset Markov chain $\left(L_{x}\omega_{n}\right)$ conditioned on
$\theta_{M}(x,L_{x}\omega)=\left(x,\zeta\right)$ has $0$ asymptotic
entropy.

\end{corollary}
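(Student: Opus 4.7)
The plan is to apply Shannon's theorem (Proposition \ref{shannon}) separately to the stationary $G$-systems $(Z,\lambda)$ and $(M,\bar{\lambda})$. For $\eta\varcurlyvee\mathbb{P}_{\mu}$-a.e.\ pair $(x,\omega)$ this yields
\[
h_{\mu}(Z,\lambda)=\lim_{n\to\infty}-\tfrac{1}{n}\log\tfrac{d\omega_{n}^{-1}\lambda}{d\lambda}(z),\qquad h_{\mu}(M,\bar{\lambda})=\lim_{n\to\infty}-\tfrac{1}{n}\log\tfrac{d\omega_{n}^{-1}\bar{\lambda}}{d\bar{\lambda}}(x,\zeta),
\]
where $z=\theta(x,L_{x}\omega)$ and $(x,\zeta)=\theta_{M}(x,L_{x}\omega)$ is its image in $M$. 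The bulk of the work then consists in rewriting the difference of these two a.s.\ limits as the transition probability appearing on the right-hand side of (\ref{eq:entropy difference}).

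For this I would use the $G$-equivariant disintegration $\lambda=\int_{M}\lambda^{(x,\zeta)}\,d\bar{\lambda}(x,\zeta)$, together with the identity $(g.\lambda)^{g.m}=g.\lambda^{m}$ for the action of $g\in G$ on the fiber measures, to obtain the standard chain-rule decomposition
\[
\tfrac{d\omega_{n}^{-1}\lambda}{d\lambda}(z)=\tfrac{d\omega_{n}^{-1}\bar{\lambda}}{d\bar{\lambda}}(x,\zeta)\cdot\tfrac{d\omega_{n}^{-1}\lambda^{\omega_{n}.(x,\zeta)}}{d\lambda^{(x,\zeta)}}(z).
\]
Subtracting the two Shannon identities, the difference $h_{\mu}(Z,\lambda)-h_{\mu}(M,\bar{\lambda})$ is realized as the a.s.\ limit of $-\tfrac{1}{n}\log$ of the fiberwise Radon--Nikodym factor on the right. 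It then remains to identify this fiberwise factor, evaluated along the trajectory $\omega$, with the $n$-step transition probability $P_{\mu,x}^{\zeta,n}(L_{x},L_{x}\omega_{n})$. This is the content of the Fact recorded just above the corollary: the fiber of $Z\to M$ over $(x,\zeta)$ is the Poisson boundary of the coset Markov chain on $L_{x}\backslash G$ with kernel $P_{\mu,x}^{\zeta}$, and for a countable Markov chain started at a fixed state the harmonic-measure Radon--Nikodym derivative, evaluated along the trajectory itself, equals precisely the $n$-step transition density. This is the same identification underlying Derriennic's ratio-limit theorem and the Kaimanovich entropy criteria revisited in Section~\ref{sec:entropy-criterion}.

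For the ``in particular'' statement, the extension $(Z,\lambda)\to(M,\bar{\lambda})$ is proximal because $(Z,\lambda)\to(X,\eta)$ is a proximal extension of its factor $(X,\eta)$ and $(M,\bar{\lambda})$ sits between them; by \cite[Prop.~1.9]{NZ3} it is therefore a measurable $G$-isomorphism if and only if $h_{\mu}(Z,\lambda)=h_{\mu}(M,\bar{\lambda})$, which by (\ref{eq:entropy difference}) is equivalent to the nonnegative a.s.\ limit vanishing $\bar{\lambda}$-almost everywhere, i.e.\ to the conditioned coset Markov chain having zero asymptotic entropy. The step I expect to be the main obstacle is the precise bookkeeping in the second paragraph: one must check that the $G$-equivariance of the disintegration, combined with the explicit Doob formula defining $P_{\mu,x}^{\zeta}$, produces the transition density on the nose rather than only up to multiplicative constants that could conceivably survive on the $\tfrac{1}{n}\log$ scale.
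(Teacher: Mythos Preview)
Your overall strategy is reasonable but differs from the paper's, and the identification step contains a genuine gap.

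The paper's route is a single application of Proposition~\ref{shannon} to the extension $(Z,\lambda)\to(M,\bar{\lambda})$: the Fact immediately preceding the corollary says the fiberwise process over $M$ is again a Markov chain, with transition kernel $P_{\mu,x}^{\zeta}$; hence the proof of Proposition~\ref{shannon} (Kingman's subadditive ergodic theorem applied to $-\log P_{\mu,x}^{\zeta,n}(L_x,L_x\omega_n)$, followed by the entropy formulae of Theorem~\ref{thm:entropy}) runs verbatim with $(M,\bar{\lambda})$ in the role of the base. The output is already the transition-probability limit, with no Radon--Nikodym bookkeeping needed.

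Your approach instead produces, after the chain rule, the a.s.\ limit
\[
h_{\mu}(Z,\lambda)-h_{\mu}(M,\bar{\lambda})=\lim_{n\to\infty}\tfrac{1}{n}\log\tfrac{d(\omega_n\lambda)^{(x,\zeta)}}{d\lambda^{(x,\zeta)}}(z).
\]
Two remarks. First, a labeling issue: the pointwise Radon--Nikodym statements you invoke for $(Z,\lambda)$ and $(M,\bar{\lambda})$ are Proposition~\ref{birkhoff}, not Proposition~\ref{shannon}; the latter is stated for Poisson bundles, and $(M,\bar{\lambda})$ is merely an intermediate factor. Second, and more substantively, your claim that ``the harmonic-measure Radon--Nikodym derivative, evaluated along the trajectory itself, equals precisely the $n$-step transition density'' is false at finite $n$. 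The fiberwise factor $\tfrac{d(\omega_n\lambda)^{(x,\zeta)}}{d\lambda^{(x,\zeta)}}(z)$ is a Martin-kernel ratio for the chain $P_{\mu,x}^{\zeta}$, not the quantity $P_{\mu,x}^{\zeta,n}(L_x,L_x\omega_n)$; already for the simple random walk on $F_k$ these differ pointwise. What is true is that the two quantities share the same $\tfrac{1}{n}\log$ asymptotic --- but establishing that is precisely the content of Shannon's theorem for the Doob-transformed chain, i.e.\ the paper's argument. So as written the identification step is circular: to close it you must run the Kingman subadditive argument you were hoping to bypass.

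Your treatment of the ``in particular'' clause is fine and matches the discussion the paper gives just before the corollary.
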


\subsection{Strip approximation for bundles over IRS\label{subsec:strip}}

The strip approximation criterion, due to Kaimanovich \cite[Section 6]{KaimanovichHyperbolic},
is a powerful tool for identification of Poisson boundary in the presence
of some form of hyperbolicity.

Considers bilateral paths in $G^{\mathbb{Z}}$. Given a step distribution
$\mu$ on $G$, denote by $\check{\mu}$ the reflected measure $\check{\mu}(g)=\mu\left(g^{-1}\right)$.
Take the product space $\left(G^{\mathbb{N}},\mathbb{P}_{\mu}\right)\times\left(G^{\mathbb{N}},\mathbb{P}_{\check{\mu}}\right)$
and the map $\left(x,\check{x}\right)\mapsto\omega\in G^{\mathbb{Z}}$,
where $\omega_{0}=id$, $\omega_{n}=x_{n}$, $\omega_{-n}=\check{x}_{n}$
for $n\in\mathbb{N}$. We write $\tilde{\mathbb{P}}_{\mu}$ for the
pushforward of $\mathbb{P}_{\mu}\times\mathbb{P}_{\check{\mu}}$ under
this map and call $\left(G^{\mathbb{Z}},\tilde{\mathbb{P}}_{\mu}\right)$
a bilateral path space. Denote by $\left(B_{+},\nu_{+}\right)$ the
Poisson boundary ($\left(B_{-},\nu_{-}\right)$ resp.) of the $\mu$-random
walk ($\check{\mu}$-random walk resp.) on $G$ and ${\rm bnd}_{+}:\left(G^{\mathbb{N}},\mathbb{P}_{\mu}\right)\to\left(B_{+},\nu_{+}\right)$
the associated boundary map (${\rm bnd}_{-}$ resp.). Then we have
a map from the bilateral paths to the product of the Poisson boundaries
\begin{align*}
{\rm bnd}_{+}\times{\rm bnd}_{-} & :\left(G^{\mathbb{Z}},\tilde{\mathbb{P}}_{\mu}\right)\to\left(B_{+},\nu_{+}\right)\times\left(B_{-},\nu_{-}\right)\\
 & \omega\mapsto\left({\rm bnd}_{+}\left(\left(\omega_{n}\right)_{n\in\mathbb{N}}\right),{\rm bnd}_{-}\left(\left(\omega_{-n}\right)_{n\in\mathbb{N}}\right)\right).
\end{align*}

Bilateral path space does not fit into the general stationary joining
framework considered in Section~\ref{sec:Poissonbundle}. However
when the measure $\eta$ in the base space $(X,\eta)$ is $G$-invariant,
we may take the product space $\left(X\times G^{\mathbb{Z}},\eta\times\tilde{\mathbb{P}}_{\mu}\right)$.
As in Subsection \ref{subsec:Stationary-joining}, it admits skew
transform 
\[
\tilde{T}(x,\omega)=\left(\omega_{1}^{-1}.x,\left(\omega_{1}^{-1}\omega_{n+1}\right)_{n\in\mathbb{Z}}\right).
\]
In the same way as Fact \ref{pmp}, one can verify that if $\left(X,\eta\right)$
is an ergodic p.m.p. $G$-system, then $\tilde{T}\curvearrowright\left(X\times G^{\mathbb{Z}},\eta\times\widetilde{\mathbb{P}}_{\mu}\right)$
is a p.m.p. ergodic transformation.

In this setting, the following version of strip approximation holds.
Suppose in both positive and negative time directions, we have candidates
for the Poisson bundle that fit into 
\[
\left(X\times B_{\pm},\eta\times\nu_{\pm}\right)\to\left(Z,\lambda_{\pm}\right)\to\left(M_{\pm},\bar{\lambda}_{\pm}\right)\to\left(X,\eta\right)
\]
respectively. Denote by 
\begin{align*}
(X\times G^{\mathbb{Z}},\eta\times\widetilde{\mathbb{P}}_{\mu}) & \to(M_{\pm},\bar{\lambda}_{\pm})\\
(x,\omega) & \mapsto\left(x,\zeta_{\pm}(x,\omega)\right)
\end{align*}
the maps factorising the above. Further assume that $G$ is equipped
with a distance $d$. Denote by $|g|=d(\mathrm{id}_{G},g)$ and $B_{L\backslash G}(R)$
the ball of radius $R$ centred at $L$ in the coset space $L\backslash G$
with induced distance. For example, when $G$ is finitely generated,
these can be word distances in the group and Schreier graphs.

\begin{theo}\label{strip} Let $G$ be a countable group with $\mu$
of finite entropy. Suppose $(X,\eta)$ is $G$-invariant. Assume that
we have a measurable assignment of strips 
\[
S(x,\omega)=S\left(x,\zeta_{+}(x,\omega),\zeta_{-}(x,\omega)\right)\subseteq L_{x}\backslash G
\]
that satisfies 
\begin{description}
\item [{(i)}] compatibility: $L_{x}\omega_{1}\in S(x,\omega)$ if and only
if $\omega_{1}^{-1}L_{x}\omega_{1}\in S\left(\tilde{T}\left(x,\omega\right)\right)$, 
\item [{(ii)}] positive probability of containing the root: $\left(\eta\times\widetilde{\mathbb{P}}_{\mu}\right)\left(L_{x}\in S\left(x,\omega\right)\right)>0,$ 
\item [{(iii)}] subexponential size: for any $\epsilon>0$ and $\eta\times\widetilde{\mathbb{P}}_{\mu}$-a.e.
$(x,\omega)$, 
\[
\limsup\frac{1}{n}\log\left|S(x,\omega)\cap B_{L_{x}\backslash G}\left(|L_{x}\omega_{n}|\right)\right|\le\epsilon.
\]
\end{description}
Then $\left(M_{+},\bar{\lambda}_{+}\right)$ is $G$-isomorphic to
the Poisson bundle $\left(Z_{+},\lambda_{+}\right)$; and $\left(M_{-},\bar{\lambda}_{-}\right)$
is $G$-isomorphic to the Poisson bundle $\left(Z_{-},\lambda_{-}\right)$.
\end{theo}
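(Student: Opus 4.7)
The plan is to adapt Kaimanovich's classical strip criterion to the coset-Markov setting of Proposition \ref{markov}. By the proximality remark opening this section together with \cite[Prop. 1.9]{NZ3}, it suffices to prove $h_\mu(Z_+,\lambda_+)=h_\mu(M_+,\bar{\lambda}_+)$, which via Corollary \ref{doob-zero} amounts to showing that for $\bar{\lambda}_+$-a.e. $(x,\zeta_+)$,
$$
\lim_{n\to\infty}-\tfrac{1}{n}\log P_{\mu,x}^{\zeta_+,n}(L_x,L_x\omega_n)=0.
$$
The argument for $(M_-,\bar{\lambda}_-)$ will follow by exchanging the roles of $\mu$ and $\check{\mu}$, so I focus on the positive direction.

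To access past and future simultaneously I would work in the bilateral space $(X\times G^{\mathbb{Z}},\eta\times\tilde{\mathbb{P}}_\mu)$, which because $\eta$ is $G$-invariant carries a measure-preserving ergodic skew transform $\tilde{T}$. Conditioning further on $\zeta_-(x,\omega)$ can only decrease the conditional transition probability $P_{\mu,x}^{\zeta_+,n}(L_x,L_x\omega_n)$, and independence of past and future at time $0$ under $\tilde{\mathbb{P}}_\mu$ makes this compatible with the Doob formalism of Proposition~\ref{markov}. The heart of the proof is then a Kaimanovich-type strip estimate: conditionally on both boundary values, the coset chain at time $n$ is essentially uniformly distributed on the strip intersected with the observed ball, which translates into
$$
P_{\mu,x}^{\zeta_+,\zeta_-,n}(L_x,L_x\omega_n)\;\ge\;\frac{\mathbb{1}\{L_x\omega_n\in S(x,\omega)\}}{\bigl|S(x,\omega)\cap B_{L_x\backslash G}(|L_x\omega_n|)\bigr|}.
$$
Combined with the subexponential growth (iii), taking $-\tfrac{1}{n}\log$ gives an $o(1)$ bound along the event $\{L_x\omega_n\in S(x,\omega)\}$.

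To upgrade this to an almost-sure statement along all $n$, I would invoke the compatibility (i), which, using $L_{\omega_n^{-1}.x}=\omega_n^{-1}L_x\omega_n$ from the $G$-equivariance of $x\mapsto L_x$, is equivalent to the identity $\tilde{T}^{-n}(A)=\{L_x\omega_n\in S(x,\omega)\}$ for $A=\{(x,\omega):L_x\in S(x,\omega)\}$. By (ii) $A$ has positive $\eta\times\tilde{\mathbb{P}}_\mu$-mass, so Birkhoff's theorem for $\tilde T$ yields positive density of times $n$ with $L_x\omega_n\in S(x,\omega)$. Shannon's theorem (Proposition \ref{shannon}) ensures that $-\tfrac{1}{n}\log P_{\mu,x}^{\zeta_+,n}(L_x,L_x\omega_n)$ converges almost surely to a deterministic constant, and since this constant is bounded above along a positive-density subsequence by an arbitrarily small $\varepsilon$, it must vanish; Corollary \ref{doob-zero} then yields the identification.

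The main obstacle is establishing the strip inequality rigorously in the coset space $L_x\backslash G$ under the Doob twist coming from $\bar{\lambda}_+$ and $\bar{\lambda}_-$: one must keep track of the Radon--Nikodym cocycle $ds\bar{\lambda}_\pm/d\bar{\lambda}_\pm$ along each length-$n$ coset trajectory, and check that Kaimanovich's reflection/uniform-mass argument on the strip descends cleanly from $G$ to $L_x\backslash G$ under the stabilizer hypothesis \hyperlink{AssumpS}{(\textbf{S})}. Finiteness of the Shannon entropy of $\mu$ is needed to apply Shannon's theorem and to guarantee integrability of the entropy densities appearing in the argument.
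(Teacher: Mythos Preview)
Your high-level plan mirrors the paper's proof in two key ingredients---Birkhoff for the positive density of strip visits, and the entropy formula of Corollary~\ref{doob-zero} for the a.s.\ limit---but the central estimate you propose is not the one that makes the argument work, and as written it is not correct.

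The ``strip inequality''
\[
P_{\mu,x}^{\zeta_+,\zeta_-,n}(L_x,L_x\omega_n)\;\ge\;\frac{\mathbb{1}\{L_x\omega_n\in S(x,\omega)\}}{\bigl|S(x,\omega)\cap B_{L_x\backslash G}(|L_x\omega_n|)\bigr|}
\]
does not follow from any reflection or uniform-mass argument. Knowing that $L_x\omega_n$ lies in the strip does not force the conditional law at time $n$ to be supported on $S\cap B$, nor to put mass at least $1/|S\cap B|$ on the realized point; many cosets outside the strip carry positive conditional mass, and even on the strip the mass need not be spread uniformly. (Your side remark that conditioning on $\zeta_-$ ``can only decrease'' the forward transition probability is also off: by independence of the two half-trajectories under $\tilde{\mathbb{P}}_\mu$, conditioning on $\zeta_-$ leaves the forward-conditioned kernel unchanged, so neither inequality holds in a useful direction.) You flag establishing this inequality as ``the main obstacle,'' and indeed it is not a bound one can prove in this generality; neither Kaimanovich's original proof nor any variant I know proceeds through a pointwise lower bound of this form.

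The paper's proof runs the estimate in the opposite direction. One assumes toward a contradiction that $h_\mu(Z_+,\lambda_+)-h_\mu(M_+,\bar\lambda_+)=\delta>0$; then Corollary~\ref{doob-zero} gives an \emph{upper} bound $P_{\mu,x}^{\zeta_+,n}(L_x,L_x\omega_n)\le e^{-2\epsilon n}$ on a set of measure $\ge 1-p$. Integrating over $M_+$ and summing over the at most $|S(x,\omega)\cap B(|L_x\omega_n|)|$ candidate positions, the subexponential size (iii) yields $\eta\times\widetilde{\mathbb{P}}_\mu\bigl(L_x\omega_n\in S(x,\omega)\bigr)\le e^{-\epsilon n}$, and Borel--Cantelli forces $L_x\omega_n\in S(x,\omega)$ only finitely often. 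This contradicts the positive density of strip visits that you correctly obtain from Birkhoff via (i) and (ii). The point is that one never needs a pointwise lower bound on conditional transition probabilities---only the upper bound coming from the assumed entropy gap, combined with a union bound over the small strip.
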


\begin{remark}

In Theorem \ref{strip}, we require the compatibility condition (i)
and positive probability of the event that the root is on the strip
(ii) as a replacement for $G$-equivariance of strips in the original
Kaimanovich strip criterion. An illustration can be found in Figure~\ref{fig:strip}.

\end{remark} 
\begin{proof}[Proof of Theorem \ref{strip}]

The proof relies on the Birkhoff ergodic theorem applied to the skew
transformation $\tilde{T}$ and Shannon's Theorem as in Proposition~\ref{shannon}.

Denote by $A$ the subset $\left\{ (x,\omega):L_{x}\in S\left(x,\omega\right)\right\} $
of $X\times G^{\mathbb{Z}}$. By (ii), $A$ has positive probability
under $\eta\times\widetilde{\mathbb{P}}_{\mu}$. Apply the Birkhoff
ergodic theorem to $\tilde{T}\curvearrowright\left(X\times G^{\mathbb{Z}},\eta\times\widetilde{\mathbb{P}}_{\mu}\right)$,
we have that for a.e. $(x,\omega)$, the set of times $n$ such that
$\tilde{T}^{n}(x,\omega)\in A$ has positive limiting frequency. Note
that 
\begin{align*}
\left\{ \tilde{T}^{n}(x,\omega)\in A\right\}  & =\left\{ L_{\omega_{n}^{-1}.x}\in S\left(\omega_{n}^{-1}.x,\left(\omega_{n}^{-1}\omega_{m+n}\right)_{m\in\mathbb{Z}}\right)\right\} \\
 & =\left\{ L_{x}\omega_{n}\in S\left(x,\omega\right)\right\} \ \ \mbox{by compatibility (i)}.
\end{align*}
It follows that for a.e. $(x,\omega)$, the set $\left\{ n\in\mathbb{N}:L_{x}\omega_{n}\in S\left(x,\omega\right)\right\} $
has positive density.

Assume by contradiction that $h\left(Z_{+},\lambda_{+}\right)-h\left(M_{+},\bar{\lambda}_{+}\right)=\delta>0$
and take $\epsilon=\frac{\delta}{3}$. By Corollary~\ref{doob-zero},
for any $p>0$ there is a subset $\tilde{V}\subset X\times G^{\mathbb{Z}}$
with $\left(\eta\times\tilde{\mathbb{P}}_{\mu}\right)(\tilde{V})\ge1-p$
and there is $N\in\mathbb{N}$ such that for $(x,\omega)\in\tilde{V}$
and $n\ge N$ 
\begin{align}
P_{\mu,x}^{\zeta_{+}(x,\omega),n}\left(L_{x},L_{x}\omega_{n}\right)\le e^{-2n\epsilon}.\label{0ent2}
\end{align}
Recall that as in previous sections we have the disintegration of
measure over $M_{+}$ that $\eta\times\mathbb{P}_{\mu}=\int_{M_{+}}\left(\eta\times\mathbb{P}_{\mu}\right)_{(x,\zeta_{+})}d\bar{\lambda}_{+}$,
and moreover, fiberwise $P_{\mu,x}^{\zeta_{+}}$ is the transition
kernel of the Doob transformed random walk conditioned on $\zeta_{+}(x,\omega)=(x,\zeta_{+})$.
We have then 
\begin{align*}
\eta\times\widetilde{\mathbb{P}}_{\mu} & \left(L_{x}\omega_{n}\in S(x,\omega),(x,\omega)\in\tilde{V}\right)\\
 & =\eta\times\widetilde{\mathbb{P}}_{\mu}(L_{x}\omega_{n}\in S(x,\omega)\cap B_{L_{x}\backslash G}\left(|L_{x}\omega_{n}|\right),(x,\omega)\in\tilde{V})\\
 & =\int_{M_{+}}\left(\eta\times\widetilde{\mathbb{P}}_{\mu}\right)_{x,\zeta_{+}}\left(L_{x}\omega_{n}\in S(x,\omega)\cap B_{L_{x}\backslash G}\left(|L_{x}\omega_{n}|\right),(x,\omega)\in\tilde{V}\right)d\bar{\lambda}_{+}(x,\zeta_{+})\\
 & \le\int_{M_{+}}\left(\sup_{(x,\omega)\in\tilde{V}}P_{\mu,x}^{\zeta_{+}(x,\omega),n}\left(L_{x},L_{x}\omega_{n}\right)\right)\left|S(x,\omega)\cap B_{L_{x}\backslash G}\left(|L_{x}\omega_{n}|\right)\right|d\bar{\lambda}_{+}(x,\zeta_{+})\\
 & \le e^{-n\epsilon},
\end{align*}
where the last line uses the bound (\ref{0ent2}) and the subexponential
size assumption (iii). By the Borel-Cantelli lemma, and as $p$ is
arbitrary, we deduce that 
\[
\eta\times\widetilde{\mathbb{P}}_{\mu}\left(\left\{ L_{x}\omega_{n}\in S(x,\omega)\mbox{ for infinitely many }n\in\mathbb{N}\right\} \right)=0.
\]
contradicting the positive limiting frequency of times spent on the
strip. The statement for $(M_{-},\lambda_{-})$ follows from applying
the same argument to negative indices. 
\end{proof}
\begin{remark}

In the setting of general locally compact groups, one can not apply
the subadditive ergodic theorem to derive an analogue of Proposition
\ref{shannon}. The recent work of Forghani and Tiozzo \cite{FT}
shows a version of Shannon's theorem for random walks on locally compact
groups; and the techniques there could be adapted to our setting.
We will consider the Poisson bundle identification problem for free
groups, then inducing to ${\rm SL}(d,\mathbb{R})$. For this reason
we do not pursue the direction to formulate results for locally compact
groups in this section.

\end{remark}

\subsection{Ray approximation criteria for Poisson bundles over standard systems}

For future reference, we state a version of the ray approximation
criterion for Poisson bundles over standard systems, which is more
generally applicable than the strip criterion. Such a criterion is
originally due to Kaimanovich \cite{KaimanovichHyperbolic}. The version
stated here is adapted from the enhanced criterion of Lyons and Peres
\cite{LyonsPeres}.

\begin{theo}[Ray approximation \cite{KaimanovichHyperbolic,LyonsPeres}]\label{ray}
Let $G$ be a countable group endowed with $\mu$ of finite entropy.
Let $(M,\bar{\lambda})$ be a $G$-system that fits in (\ref{eq:M-sequence}),
denote by $\theta_{M}:W_{\Omega}\to M$ the factor map. Suppose for
any $\epsilon>0$, there is a subset $U\subseteq M$ with positive
measure $\bar{\lambda}(U)>0$ such that there is a sequence of measurable
maps 
\[
(x,\zeta)\mapsto A_{n}^{\epsilon}(x,\zeta),
\]
where $\left(x,\zeta\right)\in U$ and $A_{n}^{\epsilon}(x,\zeta)\subseteq L_{x}\backslash G$
satisfying that 
\begin{description}
\item [{(i)}] $\limsup_{n\to\infty}\overline{\mathbb{P}}_{\mu}\left(\exists m\ge n:\ L_{x}\omega_{m}\in A_{n}^{\epsilon}\left(\theta_{M}(x,L_{x}\omega)\right)|\theta_{M}(x,L_{x}\omega)\in U\right)>0$, 
\item [{(ii)}] $\limsup_{n\to\infty}\frac{1}{n}\log\left|A_{n}^{\epsilon}(x,\zeta)\right|\le\epsilon$
for all $(x,\zeta)\in U$. 
\end{description}
Then $(M,\bar{\lambda})$ is $G$-measurable isomorphic to the Poisson
bundle $\left(Z,\lambda\right)$ over $\left(X,\eta\right)$. \end{theo}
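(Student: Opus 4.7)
The plan is to reduce the identification $(M,\bar{\lambda}) \cong (Z,\lambda)$ to the equality of Furstenberg entropies. The Poisson bundle $(Z,\lambda)$ is a proximal extension of $(X,\eta)$, and since $(M,\bar{\lambda})$ is an intermediate $G$-factor the induced map $(Z,\lambda)\to(M,\bar{\lambda})$ is also proximal by~\cite[Prop.~4.1]{FurstenbergGlasner}; then by~\cite[Prop.~1.9]{NZ3} it suffices to prove $h_{\mu}(Z,\lambda)=h_{\mu}(M,\bar{\lambda})$. I would proceed by contradiction, assuming $\delta:=h_{\mu}(Z,\lambda)-h_{\mu}(M,\bar{\lambda})>0$. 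By Corollary~\ref{doob-zero}, the fiberwise Shannon--Breiman--McMillan limit
\[
-\tfrac{1}{m}\log P_{\mu,x}^{\zeta,m}(L_{x},L_{x}\omega_{m}) \xrightarrow[m\to\infty]{} h_{(x,\zeta)} \quad \overline{\mathbb{P}}_{\mu}\text{-a.s.}
\]
integrates to $\delta>0$ over $(M,\bar{\lambda})$, so $h_{(x,\zeta)}>\delta_{0}$ for some $\delta_{0}>0$ on a set of positive $\bar{\lambda}$-measure intersecting $U$. Applying the hypothesis with $\epsilon<\delta_{0}/3$ and shrinking $U$ to this subset yields the maps $(x,\zeta)\mapsto A_{n}^{\epsilon}(x,\zeta)$.

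The core estimate is modelled on the proof of Theorem~\ref{strip}, decomposing the fiber distribution at each time $m$ into its Shannon-typical and atypical pieces via $T_{m}(\zeta):=\{g:P_{\mu,x}^{\zeta,m}(L_{x},g)\leq e^{-m(h_{(x,\zeta)}-\epsilon)}\}$:
\[
P_{\mu,x}^{\zeta,m}(L_{x},A_{n}^{\epsilon})\leq |A_{n}^{\epsilon}(x,\zeta)|\,e^{-m(h_{(x,\zeta)}-\epsilon)}+P_{\mu,x}^{\zeta,m}\bigl(L_{x},T_{m}(\zeta)^{c}\bigr).
\]
For $n$ large, $|A_{n}^{\epsilon}|\leq e^{2n\epsilon}$ by~(ii), so summing the first term over $m\geq n$ gives a geometric series of order $O\bigl(e^{n(3\epsilon-h_{(x,\zeta)})}\bigr)\to 0$. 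The remaining atypical piece is handled by the mechanism from Kaimanovich's ray criterion~\cite{KaimanovichHyperbolic} and its Lyons--Peres refinement~\cite{LyonsPeres}: restrict trajectories to an Egorov-type subset $V\subseteq W_{\Omega}$ of $\overline{\mathbb{P}}_{\mu}$-measure close to $1$ on which Shannon convergence is uniform beyond a threshold $N_{0}$, write the event in~(i) conditionally on $V$ (losing at most $\overline{\mathbb{P}}_{\mu}(V^{c})/\bar{\lambda}(U)$ in conditional probability), and push the estimate through first $n\to\infty$ and then $\overline{\mathbb{P}}_{\mu}(V^{c})\to 0$ to contradict the positive limsup in~(i).

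The main obstacle is precisely the summable control of the atypical term $P_{\mu,x}^{\zeta,m}(L_{x},T_{m}(\zeta)^{c})$ along the whole tail $m\geq n$: pointwise Shannon convergence alone gives only $\to 0$, not summability, so a naive union bound over $m\geq n$ is too crude. In the proof of Theorem~\ref{strip}, this difficulty is bypassed by the ergodic theorem, which supplies a positive density of times on the strip rather than an unbounded sum. In the present setting, where condition~(i) provides only a positive hitting probability for the first entry $\tau_{n}:=\inf\{m\geq n:L_{x}\omega_{m}\in A_{n}^{\epsilon}\}$, the analogous role is played by the Kaimanovich--Lyons--Peres argument; transferring it to the fiberwise Doob-transformed conditional coset Markov chain is the main technical adaptation required beyond the strip criterion.
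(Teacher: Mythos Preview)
Your reduction to $h_{\mu}(Z,\lambda)=h_{\mu}(M,\bar{\lambda})$ via proximality and the contradiction setup match the paper exactly. Two places where you diverge unnecessarily:

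First, Corollary~\ref{doob-zero} gives the Shannon limit as an $\eta\varcurlyvee\mathbb{P}_{\mu}$-a.s.\ \emph{constant} equal to $\delta$, not a function $h_{(x,\zeta)}$ that merely integrates to $\delta$. So there is no need to extract a subset where $h_{(x,\zeta)}>\delta_{0}$ or to intersect it with $U$; simply set $\epsilon=\delta/3$ and use the $U$ supplied by the hypothesis.

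Second, and more substantively, your typical/atypical split and the ``main obstacle'' of summing $P_{\mu,x}^{\zeta,m}\bigl(L_{x},T_{m}(\zeta)^{c}\bigr)$ over $m\ge n$ are artifacts of that decomposition, not genuine features of the problem. The paper works directly at the trajectory level: take the Egorov set $V\subseteq W_{\Omega}$ with $\overline{\mathbb{P}}_{\mu}(V)>1-p$ on which $P_{\mu,x}^{\zeta,m}(L_{x},L_{x}\omega_{m})\le e^{-2m\epsilon}$ for all $m\ge N$, and bound the event in~(i) intersected with $V$ by a plain union bound over $m\ge n$. On $V$ every point the chain visits at time $m$ has conditional mass at most $e^{-2m\epsilon}$, so
\[
\overline{\mathbb{P}}_{\mu}\bigl(L_{x}\omega_{m}\in A_{n}^{\epsilon},\ V \ \big|\ (x,\zeta)\in U\bigr)\le |A_{n}^{\epsilon}|\,e^{-2m\epsilon}\le e^{\frac{3}{2}n\epsilon}e^{-2m\epsilon},
\]
and the geometric sum over $m\ge n$ is $O(e^{-n\epsilon/2})$; letting $n\to\infty$ and then $p\to0$ contradicts~(i). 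Once you have restricted to $V$ there is no residual atypical term at all, so no Lyons--Peres mechanism is needed: the enhancement from~\cite{LyonsPeres} is already encoded in the form of condition~(i) (hitting $A_{n}^{\epsilon}$ at some $m\ge n$ rather than at $m=n$), and the proof absorbs it with a single union bound.
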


A proof of Theorem \ref{ray} is provided in Subsection \ref{subsec:shannon-ray}.

\section{A setting for lower semi-continuity argument \label{sec:lower-argu}}

We now return to the general setting of bundles over stationary systems.
Suppose $X$ and $Y$ are locally compact metrizable spaces and $\pi:X\to Y$
is a Borel $G$-factor map where $G$ is a lcsc group. For the remainder
of this section, let $\pi:X\to Y$ and $\left(Y,\nu\right)$ be a
fixed $\mu$-stationary system. In this section we do not need to
assume that $\left(Y,\nu\right)$ is a $\mu$-boundary. 

Suppose we have a (topological) bundle $M$ over $X$ where the fiber
over a point $x\in X$ is a topological space $M_{x}$. We assume: 
\begin{description}
\item [{\hypertarget{AssumpM}{(M)}}] Fiberwise measures. The space $M_{x}$ is equipped with a
family of probability measures $\left\{ \alpha_{x,g}\right\} _{g\in G}$
in the same measure class. Moreover, for every $g\in G$, the Radon-Nikodym
derivative $d\alpha_{x,g}/d\alpha_{x,e}\in L^{\infty}\left(M_{x},\alpha_{x,e}\right)$.
Let $C_{x,g}<\infty$ be an upper bound for the $L^{\infty}$-norm
of $d\alpha_{x,g}/d\alpha_{x,e}$. 
\end{description}
Consider a path of measures on $X$, $p\mapsto\eta_{p}$ such that
$\pi_{\ast}(\eta_{p})=\nu$\textcolor{black}{{} and $\left(X,\eta_{p}\right)$
is a relative measure-preserving extension of $(Y,\nu)$} for all
$p\in[0,1]$. As before, let $\eta=\int_{Y}\eta^{y}d\nu(y)$
be the disintegration of $\eta$ over $Y$. Similar to \hyperlink{AssumpC}{(\textbf{C})}, suppose 
\begin{description}
\item [{\hypertarget{AssumpC'}{(C')}}] Fiberwise continuity. The map $p\mapsto\eta_{p}^{y}$ is
continuous for $\nu$-a.e. $y$, and for each $y\in Y$,
there is a compact subset $S_{y}\subseteq X$, such that $S_{y}\supseteq\cup_{p\in[0,1]}{\rm supp}\eta_{p}^{y}$.
Equip $S_{y}$ with the subspace topology and ${\rm Prob}(S_{y})$
the weak$^{\ast}$-topology. 

\end{description}

Under \hyperlink{AssumpM}{(\textbf{M})} , we equip the bundle $M$ with a family of measures $\boldsymbol{\alpha}_{p}=\left(\alpha_{g,p}\right)_{g\in G}$,
where $\alpha_{g,p}$ is defined by its disintegration $\int_{X}\alpha_{x,g}d\eta_{p}(x)$
over the map $M\to X$ with measure $\eta_{p}$ on $X$. We define
the entropy of $\boldsymbol{\alpha}_{p}$ as
\begin{align}
h_{\mu}\left(M,\boldsymbol{\alpha}_{p}\right) & :=h_{\mu}(Y,\nu)+\int_{G}\int_{X}D\left(\alpha_{x,g}\parallel\alpha_{x,e}\right)\varphi_{g}(\pi(x))d\eta_{p}(x)d\mu(g)\nonumber \\
 & =h_{\mu}(Y,\nu)+\int_{G}\int_{Y}\int_{S_{y}}D\left(\alpha_{x,g}\parallel\alpha_{x,e}\right)d\eta_{p}^{y}dg\nu(y)d\mu(g)\label{eq:entropy-alpha}
\end{align}
where $\varphi_{g}(y)=\frac{dg\nu}{d\nu}(y)$ is the Radon-Nikodym
derivative in $(Y,\nu)$. We refer to Appendix~\ref{sec:entropy-formulae}
for definition and basic properties of the KL-divergence $D(\alpha||\beta)$.
This definition of entropy is consistent:

\begin{lemma}\label{consistent} Assume the composition of factor
maps $X\times B\overset{\zeta}{\to}M\to X$ is the projection on the
first factor and let $\alpha_{x,g}:=\left(\zeta_{\ast}\left(g\nu_{B}\right)\right){}_{x}$,
then $h_{\mu}\left(M,\boldsymbol{\alpha}\right)$ is the Furstenberg
entropy of $(M,\zeta_{\ast}\nu_{B})$. \end{lemma}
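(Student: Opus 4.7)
The plan is to compute $h_\mu(M, \lambda)$ directly, for $\lambda := \zeta_*\nu_B$, via the chain rule for KL divergence in a disintegration, and to match the result term by term with the right-hand side of~\eqref{eq:entropy-alpha}. First, I would identify the disintegration of $\lambda$ over $(X, \eta_p)$: since the composition $X \times B \to M \to X$ is the first projection, $\zeta$ restricts on each fiber to a map $\zeta_x : B \to M_x$, and $\lambda$ disintegrates as $\lambda = \int_X \alpha_{x,e} \, d\eta_p(x)$ with $\alpha_{x,e} = (\zeta_x)_* \nu_B$, consistent with the defining formula $\alpha_{x,g} = (\zeta_*(g\nu_B))_x$.

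Next, the $G$-equivariance of $\zeta$, namely $g \circ \zeta_x = \zeta_{g.x} \circ g$, gives
\[
g.\alpha_{x,e} = (\zeta_{g.x})_*(g\nu_B) = \alpha_{g.x, g};
\]
after the change of variable $y = g.x$, this exhibits $g.\lambda$ as a disintegration over $X$ with base $g.\eta_p$ and fiber $\alpha_{y,g}$ over $y$. The chain rule for KL divergence then yields
\[
D(g.\lambda \parallel \lambda) = D(g.\eta_p \parallel \eta_p) + \int_X D(\alpha_{y,g} \parallel \alpha_{y,e}) \, d(g.\eta_p)(y),
\]
with absolute continuity at the base furnished by quasi-invariance of $\eta_p$ and fiberwise by~\hyperlink{AssumpM}{(\textbf{M})}.

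Integrating against $d\mu(g)$ finishes the computation: the first term integrates to $h_\mu(X, \eta_p)$, which equals $h_\mu(Y, \nu)$ since $(X, \eta_p) \to (Y, \nu)$ is a relative measure-preserving extension. The same hypothesis gives $\frac{dg\eta_p}{d\eta_p}(y) = \varphi_g(\pi(y))$, converting the second term into exactly the double integral appearing in~\eqref{eq:entropy-alpha}.

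The main obstacle is the correct identification of the fiber measure of $\lambda$ with $(\zeta_x)_*\nu_B$: the stationary joining $\eta_p \varcurlyvee \nu_B$ does not in general split as a product, and the conditional distribution on $B$ given a point $x \in X$ depends on $x$ in a way controlled by Lemma~\ref{standard}. The measure-preserving hypothesis on $\eta_p$ over $\nu$ is what makes the identification $\alpha_{x,e} = (\zeta_x)_* \nu_B$ consistent with the notation $\zeta_*\nu_B$; once this is in place, the chain rule and change-of-variable manipulations above are routine.
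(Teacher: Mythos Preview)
Your argument is correct and takes a genuinely different route from the paper. The paper's proof is a callback to the machinery already developed for Poisson bundles: it invokes Proposition~\ref{entropy-formula} (proved via the Birkhoff-type Lemma~\ref{standard-entropy}) and Lemma~\ref{derriennic-basic}(iv), which expresses the fiberwise mutual information $\mathrm{I}(\xi_1^x,\mathcal{T}_x)$ as $\int_G D\bigl((g\lambda)_x \parallel \lambda_x\bigr)\varphi_g(\pi(x))\,d\mu(g)$, and simply observes that these results apply verbatim with $\zeta:X\times B\to M$ in place of $\psi:X\times B\to Z$. Your direct chain-rule computation bypasses the mutual information and the Doob-transformed random walk entirely: you only use the abstract identity $D(g\lambda\parallel\lambda)=D(g\eta\parallel\eta)+\int_X D\bigl((g\lambda)_x\parallel\lambda_x\bigr)\,d(g\eta)(x)$ together with the equivariance $(g\lambda)_{g.x}=g.\lambda_x$ and the relation $d(g\eta)/d\eta=\varphi_g\circ\pi$ coming from the measure-preserving extension. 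This is more elementary and arguably clarifies why the formula~\eqref{eq:entropy-alpha} is the right definition.

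One caveat: your explicit identification $\alpha_{x,e}=(\zeta_x)_*\nu_B$ is not correct, and your last paragraph does not resolve it in the way you suggest. By Lemma~\ref{standard} the disintegration of $\eta\varcurlyvee\nu_B$ over $X$ gives conditional measure $\nu_B^{\pi(x)}$ (the disintegration of $\nu_B$ over $Y$) on the $B$-factor, so in fact $\alpha_{x,e}=(\zeta_x)_*\nu_B^{\pi(x)}$ and likewise $\alpha_{x,g}=(\zeta_x)_*(g\nu_B)^{\pi(x)}$; the measure-preserving hypothesis on $\pi$ does not change this. Fortunately your argument never uses the explicit form of $\alpha_{x,e}$: all you need is that $\alpha_{x,e}=\lambda_x$ by definition and that $(g\lambda)_{g.x}=g.\lambda_x$, which follows from general equivariance of disintegrations and yields $\alpha_{g.x,g}=g.\alpha_{x,e}$ exactly as you wrote. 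So the computation stands once you drop the incorrect intermediate formula.
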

\begin{proof}
This follows from Proposition~\ref{entropy-formula} and Lemma~\ref{derriennic-basic}
(iv), with $X\times B\overset{\zeta}{\to}M$ in place of $X\times B\overset{\psi}{\to}Z$.
Recall that by Lemma~\ref{translate-1}, $(g\lambda)_{x}=(\psi_{\ast}g\nu_{B})_{x}$. 
\end{proof}
For the rest of this section we will focus on fiberwise approximations
to the KL-divergence $D\left(\alpha_{x,g}\parallel\alpha_{x,e}\right)$.
This will be sufficient for our purposes:

\begin{lemma}\label{Fatou}

Under \hyperlink{AssumpM}{(\textbf{M})} , \hyperlink{AssumpC'}{(\textbf{C'})} , if for $\nu$-a.e. $y\in Y$, $\mu$-a.e. $g\in G$,
the map $x\mapsto D\left(\alpha_{x,g}\parallel\alpha_{x,e}\right)$
is lower semi-continous on $S_{y}$, then $p\mapsto h_{\mu}\left(M,\boldsymbol{\alpha}_{p}\right)$
is lower semi-continuous. 

\end{lemma}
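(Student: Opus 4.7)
The plan is to combine the Portmanteau theorem applied fiberwise on each $S_y$ with two successive applications of Fatou's lemma in order to push the $\liminf$ past the outer integrals over $G$ and $Y$. Since the term $h_\mu(Y,\nu)$ appearing in formula~(\ref{eq:entropy-alpha}) does not depend on $p$, lower semi-continuity of $p \mapsto h_\mu(M,\boldsymbol{\alpha}_p)$ reduces to lower semi-continuity of
\[
I(p) := \int_G \int_Y \int_{S_y} D\!\left(\alpha_{x,g} \parallel \alpha_{x,e}\right) d\eta_p^y(x)\, d(g\nu)(y)\, d\mu(g).
\]
Given a sequence $p_n \to p_0$ in $[0,1]$, the task is to verify $\liminf_{n\to\infty} I(p_n) \geq I(p_0)$.

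The first step is purely fiberwise. Fix $y \in Y$ and $g \in G$ for which the hypothesis applies, and write $F_{g,y}(x) := D(\alpha_{x,g} \parallel \alpha_{x,e})$; by assumption this is a non-negative lower semi-continuous function on the compact metric space $S_y$. Assumption~\hyperlink{AssumpC'}{(\textbf{C'})} supplies weak-$*$ convergence $\eta_{p_n}^y \to \eta_{p_0}^y$ in $P(S_y)$, and Portmanteau's inequality for non-negative lsc functions then yields
\[
\liminf_{n\to\infty} \int_{S_y} F_{g,y}\, d\eta_{p_n}^y \;\geq\; \int_{S_y} F_{g,y}\, d\eta_{p_0}^y.
\]

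The second step is to transport this inequality through the two outer integrals. Since the inner quantities are non-negative, I would apply Fatou's lemma successively to the integration against $g\nu$ (absolutely continuous with respect to $\nu$ with density $\varphi_g$ by quasi-invariance) and then against $\mu$, which gives
\[
\liminf_{n\to\infty} I(p_n) \;\geq\; \int_G \int_Y \liminf_{n\to\infty} \int_{S_y} F_{g,y}\, d\eta_{p_n}^y\, d(g\nu)(y)\, d\mu(g) \;\geq\; I(p_0).
\]
The one delicate point worth checking is the Portmanteau inequality when $F_{g,y}$ is unbounded: this is standard---truncate at height $N$ to get bounded lsc functions $\min(F_{g,y},N)$, apply Portmanteau there, then let $N\to\infty$ via monotone convergence on both sides. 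Beyond this routine verification, no additional integrability assumption enters, since the whole argument runs on non-negative quantities and Fatou handles the two outer integrals uniformly; the hypothesis of the lemma is in fact tailor-made so that the two standard steps above apply without further input.
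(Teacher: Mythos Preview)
Your proof is correct and follows essentially the same approach as the paper's. The paper establishes the fiberwise inequality by writing the lower semi-continuous divergence as an increasing limit of continuous functions and applying monotone convergence, which is precisely your truncation-plus-Portmanteau argument spelled out; the outer integrals are then handled by Fatou's lemma in both versions.
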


\begin{proof}

The lower semi-continuity assumption on $D\left(\alpha_{x,g}\parallel\alpha_{x,e}\right)$
implies that it can be written as an increasing limit of non-negative
continuous functions $f_{n}$ on $S_{y}$. Let $p_{m}\to p$, then
fiberwise continuity \hyperlink{AssumpC'}{(\textbf{C'})} implies 
\[
\int_{S_{y}}f_{n}(y)d\eta_{p}^{y}=\lim_{m\to\infty}\int_{S_{y}}f_{n}(y)d\eta_{p_{m}}^{y}\le\liminf_{m\to\infty}\int_{S_{y}}D\left(\alpha_{x,g}\parallel\alpha_{x,e}\right)d\eta_{p_{m}}^{y}.
\]
Monotone convergence theorem implies $\int_{S_{y}}D\left(\alpha_{x,g}\parallel\alpha_{x,e}\right)d\eta_{p}^{y}=\lim_{n\to\infty}\int_{S_{y}}f_{n}(y)d\eta_{p}^{y}$.
Thus $p\mapsto\int_{S_{y}}D\left(\alpha_{x,g}\parallel\alpha_{x,e}\right)d\eta_{p}^{y}$
is lower semi-continuous. By the integral formula (\ref{eq:entropy-alpha}),
the statement follows from Fatou's lemma. 

\end{proof}

\subsection{The case of uniform fiberwise approximation\label{subsec:fiber-uni}}

In this subsection we consider approximations of measures on $M_{x}$.
Assume: 
\begin{description}
\item [{\hypertarget{AssumpP}{(P)}}] Generating finite partitions. For each $x\in X$, there is
a refining sequence of finite measurable partitions $\mathcal{P}_{x,n}$
of $M_{x}$, $n\in\mathbb{N}$, such that the union $\cup_{n\in\mathbb{N}}\mathcal{P}_{x,n}$
generates the Borel $\sigma$-field $\mathcal{B}_{x}$ of $M_{x}$. 
\end{description}
The KL-divergence of two Borel probability measures $\beta_{x,1}$
and $\beta_{x,2}$ on $M_{x}$ is then given by 
\[
D\left(\beta_{x,1}\parallel\beta_{x,2}\right)=\sup_{n}H_{\beta_{x,1}\parallel\beta_{x,2}}(\mathcal{P}_{x,n}),\quad\textrm{where}\quad H_{\beta_{x,1}\parallel\beta_{x,2}}(\mathcal{P}_{x,n})=\sum_{A\in\mathcal{P}_{x,n}}\beta_{x,1}(A)\log\frac{\beta_{x,1}(A)}{\beta_{x,2}(A)}.
\]
See Appendix~\ref{sec:entropy-formulae}. We show continuity of the
maps $x\mapsto H_{\beta_{x,1}\parallel\beta_{x,2}}(\mathcal{P}_{x,n})$
under assumptions of approximations.

Let $S$ be a subset of $X$, equipped with subspace topology. We
say a collection of probability spaces $\left(\mathcal{P}_{x,n},q_{x}\right)$,
where $q_{x}$ is a probability measure on the partition $\mathcal{P}_{x,n}$,
is \emph{locally constant} on $S$ if for every $x\in X'$, there
is an open neighborhood $O(x)$ of it in $X'$ such that for any $x'\in O(x)$,
the spaces $\left(\mathcal{P}_{x,n},q_{x}\right)$ and $\left(\mathcal{P}_{x',n},q_{x'}\right)$
are isomorphic.

Assume  \hyperlink{AssumpP}{(\textbf{P})} . Let $\left(\beta_{x}\right)_{x\in X'}$ be a collection
of probability measures with each $\beta_{x}$ supported on $M_{x}$.
We say that 
\begin{itemize}
\item this collection admits \emph{approximations} on $X'$ if for $x\in X'$
and $n,t\in\mathbb{N}$, there is a positive measure $q_{x,n}^{t}$
on $\mathcal{P}_{x,n}$ such that 
\[
\max_{A\in\mathcal{P}_{x,n}}\left|1-\frac{\beta_{x}\left(A\right)}{q_{x,n}^{t}(A)}\right|\le\varepsilon_{x,n}(t)\quad\textrm{with}\quad\lim_{t\to\infty}\varepsilon_{x,n}(t)=0.
\]
\item Such approximations are \emph{uniform} on $X'$ if in addition, 
\[
\lim_{t\to\infty}\sup_{x\in X'}\varepsilon_{x,n}(t)=0.
\]
\item Such approximations are locally constant if for $n,t\in\mathbb{N}$
the collection $(\mathcal{P}_{x,n},q_{x,n}^{t})$ is locally constant. 
\end{itemize}
\begin{proposition}\label{KL-uni} Let $\left(\beta_{x,1}\right)_{x\in X'}$
and $\left(\beta_{x,2}\right)_{x\in X'}$ be two collections of fiber
probability measures, where $\beta_{x,i}$ is supported on $M_{x}$.
Suppose each $\left(\beta_{x,i}\right)_{x\in X'}$ admit locally constant
uniform approximations on $S$, $i\in\{1,2\}$; and there is a constant
$C>0$ such that $1/C\le\left\Vert d\beta_{x,1}/d\beta_{x,2}\right\Vert _{\infty}\le C$
for all $x\in X'$. Then the following map is continuous: 
\begin{align*}
S & \to\mathbb{R}_{\ge0}\\
x & \mapsto H_{\beta_{x,1}\parallel\beta_{x,2}}\left(\mathcal{P}_{x,n}\right).
\end{align*}
It follows that $x\mapsto D\left(\beta_{x,1}\parallel\beta_{x,2}\right)$
is lower semi-continuous on $S$.

\end{proposition}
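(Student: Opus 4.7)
The plan is to establish continuity of the finite-partition relative entropies $H_{\beta_{x,1}\parallel\beta_{x,2}}(\mathcal{P}_{x,n})$ first, and then derive lower semi-continuity of $D(\beta_{x,1}\parallel\beta_{x,2})$ from the standard identity expressing the KL-divergence as a supremum over refining partitions. The guiding principle is that for fixed $n$ the partition $\mathcal{P}_{x,n}$ is finite, so the relative entropy is a finite sum of terms of the form $\beta_{x,1}(A)\log(\beta_{x,1}(A)/\beta_{x,2}(A))$, and it suffices to show (i) that each atom-mass $x\mapsto \beta_{x,i}(A)$ varies continuously with $x\in S$, and (ii) that the logarithmic factor stays uniformly bounded.

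For (i), fix $x_{0}\in S$ and $\epsilon>0$. The \emph{uniform} part of the approximation hypothesis lets us select $t=t(\epsilon)$ satisfying $\sup_{x\in S}\varepsilon_{x,n}(t)<\epsilon$ simultaneously for both $i=1,2$. The \emph{local constancy} part then provides (after intersecting the two neighborhoods for $i=1,2$) an open $O\ni x_{0}$ in $S$ on which $(\mathcal{P}_{x,n},q_{x,n,i}^{t})\cong (\mathcal{P}_{x_{0},n},q_{x_{0},n,i}^{t})$ via a common bijection of atoms. Under this identification $q_{x,n,i}^{t}(A)=q_{x_{0},n,i}^{t}(A)$ for every atom $A$, so the triangle inequality and the approximation bound give
\begin{equation*}
|\beta_{x,i}(A)-\beta_{x_{0},i}(A)|\le |\beta_{x,i}(A)-q_{x,n,i}^{t}(A)|+|q_{x_{0},n,i}^{t}(A)-\beta_{x_{0},i}(A)|\le 2\epsilon\,q_{x_{0},n,i}^{t}(A).
\end{equation*}
Since $q_{x_{0},n,i}^{t}(A)\le 1$ (up to the normalization error), this shows $x\mapsto \beta_{x,i}(A)$ is continuous at $x_{0}$.

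For (ii), the bound $1/C\le \|d\beta_{x,1}/d\beta_{x,2}\|_{\infty}\le C$ (interpreted as a two-sided $\beta_{x,2}$-a.e. estimate on the Radon-Nikodym derivative) forces $\beta_{x,1}(A)/\beta_{x,2}(A)\in[1/C,C]$ for every atom with $\beta_{x,2}(A)>0$; atoms with $\beta_{x,2}(A)=0$ also satisfy $\beta_{x,1}(A)=0$ and contribute nothing under the convention $0\log 0=0$. Hence each summand $\beta_{x,1}(A)\log(\beta_{x,1}(A)/\beta_{x,2}(A))$ is a continuous function of $x\in S$ with its $\log$-factor confined to $[-\log C,\log C]$. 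Summing over the finite partition $\mathcal{P}_{x,n}$ yields continuity of $x\mapsto H_{\beta_{x,1}\parallel\beta_{x,2}}(\mathcal{P}_{x,n})$.

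The final assertion follows formally: because $\bigcup_{n}\mathcal{P}_{x,n}$ generates $\mathcal{B}_{x}$ by \hyperlink{AssumpP}{(\textbf{P})}, the standard monotone-convergence identity $D(\beta_{x,1}\parallel\beta_{x,2})=\sup_{n}H_{\beta_{x,1}\parallel\beta_{x,2}}(\mathcal{P}_{x,n})$ expresses $x\mapsto D(\beta_{x,1}\parallel\beta_{x,2})$ as a pointwise supremum of continuous functions, and any such supremum is lower semi-continuous. The main delicate point is the order of quantifiers in the continuity step: one must first pick $t$ using \emph{uniformity}, and only then shrink to a neighborhood using local constancy — without the uniform estimate the double limit ``$x\to x_{0}$ then $t\to\infty$'' could not be swapped to yield continuity of $\beta_{x,i}(A)$ itself.
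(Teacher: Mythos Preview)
Your proof is correct and reaches the same conclusion, but the route differs from the paper's. The paper works at the level of the relative-entropy \emph{functional}: it invokes Lemma~\ref{KL-difference-1} (a reverse-Pinsker type estimate) to bound $\left|H_{q_{x,n,1}^{t}\parallel q_{x,n,2}^{t}}(\mathcal{P}_{x,n})-H_{\beta_{x,1}\parallel\beta_{x,2}}(\mathcal{P}_{x,n})\right|$ uniformly in $x$, and then reads off continuity of the target as a uniform limit (in $t$) of the locally constant functions $x\mapsto H_{q_{x,n,1}^{t}\parallel q_{x,n,2}^{t}}(\mathcal{P}_{x,n})$. You instead work at the level of the \emph{atom-masses}: you show each $x\mapsto\beta_{x,i}(A)$ is continuous via the uniformity-then-local-constancy argument, observe that the approximation condition forces $\beta_{x,i}(A)>0$ on every atom, and use the two-sided Radon--Nikodym bound to see that $(a,b)\mapsto a\log(a/b)$ is continuous with bounded $\log$-factor on the relevant range, so the finite sum is continuous. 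Your approach is more elementary and self-contained, avoiding the auxiliary lemma; the paper's approach has the advantage of producing the explicit error estimates (\ref{eq:error1})--(\ref{eq:error2}), which are exactly the quantities $\Delta_{x,g}(n)$ reused in the integral-bound variant Proposition~\ref{KL-lower semi}. One small point worth making explicit in your write-up: the local-constancy isomorphisms for $i=1$ and $i=2$ must identify atoms via the \emph{same} bijection $\mathcal{P}_{x,n}\cong\mathcal{P}_{x_{0},n}$ (as is the case in the applications, where the partition itself is determined by the base point and the bijection is canonical).
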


\begin{remark} For our applications, the subset $S$ will be totally
disconnected and satisfy the local coincidence property \hyperlink{AssumpL}{(\textbf{L})}. The locally
constant approximation condition is natural in that context. See Proposition
\ref{KL-lower semi} for a formulation with weaker assumptions.

\end{remark}
\begin{proof}[Proof of Proposition \ref{KL-uni}]

Let $q_{x,n,i}^{t}$ be the approximation measures of $\beta_{x,i}$
on the finite partition $\mathcal{P}_{x,n}$ with the corresponding
error bound $\varepsilon_{x,n,i}(t)$. Note that 
\begin{align*}
\max_{A\in\mathcal{P}_{x,n}}\frac{q_{x,n,2}^{t}(A)}{q_{x,n,1}^{t}(A)} & \le\frac{1}{\left(1-\varepsilon_{x,n,1}(t)\right)\left(1-\varepsilon_{x,n,2}(t)\right)}\max_{A\in\mathcal{P}_{x,n}}\frac{\beta_{x,2}(A)}{\beta_{x,1}(A)}\\
 & \le\frac{C}{\left(1-\varepsilon_{x,n,1}(t)\right)\left(1-\varepsilon_{x,n,2}(t)\right)}=:C_{n,t}.
\end{align*}
Lemma \ref{KL-difference-1} implies that 
\begin{align}
H_{q_{x,n,2}^{t}\parallel q_{x,n,1}^{t}}\left(\mathcal{P}_{x,n}\right)-H_{\beta_{x,2}\parallel\beta_{x,1}}\left(\mathcal{P}_{x,n}\right) & \le2C_{n,t}^{1/2}\max_{A\in\mathcal{P}_{x,n}}\left|1-\frac{\beta_{x,2}(A)}{q_{x,n,2}^{t}(A)}\right|+\log\left(\max_{A\in\mathcal{P}_{x,n}}\frac{\beta_{x,1}(A)}{q_{x,n,1}^{t}(A)}\right)\nonumber \\
 & \le2C_{n,t}^{1/2}\varepsilon_{x,n,2}(t)+\varepsilon_{x,n,1}(t);\label{eq:error1}
\end{align}
and 
\begin{align}
H_{\beta_{x,2}\parallel\beta_{x,1}}\left(\mathcal{P}_{x,n}\right)-H_{q_{x,n,2}^{t}\parallel q_{x,n,1}^{t}}\left(\mathcal{P}_{x,n}\right) & \le2C^{1/2}\max_{A\in\mathcal{P}_{x,n}}\left|1-\frac{q_{x,n,2}^{t}(A)}{\beta_{x,2}(A)}\right|+\log\left(\max_{A\in\mathcal{P}_{x,n}}\frac{q_{x,n,1}^{t}(A)}{\beta_{x,1}(A)}\right)\nonumber \\
 & \le2C^{1/2}\frac{\varepsilon_{x,n,2}(t)}{1-\varepsilon_{x,n,2}(t)}+\frac{\varepsilon_{x,n,1}(t)}{1-\varepsilon_{x,n,1}(t)}.\label{eq:error2}
\end{align}
Write $\epsilon_{n}(t)=\sup_{x\in X'}(\varepsilon_{x,n,2}(t)+\varepsilon_{x,n,1}(t))$,
then the uniform assumption states that $\epsilon_{n}(t)\overset{t\to\infty}{\to}0$.
Therefore (\ref{eq:error1}) and (\ref{eq:error2}) show that the
sequence of continuous (actually locally constant) functions $x\mapsto H_{q_{x,n,2}^{t}\parallel q_{x,n,1}^{t}}\left(\mathcal{P}_{x,n}\right)$
converges uniformly to the function $x\mapsto H_{\beta_{x,2}\parallel\beta_{x,1}}\left(\mathcal{P}_{x,n}\right)$
as $t\to\infty$. Thus by the uniform convergence theorem, the limit
function is continuous as well. By  \hyperlink{AssumpP}{(\textbf{P})} , the partitions $\mathcal{P}_{x,n}$
generate the Borel $\sigma$-field $\mathcal{B}_{x}$ of $M_{x}$,
we have that $H_{\beta_{x,2}\parallel\beta_{x,1}}\left(\mathcal{P}_{x,n}\right)\nearrow D\left(\beta_{x,1}\parallel\beta_{x,2}\right)$
when $n\to\infty$. It follows that $x\mapsto D\left(\beta_{x,1}\parallel\beta_{x,2}\right)$
is lower semi-continuous on $S$. 
\end{proof}
\begin{corollary}\label{cor-lower-uni} In the setting of \hyperlink{AssumpC'}{(\textbf{C'})} , \hyperlink{AssumpM}{(\textbf{M})} 
and  \hyperlink{AssumpP}{(\textbf{P})} , suppose for $\nu$-a.e. $y\in Y$, the family of measures
$\left(\alpha_{x,g}\right)_{x\in S_{y}}$ admits locally constant
uniform approximations on $S_{y}$, then the map $p\mapsto h_{\mu}\left(M,\boldsymbol{\alpha}_{p}\right)$
is lower semi-continuous. \end{corollary}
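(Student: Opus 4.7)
The plan is a direct combination of Proposition~\ref{KL-uni} (pointwise lower semi-continuity of KL-divergence under approximations) with Lemma~\ref{Fatou} (passage from pointwise lower semi-continuity of the fiberwise KL-divergences to lower semi-continuity of the entropy). Fix $g \in G$ and $y \in Y$ in the full-measure sets where the fiberwise continuity hypothesis \hyperlink{AssumpC'}{(\textbf{C'})} and the locally constant uniform approximation hypothesis both hold. I will apply Proposition~\ref{KL-uni} to the set $S_y$ with $\beta_{x,1} = \alpha_{x,g}$ and $\beta_{x,2} = \alpha_{x,e}$. Assumption \hyperlink{AssumpP}{(\textbf{P})} supplies the generating refining partitions $\mathcal{P}_{x,n}$, and the hypothesis of the corollary, applied once to $g$ and once to the identity element of $G$, provides the required locally constant uniform approximations for $\beta_{x,1}$ and $\beta_{x,2}$ respectively.

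The remaining hypothesis of Proposition~\ref{KL-uni} is the two-sided bound $1/C \le \|d\alpha_{x,g}/d\alpha_{x,e}\|_\infty \le C$ uniformly over $x \in S_y$. Assumption \hyperlink{AssumpM}{(\textbf{M})} gives a pointwise upper bound $C_{x,g}$ (and an analogous pointwise lower bound by symmetry in $g$, $e$). To upgrade these to a uniform bound on $S_y$, I will exploit the locally constant structure of the approximating measures $q^{t}_{x,n,g}$: for fixed $n$ and $t$ large enough, the assignment $x \mapsto q^{t}_{x,n,g}$ takes only finitely many values on $S_y$, and since $S_y$ is compact this yields, after letting $t\to\infty$ and $n\to\infty$, a single constant $C=C(g,y)$ valid for all $x \in S_y$.

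With all the hypotheses of Proposition~\ref{KL-uni} verified, the map $x \mapsto D(\alpha_{x,g} \parallel \alpha_{x,e})$ is lower semi-continuous on $S_y$ for $\mu$-a.e.\ $g \in G$ and $\nu$-a.e.\ $y \in Y$. Lemma~\ref{Fatou} then delivers the lower semi-continuity of $p \mapsto h_\mu(M, \boldsymbol{\alpha}_p)$. I expect the main (and essentially only) non-routine point to be the uniformization of the Radon--Nikodym bound on the compact fiber $S_y$; once this is in hand the corollary is a bookkeeping composition of the two previously established results.
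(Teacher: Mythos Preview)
Your overall approach is the same as the paper's: apply Proposition~\ref{KL-uni} with $\beta_{x,1}=\alpha_{x,g}$, $\beta_{x,2}=\alpha_{x,e}$ on each $S_y$ to get lower semi-continuity of $x\mapsto D(\alpha_{x,g}\parallel\alpha_{x,e})$, then invoke Lemma~\ref{Fatou}. The paper's proof is literally the one line ``This follows from Lemma~\ref{Fatou} and Proposition~\ref{KL-uni} for $\beta_{x,1}=\alpha_{x,g}$ and $\beta_{x,2}=\alpha_{x,e}$.''

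You correctly spot a mismatch the paper glosses over: Proposition~\ref{KL-uni} asks for a uniform-in-$x$ two-sided bound on $\|d\alpha_{x,g}/d\alpha_{x,e}\|_\infty$, whereas \hyperlink{AssumpM}{(\textbf{M})} only supplies the pointwise bound $C_{x,g}$. However, your proposed fix overstates what one can extract. From compactness of $S_y$ and the locally constant approximations you get, for each fixed $n$ and large $t$, a uniform bound on $\max_{A\in\mathcal{P}_{x,n}} q^t_{x,n,g}(A)/q^t_{x,n,e}(A)$, and hence (letting $t\to\infty$) a uniform-in-$x$ bound $C_n(g,y)$ on $\max_{A\in\mathcal{P}_{x,n}}\alpha_{x,g}(A)/\alpha_{x,e}(A)$. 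But there is no control on $C_n$ as $n\to\infty$, so you do \emph{not} obtain a single constant bounding the full $L^\infty$ norm of $d\alpha_{x,g}/d\alpha_{x,e}$.

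Fortunately this does not matter: re-reading the proof of Proposition~\ref{KL-uni}, the constant $C$ is only ever used at a fixed level $n$ (to bound $\max_{A\in\mathcal{P}_{x,n}}\beta_{x,2}(A)/\beta_{x,1}(A)$ in the definition of $C_{n,t}$ and in the estimate (\ref{eq:error2})). An $n$-dependent uniform bound $C_n(g,y)$ --- which your argument does deliver --- suffices to show that $x\mapsto H_{\alpha_{x,g}\parallel\alpha_{x,e}}(\mathcal{P}_{x,n})$ is continuous on $S_y$ for each $n$, and the lower semi-continuity of the KL-divergence follows as a supremum of continuous functions. With this small adjustment your proof is complete. (In the paper's concrete applications the uniform $L^\infty$ bound is available anyway, coming either from assumption \hyperlink{AssumpB}{(\textbf{B})} or from pushing forward a bounded Radon--Nikodym derivative on $\partial F$, so the issue never surfaces there.)
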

\begin{proof}
This follows from Lemma \ref{Fatou} and Proposition \ref{KL-uni}
for $\beta_{x,1}=\alpha_{x,g}$ and $\beta_{x,2}=\alpha_{x,e}$. 
\end{proof}
\subsection{A more general criterion with integral bounds}

For completeness, we record in this subsection a relaxed version of
Corollary \ref{cor-lower-uni}, which ensures lower semi-continuity
of the map $p\mapsto h_{\mu}\left(M,\boldsymbol{\alpha}_{p}\right)$.

Under  \hyperlink{AssumpP}{(\textbf{P})} , we assume that for each $g\in G$ and $n\in\mathbb{N}$,
there is a probability $q_{x,g}^{n}$ defined on the partition $\mathcal{P}_{x,n}$
that approximates $\alpha_{x,g}$ in the sense that there is a constant
$\varepsilon_{x}(g,n)>0$ such that 
\[
\max_{A\in\mathcal{P}_{x,n}}\left|1-\frac{\alpha_{x,g}(A)}{q_{x,g}^{n}(A)}\right|\quad\textrm{ and }\quad\lim_{n\to\infty}\varepsilon_{x}(g,n)=0.
\]
Recall that in \hyperlink{AssumpM}{(\textbf{M})} , $C_{x,g}$ is an upper bound for the $L^{\infty}$-norm
of the Radon-Nikodym derivative $d\alpha_{x,g}/d\alpha_{x,e}$. Similar
to the bound in (\ref{eq:error1}), define 
\begin{equation}
\Delta_{x,g}(n):=2\left(\frac{C_{x,g}}{\left(1-\varepsilon_{x}(e,n)\right)\left(1-\varepsilon_{x}(g,n)\right)}\right)^{1/2}\varepsilon_{x}(g,n)+\varepsilon_{x}(e,n).\label{eq:Delta-1}
\end{equation}

\begin{proposition}\label{KL-lower semi} In the setting of \hyperlink{AssumpC'}{(\textbf{C'})},
\hyperlink{AssumpM}{(\textbf{M})}  and  \hyperlink{AssumpP}{(\textbf{P})} , suppose in addition that for each $y\in Y$, 
\begin{itemize}
\item[-] for all $n\in\mathbb{N}$, the map $x\mapsto H_{q_{x,e}^{n}\parallel q_{x,g}^{n}}\left(\mathcal{P}_{x,n}\right)$
is continuous on $S_{y}$ ,
\item[-] for $\mu\times\nu$-a.e. $(g,y)$, the error terms $\Delta_{x,g}(n)$
defined in (\ref{eq:Delta-1}) is dominated by some function $\psi_{g}(x)$
which is integrable with respect to every $\eta\in\mathfrak{P}_{1}$.
\item[-] there is a sequence $\left(\delta_{n}(g,y)\right)_{n\in\mathbb{N}}$
that converges to $0$ and 
\[
\int_{S_{y}}\Delta_{x,g}(n)d\eta_{p}^{y}(x)\le\delta_{n}(g,y)\mbox{ for all }p\in[0,1].
\]
\end{itemize}
Then the map $p\mapsto h_{\mu}\left(M,\boldsymbol{\alpha}_{p}\right)$
is lower semi-continuous.

\end{proposition}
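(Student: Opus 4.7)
The plan is to mimic the proof of Corollary~\ref{cor-lower-uni}, with the pointwise uniform approximation there replaced by integral control: rather than establishing lower semi-continuity of $x \mapsto D(\alpha_{x,g}\parallel\alpha_{x,e})$ on each fiber, I will directly show lower semi-continuity of the map $p \mapsto \int_{S_y} D(\alpha_{x,g}\parallel\alpha_{x,e})\, d\eta_p^y(x)$ for $\nu$-a.e.\ $y$ and $\mu$-a.e.\ $g$, and then conclude by Fatou's lemma applied to formula (\ref{eq:entropy-alpha}).

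Following the computation leading to (\ref{eq:error1}) and (\ref{eq:error2}) in Proposition~\ref{KL-uni}, applied with $(\beta_{x,2},\beta_{x,1})=(\alpha_{x,g},\alpha_{x,e})$ and approximating measures $q_{x,g}^n, q_{x,e}^n$, the definition (\ref{eq:Delta-1}) of $\Delta_{x,g}(n)$ yields the pointwise bound $\bigl|H_{\alpha_{x,g}\parallel\alpha_{x,e}}(\mathcal{P}_{x,n}) - H_{q_{x,g}^n\parallel q_{x,e}^n}(\mathcal{P}_{x,n})\bigr| \le \Delta_{x,g}(n)$. Setting $B_n(p;g,y) := \int_{S_y} H_{\alpha_{x,g}\parallel\alpha_{x,e}}(\mathcal{P}_{x,n})\, d\eta_p^y(x)$ and $A_n(p;g,y) := \int_{S_y} H_{q_{x,g}^n\parallel q_{x,e}^n}(\mathcal{P}_{x,n})\, d\eta_p^y(x)$, integration against $\eta_p^y$ together with the integral hypothesis on $\Delta_{x,g}(n)$ give $|B_n - A_n| \le \delta_n(g,y)$ uniformly in $p$. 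The integrand defining $A_n$ is continuous on the compact $S_y$ by assumption, hence bounded, so by the weak$^{*}$ continuity of $p \mapsto \eta_p^y$ provided by \hyperlink{AssumpC'}{(\textbf{C'})} the function $A_n(\cdot\,;g,y)$ is continuous on $[0,1]$. By \hyperlink{AssumpP}{(\textbf{P})} the partitions $\mathcal{P}_{x,n}$ generate $\mathcal{B}_x$, so monotone convergence yields $B_n(p;g,y) \nearrow B_\infty(p;g,y) := \int_{S_y} D(\alpha_{x,g}\parallel\alpha_{x,e})\, d\eta_p^y(x)$ as $n \to \infty$.

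To conclude that $p \mapsto B_\infty(p;g,y)$ is lower semi-continuous for each fixed $(g,y)$, take $p_m \to p_0$ and any $N$; by continuity of $A_N$ and the chain $A_N \le B_N + \delta_N \le B_\infty + \delta_N$,
\[
A_N(p_0;g,y) = \lim_m A_N(p_m;g,y) \le \liminf_m B_\infty(p_m;g,y) + \delta_N(g,y).
\]
Since $|A_N(p_0) - B_N(p_0)| \le \delta_N(g,y)$ and $B_N(p_0) \nearrow B_\infty(p_0)$, letting $N \to \infty$ and using $\delta_N(g,y) \to 0$ gives $B_\infty(p_0;g,y) \le \liminf_m B_\infty(p_m;g,y)$. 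Plugging this pointwise-in-$(g,y)$ lower semi-continuity into (\ref{eq:entropy-alpha}) and invoking Fatou's lemma to the non-negative functions $B_\infty(p_m;\cdot,\cdot)$ against $d(g\nu)(y)\, d\mu(g)$, the domination by the integrable $\psi_g(x)$ ensuring that all quantities are finite, shows that $p \mapsto h_\mu(M, \boldsymbol{\alpha}_p)$ is lower semi-continuous.

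The main subtlety is that without the pointwise uniform approximation of Corollary~\ref{cor-lower-uni}, one cannot invoke the uniform convergence theorem to obtain continuity of $x \mapsto H_{\alpha_{x,g}\parallel\alpha_{x,e}}(\mathcal{P}_{x,n})$ on the fiber and then proceed as in Proposition~\ref{KL-uni}; the approximation argument must instead be run one level up, at the scale of the already integrated functionals $A_n, B_n$. The uniformity in $p$ of the error $\delta_n(g,y) \to 0$ is exactly what permits the interchange of the limits $N \to \infty$ and $m \to \infty$ in the display above, while the integrability of $\psi_g$ with respect to every $\eta \in \mathfrak{P}_1$ prevents pathological behaviour when Fatou's lemma is applied at the outer $(g,y)$-integrals.
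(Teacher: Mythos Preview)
Your argument is correct and follows essentially the same route as the paper: both proofs reduce via Fatou to showing lower semi-continuity of $p\mapsto\int_{S_y}D(\alpha_{x,g}\parallel\alpha_{x,e})\,d\eta_p^y$, and both use the one-sided estimate from Lemma~\ref{KL-difference-1} together with the uniform integral bound $\delta_n(g,y)\to 0$. The only cosmetic difference is in packaging the final step: the paper observes that the integral in question equals $\sup_n\{\,\int_{S_y}H_{q_{x,g}^n\parallel q_{x,e}^n}(\mathcal{P}_{x,n})\,d\eta_p^y-\delta_n\,\}$ and concludes LSC immediately as a supremum of continuous functions, whereas you unwind this into an explicit $\liminf$ argument with $A_N$ and $B_N$---the content is the same.
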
 
\begin{proof}
As in Lemma \ref{Fatou}, it suffices to show that $p\mapsto\int_{S_{y}}D\left(\alpha_{x,g}\parallel\alpha_{x,e}\right)d\eta_{p}^{y}(x)$
is lower semi-continuous. As in the proof of Proposition~\ref{KL-uni},
Lemma \ref{KL-difference-1} implies 
\begin{equation}
D\left(\alpha_{x,e}\parallel\alpha_{x,g}\right)=\sup_{n\in\mathbb{N}}\left\{ H_{q_{x,e}^{n}\parallel q_{x,g}^{n}}\left(\mathcal{P}_{x,n}\right)-\Delta_{x,g}(n)\right\} .\label{eq:KL-sup}
\end{equation}
Write $\delta_{n}=\delta_{n}(g,y)$, we have 
\begin{align*}
\int_{S_{y}}D\left(\alpha_{x,g}\parallel\alpha_{x,e}\right)d\eta^{y} & (x)=\int_{S_{y}}\sup_{n}\left\{ H_{q_{x,g}^{n}\parallel q_{x,e}^{n}}\left(\mathcal{P}_{x,n}\right)-\Delta_{x,g}(n)\right\} d\eta^{y}(x)\\
 & \ge\sup_{n}\left\{ \int_{S_{y}}H_{q_{x,g}^{n}\parallel q_{x,e}^{n}}\left(\mathcal{P}_{x,n}\right)d\eta_{p}^{y}-\int_{S_{y}}\Delta_{x,g}(n)d\eta^{y}(x)\right\} \\
 & \ge\sup_{n}\left\{ \int_{S_{y}}H_{q_{x,g}^{n}\parallel q_{x,e}^{n}}\left(\mathcal{P}_{x,n}\right)d\eta^{y}-\delta_{n}\right\} .
\end{align*}
In the other direction, (\ref{eq:KL-sup}) and the dominated convergence
theorem implies that 
\[
\int_{S_{y}}D\left(\alpha_{x,g}\parallel\alpha_{x,e}\right)d\eta^{y}(x)=\lim_{n}\int_{S_{y}}H_{q_{x,g}^{n}\parallel q_{x,e}^{n}}\left(\mathcal{P}_{x,n}\right)d\eta^{y}(x).
\]
Since $\delta_{n}\to0$ as $n\to\infty$, we have then 
\begin{equation}
\int_{S_{y}}D\left(\alpha_{x,g}\parallel\alpha_{x,e}\right)d\eta^{y}(x)=\sup_{n}\left\{ \int_{S_{y}}H_{q_{x,g}^{n}\parallel q_{x,e}^{n}}\left(\mathcal{P}_{x,n}\right)d\eta^{y}-\delta_{n}\right\} .\label{eq:sup-delta}
\end{equation}
By the continuity assumptions (C1) and (C2), the function $\eta\mapsto\int_{S_{y}}H_{q_{x,e}^{n}\parallel q_{x,g}^{n}}\left(\mathcal{P}_{x,n}\right)d\eta^{y}-\delta_{n}$
is continuous on $\mathfrak{P}_{1}$. Then (\ref{eq:sup-delta}) implies
the statement. 
\end{proof}

\section{Bowen-Poisson bundle for free groups\label{Sec:identification-free}}

In this section, we apply the tools of the previous sections to the
Poisson bundles over IRSs of the free group considered in \cite{Bowen}.

Let $F$ be the free group $\mathbf{F}_{k}$ on $k\ge2$ generators.
Denote its standard generating set as $S=\left\{ a_{1},\dots,a_{k}\right\} $.
The Schreier graph of a subgroup $H$ of $F$ has vertex set $H\backslash F$
(the space of cosets) and edge set $\left\{ (Hg,Hgs):g\in F,s\in S\right\} $.

Following a terminology of Bowen, we call a subgroup $H$ of $F$,
or rather its Schreier graph $H\backslash F$, \emph{tree-like} if
the only simple loops are self-loops, i.e. have length $1$ -- see
\cite[Section 4]{Bowen}. This precisely means that between any two
vertices of the Schreier graph, there is a unique path without backtrack
nor self-loop from one to the other. Algebraically, a subgroup $H$
is tree-like if and only if it is generated by elements of the form
$gsg^{-1}$ for $g$ in $F$ and $s$ in the generating set (and we
can assume there is a bijection between such pairs $(g,s)$ and the
loops of the Schreier graph). We denote by $\partial\left(H\backslash F\right)$
the space of ends of $H\backslash F$. Denote by ${\rm Tree}_{F}$
the subset of ${\rm Sub}(F)$ which consists of $H$ with tree-like
Schreier graphs. It is a conjugacy invariant closed subset of ${\rm Sub}(F)$.

\subsection{Quasi-transitive tree-like Schreier graphs\label{subsec:quasi-transitive}}

We first consider the situation where $H_{0}\in{\rm Tree}_{F}$ has
a normalizer $N_{F}(H_{0})$ of finite index in $F$. The normalizer
$N_{F}(H_{0})$ acts from the left on the Schreier graph of $H_{0}\backslash F$
by automorphisms; it extends to a continuous action on the space of
ends $\partial(H_{0}\backslash F)$. We also assume that the tree-like
Schreier graph $H_{0}\backslash F$ has infinitely many ends.
\begin{example}
\label{exKell} Following \cite[Section 4.3]{Bowen}, given an integer
$\ell\ge2$, take the subgroup $K_{\ell}$ of $\mathbf{F}_{2}=\langle a,b\rangle$
which is generated by all elements of the form $ghg^{-1}$, where
$g\in\left\langle a^{\ell},b^{\ell}\right\rangle $ and $h\in\left\{ a^{k}ba^{-k},b^{k}ab^{-k}:k=1,2,\ldots,\ell-1\right\} $.
The coset Schreier graph $K_{\ell}\backslash F$ is tree-like, and
the normalizer $N_{F}(K_{\ell})$ is of finite index in $F$. 
\end{example}

It is known by \cite{CartwrightSoardi} that for a locally finite
infinite tree $\mathsf{T}$, for any random walk step distribution
$\mu$ on ${\rm Aut}(\mathsf{T})$ such that ${\rm supp}\mu$ is not
contained in an amenable subgroup of ${\rm Aut}(\mathsf{T})$, the
sequence $\omega_{n}.v$, where $\left(\omega_{n}\right)$ is a $\mu$-random
walk, converges to an end with probability $1$. This convergence
result uses a martingale argument originally due to Furstenberg. Along
this line of reasoning, we have:

\begin{lemma}\label{quasitransitive-convergence} Let $\mu$ be a
non-degenerate step distribution on $F$ and $\left(\omega_{n}\right)_{n=0}^{\infty}$
be a $\mu$-random walk. Suppose $H_{0}\in{\rm Tree}_{F}$ has a normalizer
$N_{F}(H_{0})$ of finite index in $F$ and that the Schreier graph
$H_{0}\backslash F$ has infinitely many ends. Then the coset random
walk $\left(H_{0}\omega_{n}\right)_{n=0}^{\infty}$ converges to an
end in $\partial\left(H_{0}\backslash F\right)$ with probability
$1$. \end{lemma}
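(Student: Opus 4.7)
The plan is to adapt the Furstenberg-type martingale boundary convergence argument, in the spirit of Cartwright--Soardi~\cite{CartwrightSoardi}, to the quasi-transitive tree-like Schreier graph at hand.

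First I would reduce to a genuine tree. Since $\Gamma := H_0\backslash F$ is tree-like, removing all self-loops produces a locally finite tree $T$ with the same vertex set, and the ends of $\Gamma$ and of $T$ are canonically in bijection. The coset chain $(H_0\omega_n)$ becomes a lazy Markov chain on $T$ once self-loop transitions are projected out, so convergence of $(H_0\omega_n)$ to an end of $\Gamma$ is equivalent to convergence to an end of $T$.

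Next I would invoke the finite-index hypothesis. The group $\Lambda := N_F(H_0)/H_0$ acts freely by left multiplication on $\Gamma$, and hence on $T$, by graph automorphisms, with finite quotient $\Lambda\backslash T$ because $[F:N_F(H_0)]<\infty$; thus $T$ is $\Lambda$-quasi-transitive. The transition kernel of $(H_0\omega_n)$ on $T$ is $\Lambda$-equivariant, since left multiplication by any element of $N_F(H_0)$ commutes with the right multiplications generating the walk. As $T$ has infinitely many ends and carries a cocompact group action, $\Lambda$ is non-amenable (by Stallings' structure theorem it splits as a non-trivial graph of finite groups over finite edge groups, hence contains a non-abelian free subgroup).

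The final step is the martingale argument itself. Non-degeneracy of $\mu$ on $F$ yields that the coset chain is irreducible, and combined with non-amenability of $T$ and the cocompact $\Lambda$-action it is transient. Furstenberg's classical construction then applies in this $\Lambda$-equivariant setting: fix a reference end $\xi_0$ (or a base horofunction/Busemann cocycle) and consider the $\Lambda$-averaged Busemann values along the trajectory. These form a submartingale with drift, and the martingale convergence theorem together with the tree geometry of $T$ forces $(H_0\omega_n)$ to converge a.s.\ to some $\xi_\infty \in \partial T = \partial(H_0\backslash F)$.

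The main obstacle will be the last step: Cartwright--Soardi's theorem as quoted is stated for random walks on $\mathrm{Aut}(T)$, whereas here the kernel is a $\Lambda$-invariant Markov kernel that does not obviously descend from a group random walk on $\mathrm{Aut}(T)$. The adaptation requires either projecting onto the finite $\Lambda$-quotient and using positive recurrence/ergodicity there to obtain a positive drift along the tree, or directly invoking the more general framework of Woess for quasi-transitive invariant random walks on trees; either route should deliver the convergence given the non-amenability and non-degeneracy established in the earlier steps.
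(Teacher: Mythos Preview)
Your structural setup is correct: the Schreier graph is a tree with self-loops, $\Lambda=N_F(H_0)/H_0$ acts freely and cocompactly on it, the coset kernel is $\Lambda$-equivariant, and $\Lambda$ is non-amenable. But the convergence step itself---which you flag as the obstacle---is a genuine gap, not just a technicality to be patched by citing Woess. The coset chain is \emph{not} nearest-neighbor (nothing prevents $\mu$ from having unbounded support with heavy tails), so ``transient Markov chain on a tree converges to an end'' does not apply, and there is no off-the-shelf theorem for arbitrary $\Lambda$-invariant kernels with long jumps and no moment hypothesis. Your Busemann-cocycle suggestion is also not an argument: there is no reason the horofunction along the trajectory should be a submartingale with drift in this generality.

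The paper resolves this with two concrete ideas you are missing. First, rather than working with the coset kernel, it takes the return times $\tau_n$ of the $\mu$-walk $(\omega_n)$ to the finite-index subgroup $N=N_F(H_0)$; the induced walk $(\omega_{\tau_n})$ is then a genuine \emph{group} random walk on $N$, and since $N$ acts on $H_0\backslash F$ by graph automorphisms with non-amenable image, Cartwright--Soardi applies directly and gives convergence of the subsequence $H_0\omega_{\tau_n}$ to an end $\lambda_\omega^{(\tau)}$. Second---and this is half the work---one must upgrade to the full sequence $H_0\omega_n$. The paper does this by fixing a reference ray $\xi\in\partial F$ with good projection properties, building a $\mu$-harmonic family of measures $\nu_\gamma$ on $\partial(H_0\backslash F)$ indexed by \emph{all} $\gamma\in F$ (not just $N$), applying the martingale convergence theorem to $(\nu_{\omega_n})$, identifying the limit with $\delta_{\lambda_\omega^{(\tau)}}$ along the return subsequence via the equivariance $\nu_g=g.\nu_e$ for $g\in N$, and finally ruling out other accumulation points by a contraction argument (\cite[Lemma 2.2]{CartwrightSoardi}). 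The return-time reduction is the key move that lets Cartwright--Soardi do the heavy lifting; your proposal never reaches it.
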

\begin{proof}
Let $\tau_{n}$ be the $n$-th return time of the random walk $\left(\omega_{n}\right)_{n=0}^{\infty}$
to the finite index subgroup $N=N_{F}(H_{0})$. Denote by $\mu_{\tau}$
the distribution of $\omega_{\tau_{1}}$. Note that ${\rm supp}\mu_{\tau}$
generates $N$. Denote by $o$ the identity coset in $H_{0}\backslash F$,
we have that $\omega_{\tau_{n}}.o=H_{0}\omega_{\tau_{n}}$. Apply
the convergence theorem \cite{CartwrightSoardi} to the $\mu_{\tau}$-random
walk $\left(\omega_{\tau_{n}}\right)_{n=0}^{\infty}$ on $N<{\rm Aut}(H_{0}\backslash F)$,
we have that with probability $1$, $\omega_{\tau_{n}}.o$ converges
to an end. On this full measure set of $\omega\in F^{\mathbb{N}}$,
denote by $\lambda_{\omega}^{(\tau)}$ the end where $H_{0}\omega_{\tau_{n}}$
converges to.

Take an infinite reduced word $\xi=x_{1}x_{2}\ldots\in\partial F$
such that for any $\gamma\in F$, in the Schreier graph $H_{0}\backslash F$,
the sequence $\left(H_{0}\gamma x_{1}\ldots x_{n}\right)$ converges
to an end in $\partial\left(H_{0}\backslash F\right)$. Denote the
end as $H_{0}\gamma\xi$. Such infinite words exist: since simple
random walk on $H_{0}\backslash F$ is transient and converges to
an end starting from any vertex, we have that $\nu_{0}$-a.e. $\xi\in\partial F$
has the property required, where $\nu_{0}$ is the harmonic measure
on $\partial F$ of simple random walk on $F$. Here transience follows
from the assumption that the quasi-transitive graph $H_{0}\backslash F$
is not quasi-isometric to $\mathbb{Z}$. Fix a choice of such $\xi$.
Let $\nu_{\gamma}(n)$ be the distribution of $H_{0}\gamma\omega_{n}\xi$
on $\partial\left(H_{0}\backslash F\right)$. By compactness and a
standard diagonal argument, there is a subsequence $\left(n_{i}\right)$
such that $\nu_{\gamma}(n_{i})$ converges in the weak$^{\ast}$ topology
for all $\gamma\in F$. Denote by $\nu_{\gamma}$ the limit of $\nu_{\gamma}(n_{i})$.
The limits satisfy the harmonicity condition $\sum_{s\in F}\nu_{\gamma s}\mu(s)=\nu_{\gamma}$,
$\gamma\in F$. By the martingale convergence theorem, along the $\mu$-random
walk trajectory, $\nu_{\omega_{n}}$ converges to a limit measure
$\nu_{\omega}$ in the weak$^{\ast}$ topology for $\mathbb{P}_{\mu}$-a.e.
$\omega$.

Next we show that for $\mathbb{P}_{\mu}$-a.e. $\omega$, the limit
$\nu_{\omega}$ is the point mass at the end $\lambda_{\omega}^{(\tau)}$.
It suffices to show the subsequence $\left(\nu_{\omega_{\tau_{n}}}\right)$
converges weakly to $\delta_{\lambda_{\omega}^{(\tau)}}$. Since $N$
normalizes $H_{0}$, we have for $g\in N$, 
\begin{equation}
\nu_{g}=\lim_{i\to\infty}\sum_{s\in F}\delta_{\left\{ H_{0}gs\xi\right\} }\mu^{(n_{i})}(s)=\lim_{i\to\infty}\sum_{s\in F}\delta_{\left\{ gH_{0}s\xi\right\} }\mu^{(n_{i})}(s)=g.\nu_{e}.\label{eq:automorphism}
\end{equation}
The $\mu$-harmonicity condition satisfied by $\left\{ \nu_{g}\right\} $
then implies $\nu_{e}$ is $\mu_{\tau}$ stationary. Then the measures
$\nu_{g}$, $g\in N$, are non-atomic, see e.g., the first paragraph
in the proof of \cite[Theorem]{CartwrightSoardi}. Recall that for
$\mathbb{P}_{\mu}$-a.e. $\omega$, along the subsequence $\left(\omega_{\tau_{n}}\right)$,
$\omega_{\tau_{n}}.o$ converges to the end $\lambda_{\omega}^{(\tau)}$.
By \cite[Lemma 2.2]{CartwrightSoardi}, $\omega_{\tau_{n}}.x$ converges
to $\lambda_{\omega}^{(\tau)}$ for all $x\in H_{0}\backslash F\cup\partial\left(H_{0}\backslash F\right)$
except possibly one point. Therefore $\omega_{\tau_{n}}.\nu_{e}$
converges to the point mass at $\lambda_{\omega}^{(\tau)}$. We conclude
that $\nu_{\omega}=\delta_{\lambda_{\omega}^{(\tau)}}$.

Finally, suppose that the sequence $\left(H_{0}\omega_{n}\right)_{n=0}^{\infty}$
has other accumulation points than $\lambda_{\omega}^{(\tau)}$: there
is a subsequence $\left(H_{0}\omega_{m_{j}}\right)$ that converges
to a different end $\lambda'$. Since $N$ is finite index in $F$,
passing to a further subsequence if necessary, we may assume that
there is a $\gamma\in F$ such that $\omega_{m_{j}}\in N\gamma$ for
all $j$. Then $\omega_{m_{j}}\gamma^{-1}.o=H_{0}\omega_{m_{j}}\gamma^{-1}$
converges to the end $\lambda'$ as well. The same calculation as
in (\ref{eq:automorphism}) shows that $\nu_{\omega_{m_{j}}}=\left(\omega_{m_{j}}\gamma^{-1}\right).\nu_{\gamma}$.
Again by \cite[Lemma 2.2]{CartwrightSoardi}, $\left(\nu_{\omega_{m_{j}}}\right)$
converges weakly o $\delta_{\lambda'}$. However we have shown that
$\nu_{\omega_{n}}$ converges weakly to $\delta_{\lambda_{\omega}^{(\tau)}}$.
Therefore $\lambda'=\lambda_{\omega}^{(\tau)}$. We conclude that
there is no other accumulation points and $(H_{0}\omega_{n})_{n=0}^{\infty}$
converges to $\lambda_{\omega}^{(\tau)}$ for $\mathbb{P}_{\mu}$-a.e.
$\omega$. 
\end{proof}

Under the finite entropy and finite log-moment assumption on $\mu$,
we note the following strengthening of the convergence statement in
Lemma \ref{quasitransitive-convergence}. This property will be useful
in the lifting argument in the next subsection. Given a point $x\in H_{0}\backslash F$
and an element $g\in F$ (viewed as a reduced word), for $0\le\ell\le|g|$,
denote by $g_{\ell}$ the length $\ell$ prefix of $g$, and $\left[x;g\right]$
the set $\left\{ x,xg_{1},xg_{2},\ldots,xg\right\} $.

\begin{lemma}\label{crossing} In the setting of Lemma \ref{quasitransitive-convergence},
assume also $\mu$ has finite entropy and finite log-moment. Then
we have for any finite set $B$ in $H_{0}\backslash F$, 
\[
\mathbb{P}_{\mu}\left(B\cap\left[H_{0}\omega_{n};\omega_{n}^{-1}\omega_{n+1}\right]\neq\emptyset\mbox{ infinitely often}\right)=0.
\]
\end{lemma}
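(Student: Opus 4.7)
The plan is to combine the end-convergence provided by Lemma \ref{quasitransitive-convergence} with the tree-like geometry of $H_0\backslash F$: once the coset random walk enters the component of the tree past $B$ that contains the limit end, the geodesic between consecutive positions cannot cross $B$, so only finitely many crossings can occur.

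By Lemma \ref{quasitransitive-convergence}, $\mathbb{P}_\mu$-a.s.\ $H_0\omega_n$ converges to some end $\lambda_\omega\in\partial(H_0\backslash F)$. Tree-likeness of $H_0\backslash F$ means the graph obtained by contracting self-loops is a genuine tree, and in particular forbids two distinct undirected edges between any two distinct vertices; consequently a reduced word in $F$ never immediately backtracks in the Schreier graph. Hence the path $[H_0\omega_n;\omega_n^{-1}\omega_{n+1}]$ is, modulo self-loop stops, a non-backtracking walk in a tree, which is automatically injective and equal to the unique simple path from $H_0\omega_n$ to $H_0\omega_{n+1}$.

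Since $B$ is finite and the Schreier graph is locally finite, removing $B$ decomposes the underlying tree into finitely many subtree components $T_1,\dots,T_k$, each end of $H_0\backslash F$ lying in the closure of exactly one of them. Let $T_{i(\omega)}$ be the component whose closure contains $\lambda_\omega$. Since $T_{i(\omega)}$ together with the ends in its closure is an open neighborhood of $\lambda_\omega$ in the end compactification, the convergence $H_0\omega_n\to\lambda_\omega$ furnishes an almost surely finite random $N(\omega)$ such that $H_0\omega_n\in T_{i(\omega)}$ for every $n\ge N(\omega)$.

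For such $n$, both endpoints $H_0\omega_n,H_0\omega_{n+1}$ lie in the convex subtree $T_{i(\omega)}$, so the simple path between them, which coincides with $[H_0\omega_n;\omega_n^{-1}\omega_{n+1}]$, stays inside $T_{i(\omega)}$ and is therefore disjoint from $B$. Hence the event $B\cap[H_0\omega_n;\omega_n^{-1}\omega_{n+1}]\ne\emptyset$ occurs only for $n<N(\omega)$, proving the lemma. The most delicate step is the identification of the word-traced path with the tree geodesic, which relies on a careful use of tree-likeness; the hypotheses of finite entropy and finite log-moment do not seem to enter this particular proof and appear to be inherited from the surrounding setup of Section \ref{Sec:identification-free}.
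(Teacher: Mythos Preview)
Your argument contains a genuine gap at the geometric step. You claim that tracing a reduced word through a tree-like Schreier graph gives, modulo self-loop stops, a non-backtracking path, hence the geodesic between the endpoints. This is false. Take a vertex $v$ with a tree edge labelled $b$ to $w=vb$, and suppose the $a$-edge at $w$ is a self-loop (so $wa=w$). The word $bab^{-1}$ is reduced, yet the traced path is $v\to w\to w\to v$; collapsing the self-loop stop leaves $v\to w\to v$, which backtracks. The point is that a self-loop lets the projected path ``forget'' which tree edge it arrived along, so the next letter (which is only forbidden to be the inverse of the self-loop label, not of the previous tree-edge label) may reverse it. Consequently $[H_0\omega_n;\omega_n^{-1}\omega_{n+1}]$ need not be contained in the geodesic between its endpoints, and knowing that $H_0\omega_n,H_0\omega_{n+1}\in T_{i(\omega)}$ does not force the intermediate points to lie there. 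With $B=\{w\}$ in the example above, both endpoints are $v\notin B$ while the path meets $B$.

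Once this identification fails, nothing in your argument controls the excursions of $[H_0\omega_n;\omega_n^{-1}\omega_{n+1}]$: the increment $\omega_n^{-1}\omega_{n+1}$ can have arbitrarily large word length, and mere convergence $H_0\omega_n\to\lambda_\omega$ gives no rate for $d(H_0\omega_n,B)\to\infty$. The paper's proof proceeds quite differently and genuinely uses both extra hypotheses. It first observes that the Poisson bundle over the conjugates of $H_0$ has strictly positive Furstenberg entropy $h>0$ (by Lemma~\ref{quasitransitive-convergence} the tail is nontrivial), then invokes Shannon's theorem (Proposition~\ref{shannon}) to place $H_0\omega_n$ in a set where each point has transition probability at most $e^{-n(h-\epsilon)}$, uses finite log-moment to bound $|\omega_n^{-1}\omega_{n+1}|\le e^{n\epsilon}$ eventually, and finishes by a direct union bound and Borel--Cantelli. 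Your closing remark that the finite-entropy and finite-log-moment hypotheses ``do not seem to enter'' was the tell: they are exactly what makes the quantitative argument work.
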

\begin{proof}
It suffices to prove it for the case where $B$ consists of a single
point, $B=\{x_{0}\}$. Let $h$ be the entropy of the Poisson bundle
over conjugates of $H_{0}$. By the convergence lemma \ref{quasitransitive-convergence},
we know that the tail $\sigma$-field of $\left(H_{0}\omega_{n}\right)_{n=0}^{\infty}$
is nontrivial, and thus the Furstenberg entropy of the Poisson bundle
is positive: $h=h_{\mu}(Z,\lambda)>0$. Take any $0<\epsilon<h/3$.
Consider the subset of vertices 
\[
A_{n}=\left\{ x\in H_{0}\backslash F:-\frac{1}{n}\log P_{\mu,H_{0}}^{n}(o,x)\ge h-\epsilon\right\} ,
\]
and the event 
\[
C_{n}=\left\{ \omega:\log\left|\omega_{n}^{-1}\omega_{n+1}\right|\le n\epsilon,H_{0}\omega_{n}\in A_{n},x_{0}\in\left[H_{0}\omega_{n};\omega_{n}^{-1}\omega_{n+1}\right]\right\} .
\]
Given an element $g\in F$, for $\ell\le|g|$, denote by $g_{\ell}$
the length $\ell$ prefix of $g$. Then we have: 
\begin{align*}
\mathbb{P}_{\mu}\left(C_{n}\right) & \le\sum_{g:|g|\le e^{n\epsilon}}\mu(g)\sum_{\ell=0}^{|g|}\mathbb{P}_{\mu}\left(H_{0}\omega_{n}\in A_{n},H_{0}\omega_{n}g_{\ell}=x_{0}\right)\\
 & =\sum_{g:|g|\le e^{n\epsilon}}\mu(g)\sum_{\ell=0}^{|g|}\mathbb{P}_{\mu}\left(H_{0}\omega_{n}=x_{0}g_{\ell}^{-1}\right)\mathbf{1}_{A_{n}}\left(x_{0}g_{\ell}^{-1}\right)\\
 & \le\sum_{g:|g|\le e^{n\epsilon}}\mu(g)\sum_{\ell=0}^{|g|}e^{-n(h-\epsilon)}\mbox{ (by definition of the set }A_{n})\\
 & \le ce^{n\epsilon}e^{-n(h-\epsilon)}\le ce^{-nh/3}.
\end{align*}
By the Borel-Cantelli lemma, we have that $\mathbb{P}_{\mu}\left(\omega\in C_{n}\mbox{ i.o.}\right)=0$.
By the Shannon theorem \ref{shannon}, $\mathbb{P}_{\mu}\left(H_{0}\omega_{n}\notin A_{n}\mbox{ i.o.}\right)=0$,
and recall that finite log-moment implies that $\mathbb{P}_{\mu}\left(\log\left|\omega_{n}^{-1}\omega_{n+1}\right|>n\epsilon\mbox{ i.o.}\right)=0$.
The statement follows from taking a union of these three events. 
\end{proof}

\subsection{Identification over the covering construction\label{subsec:identification}}

We describe the end-compactification bundle and identify it with the
Poisson bundle.

\subsubsection{A bundle of end-compactifications}

Denote by $M$ the end compactification bundle over ${\rm Tree}_{F}$:
the fiber over $H\in{\rm Tree}_{F}$ is the space of ends $\partial\left(H\backslash F\right)$.
The group $F$ acts on $M$ as follows. For $(H,\zeta)\in M$, let
$\xi\in\partial F$ be such that the sequence $H\xi_{n}$ converges
to $\zeta$ on $H\backslash F$, where $\xi_{n}$ is the length $n$
prefix of $\xi$. Then $F$ acts on $M$ by $\gamma.\left(H,\zeta\right)=\left(H^{\gamma},\zeta'\right)$,
where $\zeta'$ is the end in $\partial\left(H^{\gamma}\backslash F\right)$
that $\left(H^{\gamma}\gamma\xi_{n}\right)_{n=1}^{\infty}$ converges
to. 
\begin{fact}
\label{F-end}

The $F$-action on $M$ described above is well-defined. 
\end{fact}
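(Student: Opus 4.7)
The plan is to exhibit the action as coming from left multiplication by $\gamma$, which is an isomorphism of Schreier graphs; this simultaneously establishes convergence of $(H^\gamma\gamma\xi_n)$, independence from the choice of $\xi$, and the group action axioms.

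First I would verify that for $\gamma\in F$ and $H\in\mathrm{Tree}_F$, the map
\[
L_\gamma:H\backslash F\longrightarrow H^\gamma\backslash F,\qquad Hg\longmapsto \gamma Hg=H^\gamma\gamma g
\]
is a well-defined isomorphism of Schreier graphs (with $H^\gamma=\gamma H\gamma^{-1}$). Well-definedness is immediate from $Hg=Hg'$ implying $\gamma Hg=\gamma Hg'$, and $L_{\gamma^{-1}}$ is an inverse. The $s$-labelled edge $(Hg,Hgs)$ maps to $(H^\gamma\gamma g,H^\gamma\gamma g s)$, which is the $s$-labelled edge out of $H^\gamma\gamma g$. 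In particular tree-likeness passes to $H^\gamma$ (which is also obvious since $\mathrm{Tree}_F$ is conjugation-invariant, as noted in the text), and $L_\gamma$ extends to a homeomorphism
\[
\overline{L_\gamma}:\partial(H\backslash F)\longrightarrow \partial(H^\gamma\backslash F)
\]
of end compactifications, because ends of tree-like graphs are functorial under graph isomorphisms.

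Next I would observe that if $\xi\in\partial F$ satisfies $H\xi_n\to\zeta$ in $\partial(H\backslash F)$, then $L_\gamma(H\xi_n)=H^\gamma\gamma\xi_n$, and so by continuity of $\overline{L_\gamma}$ the sequence $(H^\gamma\gamma\xi_n)$ converges to $\overline{L_\gamma}(\zeta)\in\partial(H^\gamma\backslash F)$. This value depends only on $(H,\zeta,\gamma)$, not on the chosen lift $\xi$, so the map $\gamma.(H,\zeta):=(H^\gamma,\overline{L_\gamma}(\zeta))$ is well-defined. Existence of at least one such $\xi$ is ensured by the fact that any end $\zeta$ of the tree-like graph $H\backslash F$ is represented by a non-backtracking ray starting at the identity coset, and such a ray lifts to an infinite reduced word in $F$ (choosing, at each step, a preimage of the next edge under the covering $F\to H\backslash F$).

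Finally, the action axioms follow from the functorial identity $L_{\gamma_1\gamma_2}=L_{\gamma_1}\circ L_{\gamma_2}$, which is a direct computation: $L_{\gamma_1}(L_{\gamma_2}(Hg))=L_{\gamma_1}(H^{\gamma_2}\gamma_2 g)=(H^{\gamma_2})^{\gamma_1}\gamma_1\gamma_2 g=H^{\gamma_1\gamma_2}\gamma_1\gamma_2 g=L_{\gamma_1\gamma_2}(Hg)$, together with $L_e=\mathrm{id}$. Passing to end compactifications yields $\overline{L_{\gamma_1\gamma_2}}=\overline{L_{\gamma_1}}\circ\overline{L_{\gamma_2}}$, which is the required cocycle identity. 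There is essentially no obstacle here: the only modest point is carefully checking that left multiplication by $\gamma$ really is a graph isomorphism respecting the $S$-labelling, everything else is a formal consequence.
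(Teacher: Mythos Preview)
Your proof is correct and in fact more complete than the paper's. Both arguments rest on the same observation---that $Hg\mapsto H^\gamma\gamma g$ carries $(H\xi_n)$ to $(H^\gamma\gamma\xi_n)$---but you package it as a labelled graph isomorphism $L_\gamma$ and invoke functoriality of end compactification, whereas the paper verifies the end-preserving property by hand via a Gromov-product inequality: since $L_\gamma$ is an isometry sending the root $H$ to $H^\gamma\gamma$ (at distance $\le|\gamma|$ from the new root $H^\gamma$), one has $(H^\gamma\gamma\xi_n\mid H^\gamma\gamma\xi'_n)_{H^\gamma}\ge(H\xi_n\mid H\xi'_n)_H-|\gamma|_F$, and both sides tend to infinity together. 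Your route is cleaner and also addresses two points the paper leaves implicit: existence of a lift $\xi$ for a given end $\zeta$, and the cocycle identity $\overline{L_{\gamma_1\gamma_2}}=\overline{L_{\gamma_1}}\circ\overline{L_{\gamma_2}}$ giving the group action axioms. The paper's inequality, on the other hand, makes the quantitative dependence on $|\gamma|$ explicit, which is occasionally useful elsewhere.
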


\begin{proof}
Suppose $\xi,\xi'$ are two infinite reduced words such that $H\xi_{n}$
and $H\xi_{n}'$ converge to the same end $\zeta\in\partial\left(H\backslash F\right)$.
Then on the tree-like Schreier graph $H\backslash F$, the Gromov
product $\left(H\xi_{n}|H\xi_{n}'\right)_{H}\overset{n\to\infty}{\longrightarrow}\infty$.
On the graph $H^{\gamma}\backslash F$, which is related to $H\backslash F$
by rerooting, we have 
\[
\left(H^{\gamma}\gamma\xi_{n}|H^{\gamma}\gamma\xi_{n}'\right)_{H^{\gamma}}\ge\left(H\xi_{n}|H\xi_{n}'\right)_{H}-|\gamma|_{F}.
\]
Thus $H^{\gamma}\gamma\xi_{n}$ and $H^{\gamma}\xi'_{n}$ converges
to the same end in $\partial\left(H^{\gamma}\backslash F\right)$. 
\end{proof}
Throughout the rest of this subsection, let $\mu$ be a nondegenerate
step distribution on $F$ of finite entropy and finite log-moment.
Let $(\omega_{n})$ be a $\mu$-random walk on $F$. Denote by $\nu$
the hitting distribution on $\partial F$ of the $\mu$-random walk.

Let $H_{0}\in{\rm Tree}_{F}$ be as in Lemma \ref{quasitransitive-convergence}
that the Schreier graph $H_{0}\backslash F$ is a quasi-transitive
tree-like graph with infinitely many ends. Denote by 
\begin{equation}
{\rm Tree}_{F}^{H_{0}}=\left\{ H\in{\rm Tree}_{F}:H<H_{0}^{\gamma}\mbox{ for some }\gamma\in F\right\} ,\label{eq:tree-cover}
\end{equation}
that is, subgroups $H$ such that, up to rerooting, the Schreier graph
$H\backslash F$ is a tree-like graph that covers $H_{0}\backslash F$.

The property stated in Lemma \ref{crossing} naturally lifts to covering
graphs. Thus we have the following convergence to ends lemma.

\begin{lemma}\label{lift} Let $H_{0}$ be as in Lemma \ref{quasitransitive-convergence}.
For any $H\in{\rm Tree}_{F}^{H_{0}}$ and any finite set $K$ in $H\backslash F$,
$\mathbb{P}_{\mu}$-almost surely $K\cap\left[H\omega_{n};\omega_{n}^{-1}\omega_{n+1}\right]\neq\emptyset$
for only finitely many $n$. In particular, $H\omega_{n}$ converges
to an end in $\partial\left(H\backslash F\right)$ when $n\to\infty$.
\end{lemma}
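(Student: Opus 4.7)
The plan is to lift Lemma \ref{crossing} along the covering map from $H\backslash F$ to $H_0^\gamma\backslash F$. First I would observe that Lemma \ref{crossing} applies verbatim to every conjugate $H_0^\gamma$: the Schreier graph $H_0^\gamma\backslash F$ is isomorphic as a right $F$-set to $H_0\backslash F$, the normalizer $N_F(H_0^\gamma)=\gamma^{-1}N_F(H_0)\gamma$ retains finite index, and the Furstenberg entropy of the associated Poisson bundle is a conjugacy invariant, so the proof of Lemma \ref{crossing} (relying on the Shannon theorem and the finite log-moment) goes through with $H_0$ replaced by $H_0^\gamma$ and the coset walk started at the identity coset.

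Next, for $H\in{\rm Tree}_F^{H_0}$, fix $\gamma$ with $H<H_0^\gamma$ and consider the $F$-equivariant quotient map $p:H\backslash F\to H_0^\gamma\backslash F$ defined by $Hx\mapsto H_0^\gamma x$. For any trajectory $\omega$ in $F$, equivariance gives that the single-step path $[H\omega_n;\omega_n^{-1}\omega_{n+1}]$ in $H\backslash F$ projects to $[H_0^\gamma\omega_n;\omega_n^{-1}\omega_{n+1}]$ in $H_0^\gamma\backslash F$. Given a finite $K\subset H\backslash F$, the image $p(K)$ is finite, and the event $\{K\cap [H\omega_n;\omega_n^{-1}\omega_{n+1}]\neq\emptyset\}$ is contained in $\{p(K)\cap [H_0^\gamma\omega_n;\omega_n^{-1}\omega_{n+1}]\neq\emptyset\}$. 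Lemma \ref{crossing} applied to $H_0^\gamma$ and $p(K)$ shows that the latter occurs for only finitely many $n$ almost surely, which settles the first assertion.

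For the convergence to an end, I would fix a base vertex $o=H$ in $H\backslash F$ and apply the first assertion to the closed ball $B_R$ of radius $R$ around $o$, which is finite because the Schreier graph is locally finite. This yields an a.s.\ finite threshold $N_R(\omega)$ after which neither $H\omega_n$ nor any intermediate vertex of the step-path meets $B_R$. Since $H\backslash F$ is tree-like, removing $B_R$ splits the graph into connected components, and the step-path joins $H\omega_n$ to $H\omega_{n+1}$ via edge-adjacent vertices all lying in $(H\backslash F)\setminus B_R$, so the entire tail $(H\omega_n)_{n\ge N_R(\omega)}$ lies in a single component $C_R(\omega)$. The components $C_R(\omega)$ are nested in $R$ and pick out a unique end $\zeta_\omega\in\partial(H\backslash F)$, to which $(H\omega_n)$ converges.

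The main subtlety is in treating the tree-like structure cleanly: self-loops in the Schreier graph do not obstruct the component argument, since they simply keep the step-path at its current vertex, but this must be spelled out to justify that "not meeting $B_R$" implies the step-path stays in one component of $(H\backslash F)\setminus B_R$. Once this is verified, the reduction to Lemma \ref{crossing} via the covering $p$ does the rest.
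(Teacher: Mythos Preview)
Your argument is correct and follows exactly the approach the paper intends: the paper states only that ``the property stated in Lemma \ref{crossing} naturally lifts to covering graphs'' without further detail, and your proof supplies precisely this lifting via the equivariant projection $p:H\backslash F\to H_0^\gamma\backslash F$, together with the observation that Lemma \ref{crossing} is conjugation-invariant. Your deduction of convergence to an end from the crossing statement (via the connected step-paths staying in a single component of the complement of each ball) is also the standard way to unpack the paper's ``in particular'', and your remark about self-loops is the right caveat to make the component argument airtight.
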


Lemma \ref{lift} implies that there is a measurable $F$-invariant
$\nu$-conull subset $A\subseteq\partial F$ such that the map $\zeta_{H}:A\to\partial\left(H\backslash F\right)$
is defined for all $H\in{\rm Tree}_{F}^{H_{0}}$: if $\omega_{n}$
converges to $\xi\in A$, then $L\omega_{n}$ converges to $\zeta_{H}(\xi)$.

Suppose $\rho$ is an $F$-invariant measure supported on ${\rm Tree}_{F}^{H_{0}}$.
We equip the bundle $M\to{\rm Tree}_{F}$ with a measure $\bar{\lambda}$
such that the disintegration of $\bar{\lambda}$ is 
\[
\bar{\lambda}=\int_{{\rm Tree}_{F}^{H_{0}}}\left(\zeta_{H}\right)_{\ast}\nu d\rho(H).
\]
That is, in $M$ the fiber $\partial\left(H\backslash F\right)$ over
$H$ is equipped with the measure $(\zeta_{H})_{\ast}\nu$, which
is the hitting distribution of the random walk $\left(H\omega_{n}\right)_{n=0}^{\infty}$
on the ends space $\partial\left(H\backslash F\right)$.

\begin{figure}
\begin{centering}
\begin{tikzpicture}[scale=0.8]
\draw[red] (0,0) node {$\bullet$}  node[below right] {$H\omega_0$};
\draw[black] (0,0) node {$\square$};
\draw[black] (1.8,0) node {$\bullet$} node[below right] {$Ha$};
\draw[black] (-1.8,0) node {$\bullet$} node[below] {$Ha^{-1}$};
\draw[black] (3.6,0) node {$\bullet$};
\draw[black] (-3.6,0) node {$\bullet$};
\draw[red] (5.4,0) node {$\bullet$} node[below right] {$H\omega_2$};
\draw[black] (-5.4,0) node {$\bullet$};
\draw[red] (1.8,1.8) node {$\bullet$} node[below right] {$H\omega_1$};
\draw[black] (0,1.8) node {$\bullet$} node[below right] {$Hb$};
\draw[black] (0,3.6) node {$\bullet$};
\draw[red] (-5.4,1.8) node {$\bullet$} node[below right] {$H\omega_{-1}$};
\draw[black] (5.4,1.8) node {$\bullet$};
\draw[blue] (7,3.6) node[right] {$\zeta_+(H,\omega)$};
\draw[blue] (-8.4,1.8) node[above] {$\zeta_-(H,\omega)$};
\draw (-6.6,0)--(6.6,0);
\draw(0,-1.2)--(0,4.2);
\draw(-5.4,-1.2)--(-5.4,2.4);
\draw(1.8,-1.2)--(1.8,2.4);
\draw(1.2,1.8)--(2.4,1.8);
\draw(-6,1.8)--(-4.8,1.8);
\draw(-0.6,3.6)--(0.6,3.6);
\draw (5.4,-1.2)--(5.4,2.4);
\draw[thick] (3.6,0) to[bend right=80] (3.6,0.8);
\draw[thick] (3.6,0.8) to[bend right=80] (3.6,0);
\draw[thick] (-3.6,0) to[bend right=80] (-3.6,0.8);
\draw[thick] (-3.6,0.8) to[bend right=80] (-3.6,0);
\draw[thick] (-1.8,0) to[bend right=80] (-1.8,0.8);
\draw[thick] (-1.8,0.8) to[bend right=80] (-1.8,0);
\draw[thick] (0,1.8) to[bend right=80] (-0.8,1.8);
\draw[thick] (-0.8,1.8) to[bend right=80] (0,1.8);
\draw[thick] (5.4,1.8) to[bend right=80] (4.6,1.8);
\draw[thick] (4.6,1.8) to[bend right=80] (5.4,1.8);
\draw[ultra thick, dashed, blue] (-6.6,0)--(5.4,0);
\draw[ultra thick, dashed, blue] (5.4,2.4)--(5.4,0);
\draw[ultra thick, dashed, blue] (5.4,2.4)--(7,3.6);
\draw[ultra thick, dashed, blue] (-6.6,0)--(-8.4,1.8);
\draw[thick, ->] (0,-1.8)--(0,-2.8);
\draw[black] (0,-2.2) node[right] {$\tilde{T}$};

\begin{scope}[yshift=-7.8cm]
\draw[red] (0,0) node {$\bullet$}  node[below right] {$H'\omega'_{-1}$};
\draw[black] (1.8,1.8) node {$\square$};
\draw[black] (1.8,0) node {$\bullet$} node[below right] {$H'b^{-1}$};
\draw[black] (-1.8,0) node {$\bullet$} node[below] {$H'b^{-1}a^{-2}$};
\draw[black] (3.6,0) node {$\bullet$};
\draw[black] (-3.6,0) node {$\bullet$};
\draw[red] (5.4,0) node {$\bullet$} node[below right] {$H'\omega'_1$};
\draw[black] (-5.4,0) node {$\bullet$};
\draw[red] (1.8,1.8) node {$\bullet$} node[below right] {$H'\omega'_0$};
\draw[black] (0,1.8) node {$\bullet$} node[below right] {\tiny{$H'b^{-1}a^{-1}b$}};
\draw[black] (0,3.6) node {$\bullet$};
\draw[red] (-5.4,1.8) node {$\bullet$} node[below right] {$H'\omega'_{-2}$};
\draw[black] (5.4,1.8) node {$\bullet$};
\draw[blue] (7,3.6) node[right] {$\zeta_+(H',\omega')$};
\draw[blue] (-8.4,1.8) node[above] {$\zeta_-(H',\omega')$};
\draw (-6.6,0)--(6.6,0);
\draw(0,-1.2)--(0,4.2);
\draw(-5.4,-1.2)--(-5.4,2.4);
\draw(1.8,-1.2)--(1.8,2.4);
\draw(1.2,1.8)--(2.4,1.8);
\draw(-6,1.8)--(-4.8,1.8);
\draw(-0.6,3.6)--(0.6,3.6);
\draw (5.4,-1.2)--(5.4,2.4);
\draw[thick] (3.6,0) to[bend right=80] (3.6,0.8);
\draw[thick] (3.6,0.8) to[bend right=80] (3.6,0);
\draw[thick] (-3.6,0) to[bend right=80] (-3.6,0.8);
\draw[thick] (-3.6,0.8) to[bend right=80] (-3.6,0);
\draw[thick] (-1.8,0) to[bend right=80] (-1.8,0.8);
\draw[thick] (-1.8,0.8) to[bend right=80] (-1.8,0);
\draw[thick] (0,1.8) to[bend right=80] (-0.8,1.8);
\draw[thick] (-0.8,1.8) to[bend right=80] (0,1.8);
\draw[thick] (5.4,1.8) to[bend right=80] (4.6,1.8);
\draw[thick] (4.6,1.8) to[bend right=80] (5.4,1.8);
\draw[ultra thick, dashed, blue] (-6.6,0)--(5.4,0);
\draw[ultra thick, dashed, blue] (5.4,2.4)--(5.4,0);
\draw[ultra thick, dashed, blue] (5.4,2.4)--(7,3.6);
\draw[ultra thick, dashed, blue] (-6.6,0)--(-8.4,1.8);
\end{scope}
\end{tikzpicture} 
\par\end{centering}
\caption{A trajectory $H\omega$ in $H\backslash F$, and its image under the
skew tranformation $(H',\omega'):=\tilde{T}(H,\omega)=\left(H^{\omega_{1}^{-1}},(\omega_{1}^{-1}\omega_{n+1})_{n}\right)$.
The root of each Schreier graph is marked by a square. The compatibility
condition (i) of Theorem~\ref{strip} is satisfied as $H\omega_{1}$
is on the strip of $(H,\omega)$ if and only if the root $H'=H^{\omega_{1}^{-1}}$
is on the strip of $(H',\omega')$. }
\label{fig:strip} 
\end{figure}
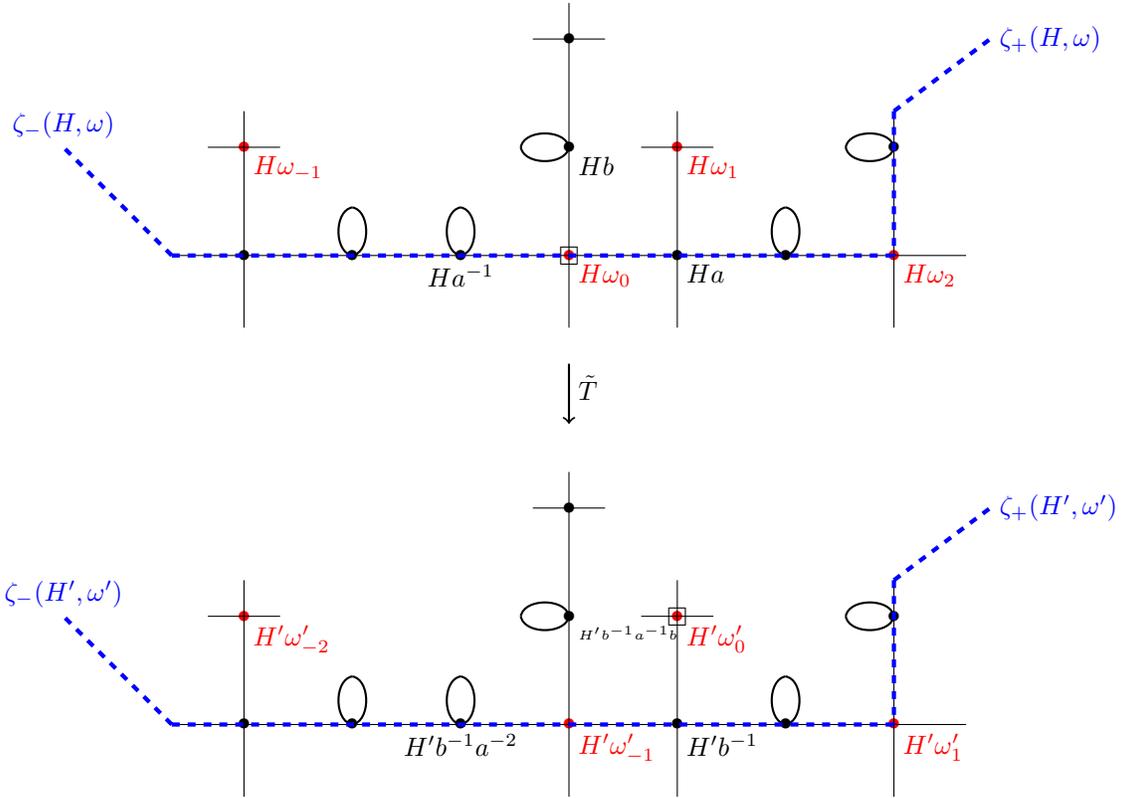

\subsubsection{Identification of bundles}

\begin{notation}[Shadows]\label{notation-shd} Let $H\in{\rm Tree}_{F}$,
choose the identity coset $H$ as the base point $o$ in $H\backslash F$.
For a vertex $v\in H\backslash F$, denote by ${\rm Shd}(v)$ the
set of geodesic rays (finite or infinite) based at $o$ that pass
through $v$. We view ${\rm Shd}(v)$ as a subset of $\left(H\backslash F\right)\cup\partial\left(H\backslash F\right)$.
Denote by 
\[
\mho_{H}(v):={\rm Shd}(v)\cap\partial\left(H\backslash F\right).
\]
\end{notation}

We note the following lower bound on the hitting probabilities of
shadows, which will be used to apply the strip criterion in Proposition~\ref{identification2}.

\begin{lemma}\label{lift-pos} For each $n\in\mathbb{N}$, there
exists a constant $c=c(H_{0},n,\mu)>0$ such that for any $H\in{\rm Tree}_{F}^{H_{0}}$,
the hitting distribution satisfies 
\[
\left(\zeta_{H}\right)_{\ast}\nu\left(\mho_{H}(u)\right)\ge c
\]
for any $u\in H\backslash F$ within distance $n$ to the root $o=H$.
\end{lemma}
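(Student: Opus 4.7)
I plan to use the strong Markov property at the first hitting time $T_u := \inf\{t \ge 0 : H\omega_t = u\}$ to decompose
\[
(\zeta_H)_\ast\nu(\mho_H(u)) \;=\; \mathbb{P}(T_u < \infty)\cdot\mathbb{P}_u\bigl(\zeta_H \in \mho_H(u)\bigr),
\]
using that in a tree-like graph any end in $\mho_H(u)$ has its unique geodesic ray from $o=H$ pass through $u$, so the walk must visit $u$ to converge to such an end. I then lower bound each factor uniformly over $H \in \mathrm{Tree}_F^{H_0}$ and $u$ at distance $\le n$ from $o$.

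The hitting factor uses non-degeneracy of $\mu$. The unique non-backtracking path from $o$ to $u$ in the tree-like $H\backslash F$ reads off a reduced word $g_u \in F$ with $|g_u|_F \le n$ and $Hg_u = u$. Non-degeneracy supplies $k=k(g_u) \in \mathbb{N}$ with $\mu^{(k)}(g_u) > 0$, so $\mathbb{P}(T_u \le k) \ge \mu^{(k)}(g_u)$; since $\{g \in F : |g|_F \le n\}$ is finite, the minimum yields a uniform constant $c_1(n, \mu) > 0$.

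The escape factor uses the covering $\pi \colon H\backslash F \to H_0^\gamma\backslash F$. As the quasi-transitive base has infinitely many ends, its underlying tree has no leaves, and neither does the cover; thus $u$ has a beyond-$u$ neighbor $v = us$ for some $s \in S^{\pm 1}$. In a tree-like graph, the walk from $v$ avoids $u$ iff it remains in the beyond-$u$ component containing $v$, iff $\zeta_H \in \mho_H(u)$. Applying strong Markov at a time $k(s)$ with $\mu^{(k(s))}(s) > 0$,
\[
\mathbb{P}_u\bigl(\zeta_H \in \mho_H(u)\bigr) \;\ge\; \mathbb{P}_u\bigl(H\omega_{k(s)} = v\bigr)\cdot\mathbb{P}_v\bigl(\text{walk never visits } u\bigr) \;\ge\; c_3\cdot\mathbb{P}_v\bigl(\text{walk never visits } u\bigr),
\]
where $c_3(\mu) := \min_{s \in S^{\pm 1}}\mu^{(k(s))}(s) > 0$. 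A return to $u$ in the cover projects to a return to $u' = \pi(u)$ in the base, giving $\mathbb{P}_v(\text{return to } u) \le \mathbb{P}_{v'}(\text{return to } u')$; by Lemma~\ref{quasitransitive-convergence} the base walk is transient, and the finitely many pairs $(u', v')$ with $u'$ at distance $\le n$ from root and $v'$ a neighbor provide a uniform maximum return probability strictly less than one, yielding a uniform $c^*(H_0, n, \mu) > 0$ with $\mathbb{P}_v(\text{walk never visits } u) \ge c^*$. Combining, $c := c_1 c_3 c^* > 0$ is the required constant.

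The main obstacle is the uniform escape estimate in the cover: since $H\backslash F$ is generally not quasi-transitive and may have infinitely many $F$-orbits of vertices, uniformity is unavailable directly upstairs. The key observation is the comparison inequality $\mathbb{P}_v(\text{return}) \le \mathbb{P}_{v'}(\text{return})$, which reduces the escape bound to a uniform estimate in the quasi-transitive base provided by Lemma~\ref{quasitransitive-convergence}.
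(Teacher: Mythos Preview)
Your overall two-factor strategy (reach a point in the shadow, then escape uniformly via comparison with the base graph) matches the paper's, but the escape step has a genuine gap.

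The measure $\mu$ is only assumed non-degenerate with finite entropy and log-moment; it need not be supported on the generating set $S^{\pm1}$, so the coset walk $(H\omega_t)$ on $H\backslash F$ takes long-range jumps. Your claimed equivalence ``the walk from $v$ avoids $u$ iff it remains in the beyond-$u$ component containing $v$'' is therefore false: the walk can jump from $\mathrm{Shd}(u)$ to its complement without ever landing on $u$. Consequently the event $\{\text{never visit }u\}$ that you bound below via base-graph transience does not imply $\{\zeta_H\in\mho_H(u)\}$, and your displayed inequality $\mathbb{P}_u(\zeta_H\in\mho_H(u))\ge c_3\cdot\mathbb{P}_v(\text{never visit }u)$ is unjustified. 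The same nearest-neighbour assumption underlies your opening decomposition: for general $\mu$ only the inequality $\ge$ holds there (which would suffice), since the walk may converge to an end in $\mho_H(u)$ without ever hitting $u$.

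The paper circumvents this by replacing ``visit $u$'' with the interpolated sweep $[H\omega_t;\omega_t^{-1}\omega_{t+1}]$, the set of vertices along the reduced-word path of each increment. Sweeps do separate components in a tree-like graph: if every sweep avoids $u$ and the starting point lies in $\mathrm{Shd}(u)$, the trajectory stays in $\mathrm{Shd}(u)$. Positivity of the corresponding escape probability in the base graph is supplied by Lemma~\ref{crossing} rather than bare transience, and since the covering map carries sweeps to sweeps, this lifts to a uniform bound in the cover. To repair your argument you would need to replace ``never return to $u$'' by ``sweeps never meet the $n$-ball'' and invoke Lemma~\ref{crossing} in place of transience.
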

\begin{proof}
First consider the Schreier graph $H_{0}\backslash F$. Let $B$ be
a connected finite subset of $H_{0}\backslash F$, containing the
root $o=H_{0}$. We claim that for any $v\in H_{0}\backslash F$ such
that $v\notin B$, 
\[
p_{v}(B):=\mathbb{P}_{\mu}\left(B\cap\left[v\omega_{t};\omega_{t}^{-1}\omega_{t+1}\right]=\emptyset\mbox{ for all }t\in\mathbb{N}\right)>0.
\]
Suppose the claim is not true for some $v$. The connected component
of $v$ in $H_{0}\backslash F-B$ is ${\rm Shd}(v_{0})$ of some vertex
$v_{0}$ at distance $1$ to $B$. Then it follows from non-degeneracy
of $\mu$ that for all $x\in{\rm Shd}(v_{0})$, with probability $1$,
$\left[x\omega_{t};\omega_{t}^{-1}\omega_{t+1}\right]$ intersects
$B$ for some $t\in\mathbb{N}$. Since the hitting distribution $\left(\zeta_{H_{0}}\right)_{\ast}\nu$
charges the cylinder set $\mho(v_{0})$ with positive probability,
this contradicts with Lemma \ref{crossing}.

Let $H\in{\rm Tree}_{F}^{H_{0}}$. Recall (\ref{eq:tree-cover}) that
$H<H_{0}^{\gamma}$ for some word $\gamma$ in $F$ representative
of one of the finitely many cosets of $F/N_{H}(F)$. Assume $|\gamma|\le n$.
On the Schreier graph $H\backslash F$, let $u$ be a vertex within
distance $n$ to the root $o=H$, and choose $g\in F$ a representative
such that $u=Hg$ and $|g|\le n$. Then choose an element $g'\in F$
with $|g'|\le2n$ such that on $H\backslash F$, $Hgg'\in{\rm Shd}(u)$;
and on $H_{0}^{\gamma}\backslash F$, $\left|H_{0}^{\gamma}gg'\right|>n$.
By non-degeneracy of $\mu$, there is $m_{0}\in\mathbb{N}$ such that
$\mu^{(m_{0})}$ charges every element in the ball of radius $3n$
around identity in $F$. Consider the event that in $m_{0}$ steps,
the $\mu$-random walk on $F$ is at $gg'$, and after time $m_{0}$,
the induced trajectory on $H_{0}^{\gamma}\backslash F$ never sweeps
cross the ball of radius $n$ around $H_{0}^{\gamma}$. The covering
property implies that the corresponding trajectory $\left(H\omega_{t}\right)$
never sweeps cross the ball of radius $n$ around $H$ after time
$m_{0}$, in particular, it stays in the shadow of $u$. As the $n$-ball
centred at $H_{0}^{\gamma}$ in $H_{0}^{\gamma}\backslash F$ is isometric
the $n$-ball centred at $H_{0}\gamma^{-1}$ in $H_{0}\backslash F$,
it follows that 
\[
\left(\zeta_{H}\right){}_{\ast}\nu\left(\mho(u)\right)\ge\min_{B(e_{F},3n)}\mu^{(m_{0})}\cdot\min\left\{ p_{v}\left(B(H_{0}\gamma^{-1},n)\right):v\notin B(H_{0}\gamma^{-1},n),\ |v|\le3n\right\} .
\]
\end{proof}
Since $\rho$ is $F$-invariant, we are in the setting of Subsection~\ref{subsec:strip},
with $X={\rm Tree}_{F}^{H_{0}}$ and $x\mapsto L_{x}$ the identity
map. We apply the strip criterion to identify the Poisson bundle with
the end compactification bundle. Recall that as a measurable $F$-space,
$\left(M,\bar{\lambda}\right)$ fits into the sequence of $F$-factors
\[
\left({\rm Tree}_{F}^{H_{0}}\times\partial F,\rho\times\nu\right)\overset{\zeta}{\to}\left(M,\bar{\lambda}\right)\to\left({\rm Tree}_{F}^{H_{0}},\rho\right),
\]
where the first map $\zeta$ sends $(H,\xi)$ to $\left(H,\zeta_{H}(\xi)\right)$;
and the second map is the coordinate projection $\left(H,\zeta_{H}(\xi)\right)\mapsto H$.

\begin{proposition}\label{identification2} Let $H_{0}$ be as in
Lemma \ref{quasitransitive-convergence} and $\rho$ an ergodic $F$-invariant
probability measure on ${\rm Tree}_{F}^{H_{0}}$. The Bowen-Poisson
bundle over $\left({\rm Tree}_{F}^{H_{0}},\rho\right)$ is $F$-measurably
isomorphic to the end compactification bundle $\left(M,\bar{\lambda}\right)$
defined above.

\end{proposition}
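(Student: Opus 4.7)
The plan is to apply the strip criterion, Theorem~\ref{strip}, using geodesic strips in the tree-like Schreier graphs. Since $\rho$ is $F$-invariant we are in the setting of Subsection~\ref{subsec:strip} with $X = {\rm Tree}_F^{H_0}$ and $L_H = H$. For a bilateral trajectory $\omega \in F^{\mathbb{Z}}$, write $\xi_+(\omega),\xi_-(\omega) \in \partial F$ for the limits of the forward $\mu$-walk and backward $\check{\mu}$-walk. By Lemma~\ref{lift}, for $\rho$-a.e.\ $H$ the projected sequences $(H\omega_{\pm n})_{n\ge 0}$ converge to ends $\zeta^H_\pm(\omega) := \zeta_H(\xi_\pm) \in \partial(H\backslash F)$, giving an analogous end-compactification bundle $(M_-,\bar{\lambda}_-)$ for $\check{\mu}$ sitting inside $(X\times \partial F,\rho\times \check{\nu}) \to (M_-,\bar{\lambda}_-)\to (X,\rho)$. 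A preliminary step is to verify that $\zeta^H_+(\omega)\neq \zeta^H_-(\omega)$ almost surely: this event is $\tilde T$-invariant and the ergodicity of $\tilde T$ on $(X\times F^{\mathbb{Z}},\rho\times \tilde{\mathbb{P}}_\mu)$ dichotomizes it; non-atomicity of the hitting measures (consequence of the infinite-ended tree-like geometry) together with independence of forward and backward walks rules out the trivial case.

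Define the strip $S(H,\omega)\subseteq H\backslash F$ to be the vertex set of the unique bi-infinite geodesic in the tree-like Schreier graph $H\backslash F$ from $\zeta^H_-(\omega)$ to $\zeta^H_+(\omega)$. The three hypotheses of Theorem~\ref{strip} are verified as follows. \emph{(i) Compatibility}: the geodesic between two ends is an intrinsic object of the graph, independent of the choice of basepoint; after the skew transformation the graph $H^{\omega_1^{-1}}\backslash F$ is canonically identified with $H\backslash F$ via the relabeling $H^{\omega_1^{-1}}g \leftrightarrow H\omega_1 g$, and under this identification the new limit ends are the old ones while the new root is $H\omega_1$. Hence $H\omega_1 \in S(H,\omega)$ iff the new root lies on $S(\tilde T(H,\omega))$. \emph{(ii) Positive probability of containing the root}: $H$ lies on the geodesic exactly when $\zeta^H_+$ and $\zeta^H_-$ lie in shadows of distinct neighbors of $H$; by Lemma~\ref{lift-pos}, each neighbor shadow has hitting mass bounded below by some $c>0$, and in an infinite-ended tree-like graph $H$ admits at least two neighbors with disjoint non-trivial shadows, so independence of forward and backward walks yields positive probability. \emph{(iii) Subexponential size}: in the locally finite graph $H\backslash F$ a geodesic intersects the ball $B_{H\backslash F}(r)$ in at most $2r+1$ vertices, which is clearly subexponential in $n$ along any trajectory.

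Applying Theorem~\ref{strip} yields that $(M_+,\bar{\lambda}_+) = (M,\bar{\lambda})$ is $F$-measurably isomorphic to the Bowen--Poisson bundle $(Z,\lambda)$ over $({\rm Tree}_F^{H_0},\rho)$.

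The main obstacle will be rigorously verifying the compatibility condition (i): although geometrically evident, it requires careful bookkeeping of the skew transformation's effect on pairs (subgroup, boundary limits), in particular the canonical identification of the rerooted Schreier graph $H^{\omega_1^{-1}}\backslash F$ with $H\backslash F$ and the induced action on $\partial(H\backslash F)$ as defined through Fact~\ref{F-end}. A secondary technical point is the almost-sure distinctness of $\zeta^H_+$ and $\zeta^H_-$; this is handled by the ergodicity dichotomy above combined with the positivity of hitting measures on shadows provided by Lemma~\ref{lift-pos}, although one should check that non-atomicity of the forward hitting distribution (or equivalently, positive drift on the tree-like graph) is available in our setting of finite entropy and finite log-moment.
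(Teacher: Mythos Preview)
Your proposal is correct and follows essentially the same approach as the paper: apply the strip criterion Theorem~\ref{strip} with strips given by bi-infinite geodesics between the forward and backward limit ends, verifying compatibility via rerooting invariance of geodesics, positivity via Lemma~\ref{lift-pos}, and subexponential size via the linear bound $|S\cap B(r)|\le 2r+1$. One point to tighten: your verification of (iii) says the strip size is ``clearly subexponential in $n$ along any trajectory,'' but the radius in the criterion is $|L_x\omega_n|$, and controlling $\frac{1}{n}\log|L_x\omega_n|\to 0$ is exactly where the finite log-moment hypothesis enters (via $|L_x\omega_n|\le|\omega_n|_F$); the paper makes this step explicit, and you should too rather than leaving it implicit in your closing remarks.
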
 
\begin{proof}
We apply the strip criterion in Theorem \ref{strip}. Consider the
bilaterial path space $\left(F^{\mathbb{Z}},\tilde{\mathbb{P}}_{\mu}\right)$.
Denote by $\zeta_{+}(H,\omega)$ the composition $\zeta_{H}\circ{\rm bnd}_{+}$,
which is the end of $H\backslash F$ that the random walk $H\omega_{n}$
converges to in the positive time direction $n\to\infty$. Similarly,
denote by $\zeta_{+}(H,\omega)$ the composition $\zeta_{H}\circ{\rm bnd}_{-}$
in the negative time direction.

Take the strip $S(H,\omega)$ to be the (unique) geodesic on the tree-like
Schreier graph $H\backslash F$ connecting $\zeta_{+}(H,\omega)$
to $\zeta_{-}(H,\omega)$. Since the geodesic does not depend on the
location of the root, see Figure~\ref{fig:strip}, the choice of
strips satisfies the compatibility condition (i) in Theorem \ref{strip}.

We now verify the positivity condition (ii). Since the graph $H\backslash F$
is tree-like, we have that for $b_{+}\in\mho(Hs)$ and $b_{-}\in\mho(Hs')$
where $s,s'$ are two elements of $F$ such that $Hs,Hs'$ are two
distinct vertices distance $1$ from $H$. the geodesic connecting
$b_{+}$ and $b_{-}$ passes through the identity coset $H$. Therefore
for $Hs\neq Hs'$, we have 
\begin{align*}
\left(\rho\times\widetilde{\mathbb{P}}_{\mu}\right)\left(H\in S\left(H,\omega\right)\right) & \ge\mathbb{\widetilde{P}}_{\mu}\left(\zeta_{+}\left(H,\omega\right)\in\mho(Hs),\zeta_{-}\left(H,\omega\right)\in\mho(Hs')\right)\\
 & \ge c(1,\ell,\mu)c(1,\ell,\check{\mu})>0,
\end{align*}
where the positive constants $c(H_{0},1,\mu),c(H_{0},1,\check{\mu})$
are provided by Lemma \ref{lift-pos}. We have verified condition
(ii).

Since $\mu$ is assumed to have finite log-moment, we have that $\log\left|\omega_{n}\right|/n\to0$
when $n\to\infty$ for $\mathbb{P}_{\mu}$-a.e. $\omega$. Since the
strips are chosen to be geodesics, the intersection of a strip with
any ball of radius $r$ is bounded by $2r$. Condition (iii) is verified.
The statement then follows from Theorem~\ref{strip}. 
\end{proof}

\subsubsection{Another interpretation of the hitting distribution\label{subsec:interpretation}}

In the ends compactification bundle $M$, the fiberwise measure $\bar{\lambda}^{H}=\left(\zeta_{H}\right)_{\ast}\nu$
is the hitting distribution of the random walk $\left(H\omega_{n}\right)$
on $\partial\left(H\backslash F\right)$. We have the diagram

\[
\xymatrix{\left(F^{\mathbb{N}},\mathbb{P}_{\mu}\right)\ar[r]^{{\rm bnd}}\ar[d] & \left(\partial F,\nu\right)\ar[d]^{\zeta_{H}}\\
\left(\left(H\backslash F\right)^{\mathbb{N}},\overline{\mathbb{P}}_{\mu}\right)\ar[r] & \left(\partial\left(H\backslash F\right),\bar{\lambda}^{H}\right).
}
\]

In the diagram above, a point $\xi\in\partial F$ is viewed as an
end where the random walk $\left(\omega_{n}\right)$ converges to.
For later use in constructions for $SL(d,\mathbb{R})$, here we consider
another way of interpreting the map $\zeta_{H}:\left(\partial F,\nu\right)\to\left(\partial\left(H\backslash F\right),\bar{\lambda}^{H}\right)$.
A point in $\partial F$ is represented uniquely as an infinite reducible
word in the alphabet $\left\{ a^{\pm1},b^{\pm1}\right\} $. Denote
by $\xi_{n}$ the length $n$ prefix of the word $\xi$. We view $\xi_{n}$
as an element in $F$. Then the point $\xi\in\partial F$ induces
a sequence of points $\left(H\xi_{n}\right)$, which form a nearest
neighbor path on the Schreier graph $H\backslash F$.

\begin{proposition}\label{nearest}

In the setting of Lemma \ref{lift}, for $\nu$-a.e. $\xi\in\partial F$,
the nearest neighbor sequence $\left(H\xi_{n}\right)$ converges to
the end $\zeta_{H}(\xi)\in\partial\left(H\backslash F\right)$.

\end{proposition}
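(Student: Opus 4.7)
My plan is to couple the deterministic prefix path $(H\xi_n)$ with a Doob-conditioned $\mu$-random walk $(\omega_n)$ satisfying $\omega_n\to\xi$, and to transfer the convergence result of Lemma~\ref{lift} from the random walk to the deterministic path. Since $\omega_n\to\xi$ in $\partial F$, the reduced word $\omega_n$ admits a decomposition $\omega_n=\xi_{\ell(n)} w_n$, where $\ell(n)$ denotes the length of the maximal reduced prefix of $\omega_n$ agreeing with $\xi$, with $\ell(n)\to\infty$ almost surely. Correspondingly $H\omega_n=H\xi_{\ell(n)}\cdot w_n$ in the Schreier graph, with $|w_n|=|\omega_n|-\ell(n)$, and $H\omega_n\to\zeta_H(\xi)$ by Lemma~\ref{lift}.

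To establish $(H\xi_n)\to\zeta_H(\xi)$, I will rule out the two possible failure modes. Suppose first that some subsequence $(H\xi_{k_j})$ accumulates at an end $\zeta'\in\partial(H\backslash F)$ with $\zeta'\neq\zeta_H(\xi)$. In the tree-like graph $H\backslash F$, the geodesic rays from $H$ to the two ends split at a branch vertex $v$. For $j$ large, $H\xi_{k_j}$ lies deep along the ray to $\zeta'$ and $H\omega_{n_j}$, with $n_j$ chosen so that $\ell(n_j)\approx k_j$, lies deep along the ray to $\zeta_H(\xi)$; since $v$ separates them in the tree structure of $H\backslash F$,
\[
|w_{n_j}|\;\geq\;d(H\xi_{k_j},v)+d(v,H\omega_{n_j})\;\longrightarrow\;+\infty.
\]
This will contradict the sublinear tracking estimate $|\omega_n|-\ell(n)=o(n)$ combined with the positive linear drift $d(H,H\omega_n)\asymp n$ of the coset walk on the non-amenable graph $H\backslash F$. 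The second failure mode, where $(H\xi_k)$ stays in a bounded region, would force the tail of $\xi$ to use only letters whose action at some fixed vertex $v=H\xi_{k_0}$ is a self-loop; the set of such letters is a proper subset of the generating alphabet (otherwise $H\backslash F$ would be reduced to the single vertex $v$), and non-degeneracy of $\mu$ together with the fact that $\omega_n\to\xi$ implies that for $\nu$-a.e.\ $\xi$ every generator appears infinitely often in the reduced form of $\xi$, ruling out this case.

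The principal technical obstacle is establishing the two asymptotic estimates under the finite log-moment hypothesis. The sublinear tracking estimate $|\omega_n|-\ell(n)=o(n)$ is standard for random walks converging to a boundary point on trees and hyperbolic groups and follows from Gromov-product arguments combined with Borel-Cantelli. The positive linear drift of the coset walk on $H\backslash F$ will be transferred from the drift on the quasi-transitive graph $H_0\backslash F$ (which itself is extracted from the return random walk on the finite-index subgroup $N_F(H_0)$, as used in the proof of Lemma~\ref{quasitransitive-convergence}) through the natural covering $H\backslash F\to H_0^\gamma\backslash F$ preserving local combinatorics, and an application of Kingman's subadditive ergodic theorem.
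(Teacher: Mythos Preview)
Your approach has a genuine gap: both the sublinear tracking estimate $|\omega_n|-\ell(n)=o(n)$ and the positive linear drift $d(H,H\omega_n)\asymp n$ require a finite \emph{first} moment on $\mu$, which is not assumed here---the hypothesis is only finite entropy and finite log-moment. Kingman's theorem needs $\mathbb{E}|\omega_1|<\infty$ to yield a finite drift, and the standard deviation/tracking results on trees and hyperbolic groups likewise use first-moment bounds. Without these, the contradiction you set up between $|w_{n_j}|\gtrsim d(v,H\omega_{n_j})\asymp n_j$ and $|w_{n_j}|=o(n_j)$ simply does not get off the ground. Your second failure-mode argument is also incorrect: a bounded nearest-neighbour path in a tree-like Schreier graph need not use only self-loop letters, since after a self-loop step the reduced-word condition no longer prevents the next step from traversing the previously used tree edge in reverse.

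The paper's proof avoids all moment and drift considerations by exploiting Lemma~\ref{lift} directly. The key observation is that every prefix $\xi_k$ lies on the geodesic in $F$ between two consecutive random-walk positions $\omega_{n_k}$ and $\omega_{n_k+1}$ (taking $n_k=\max\{n:\xi_k\text{ is not a prefix of }\omega_n\}$). Hence $H\xi_k$ lies on the swept segment $[H\omega_{n_k};\omega_{n_k}^{-1}\omega_{n_k+1}]$, and Lemma~\ref{lift} says these segments eventually avoid any fixed finite set $K$; therefore so does $(H\xi_k)$. This already gives convergence to an end, and since for large $k$ the entire swept segment---hence both $H\xi_k$ and $H\omega_{n_k}$---lies in a single component of the complement of any large ball, the end must be $\zeta_H(\xi)$. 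No quantitative tracking is needed.
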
 
\begin{proof}
For a $\mu$-random walk trajectory $\omega=\left(\omega_{n}\right)$
on $F$ that converges to a point $\xi\in\partial F$, we claim that
for each $k\in\mathbb{N},$ there is a time $n_{k}$ such that the
prefix $\xi_{k}$ is on the geodesic connecting $\omega_{n_{k}}$
to $\omega_{n_{k}+1}$. Indeed, since $\omega$ converges to $\xi$,
$n_{k}=\max\left\{ n:\xi_{k}\mbox{ is not a prefix of }\omega_{n}\right\} $
is finite. Then $\xi_{k}$ is a prefix of $\omega_{n_{k}+1}$ and
the common prefix of $\omega_{n_{k}}$ and $\omega_{n_{k+1}}$ has
length $<k$. It follows then $\xi_{k}$ is on the geodesic path connecting
$\omega_{n_{k}}$ and $\omega_{n_{k+1}}$.

By Lemma \ref{lift}, we have for any finite set $K$ in $H\backslash F$,
almost surely $K\cap\left[H\omega_{n};\omega_{n}^{-1}\omega_{n+1}\right]\neq\emptyset$
for only finitely many $n$. Since $\xi_{k}$ is on the geodesic connecting
$\omega_{n_{k}}$ to $\omega_{n_{k}+1}$, $H\xi_{k}\in\left[L_{x}\omega_{n_{k}};\omega_{n_{k}}^{-1}\omega_{n_{k}+1}\right]$.
It follows then for $\nu$-a.e. $\xi$, the set $\{k\in\mathbb{N}:H\xi_{k}\in K\}$
is finite. Therefore $\left(H\xi_{k}\right)$ converges to the end
in $\partial\left(H\backslash F\right)$. Moreover, since $\xi_{k}$
is the length $k$ prefix of $\omega_{n_{k}+1},$ we have that the
Gromov product of $H\xi_{k}$ and $H\omega_{n_{k}+1}$ goes to infinity
as $k\to\infty$. Therefore the two sequences $\left(H\xi_{k}\right)$
and $\left(H\omega_{n_{k}+1}\right)$ converge to the same end, which
is $\zeta_{H}(\xi)$. 
\end{proof}

\subsection{Approximations on end-compactification bundles of $F$\label{subsec:Approximation-F}}

Let $H_{0}\in{\rm Tree}_{F}$ be as in Lemma \ref{quasitransitive-convergence}
and $\mu$ be a non-degenerate step distribution on $F$ with finite
entropy and finite log-moment. By Proposition~\ref{identification2},
the bundle $M$ with fiber $\partial\left(L_{x}\backslash F\right)$
over $x\in{\rm Tree}_{F}^{H_{0}}$ equipped with hitting distribution
of the coset random walk, is the Poisson bundle over the same base
$\left({\rm Tree}_{F}^{H_{0}},\rho\right)$ with $x\mapsto L_{x}$
identity map. Next we show how the bundle $M$ fits into the setting
of Proposition~\ref{KL-uni}, and obtain lower semi-continuity of
entropy. 

\begin{proposition}\label{relative-cont} 

Let $\beta$ be a probability measure in the measure class of the
$\mu$-harmonic measure $\nu$ on $\partial F$. Moreover suppose
$\left\Vert d\beta/d\nu\right\Vert _{\infty},\left\Vert d\nu/d\beta\right\Vert _{\infty}<\infty$.
Then $\beta_{x}=(\zeta_{x})_{\ast}\beta$ admits locally constant
uniform approximations on ${\rm Tree}_{F}^{H_{0}}$. 

\end{proposition}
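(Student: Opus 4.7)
The plan is to take the shadows on spheres of $H\backslash F$ as the generating partitions of~\hyperlink{AssumpP}{(\textbf{P})}, and to approximate the hitting distribution $\beta_x$ by positive measures built from the finite-prefix behaviour of infinite reduced words in $\partial F$. For $x=H\in{\rm Tree}_F^{H_0}$ and $n\in\mathbb{N}$, set
\[
\mathcal{P}_{x,n}:=\bigl\{\mho_H(v):v\text{ at graph distance }n\text{ from }H\text{ in }H\backslash F\bigr\}.
\]
Because $H\subset H_0^\gamma$ for some $\gamma$ in a finite set of coset representatives of $N_F(H_0)\backslash F$, the Schreier graph $H\backslash F$ covers the quasi-transitive graph $H_0^\gamma\backslash F$, so each sphere in $H\backslash F$ is finite of bounded cardinality. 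Since $H\backslash F$ is tree-like with infinitely many ends, $\mathcal{P}_{x,n}$ is a finite partition of $\partial(H\backslash F)$ modulo null sets, the partitions refine in $n$, and their union generates the Borel $\sigma$-field on $\partial(H\backslash F)$.

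For the approximations, given $t\ge n$ define the positive measure on $\mathcal{P}_{x,n}$ by
\[
q^t_{x,n}(\mho_H(v)):=\beta\bigl(\{\xi\in\partial F:v\text{ lies on the geodesic from }H\text{ to }H\xi_t\text{ in }H\backslash F\}\bigr),
\]
where $\xi_t$ denotes the length-$t$ prefix of $\xi$ viewed as an element of $F$. This quantity depends on $H$ only through the rooted, edge-labelled ball of radius $t$ in $H\backslash F$. Two subgroups $H,H'\in{\rm Tree}_F^{H_0}$ close in Chabauty topology agree on any prescribed ball of $F$, and hence induce isomorphic rooted Schreier balls of any prescribed radius; thus the pair $(\mathcal{P}_{x,n},q^t_{x,n})$ is locally constant in $x$.

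Proposition~\ref{nearest} gives that for $\nu$-a.e.\ $\xi$ the nearest-neighbor path $(H\xi_k)_{k\ge 0}$ converges to $\zeta_H(\xi)\in\partial(H\backslash F)$, and this holds $\beta$-a.s. since $\|d\beta/d\nu\|_\infty<\infty$. In a tree-like graph, $(H\xi_k)$ converges to an end in $\mho_H(v)$ if and only if eventually $v$ lies on the geodesic from $H$ to $H\xi_k$. Hence $\mathbf{1}[v\in\mathrm{geod}(H,H\xi_t)]\to\mathbf{1}[\zeta_H(\xi)\in\mho_H(v)]$ $\beta$-a.s., and dominated convergence gives $q^t_{x,n}(\mho_H(v))\to\beta_x(\mho_H(v))$ for each fixed $x$, $n$, $v$.

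The main obstacle is to upgrade this to \emph{uniform} convergence in $x$. The tree-like structure yields the symmetric-difference estimate
\[
\bigl|q^t_{x,n}(\mho_H(v))-\beta_x(\mho_H(v))\bigr|\le 2\|d\beta/d\nu\|_\infty\cdot\nu\bigl(\{\xi:H\xi_s=v\text{ for some }s\ge t\}\bigr),
\]
since the ``geodesic through $v$'' status of $H\xi_s$ can only flip when the path crosses $v$. Pushing down through the covering $H\backslash F\to H_0^\gamma\backslash F$ dominates the right-hand side by the same tail probability in the quasi-transitive quotient, where Lemmas~\ref{crossing} and~\ref{lift}, combined with Proposition~\ref{nearest} applied to the quotient, give a tail bound that goes to zero uniformly in $H$, in $v$ on the finite sphere, and in $\gamma$ on the finite set of representatives. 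Combined with the uniform lower bound $\beta_x(\mho_H(v))\ge c>0$ obtained from Lemma~\ref{lift-pos} and the two-sided control on $d\beta/d\nu$, one concludes
\[
\max_{A\in\mathcal{P}_{x,n}}\Bigl|1-\tfrac{\beta_x(A)}{q^t_{x,n}(A)}\Bigr|\le \varepsilon_n(t),\qquad \sup_{x\in{\rm Tree}_F^{H_0}}\varepsilon_n(t)\xrightarrow[t\to\infty]{}0.
\]
The crucial ingredient, without which uniformity would fail, is the reduction via the covering structure to the finite family of quasi-transitive quotients $H_0^\gamma\backslash F$; this is what allows one to replace estimates across the uncountably many Schreier graphs in ${\rm Tree}_F^{H_0}$ by estimates on a finite set of base graphs.
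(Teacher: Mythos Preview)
Your proof is correct and follows essentially the same route as the paper. You use the same shadow partitions $\mathcal{P}_{x,n}$, the same approximating measures (your condition ``$v$ lies on the geodesic from $H$ to $H\xi_t$'' is exactly ``$H\xi_t\in\mathrm{Shd}(v)$'', which is the paper's $\phi^t_{x,\beta}$), the same local-constancy argument via Chabauty topology, the same push-down to the finitely many quasi-transitive quotients $H_0^\gamma\backslash F$ for uniformity, and the same lower bound via Lemma~\ref{lift-pos}. The only cosmetic difference is that the paper bounds the total variation on $\mathcal{P}_{x,n}$ by the $\nu$-probability that the nearest-neighbor path re-enters the \emph{ball} of radius $n$ after time $t$, whereas you bound each atom separately by the probability of hitting the single vertex $v$; both are dominated by the same quotient-graph tail, and your factor of $2$ is harmless (in fact unnecessary, since the symmetric difference is contained in a single event).
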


Since the Schreier graph of $L_{x}\backslash F$ is tree-like, we
have a natural sequence of partitions of $\partial\left(L_{x}\backslash F\right)$
given by cylinder sets that are shadows of vertices. Consider the
sphere $S_{x}(n)=\left\{ v\in L_{x}\backslash F:d_{L_{x}\backslash F}(o,v)=n\right\} $
of the Schreier graph $L_{x}\backslash F$. Since the graph is tree-like,
we have that 
\[
\mathcal{P}_{x,n}=\left\{ \mho(v)\right\} _{v\in S_{x}(n)},
\]
where the shadow $\mho(v)$ is defined in Notation \ref{notation-shd},
forms a partition of $\partial(L_{x}\backslash F)$ by clopen subsets.
This sequence of partitions satisfy: 
\begin{itemize}
\item $\mathcal{P}_{x,n+1}$ is a refinement of $\mathcal{P}_{x,n}$, 
\item the Borel $\sigma$-field of $\partial\left(H\backslash F\right)$
is generated by the partitions $\vee_{n=0}^{\infty}\mathcal{P}_{x,n}$. 
\end{itemize}
As shown in Lemma \ref{lift}, for $x\in{\rm Tree}_{F}^{H_{0}}$,
we have a map $\zeta_{x}:\partial F\to\partial\left(L_{x}\backslash F\right)$
such that $(\zeta_{x})_{\ast}\nu$ is the hitting distribution of
the random walk $L_{x}\omega_{n}$ on the Schreier graph $L_{x}\backslash F$.

For locally constant approximations to $(\zeta_{x})_{\ast}(g\nu)$
on such partitions, one option is to take the measure of $\mho(v)$
to be the probability that the coset random walk $L_{x}g\omega_{t}$
is in ${\rm Shd}(v)$ and up to time $t$, the random walk never exited
the ball of radius $r_{t}$ around $L_{x}$, for a suitable choice
of the radius $r_{t}$. One can indeed verify the conditions to apply
Proposition~\ref{KL-uni} for such a approximations. Instead of this
natural choice, for the convenience of inducing to $SL(d,\mathbb{R})$
in the next subsections, we use the interpretation of the hitting
distributions in Subsection \ref{subsec:interpretation}, which leads
naturally to Proposition~\ref{relative-cont}.

Denote by $\xi_{t}$ the length $t$ prefix of an infinite word $\xi\in\partial F$.
Then a point $\xi\in\partial F$ induces a sequence of points $\left(H\xi_{t}\right)_{t\in\mathbb{N}}$
on the Schreier graph $H\backslash F$. Apply Proposition~\ref{nearest}
to $x\in{\rm Tree}_{F}^{H_{0}}$, we have that for $\nu$-a.e. $\xi$,
the sequence $L_{x}\xi_{t}$ converges to an end, denoted by $\zeta_{x}(\xi)$
in $\partial\left(L_{x}\backslash F\right)$, when $t\to\infty$.

Suppose $\beta$ is a probability measure on $\partial F$ in the
measure class of the $\mu$-harmonic measure $\nu$. Write $\beta_{x}=\left(\zeta_{x}\right)_{\ast}\beta$.
Define a measure $\phi_{x,\beta}^{t}$ on the partition $\mathcal{P}_{x,n}$
by setting 
\begin{equation}
\phi_{x,\beta}^{t}\left(\mho(v)\right):=\beta\left(\left\{ \xi\in\partial F:L_{x}\xi_{t}\in{\rm Shd}(v)\right\} \right).\label{eq:approx}
\end{equation}
For $t>n$, $\phi_{x,\beta}^{t}$ is a measure on $\mathcal{P}_{x,n}$
Moreover, $\phi_{x,\beta}^{t}$ depends only on the ball of radious
$t$ around $L_{x}$ in the Schreier graphs $L_{x}\backslash F$,
which by definition of the Chabauty topology implies that $x\mapsto\left(\mathcal{P}_{x,n},\phi_{x,\beta}^{t}\right)$
is locally constant on ${\rm Tree}_{F}^{H_{0}}$.

\begin{lemma}[Verifies uniform approximation]\label{error-phi}
Suppose there is a constant $C_{\beta}>0$ such that $1/C_{\beta}\le d\beta/d\nu\le C_{\beta}$.
Then there is a function $\epsilon(t)\overset{t\to\infty}{\longrightarrow}0$,
which only depends on $n,H_{0},\mu$, such that for all $x\in{\rm Tree}_{F}^{H_{0}}$,
\[
\max_{A\in\mathcal{P}_{x,n}}\left|1-\frac{\beta_{x}(A)}{\phi_{x,\beta}^{t}(A)}\right|\le C_{\beta}^{2}\epsilon(t).
\]
\end{lemma}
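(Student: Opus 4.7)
The plan is to bound the discrepancy $\bigl|\beta_{x}(\mho(v)) - \phi_{x,\beta}^{t}(\mho(v))\bigr|$ via a symmetric-difference argument on $\partial F$, then convert this into the required ratio estimate using a uniform lower bound on $\phi_{x,\beta}^{t}(\mho(v))$. Set
\[
\Sigma(v,t) := \bigl\{\xi \in \partial F : L_{x}\xi_{t} \in {\rm Shd}(v)\bigr\} \, \triangle \, \bigl\{\xi : \zeta_{x}(\xi) \in \mho(v)\bigr\},
\]
so that $\bigl|\beta_{x}(\mho(v)) - \phi_{x,\beta}^{t}(\mho(v))\bigr| \le \beta(\Sigma(v,t))$. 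Because $L_{x}\backslash F$ is tree-like, ${\rm Shd}(v)$ is separated from its complement by the unique non-loop edge connecting $v$ to its parent $p(v) \in S_{x}(n-1)$; any nearest-neighbor path crossing between the two sides after time $t$ must therefore visit $v$ itself at some time $s \ge t$. Combined with Proposition~\ref{nearest}, which says $(L_{x}\xi_{s})$ converges $\nu$-a.s.\ to $\zeta_{x}(\xi)$, this gives the inclusion
\[
\Sigma(v,t) \;\subseteq\; E^{x}(t) := \bigl\{\xi : L_{x}\xi_{s} \in B_{L_{x}\backslash F}(n) \text{ for some } s \ge t\bigr\},
\]
where $B_{L_{x}\backslash F}(n)$ is the $n$-ball around the root.

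The crux is obtaining a uniform-in-$x$ bound $\nu(E^{x}(t)) \le \epsilon_{1}(t)$, since Proposition~\ref{nearest} only gives qualitative $\nu$-a.s.\ finiteness of visits. Here I would exploit the covering hypothesis $L_{x} < H_{0}^{\gamma}$ for some $\gamma \in F$: the quotient map $L_{x}\backslash F \to H_{0}^{\gamma}\backslash F$ is a graph covering, so $L_{x}\xi_{s} \in B_{L_{x}\backslash F}(n)$ forces $H_{0}^{\gamma}\xi_{s}$ into the $n$-ball around $H_{0}^{\gamma}$. Since $N_{F}(H_{0})$ has finite index, there are only finitely many distinct conjugates $H_{0}^{\gamma_{1}}, \dots, H_{0}^{\gamma_{k}}$, and hence $E^{x}(t) \subseteq \bigcup_{i=1}^{k} E^{H_{0}^{\gamma_{i}}}(t)$. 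Proposition~\ref{nearest} applied to each $H_{0}^{\gamma_{i}}$ yields $\nu(E^{H_{0}^{\gamma_{i}}}(t)) \to 0$ as $t \to \infty$, so setting $\epsilon_{1}(t) := \sum_{i=1}^{k} \nu(E^{H_{0}^{\gamma_{i}}}(t))$ produces a function depending only on $n$, $H_{0}$, $\mu$ such that $\nu(E^{x}(t)) \le \epsilon_{1}(t)$ uniformly in $x \in {\rm Tree}_{F}^{H_{0}}$. The bounded Radon-Nikodym derivative then yields $\beta(\Sigma(v,t)) \le C_{\beta}\epsilon_{1}(t)$.

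For the ratio bound, Lemma~\ref{lift-pos} supplies $(\zeta_{L_{x}})_{\ast}\nu(\mho(v)) \ge c$ with $c = c(H_{0}, n, \mu) > 0$, whence $\beta_{x}(\mho(v)) \ge c/C_{\beta}$. Once $t$ is large enough that $\epsilon_{1}(t) \le c/(2C_{\beta}^{2})$, we obtain $\phi_{x,\beta}^{t}(\mho(v)) \ge \beta_{x}(\mho(v)) - C_{\beta}\epsilon_{1}(t) \ge c/(2C_{\beta})$, and therefore
\[
\left|1 - \frac{\beta_{x}(\mho(v))}{\phi_{x,\beta}^{t}(\mho(v))}\right| \;\le\; \frac{C_{\beta}\epsilon_{1}(t)}{c/(2C_{\beta})} \;=\; C_{\beta}^{2}\cdot \frac{2\epsilon_{1}(t)}{c}.
\]
Defining $\epsilon(t) := 2\epsilon_{1}(t)/c$ for $t$ sufficiently large, and enlarging $\epsilon(t)$ for small $t$ so that the ratio bound is trivially satisfied, yields the required function depending only on $n$, $H_{0}$, $\mu$ with $\epsilon(t) \to 0$. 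The principal obstacle is the uniform control of the second paragraph: the covering argument combined with the finiteness of the conjugacy class of $H_{0}$ is precisely what converts an almost-sure qualitative statement into a quantitative bound uniform over the uncountable family ${\rm Tree}_{F}^{H_{0}}$.
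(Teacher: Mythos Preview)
Your proof is correct and follows essentially the same route as the paper's: bound the difference $|\beta_x(\mho(v)) - \phi_{x,\beta}^t(\mho(v))|$ by the $\beta$-measure of the event that the nearest-neighbor path $(L_x\xi_s)$ revisits the $n$-ball after time $t$, use the covering $L_x < H_0^\gamma$ together with the finiteness of the conjugacy class of $H_0$ to make this uniform in $x$, and invoke Lemma~\ref{lift-pos} for the denominator. Your tracking of the constants is in fact slightly more careful than the paper's---you correctly pick up one factor of $C_\beta$ from $\beta(E^x(t)) \le C_\beta\,\nu(E^x(t))$ and another from $\beta_x(\mho(v)) \ge c/C_\beta$, which explains the $C_\beta^2$ in the statement.
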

\begin{proof}
Let $v\in S_{x}(n)$. To ease notations, in this proof we write $\phi_{x,\beta}^{t}(v)$
in place of $\phi_{x,\beta}^{t}(\mho(v))$, similarly for $\beta_{x}$.
The tree-like structure of the Schreier graphs guarantees that the
total variation distance between $\beta$ and $\phi_{x,\beta}^{t}$,
both restricted to $\mathcal{P}_{x,n}$, is no more than the measure,
under $\beta$, that there is some $s\ge t$ such that the path $L_{x}\xi_{s}$
returns to the ball of radius $n$ in $L_{x}\backslash F$. For $x\in{\rm Tree}_{F}^{H_{0}}$,
we have that $L_{x}<H_{0}^{\gamma}$ for some $\gamma\in F$, thus
for graph distances, $d(H_{0}^{\gamma},H_{0}^{\gamma}g)\le d(L_{x},L_{x}g)$
for any $g\in F$. Recall also $H_{0}$ has only finitely many conjugates
in $F$. Thus by this covering property, we have 
\begin{align*}
\frac{1}{2}\sum_{v\in S_{x}(n)}|\beta_{x}(v)-\phi_{x,\beta}^{t}(v)| & \le\beta\left(\left\{ \xi\in\partial F:\exists s\ge t,L_{x}\xi_{s}\in B_{L_{x}\backslash F}(n)\right\} \right)\\
 & \le\left\Vert \frac{d\beta}{d\nu}\right\Vert _{\infty}\nu\left(\left\{ \xi\in\partial F:\exists s\ge t,\exists\gamma\in F,H_{0}^{\gamma}\xi_{s}\in B_{H_{0}^{\gamma}\backslash F}(n)\right\} \right)\\
 & =:\left\Vert \frac{d\beta}{d\nu}\right\Vert _{\infty}\epsilon_{0}(t)=:\epsilon(t).
\end{align*}
The term $\epsilon_{0}(t)$ does not depend on $x$, and $\epsilon_{0}(t)\to0$
as $t\to\infty$ by Proposition \ref{nearest}.

The hitting measure $\beta_{x}(v)=(\zeta_{x})_{\ast}\nu\left(\mho(v)\right)$
is equal to the probability that the trajectory $(L_{x}\xi_{s})_{s=1}^{\infty}$,
eventually remains in ${\rm Shd}(v)$. By Lemma \ref{lift-pos}, we
have a lower bound $\beta_{x}(v)\ge c(n,H_{0},\mu)$. It follows that
\begin{align*}
\max_{A\in\mathcal{P}_{x,n}}\left|1-\frac{\phi_{x,\beta}^{t}(A)}{\beta_{x}(A)}\right| & \le\frac{1}{\min_{v\in\mathcal{S}_{x}(n)}\beta_{x}(v)}\sum_{v\in S_{x}(n)}|\beta_{x}(v)-\phi_{x,\beta}^{t}(v)|\le\frac{2}{c(n,H_{0},\mu)}\left\Vert \frac{d\beta}{d\nu}\right\Vert _{\infty}\epsilon_{0}(t).
\end{align*}
\end{proof}
\begin{proof}[Proof of Proposition \ref{relative-cont}]
The approximations $\phi_{x,\beta}^{t}$ defined in (\ref{eq:approx})
are locally constant and uniform by Lemma \ref{error-phi}. 
\end{proof}

\subsection{Entropy realization for the free group\label{subsec:Entropy-realization-F}}

In this subsection we conclude the proof of Theorem \ref{thm-free}.
Let $\mu$ be an admissible probability measure on $F$ with finite
entropy and finite log-moment. Recall that we are in the standard setting
of Section~\ref{sec:Poissonbundle}, with $X={\rm Tree_{F}}\subset{\rm Sub}(F)$
an $F$-space. Take a path of ergodic IRS $\left(\rho_{\ell,p}\right)_{p\in[0,1]}$
supported on $X$ as in Bowen~\cite[p.505]{Bowen}, which is briefly described in the next paragraph.

For an integer $\ell\ge2$, let $H_{0}=K_{\ell}$ be the quasi-transitive
subgroup of Example~\ref{exKell}. Its Schreier graph is tree-like with infinitely many ends and $H_0$ has a finite index normalizer. 
Informally, an $\rho_{\ell,p}$ sample is obtained by taking a random covering of the Schreier graph
$K_{\ell}\backslash F$ where each loop is ``opened'' independently
according to a $p$ Bernoulli distribution (or equivalently, the generating
pair $(g,s)$ of a loop is removed from the set of generators of $K_{\ell}$).
One can check directly from this description that for $p\in(0,1)$, $F\curvearrowright({\rm Tree}_{F},\rho_{\ell,p})$
is a weakly mixing extension of a finite transitive system. The properties we need in what follows are: the
map $p\mapsto\rho_{\ell,p}$ is weak$^{\ast}$ continuous, $\rho_{\ell,0}$
is the uniform measure on conjugates of $K_{\ell}$, and $\rho_{\ell,1}$
is the $\delta$-mass on the trivial group $\{e\}$. These are shown
in \cite[Section 4.3]{Bowen}.

The situation fits into the setting of Section~\ref{subsec:Upper-semi-continuity},
we obtain a family of measured $F$-bundles $(Z_{p},\lambda_{\ell,p})$
over IRS's $(X,\rho_{\ell,p})$ standard over the same trivial $\mu$-boundary
($Y$ is a point here).

\begin{proposition}\label{Bowen-conti} In the setting above, the
map $p\mapsto h_{\mu}(Z_{p},\lambda_{\ell,p})$ is continuous. \end{proposition}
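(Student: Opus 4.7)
The plan is to prove continuity by combining upper semi-continuity from Section~\ref{subsec:Upper-semi-continuity} with lower semi-continuity obtained from the identification result of Proposition~\ref{identification2} and the approximation scheme of Proposition~\ref{relative-cont}, invoking the framework of Section~\ref{sec:lower-argu}.

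For upper semi-continuity, I would apply Corollary~\ref{upper-semi}: since each base $(X,\rho_{\ell,p})$ is an $F$-invariant system, the relevant $\mu$-proximal base $(Y,\nu)$ is trivial (a point), so the stabilizer assumption \hyperlink{AssumpS}{(\textbf{S})} is automatic with $L_x = x$, and the fiberwise continuity \hyperlink{AssumpC}{(\textbf{C})} reduces to the weak$^{*}$-continuity of $p \mapsto \rho_{\ell,p}$ established in \cite[Section~4.3]{Bowen}, with common support in the compact set $\mathrm{Tree}_F^{K_\ell} \subseteq \mathrm{Sub}(F)$. Since $F$ is countable and $\mu$ has finite Shannon entropy, Corollary~\ref{upper-semi} yields upper semi-continuity of $p \mapsto h_\mu(Z_p,\lambda_{\ell,p})$.

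For lower semi-continuity, first observe that each $\rho_{\ell,p}$ is supported on $\mathrm{Tree}_F^{K_\ell}$, so by Proposition~\ref{identification2} the Poisson bundle $(Z_p,\lambda_{\ell,p})$ is $F$-isomorphic to the end-compactification bundle $(M,\bar\lambda_p)$, where fiberwise over $x$ the measure is the hitting distribution $(\zeta_x)_\ast\nu$ of the coset random walk. I would then fit $(M,\boldsymbol{\alpha}_p)$ into the framework of Section~\ref{sec:lower-argu} by taking $\alpha_{x,g} = (\zeta_x)_\ast(g\nu)$: assumption \hyperlink{AssumpM}{(\textbf{M})} holds because $dg\nu/d\nu \in L^{\infty}(\partial F,\nu)$ for any $g \in F$; assumption \hyperlink{AssumpP}{(\textbf{P})} is furnished by the partitions $\mathcal{P}_{x,n}$ of $\partial(L_x\backslash F)$ into shadows of the sphere $S_x(n)$, which are clopen and generate the Borel $\sigma$-field on account of the tree-like structure; and \hyperlink{AssumpC'}{(\textbf{C'})} is automatic since $Y$ is a point and $p \mapsto \rho_{\ell,p}$ is weak$^{*}$-continuous onto the compact $S = \mathrm{Tree}_F^{K_\ell}$. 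Proposition~\ref{relative-cont} then provides, for each $g \in F$, locally constant uniform approximations to the family $(\alpha_{x,g})_{x \in \mathrm{Tree}_F^{K_\ell}}$. Applying Corollary~\ref{cor-lower-uni} (together with Lemma~\ref{consistent} to identify $h_\mu(M,\boldsymbol{\alpha}_p)$ with $h_\mu(Z_p,\lambda_{\ell,p})$), we obtain lower semi-continuity. Combining the two bounds finishes the proof.

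Most of the technical weight has already been absorbed into the earlier identification and approximation results, so the only thing to verify carefully is that Bowen's path $p \mapsto \rho_{\ell,p}$ satisfies the hypotheses in the exact form required by Corollaries~\ref{upper-semi} and~\ref{cor-lower-uni}; the anticipated obstacle is to ensure that the common-support and weak$^{*}$-continuity conditions hold with the support contained in $\mathrm{Tree}_F^{K_\ell}$, which in turn is what allows Proposition~\ref{identification2} to apply uniformly along the path.
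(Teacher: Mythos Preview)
Your proposal is correct and follows essentially the same route as the paper: upper semi-continuity via Corollary~\ref{upper-semi} (checking \hyperlink{AssumpS}{(\textbf{S})} and \hyperlink{AssumpC}{(\textbf{C})} from Bowen's weak$^*$-continuity), then lower semi-continuity by identifying the Poisson bundle with the end-compactification bundle via Proposition~\ref{identification2}, setting $\alpha_{x,g}=(\zeta_x)_\ast(g\nu)$, invoking Lemma~\ref{consistent}, and feeding the locally constant uniform approximations of Proposition~\ref{relative-cont} into Corollary~\ref{cor-lower-uni}. The paper's proof is organized identically, so there is nothing to add.
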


\begin{proof}
By \cite[p.505]{Bowen}, Assumption \hyperlink{AssumpC}{(\textbf{C})} is satisfied. The assumption
\hyperlink{AssumpL}{(\textbf{L})} is automatically satisfied in the discrete setting. Lemma~\ref{upper-semi}
shows that $p\mapsto h_{\mu}(Z_{p},\rho_{\ell,p})$ is upper semi-continuous.

Now by Proposition~\ref{identification2}, the Poisson bundle $(Z_{p},\lambda_{\ell,p})$
is $F$-isomorphic to the end compactification bundle $(M,\bar{\lambda}_{\ell,p})$
whose fiber over $x$ is the topological space $M_{x}=\partial(L_{x}\backslash F)$
with hitting distribution $\bar{\lambda}_{p}^{x}=(\zeta_{x})_{\ast}(\nu_{B})$.
Then 
\[
h_{\mu}(Z_{p},\lambda_{\ell,p})=h_{\mu}(M,\bar{\lambda}_{\ell,p}).
\]
Write $\alpha_{x,g}=(\zeta_{x})_{\ast}(g\nu_{B})$ for the hitting
distribution of the coset random walk $\left(L_{x}g\omega_{n}\right)$
starting at $L_{x}g$. By stationarity of $\nu_{B}$, we have that
the Radon-Nikodym derivative $dg\nu_{B}/d\nu_{B}$ is bounded from
above and below. It follows that Assumption \hyperlink{AssumpM}{(\textbf{M})}  is satisfied and this
fits into the setting of Section~\ref{sec:lower-argu}. By Lemma~\ref{consistent},
we have 
\[
h_{\mu}(M,\bar{\lambda}_{\ell,p})=h_{\mu}(M,{\bold\alpha}_{p}).
\]
Proposition~\ref{relative-cont} provides locally constant uniform
approximations. Corollary~\ref{cor-lower-uni} gives lower semi-continuity
of $p\mapsto h_{\mu}(M,{\bold\alpha}_{p})$. Combine with the upper
semi-continuity, the statement follows.
\end{proof}
\begin{proof}[Proof of Theorem \ref{thm-free}]
With Proposition~\ref{Bowen-conti}, the proof concludes in the
same way as in \cite{Bowen}: by the intermediate value theorem, each
Furstenberg entropy value between $h_{\mu}(Z_{0},\lambda_{\ell,0})$
and $h_{\mu}(Z_{1},\lambda_{\ell,1})$ is attained. Now by \cite[Lemma 4.7]{Bowen},
$\rho_{\ell,1}$ is the trivial subgroup, so $h_{\mu}(Z_{1},\lambda_{\ell,1})=h_{\mu}(B,\nu_{B})$
is maximal.

Finally by \cite[Proof of Lemma 4.7]{Bowen}, the sequence
of measures $(\rho_{\ell,0})_{\ell\ge2}$ converges in weak$^{\ast}$
topology towards a measure $\kappa=\sum_{i=1}^{{\rm rank}(F)}\delta_{A_{i}}$,
where $A_{i}$ is the normal closure of the cyclic group $\left\langle a_{i}\right\rangle $.
Since $A_{i}\backslash F$ is isomorphic to $\mathbb{Z}$, any coset
random walk has trivial Poisson boundary. It follows that the Poisson
bundle over $(X,\kappa)$ has zero Furstenberg entropy. We conclude
that 
\[
\limsup_{\ell\to\infty}h_{\mu}(Z_{0},\lambda_{\ell,0})=0
\]
by the upper semi-continuity Corollary~\ref{upper-semi}. 
\end{proof}

\section{Poisson bundles for $SL(d,\mathbb{R})$\label{sec:Poisson-SL}}

In this section we complete the proof of Theorem \ref{th:spectrum-sl}.
Throughout this section, let $G=SL(d,\mathbb{R})$ and $\mu$ be either
an admissible measure or a Furstenberg discretization measure supported
on a lattice. In both cases, the Poisson boundary of the $\mu$-random
walk can be identified as $\left(G/P,\nu_{P}\right)$, where $\nu_{P}$
is in the measure class of the unique $K$-invariant measure. 

\subsection{Stationary system induced from IRS of $F$ \label{subsec:Construction-sl}}

The set of simple roots of $G=SL(d,\mathbb{R})$ is $\Delta=\{e_{1}-e_{2},\ldots,e_{d-1}-e_{d}\}$,
which is naturally identified with the set $\{1,\ldots,d-1\}$. Let
$I\subseteq\Delta$ and list $\Delta-I=\left\{ i_{1},\ldots,i_{\ell-1}\right\} $
in increasing order. Then associated with $I$ is the partition $d=d_{1}+\ldots+d_{\ell}$,
where $d_{j}=i_{j}-i_{j-1}$, $i_{0}=0$ and $i_{\ell}=d$. The minimal
parabolic subgroup $P=P(n,\mathbb{R})$ is the subgroup of upper triangular
matrices. The parabolic subgroup $Q=P_{I}$ is stabilizer of the standard
flag $V_{1}\subset V_{2}\subset\ldots\subset V_{\ell}$, where $V_{j}$
is spanned by the $i_{j}$-first standard basis vectors. The Levi
subgroup of $P_{I}$ consists of block diagonal matrices 
\begin{equation}
L_{I}=\left\{ \left[\begin{array}{cccc}
M_{1}\\
 & M_{2}\\
 &  & \ddots\\
 &  &  & M_{\ell}
\end{array}\right],\ M_{j}\in GL(d_{j},\mathbb{R}),\ \det(M_{1})\ldots\det(M_{\ell})=1\right\} .\label{eq:LI}
\end{equation}

Throughout this subsection we consider the situation where $I\subseteq\Delta$
is such that $SL(2,\mathbb{R})$ is a factor of $L_{I}$, i.e., one
of blocks is $2\times2$. Take such a block, that is, $k\in\{1,\ldots,\ell\}$
with $d_{k}=2$. Regard $SL(2,\mathbb{R})$ as a subgroup of $L_{I}$,
embedded in $M_{k}$ and all the other blocks are identities. Denote
by 
\[
p_{k}:Q\to G_{2}:=\left\{ M \in GL(2,\mathbb{R}), \left|\det(M)\right|=1 \right\}
\]
the quotient map which is the composition of $Q=L_{I}\ltimes V_{I}\to L_{I}$
and $L_{I}\to G_{2}$ which sends the $2\times2$-block $M_{k}$ to $\frac{1}{\sqrt{|\det M_{k}|}}M_{k}$. Note that $G_2$ is isomorphic to $SL(2,\mathbb{R})\rtimes\mathbb{Z}/2\mathbb{Z}$.

By the structure
of parabolic subgroups, we have $Q/P=\prod_{j=1}^{\ell}SL(d_{j},\mathbb{R})/P(d_{j},\mathbb{R})$, where $P(n,\mathbb{R})$
is the minimal parabolic subgroup of $SL(n,\mathbb{R})$.
The unique $K\cap Q$ invariant measure on $Q/P$ is $\bar{m}_{K\cap Q}=\prod_{j=1}^{\ell}\bar{m}_{SO(d_{j})}$,
where $\bar{m}_{SO(n)}$ denotes the unique $SO(n)$-invariant probability
measure on $SL(n,\mathbb{R})/P(n,\mathbb{R})$. Denote by
\[
\bar{p}_k:\left(Q/P,\bar{m}_{K\cap Q}\right)\to\left(SL(2,\mathbb{R})/P(2,\mathbb{R}),\bar{m}_{SO(2)}\right)
\]
 the projection to the $k$-th
component in the product, induced by the projection $p_{k}$.


For clarity of later arguments, it is convenient to fix an embedding of
$F=F_{2}$ as a lattice in $SL(2,\mathbb{R})$. Take 
\[
A=\left(\begin{array}{cc}
1 & 2\\
0 & 1
\end{array}\right),\ B=\left(\begin{array}{cc}
1 & 0\\
2 & 1
\end{array}\right).
\]
The group $\left\langle A,B\right\rangle $ is called the Sanov subgroup,
it is a free group of rank $2$. Denote by $\mathbb{H}$ the upper
half plane model of the $2$-dimensional hyperbolic space, where $SL(2,\mathbb{R})$
acts by Mobius transforms. Take the ideal rectangle $R_{0}$ with
vertices $-1,0,1,\infty$ on $\mathbb{H}$. It is the union of two
adjacent ideal triangles with vertices $-1,0,\infty$ and $0,1,\infty$
in the Farey tessellation by ideal triangles. The orbit of $R_{0}$
under $\left\langle A,B\right\rangle $ forms a tessellation of the
hyperbolic plane. The dual graph of the tessellation is a tree, it
can be identified as the standard Cayley graph of the free group $F=\left\langle A,B\right\rangle $.

We follow the classical method to code hyperbolic geodesics with the
tessellation. Take the map $\psi:\mathbb{H}\to F$ by sending a point
$x$ in the tile $\gamma R_{0}$ to $\gamma$. Given a base point
$x_{0}\in\mathbb{H}$ and an irrational point $z\in\partial\mathbb{H}$,
for the geodesic from $x_{0}$ to $z$, record the sequence of tiles
that it passes through: $\left(\gamma_{0}R_{0},\gamma_{1}R_{0},\gamma_{2}R_{0},\ldots\right)$,
where each $\gamma_{n}\in F$. Since the sequence $\left(\gamma_{n}\right)$
comes from a geodesic, which in particular can not backtrack, we have
that $\gamma_{n}$ converges to an infinite reduced word $\gamma_{\infty}\in\partial F$
as $n\to\infty$. By basic properties of the Farey tessellation, see
e.g., \cite{Series}, we have:

\begin{lemma}\label{farey}

The map $\psi$ extends continuously to 
\begin{align*}
\psi:\partial\mathbb{H}-\mathbb{Q} & \to\partial F\\
z & \mapsto\gamma_{\infty},
\end{align*}
which is $F$-equivariant, injective on $\partial\mathbb{H}-\mathbb{Q}$.

\end{lemma}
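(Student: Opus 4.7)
The proof hinges on the fact that the dual graph of the Farey tessellation $F\cdot R_0$ is a $4$-regular tree, $F$-equivariantly identified with the standard Cayley graph of $F=\langle A,B\rangle$. I would work throughout with this identification and translate statements about hyperbolic geodesics into statements about geodesic rays in the tree.

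First I would verify that $\psi(z)$ is well-defined for irrational $z$. The irrationality assumption ensures that the geodesic ray $[x_0,z)$ does not terminate at any ideal vertex of the tessellation (these being exactly the rationals) and is not itself an edge of the tessellation. Since two distinct hyperbolic geodesics meet in at most one point, the ray crosses each tessellation edge at most once, so the corresponding sequence $(\gamma_n)$ in $F$ is non-backtracking in the Cayley tree. Being a non-backtracking sequence in a tree, it is a geodesic ray and converges to a unique $\gamma_\infty\in\partial F$. Independence of the basepoint $x_0$ follows because any two geodesic rays with the same irrational ideal endpoint are asymptotic, and two asymptotic rays traversing a locally finite tessellation eventually enter the same tiles in the same order.

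Next I would settle $F$-equivariance and continuity. For $\gamma\in F$, applying $\gamma$ to $[x_0,z]$ produces the geodesic $[\gamma x_0,\gamma z]$, whose tile sequence is $(\gamma\gamma_nR_0)$, converging in $\partial F$ to $\gamma\cdot\gamma_\infty$. By basepoint independence, $\psi(\gamma z)=\gamma\psi(z)$. For continuity, if $z_m\to z$ in $\partial\mathbb{H}-\mathbb{Q}$, the rays $[x_0,z_m]$ converge to $[x_0,z]$ uniformly on compact subsets of $\mathbb{H}$. For any fixed $N$, the first $N$ tiles crossed by $[x_0,z_m]$ eventually agree with those crossed by $[x_0,z]$, which is precisely the definition of convergence in $\partial F$ (whose topology is generated by cylinders on finite prefixes of reduced words).

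The main obstacle is injectivity. Suppose $\psi(z_1)=\psi(z_2)=\gamma_\infty$ for distinct irrationals $z_1\neq z_2$. Then for all sufficiently large $n$ both rays $[x_0,z_i]$ cross the same separating edge $e_n$ (the edge between $\gamma_{n-1}R_0$ and $\gamma_nR_0$), so $z_1,z_2$ lie in the closed ``shadow'' on $\partial\mathbb{H}$ of the half-plane beyond $e_n$, a closed interval with two rational endpoints (the ideal endpoints of $e_n$). The plan is to show these intervals shrink to a single point. The hyperbolic distance from $x_0$ to $e_n$ tends to infinity because $(\gamma_n)$ leaves every finite ball in the Cayley tree; and because $z_1$ is irrational, the sequence $(\gamma_n)$ is not eventually confined to a ``parabolic horoball ray'' spiralling around a cusp of $F\backslash\mathbb{H}$. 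Combining these two facts with the geometry of the Farey tessellation (each non-cuspidal tree geodesic has its associated edges' Euclidean shadows on $\partial\mathbb{H}$ tending to zero in length), the intervals contract to a point, forcing $z_1=z_2$ and yielding the contradiction. The delicate step is distinguishing the ``irrational'' case from the parabolic one; I would handle this either by a direct estimate on the conformal distortion of the isometries representing $\gamma_n$, or by invoking the classical continued-fraction coding (as in Series) which identifies $\partial F$ minus the countable set of parabolic ends with $\partial\mathbb{H}-\mathbb{Q}$.
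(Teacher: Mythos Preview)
The paper does not actually prove this lemma: it simply states it as a consequence of ``basic properties of the Farey tessellation, see e.g., \cite{Series}'' and moves on. Your proof plan is a correct and reasonably detailed working-out of what the paper leaves entirely to that reference.

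Your treatment of well-definedness, basepoint independence, equivariance, and continuity is sound. You are also right that injectivity is the only place where real work is needed, and your diagnosis of the obstruction is accurate: the nested shadow intervals could in principle fail to shrink if the tree geodesic $(\gamma_n)$ were asymptotic to a parabolic end, but parabolic fixed points of the Sanov subgroup lie in $\mathbb{Q}\cup\{\infty\}$, so this is excluded by the irrationality hypothesis. Your second proposed route---invoking the continued-fraction coding from Series, which gives a bijection between $\partial\mathbb{H}-\mathbb{Q}$ and the non-parabolic ends of $\partial F$---is exactly the content of the reference the paper cites, so in that sense your proof collapses to the paper's one-line appeal. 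The direct-estimate route you sketch would also work but is more labor than the paper intends here.
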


We continue to use the same notations for sections and cocycles as
in Subsection \ref{subsec:Notations}.

Take a fundamental domain of $F$ in $SL(2,\mathbb{R})$ so that its
image on $\mathbb{H}$ is the ideal rectangle $R_{0}$. Lift it to
a fundamental domain $\Omega_0$ of $F$ in $G_{2}$, which is a $2$-cover of
$SL(2,\mathbb{R})$. Let $\sigma:G_{2}\times\Omega_{0}\to F$ be the
associated cocycle. Denote by $\beta:G\times G/Q\to Q$ the cocycle
associated with a chosen measurable section $\tau:G/Q\to G$. Let
$F\curvearrowright\left(X_{0},\rho\right)$ be an ergodic measure
preserving action of $F$. Consider the $G$-space 
\begin{equation}
X=G/Q\times_{\beta}\left(\Omega_{0}\times_{\sigma}X_{0}\right)\mbox{ equipped with measure }\eta_{\rho}^{\mu}=\nu_{Q}\times m_{\Omega_{0}}\times\rho,\label{eq:eta-tilt}
\end{equation}
where the $Q$ action on $\Omega_{0}\times_{\sigma}{\rm Sub}(F)$
is through the quotient map $p_{k}:Q\to G_{2}$, $\nu_{Q}$ is the
$\mu$-stationary probability measure on $G/Q$, and $m_{\Omega_{0}}$
is the restriction of the Haar measure on $G_{2}$ on $\Omega_{0}$,
normalized to be a probability measure. Note that $\left(X,\eta_{\rho}^{\mu}\right)$
is a relative-measure preserving extension of $\left(G/Q,\nu_{Q}\right)$.
In the terminology of Subsection \ref{sec:standard}, $\left(X,\eta_{\rho}^{\mu}\right)$
is a standard system over the $\mu$-boundary $\left(G/Q,\nu_{Q}\right)$.

Take the stabilizer map $X\to{\rm Sub}(G)$, $x\mapsto{\rm Stab}_{G}(x)$,
then the pushforward of $\eta_{\rho}^{\mu}$ is a $\mu$-stationary
random subgroup (SRS). This SRS may be viewed as co-induced from $\left({\rm Sub}(F),\rho\right)$
in the specific way described above. Here as customary, we identify $\rho$ and the IRS $\textrm{Stab}_\ast \rho$, where $\textrm{Stab}:X_0\to {\rm Sub}(F)$ maps a point to its stabilizer. Note that the operation is different
from the canonical co-induction of IRSs in the setting of countable
groups \cite{KQ19}.

\begin{notation}\label{notation2}

Given an IRS $\rho$ of $F$ and a step distribution $\mu$ on $G$,
denote by $\left(Z,\lambda_{\rho}^{\mu}\right)$ the Poisson bundle
associated with the $\mu$-random walk, over the standard system $\left(X,\eta_{\rho}^{\mu}\right)\overset{\pi}{\to}\left(G/Q,\nu_{Q}\right)$
as in (\ref{eq:eta-tilt}), where $L_{x}={\rm Stab}_{G}(x)$. 

\end{notation}

The bundle depends on $Q$ and the choice of rank one factor $M_{k}$,
but the dependence is suppressed in the notation.

Since $Y=G/Q$ is a factor of $X$, the stabilizer assumption \hyperlink{AssumpS}{(\textbf{S})}
of Subsection \ref{sec:standard} is satisfied. By Fact \ref{ergodic},
$S=\left\langle {\rm supp}\mu\right\rangle $ acts ergodically on
$\left(Z,\lambda_{\rho}^{\mu}\right)$ if $S$ acts ergodically on
$\left(X,\eta_{\rho}^{\mu}\right)$. 

\subsection{Identification of Poisson bundles for $SL(d,\mathbb{R})$ \label{subsec:Identification-sl}}

Recall that we assume $\mu$ is a step distribution on $G=SL(d,\mathbb{R})$
such that $\left(G/P,\nu_{P}\right)$ is the Poisson boundary of the
$\mu$-random walk, and $\nu_{P}$ is in the quasi-invariant measure
class of $\bar{m}_K$. 

Denote by $\nu_{P}=\int_{Y}\nu_{P}^{y}d\nu_{Q}(y)$ the disintegration
of the harmonic measure $\nu_{P}$ over the quotient map $G/P\to Y=G/Q$.
Note that the support of $\nu_{P}^{y}$ is $\tau(y)Q/P$ and $L_{x}$
acts on $\tau(y)Q/P$ where $y=\pi(x)$. 

As in the setting of Lemma~\ref{quasitransitive-convergence}, let
$H_{0}\in{\rm Tree}_{F}$ be a subgroup whose associated Schreier
graph is a quasi-transitive tree-like graph with infinitely many ends.
Let the subspace ${\rm Tree}_{F}^{H_{0}}\subset{\rm Sub}(F)$ be described
as in (\ref{eq:tree-cover}). Take $\rho$ an $F$-invariant measure
supported on ${\rm Tree}_{F}^{H_{0}}$. 

Associated with the $\mu$-random walk on $G$, take the Poisson bundle
$\left(Z,\lambda_{\rho}^{\mu}\right)$ over the standard system $\left(X,\eta_{\rho}^{\mu}\right)$
defined in Notation~\ref{notation2}. By Proposition \ref{ergodiccom},
in the Poisson bundle $\left(Z,\lambda_{\rho}^{\mu}\right)$ over
$\left(X,\eta_{\rho}^{\mu}\right)$, the fiber over $x\in X$ is the
space of ergodic components $L_{x}\sslash\left(\tau(y)Q/P,\nu_{P}^{y}\right)$,
where $y=\pi(x)$. 

Our task in this subsection is to identify $\left(Z,\lambda_{\rho}^{\mu}\right)$
with a concrete model. Recall that $\rho$ is supported on ${\rm Tree}_{F}^{H_{0}}$.
Denote by $M_{F}\to{\rm Tree}_{F}^{H_{0}}$ the end-compactification
$F$-bundle where the fiber over a subgroup $H\in{\rm Tree}_{F}^{H_{0}}$
is the space of ends $\partial(H\backslash F)$, see Subsection~\ref{subsec:identification}.
Retain the same notation as in (\ref{eq:eta-tilt}), induce $F\curvearrowright M_{F}$
to a $G$-space 
\begin{equation}
M=G/Q\times_{\beta}\left(\Omega_{0}\times_{\sigma}M_{F}\right).\label{eq:M-induced}
\end{equation}
We will show that the space $M$, equipped with a suitable measure,
is $G$-measurably isomorphic to the Poisson bundle $\left(Z,\lambda_{\rho}^{\mu}\right)$,
see Proposition \ref{identification-sl}. The identification will
play a key role in the lower semi-continuity argument.

\subsubsection{The case of $K$-invariant harmonic measure\label{subsec:K-inv}}

We first consider the case where the step distribution $\mu$ is such
that its Poisson boundary is $\left(G/P,\bar{m}_{K}\right)$, where
$\bar{m}_{K}$ is the $K$-invariant probability measure on $G/P$. To emphasize $K$-invariance,
in this case we write $\left(Z,\lambda_{\rho}^{K}\right)$ for the
associated Poisson bundle over $\left(X,\eta_{\rho}^{K}\right)$,
where $\left(X,\eta_{\rho}^{K}\right)$ is defined in (\ref{eq:eta-tilt})
with $\nu_P=\bar{m}_{K}$.

Regard $F$ as the Sanov subgroup in $SL(2,\mathbb{R})$. It acts
on the boundary $SL(2,\mathbb{R})/P(2,\mathbb{R})$. Given the IRS
$\rho$ of $F$, we take an $F$-bundle $\left(E_{F},m_{\rho}^{K}\right)\to\left({\rm Sub}(F),\rho\right)$
where the fiber over $H\in{\rm Sub}(F)$ is the ergodic decomposition
$H\sslash\left(SL(2,\mathbb{R})/P(2,\mathbb{R}),\bar{m}_{SO(2)}\right)$
. We now show that the Poisson bundle $\left(Z,\lambda_{\rho}^{K}\right)$
can be seen as induced from the $F$-system $\left(E_{F},m_{\rho}^{K}\right)$. 

\begin{lemma}\label{transitive}

Let $\left(Z,\lambda_{\rho}^{K}\right)$ be the Poisson bundle over
$\left(X,\eta_{\rho}\right)$. There is a $G$-measurable isomorphism
\[
\Psi_{0}:\left(Z,\lambda_{\rho}^{K}\right)\to\left(G/Q\times_{\beta}\left(\Omega_{0}\times_{\sigma}E_{F}\right),\bar{m}_{G/Q}\times m_{\Omega_{0}}\times m_{\rho}^{K}\right).
\]

\end{lemma}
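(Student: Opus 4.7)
The plan is to use the fiberwise description of the Poisson bundle from Proposition \ref{ergodiccom}, then exploit the product structure of $\bar{m}_K$ disintegrated over $G/Q$ to reduce to the $k$-th factor. First, since the harmonic measure $\bar{m}_K$ on $G/P$ is $K$-invariant, the decomposition formula (\ref{eq:disintegration}) applied to the pair $K \cap Q \subset K$ gives $\bar{m}_K = \int_{G/Q} \tau(y)_\ast \bar{m}_{K\cap Q}\, d\bar{m}_{G/Q}(y)$, and $\bar{m}_{K\cap Q} = \prod_{j=1}^\ell \bar{m}_{SO(d_j)}$ on $Q/P = \prod_{j=1}^\ell SL(d_j,\mathbb{R})/P(d_j,\mathbb{R})$. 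By Proposition \ref{ergodiccom} (with $Y = G/Q$ and the standard structure on $(X, \eta_\rho^K)$), the fiber of $Z$ over $x \in X$ is identified with the space of ergodic components of $L_x = {\rm Stab}_G(x)$ acting on $(\tau(\pi(x))Q/P, \nu_P^{\pi(x)})$.

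Next I would compute $L_x$ explicitly. A point $x \in X$ is represented by $[g, v, x_0]$ with $g \in G$, $v \in \Omega_0$, $x_0 \in X_0$, and since the $Q$-action on $\Omega_0 \times_\sigma X_0$ factors through $p_k : Q \to G_2$, its stabilizer in $Q$ equals $p_k^{-1}(v H v^{-1})$ where $H = {\rm Stab}_F(x_0)$. After conjugating by $\tau(\pi(x))$, the action of $L_x$ on $\tau(\pi(x))Q/P$ becomes the action of $p_k^{-1}(v H v^{-1})$ on $Q/P$ with product measure $\prod_j \bar{m}_{SO(d_j)}$. The crucial observation is that $p_k^{-1}(vHv^{-1})$ contains $\ker(p_k)$, which in particular contains each Levi factor $j \neq k$ acting transitively on the corresponding $SL(d_j,\mathbb{R})/P(d_j,\mathbb{R})$; the unipotent radical $N_I$ acts trivially on $Q/P$. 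Since the action on $Q/P$ decouples across factors and the product measure is a product of the unique $K$-invariant measures, the non-$k$-th coordinates form a single ergodic component, so the ergodic decomposition of $p_k^{-1}(vHv^{-1}) \curvearrowright Q/P$ is naturally identified with that of $vHv^{-1} \curvearrowright (SL(2,\mathbb{R})/P(2,\mathbb{R}), \bar{m}_{SO(2)})$, i.e.\ with the $v$-translate of the fiber of $E_F$ over $H$.

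Assembling these fiberwise identifications produces a map $\Psi_0 : Z \to G/Q \times_\beta (\Omega_0 \times_\sigma E_F)$; its $G$-equivariance follows from the equivariance of the stabilizer map $x \mapsto L_x$ combined with the standard formulae $\beta(h, gQ) = \tau(hgQ)^{-1} h \tau(gQ)$ and $\sigma(q_2, v) = v^{-1}q_2 v$ for the cocycles (the action on $Z$ is canonically induced from the action on stabilizers). For the measures, the disintegration of $\lambda_\rho^K$ over the base projection $Z \to X \to G/Q$ pushes forward, under $\Psi_0$, to $\bar{m}_{G/Q} \times m_{\Omega_0} \times m_\rho^K$: integrating the identification of fibers against $\eta_\rho^K = \bar{m}_{G/Q} \times m_{\Omega_0} \times \rho$ recovers exactly the target measure since the fiber measure over $H \in {\rm Sub}(F)$ is by definition the ergodic decomposition measure $H \sslash (SL(2,\mathbb{R})/P(2,\mathbb{R}), \bar{m}_{SO(2)})$.

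The main technical obstacle is the stabilizer computation and the verification that $\ker(p_k) \cap L_I$ acts ergodically on the non-$k$-th components with their $K$-invariant measures: this relies on the product structure of $\bar{m}_{K\cap Q}$ together with the fact that each Levi factor $GL(d_j,\mathbb{R})$ (for $j \neq k$) acts transitively on $SL(d_j,\mathbb{R})/P(d_j,\mathbb{R})$, so that ergodic decomposition in $Q/P$ really is driven by the $k$-th component alone. Once this product decoupling is made precise, the rest of the argument is bookkeeping of the induced-space cocycles.
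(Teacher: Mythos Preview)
Your proposal is correct and follows essentially the same approach as the paper: both invoke Proposition \ref{ergodiccom} to describe the fiber over $x=(y,r,H)$ as the ergodic components of $L_x \curvearrowright (\tau(y)Q/P,\tau(y).\bar{m}_{K\cap Q})$, compute $L_x=\tau(y)p_k^{-1}(rHr^{-1})\tau(y)^{-1}$, and then use transitivity of $\ker p_k$ on the non-$k$ factors of $Q/P=\prod_j SL(d_j,\mathbb{R})/P(d_j,\mathbb{R})$ to reduce to $rHr^{-1}$-invariant sets in the $k$-th factor, which after translation by $r^{-1}$ land in the fiber of $E_F$ over $H$. The paper phrases the reduction in terms of invariant measurable subsets rather than ergodic components, but this is the same argument; your remark that the cocycle $\sigma$ is given by $v^{-1}q_2 v$ is imprecise (it is the cocycle associated to the fundamental domain $\Omega_0$, so involves the section into $F$), but this does not affect the substance.
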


\begin{proof}

By (\ref{eq:disintegration}), we have that in the disintegration
of $\bar{m}_{K}$ over $G/P\to G/Q$, the fiber measure over a point
$y$ is $\tau(y).\bar{m}_{K\cap Q}$. Let $p_{k}:Q\to G_{2}$ and $\bar{p}_k:Q/P\to SL(2,\mathbb{R})/P(2,\mathbb{R})$ be the
projection maps as in Subsection \ref{subsec:Construction-sl}, where
$k$ is the index of the chosen $2\times2$ block. 

Then in the Poisson bundle $\left(Z,\lambda_{\rho}^{K}\right)\to\left(X,\eta_{\rho}\right)$,
the fiber over $x=\left(y,r,H\right)$ is the ergodic decomposition
$L_{x}\sslash\left(\tau(y)Q/P,\tau(y).\bar{m}_{K\cap Q}\right)$.
The stabilizer map $x\mapsto L_{x}$ is given explicitly by 
\begin{align}
{\rm Stab}_{G} & :G/Q\times_{\beta}\left(\Omega_{0}\times_{\sigma}{\rm Sub}(F)\right)\to{\rm Sub}(G)\nonumber \\
 & (y,r,H)\mapsto\tau(y)p_{k}^{-1}\left(rHr^{-1}\right)\tau(y)^{-1}.\label{eq:def-L}
\end{align}

A point in $Z$ can be recorded as $\left(x,A\right)$, where $x=(y,r,H)\in X$
and $A$ is an $L_{x}$-invariant measurable subset in the coset $\tau(y)Q/P$.
By the description of the subgroup $L_{x}$ in (\ref{eq:def-L}),
we have that $\tau(y)^{-1}A$ is a subset of $Q/P$ which is invariant
under $L_{Q}(r,H):=p_{k}^{-1}\left(rHr^{-1}\right)$. Since the subgroup
$L_{Q}(r,H)$ of $Q$ acts transitively on the components $SL(d_{j},\mathbb{R})/P(d_{j},\mathbb{R})$
for $j\neq k$, we have that $\tau(y)^{-1}A$ is of the form $A_{k}\times\prod_{j\neq k}SL(d_{j},\mathbb{R})/P(d_{j},\mathbb{R})$,
where $A_{k}$ is a $rHr^{-1}$-invariant subset in $SL(2,\mathbb{R})/P(2,\mathbb{R})$.
Then $r^{-1}A_{k}$ is an $H$-invariant event in $SL(2,\mathbb{R})/P(2,\mathbb{R})$,
thus in the fiber of $E_{F}$ over $H$. To summarize, we have seen
an isomorphism $\Psi_{0}:Z\to G/Q\times_{\beta}\left(\Omega_{0}\times_{\sigma}E_{F}\right)$,
$\left(\left(y,r,H\right),A\right)\mapsto\left(y,r,\left(H,r^{-1}\bar{p}_{k}\left(\tau(y)^{-1}A\right)\right)\right)$.
It follows from $K$-invariance that $(\Psi_{0})_{\ast}\lambda_{\rho}^{K}=\bar{m}_{G/Q}\times m_{\Omega_{0}}\times m_{\rho}^{K}$.

\end{proof}

Lemma \ref{transitive} shows that the identification problem for
$\left(Z,\lambda_{\rho}^{K}\right)$ is reduced to $\left(E_{F},m_{\rho}^{K}\right)$.
Relying on Furstenberg discretization, $\left(E_{F},m_{\rho}^{K}\right)$
can be identified as a $F$-Poisson bundle, and further an end compactification
bundle as follows. 

Recall that we have an $F$-equivariant map $\psi:\mathbb{H}\cup\left(\partial\mathbb{H}-\mathbb{Q}\right)\to F\cup\partial F$
from the Farey tessellation as in Lemma \ref{farey}. View $SL(2,\mathbb{R})/P(2,\mathbb{R})$
as the ideal boundary of the hyperbolic plane $\mathbb{H}$, the $SO(2)$-invariant
measure $m_{0}=\bar{m}_{SO(2)}$ corresponds to the Lebesgue measure.
By \cite[Theorem 15.2]{Furstenberg4}, see also \cite[Prop VI. 4.1]{MargulisBook},
for the lattice $F<SL(2,\mathbb{R})$, there is a nondegenerate step
distribution $\kappa_{F}$ on $F$ with finite Shannon entropy and
finite log-moment (with respect to word distance on $F$) such that
$\left(\partial\mathbb{H},m_{0}\right)$ is $F$-measurable isomorphic
to the Poisson boundary of $\left(F,\kappa_{F}\right)$. We mention
that one can also apply \cite[Theorem 0.3]{CP07} to the free group
$F$ acting on $\mathbb{H}$ to see such a measure $\kappa_{F}$ exists.
Then by the description of Proposition \ref{ergodiccom}, we have
that $\left(E_{F},m_{\rho}^{K}\right)$ is $F$-measurably isomorphic
to the Poisson bundle associated with $\kappa_{F}$-random walk over
the same base $\left({\rm Sub}(F),\rho\right)$. 

Associated with the $\kappa_{F}$-random walk $\left(\omega_{n}\right){}_{n=0}^{\infty}$,
as in Subsection \ref{subsec:identification}, we have the end-compactification
bundle $\left(M_{F},\bar{\lambda}_{\rho}\right)\to\left({\rm Tree}_{F}^{H_{0}},\rho\right)$,
where in the disintegration of $\bar{\lambda}_{\rho}$, the fiber
$\partial(H\backslash F)$ over $H\in{\rm supp}\rho$ is endowed with
the hitting distribution of the coset trajectory $\left(H\omega_{n}\right)_{n=0}^{\infty}$.

\begin{lemma}\label{iden-sl-K}

The bundle $\left(E_{F},m_{\rho}\right)$ is $F$-measurably isomorphic
to the Poisson bundle, and also to the end-compactification bundle
$\left(M_{F},\bar{\lambda}_{\rho}\right)$ associated with the $\kappa_{F}$-random
walk over the same base $\left({\rm Sub}(F),\rho\right)$. 

\end{lemma}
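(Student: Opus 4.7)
The plan is to chain two identifications that have essentially already been prepared. First, exhibit $(E_{F},m_{\rho}^{K})$ as the $\kappa_{F}$-Poisson bundle over $({\rm Sub}(F),\rho)$; then apply Proposition~\ref{identification2} to pass from the Poisson bundle to the end-compactification bundle $(M_{F},\bar{\lambda}_{\rho})$.

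For the first identification, I would invoke the Furstenberg discretization theorem in the form already cited: the measure $\kappa_{F}$ on $F$ is chosen precisely so that $(\partial\mathbb{H},m_{0})$ (with $m_{0}=\bar{m}_{SO(2)}$) is $F$-measurably isomorphic to the Poisson boundary $(B,\nu_{B})$ of $(F,\kappa_{F})$. Under this identification, the ergodic decomposition $H\sslash(SL(2,\mathbb{R})/P(2,\mathbb{R}),\bar{m}_{SO(2)})$ that defines the fiber of $E_{F}$ over $H$ coincides with the ergodic decomposition $H\sslash(B,\nu_{B})$. Now $({\rm Sub}(F),\rho)$ is $F$-invariant and hence is standard over the trivial $\kappa_{F}$-boundary (a point), with the stabilizer assumption \hyperlink{AssumpS}{(\textbf{S})} holding tautologically for $L_{H}=H$. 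Proposition~\ref{ergodiccom} then describes the fiber of the Poisson bundle $(Z_{\kappa_{F}},\lambda_{\rho}^{\kappa_{F}})$ over $H$ as exactly $H\sslash(B,\nu_{B})$. Checking that the $F$-action on both sides matches the natural translation action of $F$ on Sub$(F)$ and on the boundary yields an $F$-measurable isomorphism $(E_{F},m_{\rho}^{K})\cong(Z_{\kappa_{F}},\lambda_{\rho}^{\kappa_{F}})$.

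For the second identification, the support of $\rho$ is contained in ${\rm Tree}_{F}^{H_{0}}$ with $H_{0}$ quasi-transitive tree-like with infinitely many ends (by the running hypothesis via Lemma~\ref{quasitransitive-convergence}), and $\kappa_{F}$ has finite Shannon entropy and finite logarithmic moment. These are exactly the hypotheses of Proposition~\ref{identification2}, which provides an $F$-measurable isomorphism $(Z_{\kappa_{F}},\lambda_{\rho}^{\kappa_{F}})\cong(M_{F},\bar{\lambda}_{\rho})$, where each fiber $\partial(H\backslash F)$ is equipped with the hitting distribution of the coset walk $(H\omega_{n})$. Composing with the first isomorphism yields the two claimed identifications.

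There is no serious obstacle here; the lemma is a composition of results already established, and the main care is in matching the measure class on $\partial\mathbb{H}$ with the $\kappa_{F}$-harmonic measure, which is exactly the content of the Furstenberg discretization step. If $\rho$ is not a priori ergodic (Proposition~\ref{identification2} being stated for ergodic $\rho$), one first passes to its $F$-ergodic decomposition, applies the two-step identification on each component, and glues; the constructions are natural in $\rho$ and respect disintegration over the base, so no compatibility issue arises.
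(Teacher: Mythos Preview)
Your proposal is correct and follows essentially the same approach as the paper: identify $(\partial\mathbb{H},m_{0})$ with the $(F,\kappa_{F})$-Poisson boundary via Furstenberg discretization, use the ergodic-components description (Proposition~\ref{ergodiccom}) to match $(E_{F},m_{\rho}^{K})$ fiberwise with the $\kappa_{F}$-Poisson bundle, and then apply Proposition~\ref{identification2} to reach $(M_{F},\bar{\lambda}_{\rho})$. The paper carries out the first step slightly more explicitly via the Farey map $\psi$ and the induced fiberwise isomorphisms $\bar{\psi}_{H}$, but the substance is the same; your remark on passing to the ergodic decomposition of $\rho$ is a reasonable extra care that the paper leaves implicit.
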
 
\begin{proof}
Recall that by \cite{KaimanovichHyperbolic} we have that almost surely
a $\kappa_{F}$-random walk trajectory converges to an end in $\partial F$,
and the Poisson boundary can be identified as $\partial F$ equipped
with the hitting measure. Given the map $\psi$ associated with the
Farey tessellation, we have that the hitting measure on $\partial F$
is the same as the pushforward $\psi_{\ast}\bar{m}_{SO(2)}$.

Now for $H\in{\rm supp}\eta_{\rho}$, consider $H$-ergodic components:

\[
\xymatrix{\left(\partial\mathbb{H},\bar{m}_{SO(2)}\right)\ar[r]^{\psi}\ar[d] & \left(\partial F,\psi_{\ast}\bar{m}_{SO(2)}\right)\ar[d]\\
H\sslash\left(\partial\mathbb{H},\bar{m}_{SO(2)}\right)\ar[r]^{\bar{\psi}_{H}} & H\sslash\left(\partial F,\psi_{\ast}\bar{m}_{SO(2)}\right),
}
\]
since $\psi$ is an $F$-measurable isomorphism, it follows that $\bar{\psi}_{H}$
is a measurable isomorphism as well. Since $\left(\partial F,\psi_{\ast}\bar{m}_{SO(2)}\right)$
is the Poisson boundary of $\left(F,\kappa_{F}\right)$, we have that
the bundle over $\left({\rm Tree}_{F}^{H_{0}},\rho\right)$ with fiber
$H\sslash\left(\partial F,\psi_{\ast}\bar{m}_{SO(2)}\right)$ over
$H$ is the Poisson bundle over the same base associated with the
$\kappa_{F}$-random walk. Applying Proposition \ref{identification2}
to the random walk $\kappa_{F}$, we have that the $\kappa_{F}$-Poisson
bundle over $\left({\rm Tree}_{F}^{H_{0}},\rho\right)$ is $F$-measurable
isomorphic to the end-compactification bundle $\left(M_{F},\bar{\lambda}_{\rho}\right)$.

The isomorphism in the statement is implemented fiberwise by $\bar{\psi}_{H}:H\sslash\left(\partial\mathbb{H},m_{0}\right)\to\partial\left(H\backslash F\right)$,
where for $m_{0}$-a.e. $\xi\in\partial\mathbb{H}$, denote by $\left(\gamma_{n}\right)_{n=0}^{\infty}$
the sequence of tiles in the Farey tessellation that it passes through,
in other words $\left(\gamma_{n}\right)_{n=0}^{\infty}=\psi(\xi)$;
then by Proposition \ref{nearest}, $\psi_{H}(\xi)$ is the end that
$\left(H\gamma_{n}\right)_{n=0}^{\infty}$ converges to. 
\end{proof}
Combine Lemma \ref{transitive} and \ref{iden-sl-K}, we have that
the Poisson bundle $\left(Z,\lambda_{\rho}^{K}\right)$ is $G$-measurably
isomorphic to the bundle $M$ over the same base $\left(X,\eta_{\rho}^{K}\right)$,
where the fiber over $x=(y,r,H)$ is the space of ends $\partial\left(H\setminus F\right)$,
endowed with the hitting distribution of the $\kappa_{F}$-coset random
walk $\left(H\omega_{n}\right)_{n=0}^{\infty}$.

\subsubsection{The more general case }

Next we consider the case where $\mu$ is such that $\left(G/P,\nu_{P}\right)$
is the Poisson boundary of $(G,\mu)$ and $\nu_{P}$ is in the measure
class of $\bar{m}_{K}$. Let $\left(X,\eta_{\rho}\right)$ be defined
as in (\ref{eq:eta-tilt}). Since $\nu_{P}$ is in the measure class
of $\bar{m}_{K}$ on $G/P$, it follows that the $\mu$-Poisson bundle
$(Z,\lambda_{\rho}^{\mu})$ over $\left(X,\eta_{\rho}\right)$ can
be realized on the same space $Z$ as in the $K$-invariant case and
the measure $\lambda_{\rho}^{\mu}$ is in the measure class of $\lambda_{\rho}^{K}$.
It remains to describe the corresponding measure on the end-compactification
bundle $M$. 

By Lemma \ref{transitive} and \ref{iden-sl-K}, we have a $G$-measurable
isomorphism, which is a composition of $\Psi_{0}$ and fiberwise maps
$\bar{\psi}_{H}$: 
\begin{align}
\Psi & :Z\to M:=G/Q\times_{\beta}\left(\Omega_{0}\times_{\sigma}M_{F}\right)\nonumber \\
 & \left(\left(y,r,H\right),A\right)\mapsto\left(y,r,\left(H,\bar{\psi}_{H}\left(r^{-1}\bar{p}_{k}\left(\tau(y)^{-1}A\right)\right)\right)\right),\label{eq:Psi}
\end{align}
where $\bar{p}_{k}$ is the projection $Q/P=\prod_{j=1}^{\ell}SL(d_{j},\mathbb{R})/P(d_{j},\mathbb{R})\to SL(2,\mathbb{R})/P(2,\mathbb{R})$
to the $k$-th component, and $\bar{\psi}_{H}:H\sslash\left(\partial\mathbb{H},m_{0}\right)\to\partial\left(H\backslash F\right)$
is the map specified in the proof of Lemma~\ref{iden-sl-K}.

Denote by $\nu_{P}=\int_{G/Q}\nu_{P}^{y}d\nu_{Q}(y)$ the disintegration
of $\nu_{P}$ over $G/Q$. Note that $\nu_{P}^{y}$ is in the measure
class of $\tau(y).\bar{m}_{K\cap Q}$.

\begin{proposition}\label{identification-sl}

Suppose $\mu$ is such that $\left(G/P,\nu_{P}\right)$ is the Poisson
boundary of the $\mu$-random walk and $\nu_{P}$ is in the quasi-invariant
measure class of $\bar{m}_{K}$. The map $\Psi$ defined as (\ref{eq:Psi})
is a $G$-measurable isomorphism between the $\mu$-Poisson bundle
$\left(Z,\lambda_{\rho}^{\mu}\right)$ over $\left(X,\eta_{\rho}^{\mu}\right)$
and the bundle $\left(M,\alpha_{\rho}\right)$. In the disintegration
$\alpha_{\rho}=\int_{X}\alpha_{\rho}^{x}d\eta_{\rho}(x)$ over $\left(M,\alpha_{\rho}\right)\to\left(X,\eta_{\rho}\right)$,
we have 
\[
\alpha_{\rho}^{x}=\left(\bar{\psi}_{H}\right)_{\ast}\left(r^{-1}(\bar{p}_{k})_{\ast}\left(\tau(y)^{-1}.\nu_{P}^{y}\right)\right)\mbox{ where }x=(y,r,H).
\]
\end{proposition}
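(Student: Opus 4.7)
The plan is to deduce the statement from the $K$-invariant case already treated in Lemmas \ref{transitive} and \ref{iden-sl-K}, tracking how the fiberwise measure on $Z$ changes when $\bar{m}_K$ is replaced by a harmonic measure $\nu_P$ in the same measure class.

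First I would observe that $\Psi$, being the composition of the map $\Psi_0$ from Lemma \ref{transitive} with the fiberwise maps $\bar{\psi}_H$ from Lemma \ref{iden-sl-K}, is a $G$-equivariant measurable bijection purely as a map of Borel spaces, independently of the measure on $G/P$. The well-definedness of $\Psi|_x : [A] \mapsto \bar{\psi}_H(r^{-1} \bar{p}_k(\tau(y)^{-1} A))$ on ergodic components rests on the observation from Lemma \ref{transitive} that any $L_x$-invariant Borel subset of $\tau(y)Q/P$ has the product form $A_k \times \prod_{j\neq k} SL(d_j,\mathbb{R})/P(d_j,\mathbb{R})$ with $A_k$ an $rHr^{-1}$-invariant Borel subset of $SL(2,\mathbb{R})/P(2,\mathbb{R})$; this is a purely set-theoretic statement about the action $L_x \curvearrowright \tau(y) Q/P$ via (\ref{eq:def-L}), which does not involve any measure.

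It then remains to verify the claimed formula for $\alpha_\rho = \Psi_*\lambda_\rho^\mu$. By Proposition \ref{ergodiccom} applied to $(X, \eta_\rho^\mu) \to (G/Q, \nu_Q)$ with Poisson boundary $(G/P, \nu_P)$ and fiberwise disintegration $\nu_P^y$ over $\nu_Q$, the fiber of $(Z,\lambda_\rho^\mu)$ over $x = (y, r, H)$ is the space of ergodic components $L_x \sslash (\tau(y)Q/P, \nu_P^y)$ endowed with the pushforward of $\nu_P^y$ under the decomposition quotient. Composing with $\Psi|_x$ amounts to pushing this measure successively by $\tau(y)^{-1}$, $\bar{p}_k$, $r^{-1}$ and $\bar{\psi}_H$, which yields exactly $\alpha_\rho^x = (\bar{\psi}_H)_*(r^{-1}(\bar{p}_k)_*(\tau(y)^{-1}.\nu_P^y))$ as asserted. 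Integrating against the base measure $\eta_\rho^\mu = \nu_Q \times m_{\Omega_0} \times \rho$ then produces the global formula for $\alpha_\rho$, and $G$-equivariance of the identification transfers from the $K$-invariant case since it is a property of $\Psi$ as a map of $G$-spaces.

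The main point requiring care -- though not a serious obstacle -- is to verify that each stage of the pushforward is well-defined, since $\bar{\psi}_H$ is only canonically defined modulo $\bar{m}_{SO(2)}$-null sets on $\partial\mathbb{H}$. This reduces to checking that $r^{-1}(\bar{p}_k)_*(\tau(y)^{-1}.\nu_P^y)$ lies in the measure class of $\bar{m}_{SO(2)}$, which in turn follows from the hypothesis that $\nu_P$ is in the measure class of $\bar{m}_K$, together with the product decomposition $\bar{m}_{K\cap Q} = \prod_j \bar{m}_{SO(d_j)}$ which ensures that the projection $\bar{p}_k$ sends the measure class of $\tau(y).\bar{m}_{K\cap Q}$ to that of $\bar{m}_{SO(2)}$. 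This measure-class bookkeeping between $\nu_P$, $\bar{m}_K$, $\tau(y).\bar{m}_{K\cap Q}$, and $\bar{m}_{SO(2)}$ is the only step beyond the $K$-invariant case.
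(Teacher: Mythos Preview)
Your proposal is correct and follows the same approach as the paper: the isomorphism $\Psi$ is inherited from Lemmas \ref{transitive} and \ref{iden-sl-K} as a map of $G$-spaces, and the fiber measure formula is obtained by tracing $\nu_P^y$ through the explicit change of variables (\ref{eq:Psi}). The paper's proof is terse (two sentences), while you spell out the invocation of Proposition \ref{ergodiccom} and the measure-class check needed for $\bar{\psi}_H$ to be defined $\nu_P^y$-a.e.; these are exactly the details the paper leaves implicit in the discussion preceding the proposition.
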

\begin{proof}
Lemma \ref{transitive} and \ref{iden-sl-K} imply that $\Psi:Z\to M$
is a $G$-measurable isomorphism. The expression for $\alpha_{\rho}^{x}$
is obtained from a change of variable given the explicit formula (\ref{eq:Psi}). 
\end{proof}

\subsection{Lower semi-continuity for Poisson bundles of $SL(d,\mathbb{R})$\label{subsec:Approximation-S}}

In this subsection we prove lower semi-continuity statement via the
identification of the Poisson bundle $\left(Z,\lambda_{\rho}^{\mu}\right)$
and the bundle $\left(M,\alpha_{\rho}\right)$. Assume $\mu$ is a
step distribution on $G=SL(d,\mathbb{R})$ satisfying: 
\begin{description}  
\item [{\hypertarget{AssumpB}{(B)}}] Bounded Radon-Nikodym derivatives. The Poisson boundary of
the $\mu$-random walk can be identified as $\left(G/P,\nu_{P}\right)$,
where $\nu_{P}$ is in the quasi-invariant measure class on $G/P$
and moreover, the Radon-Nikodym derivatives $d\nu_{P}/d\bar{m}_{K}$
and $d\bar{m}_{K}/d\nu_{P}$ are in $L^{\infty}\left(G/P,\bar{m}_{K}\right)$. 
\end{description}
\begin{example}
The main sources of examples of step distributions that satisfy \hyperlink{AssumpB}{(\textbf{B})}
are from the works of Furstenberg~\cite{Furstenberg1,Furstenberg2}: 
\begin{description}
\item [{(i)}] $\mu$ is a left $K$-invariant admissible measure on $G$.
\item [{(ii)}] $\mu$ is an admissible $B_{\infty}$ measure on $G$. 
\item [{(iii)}] $\mu$ is a Furstenberg measure on a lattice $\Gamma<G$. 
\end{description}
\end{example}

Under assumption \hyperlink{AssumpB}{(\textbf{B})}, we explain how the system $\left(M,\alpha_{\rho}\right)$
in Proposition \ref{identification-sl} fits into the setting of Section
\ref{sec:lower-argu}. 
Write $Y'=G/Q\times_{\beta}\Omega_{0}$ and equip it with the measure
$\nu=\nu_{Q}\times m_{\Omega_{0}}$. Note that $\left(Y',\nu\right)$
is a measure-preserving extension of $\left(G/Q,\nu_{Q}\right)$;
and the purpose of taking this is to apply Proposition \ref{relative-cont}
in the fibers of $M\to Y'$. 

Recall that $M=G/Q\times_{\beta}\left(\Omega_{0}\times_{\sigma}M_{F}\right)$
is induced from the end-compactification bundle $M_{F}$ of $F$ over
${\rm Tree}_{F}$. View $M$ as a bundle over $X=G/Q\times_{\beta}\left(\Omega_{0}\times_{\sigma}{\rm Sub}(F)\right)$,
the fiber over $x=(y,r,H)$, is the space $M_{x}=\partial\left(H\backslash F\right)$.
By Proposition \ref{identification-sl}, disintegrate the measure
$g.\alpha_{\rho}=\Psi_{\ast}\left(g.\lambda_{\rho}\right)$ over $M\to X$,
we have that the fiber measures in $g.\alpha_{p}=\int_{X}\alpha_{x,g}d\left(g.\eta_{\rho}\right)(x)$
are given by 
\[
\alpha_{x,g}=(\zeta_{H})_{\ast}\left(r^{-1}\left(\bar{p}_{k}\right)_{\ast}\left(\tau(y)^{-1}(g\nu_{P})^{y}\right)\right),\ \mbox{where }x=\left(y,r,H\right).
\]

Suppose $H_{0}\in{\rm Tree}_{F}$ is such that its normalizer $N_{F}(H_{0})$
is of finite index in $F$ and that the Schreier graph $H_{0}\backslash F$
has infinitely many ends. Let ${\rm Tree}_{F,H_{0}}$ be as defined
in (\ref{eq:tree-cover}). In the same way as in the free group case,
when the Schreier graph $H\backslash F$ is tree-like, we take the
sequence of finite partitions of $M_{x}=\partial\left(H\backslash F\right)$
to be $\mathcal{P}_{x,n}=\left\{ \mho(v)\right\} _{v\in S_{H}(n)}$,
where the shadow $\mho(v)$ in $\partial\left(H\backslash F\right)$
is defined in Notation \ref{notation-shd}. We have that the conditions 
 \hyperlink{AssumpM}{(\textbf{M})}, \hyperlink{AssumpC'}{(\textbf{C'})} and  \hyperlink{AssumpP}{(\textbf{P})} 
 are satisfied by construction. 
Proposition \ref{relative-cont}
on the free group implies the following.

\begin{proposition}\label{fiber-continuity} In the setting above,
under \hyperlink{AssumpB}{(\textbf{B})}, for $y'=(y,r)\in Y'=G/Q\times_{\beta}\Omega_{0}$, the
map 
\begin{align*}
S_{y'}:=\{(y,r)\}\times{\rm Tree}_{F,H_{0}} & \to\mathbb{R}_{\ge0}\\
x & \mapsto H_{\alpha_{x,g}\parallel\alpha_{x,e}}\left(\mathcal{P}_{x,n}\right)
\end{align*}
is continuous with respect to the Chabauty topology on ${\rm Tree}_{F,H_{0}}$.
It follows that $x\mapsto D(\alpha_{x,g}\parallel\alpha_{x,e})$ is
lower semi-continuous on $S_{y'}$. \end{proposition}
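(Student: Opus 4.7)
The plan is to reduce the statement to Proposition~\ref{relative-cont} on the free group, applied in the fiber over $y'$. Fix $y' = (y, r)$. For $x = (y, r, H) \in S_{y'}$, I would first unwind the formula from Proposition~\ref{identification-sl} by factoring the map $\bar\psi_H$ of Lemma~\ref{iden-sl-K} as $\bar\psi_H = \zeta_H \circ \psi$, where $\psi$ is the Farey coding map of Lemma~\ref{farey} and $\zeta_H : \partial F \to \partial(H \backslash F)$ is the coset-hitting map. This rewrites
\[
\alpha_{x,g} = (\zeta_H)_*\, \beta_g^{y,r}, \qquad \beta_g^{y,r} := \psi_*\bigl( r^{-1}(\bar p_k)_*\bigl(\tau(y)^{-1}(g\nu_P)^y\bigr)\bigr),
\]
so that $\beta_g^{y,r}$ is a probability measure on $\partial F$ depending on $(y', g)$ but, crucially, not on $H$.

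Next I would verify under assumption \hyperlink{AssumpB}{(\textbf{B})} that for $\nu_Q$-a.e.\ $y$ and for $\mu$-a.e.\ $g$, the measure $\beta_g^{y,r}$ lies in the measure class of the $\kappa_F$-harmonic measure $\nu_B = \psi_*\bar m_{SO(2)}$ on $\partial F$, with essentially bounded Radon-Nikodym derivatives in both directions. The chain of comparisons runs: by \hyperlink{AssumpB}{(\textbf{B})} and the disintegration (\ref{eq:disintegration}), $\nu_P^y$ is comparable to $\tau(y).\bar m_{K\cap Q}$ with $L^\infty$ density ratios; the Jacobian $dg\bar m_K/d\bar m_K$ is smooth and hence bounded above and away from zero on the compact space $G/P$, transferring the same comparability to $(g\nu_P)^y$; the product structure $Q/P \cong \prod_j SL(d_j,\mathbb{R})/P(d_j,\mathbb{R})$ together with $\bar m_{K\cap Q} = \prod_j \bar m_{SO(d_j)}$ then ensures that $(\bar p_k)_*\bigl(\tau(y)^{-1}(g\nu_P)^y\bigr)$ is comparable to $\bar m_{SO(2)}$; acting by the fixed element $r^{-1} \in G_2$ preserves the quasi-invariant class with bounded smooth Jacobian; and $\psi$ is a measurable $F$-isomorphism sending $\bar m_{SO(2)}$ to $\nu_B$.

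With these estimates, applying Proposition~\ref{relative-cont} to $\beta = \beta_g^{y,r}$ and to $\beta = \beta_e^{y,r}$ produces locally constant uniform approximations of $\alpha_{x,g}$ and $\alpha_{x,e}$ on the shadow partitions $\mathcal{P}_{x,n}$, with error bounds uniform in $H \in {\rm Tree}_F^{H_0}$. Since $d\alpha_{x,g}/d\alpha_{x,e}$ is obtained from $d\beta_g^{y,r}/d\beta_e^{y,r}$ by conditional expectation along the fibers of $\zeta_H$, its $L^\infty$ norm is bounded uniformly in $H$. Proposition~\ref{KL-uni} then delivers continuity of $H \mapsto H_{\alpha_{x,g}\parallel \alpha_{x,e}}(\mathcal{P}_{x,n})$ on $S_{y'}$, and the generating property of $\bigcup_n \mathcal{P}_{x,n}$ presents $D(\alpha_{x,g}\parallel\alpha_{x,e})$ as a supremum of continuous functions, hence lower semi-continuous.

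The main obstacle is the second step: tracking the cascade of pushforwards through $\tau(y)^{-1}$, $\bar p_k$, $r^{-1}$ and $\psi$ while maintaining uniform bounded Radon-Nikodym estimates. Once the comparability $\beta_g^{y,r} \sim \nu_B$ with bounded $L^\infty$ density ratios is established, the free-group machinery developed in Section~\ref{Sec:identification-free} applies mechanically.
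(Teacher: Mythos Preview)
Your proposal is correct and follows essentially the same approach as the paper's proof: define the measure $\beta_g$ on $\partial\mathbb{H}$ (equivalently $\partial F$ via $\psi$) depending only on $(y,r,g)$, establish under assumption (\textbf{B}) that $d\beta_g/dm_0$ and $dm_0/d\beta_g$ are bounded via a chain of Radon--Nikodym comparisons, then invoke Proposition~\ref{relative-cont} and Proposition~\ref{KL-uni}. The paper's chain of comparisons is organized slightly differently (working directly with $\left\Vert d(g\nu_P)^y/d(\tau(y).\bar m_K)^y\right\Vert_\infty$ rather than first separating out the $g$-Jacobian), but the content is the same.
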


\begin{proof}

Given $y'=(y,r)\in Y'$, denote by $\beta_{g}$ the measure $r^{-1}\left(p_{k}\right)_{\ast}\left(\tau(y)^{-1}.(g\nu_{P})^{y}\right)$
on $\partial\mathbb{H}=SL(2,\mathbb{R})/P(2,\mathbb{R})$. The Radon-Nikodym derivative $d\beta_{g}/dm_{0}$ is bounded
by 
\[
\left\Vert \frac{d\beta_{g}}{dm_{0}}\right\Vert _{\infty}\le\left\Vert \frac{d\beta_{g}}{dr^{-1}.m_{0}}\right\Vert _{\infty}\left\Vert \frac{dr^{-1}.m_{0}}{dm_{0}}\right\Vert _{\infty}\le\left\Vert \frac{d\tau(y)^{-1}(g\nu_{P})^{y}}{d\bar{m}_{K\cap Q}}\right\Vert _{\infty}\left\Vert \frac{dr^{-1}.m_{0}}{dm_{0}}\right\Vert _{\infty},
\]
where $\tau(y)^{-1}(g\nu_{P})^{y}$ and $\bar{m}_{K\cap Q}$ are measures
on $Q/P$. Furthermore, 
\[
\left\Vert \frac{d\tau(y)^{-1}(g\nu_{P})^{y}}{d\bar{m}_{K\cap Q}}\right\Vert _{\infty}=\left\Vert \frac{d(g\nu_{P})^{y}}{d\left(\tau(y).\bar{m}_{K}\right)^{y}}\right\Vert _{\infty}\le\left\Vert \frac{d(g\nu_{P})}{d\left(\tau(y).\bar{m}_{K}\right)}\right\Vert _{\infty}\left\Vert \frac{d\left(\tau(y).\bar{m}_{K}\right)}{d\left(g\nu_{P}\right)}\right\Vert _{\infty}.
\]
Similar calculation applies to $\left\Vert dm_{0}/d\beta_{g}\right\Vert _{\infty}$.
Therefore Assumption \hyperlink{AssumpB}{(\textbf{B})} implies that $\left\Vert d\beta_{g}/dm_{0}\right\Vert _{\infty}$
and $\left\Vert dm_{0}/d\beta_{g}\right\Vert _{\infty}$ are bounded
by a finite constant depending on $y,r,g,\left\Vert d\nu_{P}/d\bar{m}_{K}\right\Vert _{\infty}$
and $\left\Vert d\bar{m}_{K}/d\nu_{P}\right\Vert _{\infty}$.

Take a Furstenberg discretization random walk $\kappa_{F}$ on $F$
such that $\left(\partial\mathbb{H},m_{0}\right)$ is $F$-isomorphic
to the Poisson boundary of $\left(F,\kappa_{F}\right)$. Denote by
$\nu$ the $\kappa_{F}$-harmonic measure on $\partial F$. We apply
Proposition \ref{relative-cont} to $\beta_{e}$ and $\beta_{g}$,
viewed as measures on $\partial F$ through the $F$-measurable isomorphism
of Proposition~\ref{iden-sl-K}. This gives locally constant uniform
approximations for $\alpha_{x,e}$ and $\alpha_{x,g}$ on $S_{y'}$,
where $\alpha_{x,g}=(\zeta_{H})_{\ast}\beta_{g}$. Since the corresponding
Radon-Nikodym derivatives are bounded, Proposition~\ref{KL-uni}
applies to show that the map 
\begin{align*}
(y,r)\times{\rm Tree}_{F,H_{0}} & \to\mathbb{R}\\
x=(y,r,H) & \mapsto H_{\alpha_{x,g}\parallel\alpha_{x,e}}\left(\mathcal{P}_{x,n}\right)
\end{align*}
is continuous with respect to the Chabauty topology. 

\end{proof}

We deduce lower semi-continuity of the entropy of the Poisson bundle
$\left(Z,\lambda_{\rho}^{\mu}\right)$.

\begin{corollary}\label{lower-sc-sl} Let $\mu$ be a step distribution
on $G$ satisfying \hyperlink{AssumpB}{(\textbf{B})}. The map $\rho\mapsto h_{\mu}\left(Z,\lambda_{\rho}^{\mu}\right)$
is lower semi-continuous, where $\rho$ is in the space of ergodic
$F$-invariant measures on ${\rm Tree}_{F,H_{0}}$, equipped with
the weak$^{\ast}$-topology. \end{corollary}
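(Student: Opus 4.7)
The plan is to use the identification of $(Z,\lambda_\rho^\mu)$ with the end-compactification bundle $(M,\alpha_\rho)$ from Proposition~\ref{identification-sl} and then apply the general lower semi-continuity framework of Section~\ref{sec:lower-argu}, whose fiberwise hypothesis has essentially been verified in Proposition~\ref{fiber-continuity}.

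First I would set up the data needed to apply Lemma~\ref{Fatou}. Take $Y'=G/Q\times_{\beta}\Omega_{0}$ equipped with the measure $\nu=\nu_{Q}\times m_{\Omega_{0}}$; let $\pi':X\to Y'$ be the natural projection, so $(X,\eta_\rho^\mu)$ is a relative measure-preserving extension of $(Y',\nu)$ and the disintegration has the product form $\eta_\rho^{y'}=\delta_{y'}\times\rho$ for $\nu$-a.e.\ $y'$. Take $S_{y'}=\{y'\}\times{\rm Tree}_{F,H_0}$, which is a closed subset of $X$ containing the union of supports $\bigcup_{\rho}\mathrm{supp}\,\eta_\rho^{y'}$. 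Fiberwise continuity \hyperlink{AssumpC'}{(\textbf{C'})} of $\rho\mapsto\eta_\rho^{y'}=\delta_{y'}\times\rho$ in weak$^{\ast}$ topology is then immediate from the weak$^{\ast}$ convergence of $\rho$. Assumption \hyperlink{AssumpM}{(\textbf{M})} follows from \hyperlink{AssumpB}{(\textbf{B})}: for each $g\in G$, the Radon--Nikodym derivative $d\alpha_{x,g}/d\alpha_{x,e}$ is controlled by $dg\nu_P/d\nu_P$ which in turn is controlled by $dg\bar{m}_K/d\bar{m}_K$ together with the $L^\infty$ bounds on $d\nu_P/d\bar{m}_K$ and $d\bar{m}_K/d\nu_P$; since $\mu$ has finite boundary entropy, these bounds integrate appropriately.

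Next I would rewrite the Furstenberg entropy in the form~(\ref{eq:entropy-alpha}). Via the $G$-isomorphism $\Psi:(Z,\lambda_\rho^\mu)\to(M,\alpha_\rho)$ of Proposition~\ref{identification-sl} and Lemma~\ref{consistent} applied to the factor map $X\times B\to M$ obtained by composing with $\Psi$, one has
\[
h_\mu(Z,\lambda_\rho^\mu)=h_\mu(M,\boldsymbol{\alpha}_\rho)=h_\mu(G/Q,\nu_Q)+\int_{G}\int_{X}D(\alpha_{x,g}\parallel\alpha_{x,e})\,\varphi_g(\pi(x))\,d\eta_\rho(x)\,d\mu(g),
\]
where $\varphi_g=d(g\nu_Q)/d\nu_Q$. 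The first summand does not depend on $\rho$, so it suffices to prove lower semi-continuity of the second summand.

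Finally I would invoke Proposition~\ref{fiber-continuity}: for $\nu$-a.e.\ $y'$ and every $g\in G$, the map $x\mapsto D(\alpha_{x,g}\parallel\alpha_{x,e})$ is lower semi-continuous on $S_{y'}$ (this is exactly what Proposition~\ref{fiber-continuity} establishes, via locally constant uniform approximations transported from the free group case). Lemma~\ref{Fatou} then yields that the map
\[
\rho\mapsto\int_{G}\int_{X}D(\alpha_{x,g}\parallel\alpha_{x,e})\,\varphi_g(\pi(x))\,d\eta_\rho(x)\,d\mu(g)
\]
is lower semi-continuous, concluding the proof. The only point requiring a little care is that Lemma~\ref{Fatou} is stated for a family $(X,\eta_p)$ parametrised by $p\in[0,1]$, whereas here the parameter is $\rho$ in a weak$^{\ast}$ compact set of ergodic IRSs; but the proof of Lemma~\ref{Fatou} uses only weak$^{\ast}$ sequential lower semi-continuity along the parameter, so the argument carries over verbatim once the dominated convergence/Fatou step is applied to sequences $\rho_m\to\rho$.
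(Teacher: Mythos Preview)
Your proposal is correct and follows essentially the same route as the paper: identify $(Z,\lambda_\rho^\mu)$ with $(M,\alpha_\rho)$ via Proposition~\ref{identification-sl}, use Proposition~\ref{fiber-continuity} to get fiberwise lower semi-continuity of $x\mapsto D(\alpha_{x,g}\parallel\alpha_{x,e})$ on $S_{y'}$, then apply Lemma~\ref{Fatou}. Your version is more explicit about verifying the hypotheses \hyperlink{AssumpC'}{(\textbf{C'})} and \hyperlink{AssumpM}{(\textbf{M})}, and you correctly flag that Lemma~\ref{Fatou} is stated for a path $p\in[0,1]$ but applies verbatim to any weak$^{\ast}$-convergent sequence of $\rho$'s---a point the paper leaves implicit.
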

\begin{proof}
The KL-divergence $D\left(\alpha_{x,g}\parallel\alpha_{x,e}\right)$
is the increasing limit of $H_{\alpha_{x,g}\parallel\alpha_{x,e}}\left(\mathcal{P}_{x,n}\right)$,
where the latter is continuous on $S_{y'}$ by Proposition \ref{fiber-continuity}.
Then by Lemma \ref{Fatou}, we have that the map $p\mapsto h_{\mu}\left(M,\alpha_{\rho}\right)$
is lower semi-continuous. By the identification in Proposition \ref{identification-sl},
the Poisson bundle $\left(Z,\lambda_{\rho}^{\mu}\right)$ is $G$-measurable
isomorphic to the induced end-compactification bundle $\left(M,\alpha_{\rho}\right)$.
Thus $h_{\mu}\left(Z,\lambda_{\rho}\right)=h_{\mu}\left(M,\alpha_{\rho}\right)$,
the statement follows. 
\end{proof}

\subsection{Entropy realization for $SL(d,\mathbb{R})$ and its lattices\label{subsec:Entropy-realization-S}}

Denote by $\Delta$ simple roots of $G=SL(d,\mathbb{R})$ and let
$I\subseteq\Delta$. Recall that we list $\Delta-I=\left\{ i_{1},\ldots,i_{\ell-1}\right\} $
in increasing order, and associated with $I$ is the partition $d=d_{1}+\ldots+d_{\ell}$,
where $d_{j}=i_{j}-i_{j-1}$, $i_{0}=0$ and $i_{\ell}=d$. Suppose
there is a $k$ is such that $d_{k}=2$. Then $i_{k}-1\in I$ and
it corresponds to a rank $1$ factor of $L_{I}$. Write $I'=I-\left\{ i_{k-1}\right\} $.

\begin{proposition}\label{prop-realization} In the setting of Theorem~\ref{th:spectrum-sl},
the interval $\left[h_{\mu}\left(G/P_{I}\right),h_{\mu}\left(G/P_{I'}\right)\right]$
is contained in the Furstenberg entropy spectrum ${\rm EntSp}(S,\mu)$.
\end{proposition}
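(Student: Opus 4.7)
The plan is to emulate the argument from Section~\ref{subsec:Entropy-realization-F} for free groups: construct a continuous one-parameter family of Poisson bundles over $(G,\mu)$-stationary systems whose Furstenberg entropies traverse the desired interval, then invoke the intermediate value theorem, and finally take a limit over a secondary parameter $\ell$ to reach the left endpoint. Fix $\ell \ge 2$ and let $(\rho_{\ell,p})_{p\in[0,1]}$ be Bowen's path of $F$-IRSs supported on ${\rm Tree}_F^{K_\ell}$, with $F$ the Sanov subgroup of $SL(2,\mathbb{R})$. Using Subsection~\ref{subsec:Construction-sl}, co-induce to obtain standard $(G,\mu)$-systems $(X,\eta_{\rho_{\ell,p}}^{\mu})$ over $(G/Q,\nu_Q)$, then form the corresponding Poisson bundles $(Z,\lambda_{\rho_{\ell,p}}^{\mu})$ of Notation~\ref{notation2}. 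The resulting stationary system $(Z,\lambda_{\rho_{\ell,p}}^{\mu})$ is ergodic under $S$ for $p \in (0,1)$ because Bowen's IRS is $F$-ergodic.

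The continuity of $p \mapsto h_\mu(Z,\lambda_{\rho_{\ell,p}}^{\mu})$ is handled by separately establishing upper and lower semi-continuity. The stabilizer assumption \hyperlink{AssumpS}{(S)} is automatic from the formula $L_x = \tau(y)p_k^{-1}(rHr^{-1})\tau(y)^{-1}$ which places $L_x$ inside the stabilizer of $\pi(x) = y$ in $G/Q$. The fiberwise continuity \hyperlink{AssumpC}{(C)} is inherited from the weak$^*$ continuity of $p \mapsto \rho_{\ell,p}$ proved in \cite{Bowen}. In the $B_\infty$ case, the local coincidence \hyperlink{AssumpL}{(L)} holds because subgroups of the form $\tau(y)p_k^{-1}(rHr^{-1})\tau(y)^{-1}$ are algebraically determined by the discrete subgroup $H$, so two of them agree on large compact sets precisely when the underlying $H$'s agree on large balls of $F$. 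Upper semi-continuity then follows from Corollary~\ref{cor-mutual} (for $B_\infty$) or Corollary~\ref{upper-semi} (for lattices). Lower semi-continuity is exactly Corollary~\ref{lower-sc-sl}, whose hypothesis \hyperlink{AssumpB}{(B)} is satisfied in both cases by the classical properties of $\bar m_K$-class stationary measures and Furstenberg discretization.

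The endpoints of the path are identified by unwinding the concrete description from Proposition~\ref{identification-sl}. At $p = 1$, $\rho_{\ell,1} = \delta_{\{e\}}$, so the coset space $H\backslash F$ is $F$ itself, $\partial(H\backslash F) = \partial F$, and via the Farey identification $\bar\psi_{\{e\}}:\partial\mathbb{H}\to\partial F$ of Lemma~\ref{iden-sl-K} the bundle fibers become the full $SL(2,\mathbb{R})/P(2,\mathbb{R})$-boundary; tracing through the induction $G/Q\times_\beta(\Omega_0\times_\sigma(\cdot))$ exhibits $(Z,\lambda_{\rho_{\ell,1}}^\mu)$ as $G$-isomorphic to $(G/P_{I'},\nu_{I'})$, since the preimage under $p_k:Q\to G_2$ of the minimal parabolic of $G_2$ is precisely $P_{I'}$. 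Consequently $h_\mu(Z,\lambda_{\rho_{\ell,1}}^\mu) = h_\mu(G/P_{I'})$. At $p = 0$, Bowen's analysis in Section~\ref{subsec:Entropy-realization-F} shows $\rho_{\ell,0}\to\kappa = \sum_{i=1}^{\operatorname{rank}(F)}\delta_{A_i}$ weak$^*$ as $\ell\to\infty$, where $A_i\backslash F\cong\mathbb{Z}$. For such $H$'s the fiberwise coset random walk has trivial Poisson boundary (it is a random walk on $\mathbb{Z}$ with finite log-moment), so the bundle degenerates onto its base $(G/P_I,\nu_I)$. Combined with the upper semi-continuity, which extends to include the limit IRS $\kappa$, this gives $\limsup_\ell h_\mu(Z,\lambda_{\rho_{\ell,0}}^\mu) \le h_\mu(G/P_I)$; the reverse inequality is monotonicity of Furstenberg entropy under the factor $(Z,\lambda_{\rho_{\ell,0}}^\mu)\to(G/P_I,\nu_I)$.

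Applying the intermediate value theorem on $[0,1]$ yields $[h_\mu(Z,\lambda_{\rho_{\ell,0}}^\mu),\,h_\mu(G/P_{I'})]\subseteq{\rm EntSp}(S,\mu)$ for each $\ell$, and taking the union over $\ell \ge 2$ produces $(h_\mu(G/P_I),\,h_\mu(G/P_{I'})]\subseteq{\rm EntSp}(S,\mu)$. Finally, $h_\mu(G/P_I)$ itself is realized by the ergodic system $(G/P_I,\nu_I)$. The main obstacle is the rigorous identification at $p = 1$: one must verify that the abstract Poisson bundle, defined via $\sigma$-fields of coset trajectories, really becomes the harmonic $G$-space $(G/P_{I'},\nu_{I'})$ after the three layers of induction ($F \leadsto G_2 \leadsto Q \leadsto G$) and the Farey identification; a secondary difficulty is ensuring the convergence at $p = 0$, $\ell \to \infty$ in a form compatible with the upper semi-continuity argument, since Bowen's continuity is only in the weak$^*$ topology on IRSs and the bundle entropy depends nonlinearly on this data.
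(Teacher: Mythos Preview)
Your approach matches the paper's almost exactly: build the path of Poisson bundles $(Z,\lambda_{\rho_{\ell,p}}^\mu)$ over the co-induced systems, combine Corollaries~\ref{cor-mutual}/\ref{upper-semi} with Corollary~\ref{lower-sc-sl} for continuity, identify the endpoints via Proposition~\ref{identification-sl}, apply the intermediate value theorem, and let $\ell\to\infty$.

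There is one genuine gap. Your ergodicity claim ``ergodic under $S$ for $p\in(0,1)$ because Bowen's IRS is $F$-ergodic'' suffices when $S=G$ (the $B_\infty$ case), since ergodicity is preserved under induction. But in the lattice case $S=\Gamma$, and $G$-ergodicity of $(X,\eta_{\rho_{\ell,p}})$ does not by itself yield $\Gamma$-ergodicity. The paper handles this in two pieces: for $p\in\{0,1\}$ the base system is a homogeneous space $G/H$ with $H$ noncompact, so Moore's ergodicity theorem gives that $\Gamma$ acts ergodically; for $p\in(0,1)$ the paper uses that $F\curvearrowright({\rm Tree}_F,\rho_{\ell,p})$ is a \emph{weakly mixing} extension of a finite transitive system, so the induced $G$-system is a relative weakly mixing, relative measure-preserving extension of a homogeneous $G/H_1$ with $H_1$ noncompact, and relative weak mixing persists upon restricting to the lattice $\Gamma$ (cf.\ \cite[Prop.~2.2(iv)]{BF14}). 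You also need ergodicity at $p=0$ (for each fixed $\ell$) and at $p=1$, not just on the open interval, since every value produced by the intermediate value theorem must come from an ergodic system to belong to ${\rm EntSp}(S,\mu)$; your argument as written omits these endpoints. The rest of your outline---including the identification at $p=1$ and the $\ell\to\infty$ limit at $p=0$---is handled by the paper in the same way you sketch.
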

\begin{proof}
Let $Q=P_{I}$. Let $\rho$ be an ergodic IRS of the free group $F$
and $\left(X,\eta_{\rho}\right)$ be the induced $G$-system in (\ref{eq:eta-tilt}),
then $\left(X,\eta_{\rho}\right)$ is a relative-measure preserving
extension of $\left(G/Q,\nu_{Q}\right)$. The $G$-action on $\left(X,\eta_{\rho}\right)$
is ergodic by general properties of inducing (see e.g. \cite{ZimmerInduced} or \cite[Prop 4.2.19]{ZimmerBook}).
For $\ell\in\mathbb{N}$, $p\in[0,1]$, let $H_{0}=K_{\ell}$ and
$\rho=\rho_{\ell,p}$ be the $F$-ergodic IRS supported on ${\rm Tree}_{F,K_{\ell}}$
constructed by Bowen as reviewed in Subsection \ref{subsec:Entropy-realization-F}.
We now verify that a lattice $\Gamma$ in $G$ also acts ergodically
on $\left(X,\eta_{\rho_{\ell,p}}\right)$. First note that when $p=0$
or $1$, $G$ acts transitively on ${\rm supp}\eta_{\rho_{\ell,p}}$,
the system is of the form $G/H$ for some noncompact closed subgroup
$H$ and the corresponding measure is in the quasi-invariant measure
class. By Moore ergodicity (see e.g., \cite[Thm 2.2.6]{ZimmerBook}), $H$ acts ergodically on $\left(G/\Gamma,\bar{m}\right)$,
it follows that $\Gamma$ acts ergodically on $\left(X,\eta_{\rho_{\ell,p}}\right)$
for $p\in\{0,1\}$. Next for $p\in(0,1)$, we have by construction that
$F\curvearrowright\left({\rm Tree}_{F},\rho_{\ell,p}\right)$ is a weakly
mixing extension of a finite transitive system. Then as a $G$-system, $\left(X,\eta_{\rho_{\ell,p}}\right)$ is a relative measure-preserving,
relative weakly mixing
extension of a homogeneous system of the form $G/H_{1}$ equipped with a quasi-invariant measure, where $H_1$ is not compact.
It remains as a weakly mixing extension when viewed as $\Gamma$-systems
(see e.g., \cite[Prop 2.2 (iv) ]{BF14}), it follows that $\Gamma\curvearrowright\left(X,\eta_{\rho_{\ell,p}}\right)$
is ergodic. 

The assumption on $\mu$ guarantees that \hyperlink{AssumpB}{(\textbf{B})} is satisfied.
Let $\left(Z,\lambda_{\ell,p}\right)$ be the $\mu$-Poisson bundle
over the standard system $\left(X,\eta_{\rho_{\ell,p}}\right)$ with
$x\mapsto L_{x}={\rm Stab}_{G}(x)$. Apply Corollary \ref{cor-mutual} in the case where $\mu$ is in $B_{\infty}$ class, and Corollary \ref{upper-semi} when $\mu$ is supported on a lattice $\Gamma$, we have that $p\mapsto h_{\mu}\left(Z,\lambda_{\ell,p}\right)$ is upper semi-continuous. 
Combined with the lower semi-continuity statement in Corollary
\ref{lower-sc-sl}, it follows that $p\mapsto h_{\mu}\left(Z,\lambda_{\ell,p}\right)$ is continuous on $[0,1]$.
In the proof of Proposition \ref{lower-sc-sl}, we
have identified $\left(Z,\lambda_{\ell,p}\right)$ with an induced
end-compactification bundle $\left(M,\alpha_{\ell,p}\right)$. 

For $p=1$, the IRS $\rho_{\ell,1}=\delta_{\{e\}}$ is supported on
the trivial subgroup. The bundle $M$ is simply $G/Q\times_{\beta}\left(\Omega_{0}\times_{\sigma}\partial F\right)$.
By definition of $p_{k}$, the $Q$-space $\Omega_{0}\times_{\sigma}\partial F$
is isomorphic to $SL(2,\mathbb{R})/P(2,\mathbb{R})$ and so to $Q/P_{I'}$.
It follows that the bundle $M$ is $G$-isomorphic to $G/Q\times_{\beta}Q/P_{I'}=G/P_{I'}$.
This shows $h_{\mu}(Z,\lambda_{\ell,1})=h_{\mu}(G/P_{I'})$.

Next for $p=0$, as in the proof of Theorem~\ref{thm-free} at the
end of Section~\ref{Sec:identification-free}, we have the weak$^{\ast}$
convergence $\rho_{\ell,0}\to\kappa=\frac{1}{2}\sum_{i=1}^{2}\delta_{A_{i}}$,
where $A_{i}$ is the normal closure of the cyclic subgroup $\left\langle a_{i}\right\rangle $
of $F$. Corollary~\ref{cor-mutual} on upper semi-continuity gives
\[
\limsup_{\ell\to\infty}h_{\mu}(Z,\lambda_{\ell,0})\le h_{\mu}(Z,\lambda_{\kappa})=\frac{1}{2}\sum_{i=1}^{2}h_{\mu}\left(Z,\lambda_{\delta_{A_{i}}}\right).
\]
We have that $A_{i}$ is a normal subgroup of $F$ with quotient $A_{i}\setminus F$
isomorphic to $\mathbb{Z}$. Since any random walk on $\mathbb{Z}$
has trivial Poisson boundary, $A_{i}$ acts ergodically on $\left(\partial\mathbb{H},\bar{m}_{SO(2)}\right)$,
which is identified with Poisson boundary of $\kappa_{F}$-random
walk on $F$. By the description in Proposition \ref{identification-sl},
we have that the Poisson bundle $\left(Z,\lambda_{\delta_{A_{i}}}\right)$
has trivial fibers over the corresponding base system $\left(X,\eta_{\delta_{A_{i}}}\right)$.
In particular, $\left(Z,\lambda_{\delta_{A_{i}}}\right)$ is a measure-preserving
extension of $\left(G/P_{I},\nu_{P_{I}}\right)$. We conclude $\limsup_{\ell\to\infty}h_{\mu}(Z,\lambda_{\ell,0})\le h_{\mu}(G,P_{I})$
and 
\[
\bigcup_{\ell\in\mathbb{N}}\left\{ h_{\mu}\left(Z,\lambda_{\ell,p}\right):p\in[0,1]\right\} =\left(h_{\mu}\left(G/P_{I}\right),h_{\mu}\left(G/P_{I'}\right)\right].
\]
\end{proof}

\begin{proof}[Proof of Theorem \ref{th:spectrum-sl}]
Combine Theorem~\ref{constraint} and Proposition~\ref{prop-realization}. 
\end{proof}
\subsection{Interpretation in terms of Lyapunov exponents\label{subsec:lyapunov}}

Equip $\mathbb{R}^{n}$ with the standard Euclidean norm $\left\Vert \cdot\right\Vert $
and $SL(d,\mathbb{R})$ with the operator norm $\left\Vert \cdot\right\Vert _{{\rm op}}$.
For a step distribution $\mu$ on $G=SL(d,\mathbb{R})$ satisfying
the first moment condition $\int\log^{+}\left\Vert g\right\Vert _{{\rm op}}d\mu(g)<\infty$,
by the Osceledets multiplicative ergodic theorem, there exists exponents
$\lambda_{1}>\lambda_{2}>\ldots>\lambda_{k}$ and for a.e. $\omega$
a flag $V^{\le\lambda_{k}}\subset V^{\le\lambda_{k-1}}\subset\ldots\subset V^{\le\lambda_{1}}=\mathbb{R}^{d}$.

When the harmonic measure $\nu_{P}$ is in the quasi-invariant measure
class, it is well-known that the Furstenberg entropy $h_{\mu}(G/Q,\nu_{Q})$ can be expressed
in terms of the Lyapunov exponents, see \cite{Ledrappier84} and references therein. Then in the setting of Theorem \ref{th:spectrum-sl}, 
the Furstenberg entropy spectrum is determined by the Lyapunov spectrum of the $\mu$-random walk. 
We include the explicit formulae below for the convenience of the reader.

\begin{proposition}

For the minimal parabolic subgroup $P$, 
\[
h_{\mu}\left(G/P,\nu_{P}\right)=\sum_{1\le i<j\le d}\lambda_{i}-\lambda_{j}.
\]
For a standard parabolic subgroup $Q$ such that the corresponding
flag in $G/Q$ is of type $\left(r_{1},r_{2},\ldots,r_{k}\right)$,
\[
h_{\mu}\left(G/Q,\nu_{Q}\right)=\sum_{\ell=1}^{k}\sum_{r_{\ell-1}<i\le r_{\ell},j>r_{\ell}}\lambda_{i}-\lambda_{j}=h_{\mu}\left(G/P,\nu_{P}\right)-\sum_{\ell=1}^{k}\sum_{r_{\ell-1}\le i<j\le r_{\ell}}\lambda_{i}-\lambda_{j}.
\]

\end{proposition}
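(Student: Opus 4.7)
The plan is to compute the Radon--Nikodym derivatives appearing in $h_\mu(G/Q,\nu_Q)$ via the Iwasawa decomposition, and then recognize the resulting integral as a linear functional of the Lyapunov vector by combining the cocycle identity with Oseledets' theorem.

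Since the Furstenberg entropy depends only on the measure class of the stationary measure (\cite[Lemma 8.9]{Furstenberg2}), the first step is to replace $\nu_P$ by the $K$-invariant representative $\bar m_K$ on $G/P$, which is legitimate under assumption~\hyperlink{AssumpB}{(\textbf{B})}. Writing the Iwasawa decomposition $g=k(g)\exp(H(g))\,n(g)$ with $H:G\to\mathfrak a$, the standard Harish-Chandra/Jacobian computation gives
\[
\frac{dg^{-1}\bar m_K}{d\bar m_K}(kP)=e^{-2\rho(H(g^{-1}k))},
\]
where $\rho=\tfrac12\sum_{\alpha>0}(\dim\mathfrak g_\alpha)\,\alpha$ is the half-sum of positive restricted roots. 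Plugging this into \eqref{eq:h-def} yields
\[
h_\mu(G/P,\nu_P)=2\int_G\int_{K/M}\rho\bigl(H(g^{-1}k)\bigr)\,d\bar m_K(kM)\,d\mu(g).
\]

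Next I would use the cocycle identity $H(g_1g_2\cdot b)=H(g_1,g_2\cdot b)+H(g_2,b)$ for the horospherical cocycle $H(g,b):=H(gk)$ (with $b=kM$), combined with the $\mu$-stationarity of $\bar m_K$, to rewrite the right-hand side as
\[
\lim_{n\to\infty}\frac1n\int_G\int_{K/M}\rho\bigl(H(g^{-1}k)\bigr)\,d\bar m_K(kM)\,d\mu^{\ast n}(g).
\]
By Furstenberg's noncommutative ergodic theorem (\cite{Furstenberg3}, see also \cite{Ledrappier84}), for $\mathbb P_\mu$--a.e.\ trajectory the vector $\tfrac1n H\!\left((\omega_n\cdots\omega_1)^{-1}k\right)$ converges almost surely to the Lyapunov vector $\vec\lambda=(\lambda_1,\dots,\lambda_d)\in\mathfrak a$; after justifying uniform integrability (using the first moment condition on $\mu$ implicit in finite boundary entropy), one obtains
\[
h_\mu(G/P,\nu_P)=2\rho(\vec\lambda).
\]

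For $G=SL(d,\mathbb R)$, the positive roots are $e_i-e_j$ with $i<j$, each of multiplicity one, so $2\rho(\vec\lambda)=\sum_{i<j}(\lambda_i-\lambda_j)$, which is the first claim. For a standard parabolic $Q=P_I$ with corresponding flag $(r_1,\dots,r_k)$, the same computation applied to the $K\cap Q$-invariant representative of $\nu_Q$ — or alternatively, factoring $\bar m_K$ through $G/P\to G/Q$ and using the chain rule — replaces $\rho$ by $\rho_Q:=\tfrac12\sum_{\alpha>0,\,\alpha\notin[I]}\alpha$. For $SL(d,\mathbb R)$ the roots not in $[I]$ are precisely the $e_i-e_j$ with $i,j$ in different blocks of the flag, giving
\[
2\rho_Q(\vec\lambda)=\sum_{\ell=1}^k\sum_{r_{\ell-1}<i\le r_\ell,\,j>r_\ell}(\lambda_i-\lambda_j),
\]
and the second equality of the proposition follows by subtracting the within-block contributions from the total.

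The only delicate step is passing from the finite-time integral to $2\rho(\vec\lambda)$: one must ensure that the convergence $\tfrac1n H(g^{-1}_n\cdots g^{-1}_1 k)\to\vec\lambda$ in Oseledets can be integrated over $\mu^{\ast n}$ and $\bar m_K$. Under assumption~\hyperlink{AssumpB}{(\textbf{B})} (bounded Radon--Nikodym derivatives with respect to $\bar m_K$), together with a log-moment bound on $\|g\|_{\mathrm{op}}$ guaranteed by finite boundary entropy, standard Kingman/Furstenberg arguments supply the required $L^1$-control and complete the proof.
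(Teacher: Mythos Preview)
Your argument is correct and follows essentially the same strategy as the paper's sketch: change to a convenient representative in the measure class, compute the Radon--Nikodym derivative via an Iwasawa decomposition, and identify the resulting $A$-component with the Lyapunov vector through an ergodic/Oseledets argument. The only substantive difference is the choice of reference measure: the paper transfers the Haar measure on the opposite unipotent $\bar V_I$ to $G/Q$ (so that the $\bar N$-part drops out for free because $\bar N$ preserves that Haar measure), whereas you keep the $K$-invariant measure $\bar m_K$ and invoke the Harish-Chandra Jacobian formula. Both routes reduce to evaluating $2\rho_Q$ on the Lyapunov vector, and the paper cites the same circle of results (\cite{Ledrappier84}, \cite{GuivarchRaugi81}) for the asymptotic identification of the Iwasawa $A$-part with the Lyapunov spectrum.

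One small imprecision: you write that the log-moment bound on $\|g\|_{\mathrm{op}}$ is ``guaranteed by finite boundary entropy.'' The paper does not derive the moment condition this way; it is simply imposed as a standing hypothesis at the start of Subsection~\ref{subsec:lyapunov} (and is automatic for the $B_\infty$ and Furstenberg classes considered in Theorem~\ref{th:spectrum-sl}). This does not affect the validity of your argument, but the justification should point to the hypothesis rather than to finite boundary entropy.
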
 
\begin{proof}[Sketch of proof.]

Given a parabolic subgroup $Q=P_{I}$, let $\bar{V}=\bar{V}_{I}$
be the corresponding opposite unipotent subgroup. denote by $\eta$
the Haar measure on $\bar{V}$. Recall that the projection $G\to G/Q$
maps $\bar{V}$ diffeomorphically onto a set of full $\bar{m}_{K}$-measure.
Still denote by $\eta$ the pushward of $\eta$ to $G/Q$. Write $g=n_{g}\sigma_{g}k_{g}\in\bar{N}AK$
for the Iwasawa decomposition. Note that $\bar{N}$ preserves the
measure $\eta$ on $G/Q$. By \cite[Proposition 1.15]{NZ3} we may
change $\bar{m}_{K}$ to the measure $\eta$ in the Radon-Nikodym
derivative and 
\begin{align*}
th_{\mu}\left(G/Q,\bar{m}_{K}\right) & =-\int_{G}\int_{G/Q}\log\frac{dg^{-1}.\bar{m}_{K}}{d\bar{m}_{K}}(x)d\bar{m}_{K}(x)d\mu^{(t)}(g)\\
 & =-\int_{G}\int_{G/Q}\log\frac{dg^{-1}.\eta}{d\eta}(x)d\bar{m}_{K}(x)d\mu^{(t)}(g)\\
 & =-\int_{G}\int_{G/Q}\log\frac{d\sigma_{g}^{-1}n_{g}^{-1}.\eta}{d\eta}(x)d\bar{m}_{K}(x)d\mu^{(t)}(g)\\
 & =-\int_{G}\int_{G/Q}\log\frac{d\sigma_{g}^{-1}\eta}{d\eta}(x)d\bar{m}_{K}(x)d\mu^{(t)}(g).
\end{align*}
Denote a $\mu$-random walk by $(\omega_{t})_{t=0}^{\infty}$. By
the almost sure convergence \cite[Theorem 2.6]{Ledrappier84} and
the equivalence between Iwasawa and polar decompositions \cite[Corollary 2.8]{GuivarchRaugi81},
it is known that $\frac{1}{t}\log\sigma_{\omega_{t}}$ converges almost
surely to the deterministic diagonal matrix $\Lambda$ with Lyapunov
exponents $\left(\lambda_{1},\lambda_{2},\ldots,\lambda_{d}\right)$,
repeated with multiplicity when the spectrum is not simple. So an
appropriate $i,j$ entry of the corresponding matrix in $\bar{V}$
is essentially multiplied by $e^{t(\lambda_{i}-\lambda_{j})}$. The
formula follows. 
\end{proof}

\appendix

\section{The Nevo-Zimmer operation on functions\label{sec:operation}}

In this section we review some arguments from \cite{NZ2} for the
use in Section \ref{sec:first-steps}. Let $G$ be a lcsc group and
$H$ be a closed subgroup of $G$. Throughout this section we assume
that an ergodic $G$-system $(X,\nu)$ has the following structure:
there is an $H$-invariant measure $\lambda$ on $X$ and a measure
$\nu_{0}$ on $G/H$ such that the map 
\begin{align*}
\xi_{0} & :\left(G\times_{H}X_{0},\nu_{0}\times\lambda\right)\to\left(X,\nu\right)\\
 & [g,x_{0}]\mapsto g.x_{0}
\end{align*}
is a $G$-factor map, where $X_{0}={\rm supp}\lambda\subseteq X$.
We identify $X_{0}$ as a subset of $G\times_{H}X_{0}$ via $x_{0}\mapsto[e,x_{0}]$.

\subsection{Conditional expectation}

Let $s$ be an element in $H$. Denote by $\mathcal{F}^{s}$ the sub-$\sigma$-field
of $\left(X_{0},\lambda\right)$ which consists of $s$-invariant
measurable sets, that is, $\mathcal{F}^{s}=\{A\in\mathcal{B}(X_{0}):s.A=A\}$.
Denote by $\mathbb{E}_{\lambda}\left[\cdot|\mathcal{F}^{s}\right]:L^{2}(X_{0},\lambda)\to L^{2}\left(X_{0},\lambda\right)$
the conditional expectation on the space of $s$-invariant functions.

Given a function $f\in C(X)$, lift it to a function $\tilde{f}$
on $G\times X_{0}$ by $\tilde{f}(g,x_{0})=f\left(g.x_{0}\right)$.
With a fixed element $g\in G$, we view $\tilde{f}(g,\cdot)$ as a
function in $L^{2}(X_{0},\lambda)$ and take its conditional expectation
given $\mathcal{F}^{s}$.

\begin{lemma}\label{invariant}

Suppose for an element $g\in G$, for any $f\in C(X)$, 
\begin{equation}
\mathbb{E}_{\lambda}\left[\tilde{f}(g,\cdot)|\mathcal{F}^{s}\right]=\mathbb{E}_{\lambda}\left[\tilde{f}(e,\cdot)|\mathcal{F}^{s}\right]\mbox{ }\lambda\mbox{-a.e.},\label{eq:condition=00003D00003D00003D}
\end{equation}
then the measure $\lambda$ (viewed as a measure on $X$) is invariant
under $g$.

\end{lemma}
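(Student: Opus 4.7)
The plan is simply to integrate both sides of the hypothesized identity against the measure $\lambda$. The key observation is that for any $F\in L^2(X_0,\lambda)$, conditional expectation preserves the integral: $\int_{X_0}\mathbb{E}_\lambda[F\mid\mathcal{F}^s]\,d\lambda=\int_{X_0}F\,d\lambda$. Applying this to both $F=\tilde f(g,\cdot)$ and $F=\tilde f(e,\cdot)$ converts the $\lambda$-a.e.\ equality (\ref{eq:condition=00003D00003D00003D}) into
\[
\int_{X_0}f(g.x_0)\,d\lambda(x_0)=\int_{X_0}f(x_0)\,d\lambda(x_0),
\]
which is nothing but $\int_X f\,d(g.\lambda)=\int_X f\,d\lambda$.

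Since the identity above holds for every $f\in C(X)$ and $X$ is taken as a compact model on which continuous functions separate Borel probability measures, I conclude $g.\lambda=\lambda$ as measures on $X$.

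There is no real obstacle here: the only ingredient beyond the hypothesis is the tower property of conditional expectation. I will state the proof in two lines, citing the identity $\int\mathbb{E}_\lambda[\,\cdot\mid\mathcal{F}^s]\,d\lambda=\int\cdot\,d\lambda$ and then appealing to density of $C(X)$ in the weak-$\ast$ topology on probability measures to promote the equality of integrals over continuous test functions to equality of the measures $g.\lambda$ and $\lambda$.
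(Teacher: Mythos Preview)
Your proof is correct and is essentially identical to the paper's own argument: both integrate the hypothesis against $\lambda$, use that $\int_{X_0}\mathbb{E}_\lambda[\cdot\mid\mathcal{F}^s]\,d\lambda=\int_{X_0}\cdot\,d\lambda$ to obtain $\int_X f\,d(g.\lambda)=\int_X f\,d\lambda$, and then conclude $g.\lambda=\lambda$ since this holds for all $f\in C(X)$.
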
 
\begin{proof}
By the definition of conditional expectation, we have that 
\begin{equation}
\int_{X_{0}}\mathbb{E}_{\lambda}\left[\tilde{f}(g,\cdot)|\mathcal{F}^{s}\right]d\lambda=\int_{X_{0}}\tilde{f}\left(g,x_{0}\right)d\lambda(x_{0})=\int_{X}f\left(g.x_{0}\right)d\lambda(x_{0})=\int_{X}f\left(x\right)dg.\lambda(x).\label{eq:liftintegral}
\end{equation}
In the second equality we used the fact that $X_{0}={\rm supp}(\lambda)$.
Then (\ref{eq:condition=00003D00003D00003D}) implies $\int_{X}f\left(x\right)dg.\lambda(x)=\int_{X}f\left(x\right)d\lambda(x)$,
since it holds for all $f\in C(X)$, $g.\lambda=\lambda$. 
\end{proof}

\subsection{Dynamics of the $\left\langle s\right\rangle $-action\label{subsec:s-contraction}}

When the $\left\langle s\right\rangle $-action on $G/H$ has certain
contracting properties, in \cite{NZ2} it is shown that the resulting
conditional expectation given $\mathcal{F}^{s}$ factors through $\xi_{0}$.
We now describe the conditions.

Recall that ${\rm Int}(s).g=sgs^{-1}$.

\begin{assumption}\label{NZ-assum}

Suppose we have an element $s\in H$, subgroups $U,V$ of $G$ and
a normal subgroup $W$ of $H$ satisfying 
\begin{description}
\item [{(i)}] the map 
\begin{align*}
p:U\times V & \to G/H\\
(u,v) & \mapsto uvH
\end{align*}
takes $U\times V$ homeomorphically to a $\nu_{0}$-conull set in
$G/H$, and moreover $p_{\ast}\left(m_{U}\times m_{V}\right)$ is
in the same measure class as $\nu_{0}$. 
\item [{(ii)}] ${\rm Int}(s)$ acts trivially on $U$, that is, $s$ commutes
with elements in $U$, 
\item [{(iii)}] ${\rm Int}(s^{-1})$ acts as a contracting automorphism
on $V$; ${\rm Int}(s)$ acts as a contracting automorphism on $W$. 
\end{description}
\end{assumption}

Note that (iii) implies that $U\cap V=\{e\}$ and (i) implies that
$U\cap H=V\cap H=\{e\}$. Also note that since $s$ is an element
in $H$, the measure $\lambda$ is invariant under $s$.

\begin{example}
For $G=SL(3,\mathbb{R})$, these assumptions are satisfied for example for
\[
H=P=\left(\begin{array}{ccc}
 \ast & \ast & \ast \\
0  & \ast & \ast \\
0 & 0 & \ast  \\
\end{array} \right),
U=\left(\begin{array}{ccc}
1 & 0 & 0 \\
\ast  & 1 & 0 \\
0 & 0 & 1  \\
\end{array} \right),
V=\left(\begin{array}{ccc}
1 & 0 & 0 \\
0  & 1 & 0 \\
\ast & \ast & 1  \\
\end{array} \right),
W=\left(\begin{array}{ccc}
1 & 0 & \ast \\
0  & 1 & \ast \\
0 & 0 & 1  \\
\end{array} \right), 
\]
\[
\textrm{and }
s=\left(\begin{array}{ccc}
e^{-t_1} & 0 & 0 \\
0  & e^{-t_1} & 0 \\
0 & 0 & e^{-t_2}  \\
\end{array} \right), \quad \textrm{ for } t_1<t_2.
\]
\end{example}

When (i) - (iii) are satisfied, consider the following continuous
map 
\begin{align*}
\xi & :U\times V\times X_{0}\to X\\
 & (u,v,x_{0})\mapsto uv.x_{0},
\end{align*}
which is related to the map $\xi_{0}$ by $\xi(u,v,x_{0})=\xi_{0}\left(\left[uv,x_{0}\right]\right)$.
Since $\nu=\nu_{0}\ast\lambda$, by condition (i) we have that the
image $\xi\left(U\times V\times X_{0}\right)$ is a $\nu$-conull
set in $X$. Denote by $\tilde{\mathcal{L}}(X)$ the sub-$\sigma$-field
of the Borel $\sigma$-field of $U\times V\times X_{0}$ that consists
of (classes modulo null sets of ) lifts by $\xi$ of measurable subsets
of $X$. Denote by $\tilde{L}^{\infty}(X)=\tilde{L}^{\infty}(X,\nu)$
the subspace of $L^{\infty}\left(U\times V\times X_{0},m_{U}\times m_{V}\times\lambda\right)$
that consists of functions that are measurable with respect to $\tilde{\mathcal{L}}(X)$
. In other words, $\tilde{L}^{\infty}(X)$ consists of lift functions
in $L^{\infty}(X)$.

The following actions of the groups $U$ and $\left\langle s\right\rangle $
on $U\times V\times X_{0}$ preserve the product structure: 
\begin{align*}
u_{1}.(u,v,x_{0}) & =(u_{1}u,v,x_{0}),\ u_{1}\in U,\\
s.(u,v,x_{0}) & =\left(sus^{-1},svs^{-1},s.x_{0}\right)=(u,{\rm Int}(s).v,s.x_{0}).
\end{align*}
The map $\xi$ is equivariant under $U$ and $\left\langle s\right\rangle $.
Note also that since $U$ commutes with $\left\langle s\right\rangle $,
$U$ preserves the $s$-invariant $\sigma$-field $\mathcal{F}^{s}$.

For a continuous function $f\in C(X)$, consider the composition $f\circ\xi$,
which is a continuous function on $U\times V\times X_{0}$. The proof
of \cite[Proposition 7.1]{NZ2} applies verbatim to the current setting
and implies that the conditional expectation $\mathbb{E}_{\lambda}\left[\tilde{f}\left(u,\cdot\right)|\mathcal{F}^{s}\right]$
is the limit of Cesaro averages under the $\left\langle s\right\rangle $-action.\textcolor{blue}{{} }

\begin{proposition}\label{contract1}

Under conditions (i), (ii) and (iii), we have

\[
\lim_{N\to\infty}\frac{1}{N+1}\sum_{n=0}^{N}s^{n}.\left(f\circ\xi\right)\left(u,v,x_{0}\right)=\mathbb{E}_{\lambda}\left(\tilde{f}\left(u,\cdot\right)|\mathcal{F}^{s}\right)(x_{0}),
\]
where the converge is a.s. and in $L^{2}$. Moreover, the function
$\left(u,v,x_{0}\right)\mapsto\mathbb{E}_{\lambda}\left(\tilde{f}\left(u,\cdot\right)|\mathcal{F}^{s}\right)(x_{0})$
is in\textcolor{black}{{} $\tilde{L}^{\infty}(X)$. }

\end{proposition}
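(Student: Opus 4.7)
The plan is to reduce the Cesaro averages on $U\times V\times X_{0}$ to a Birkhoff average on the $\lambda$-preserving system $(X_{0},\lambda,s)$, exploiting the contraction property of $\mathrm{Int}(s^{-1})$ on $V$ together with equicontinuity of the $G$-action on compact $X$. This is exactly the strategy of \cite[Proposition 7.1]{NZ2}, which the appendix invokes; below I sketch it in our notation.

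Using the $\langle s\rangle$-equivariance of $\xi$ and condition (ii) (so $s$ commutes with $U$), a direct computation gives
\begin{equation*}
s^{n}.(f\circ\xi)(u,v,x_{0})=f\bigl(u\cdot \mathrm{Int}(s^{-n})(v)\cdot s^{-n}x_{0}\bigr),
\end{equation*}
with the convention $(s.h)(w)=h(s^{-1}.w)$ (any other convention just reverses the sign and symmetrically requires the contraction on $\mathrm{Int}(s)$ in place of $\mathrm{Int}(s^{-1})$). By (iii), for each fixed $v\in V$ the elements $w_{n}=\mathrm{Int}(s^{-n})(v)$ converge to the identity in $G$. Since $X$ is compact metric and $G\times X\to X$ is continuous, a standard compactness argument shows that for $f\in C(X)$ the map $g\mapsto\|f\circ g-f\|_{\infty}$ is continuous at $e\in G$. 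Applied to $g_{n}=u w_{n}u^{-1}\to e$, this yields, for each fixed $(u,v)$ and any $\epsilon>0$, an index $n_{0}=n_{0}(u,v,\epsilon)$ such that for all $n\ge n_{0}$ and all $x_{0}\in X_{0}$,
\begin{equation*}
\bigl|f\bigl(u\cdot w_{n}\cdot s^{-n}x_{0}\bigr)-f\bigl(u\cdot s^{-n}x_{0}\bigr)\bigr|<\epsilon.
\end{equation*}

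Since $s\in H$ preserves $\lambda$, Birkhoff's pointwise ergodic theorem applied to the bounded measurable function $\tilde f(u,\cdot):X_{0}\to\mathbb R$ gives
\begin{equation*}
\frac{1}{N+1}\sum_{n=0}^{N}f(u\cdot s^{-n}x_{0})\xrightarrow[N\to\infty]{\text{a.s.\ and in }L^{2}(\lambda)}\mathbb E_{\lambda}\!\bigl[\tilde f(u,\cdot)\bigm|\mathcal F^{s}\bigr](x_{0}),
\end{equation*}
noting that the $s^{-1}$-invariant $\sigma$-field coincides with $\mathcal F^{s}$. Together with the uniform error bound of the previous paragraph, the Cesaro averages of $s^{n}.(f\circ\xi)(u,v,x_{0})$ differ from these Birkhoff sums by at most $\epsilon+O(1/N)$, so they share the same limit almost surely and in $L^{2}(X_{0},\lambda)$ for each fixed $(u,v)$. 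The main technical hurdle is promoting this fiberwise convergence to joint convergence on $U\times V\times X_{0}$: since $n_{0}$ depends on $(u,v)$, I would combine Fubini with dominated convergence (using the uniform bound $\|f\|_{\infty}$) and an Egorov-type argument on compact exhaustions of $U\times V$.

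Finally, for the $\tilde L^{\infty}(X)$-membership, each Cesaro partial sum equals $\psi_{N}\circ\xi$ with $\psi_{N}=\frac{1}{N+1}\sum_{n=0}^{N}s^{n}.f\in C(X)$, hence lies in $\tilde L^{\infty}(X)$. Since $\tilde L^{2}(X)$ is closed in $L^{2}$ and the partial sums are uniformly bounded by $\|f\|_{\infty}$, the $L^{2}$-limit lies in $L^{\infty}\cap\tilde L^{2}(X)=\tilde L^{\infty}(X)$, as claimed.
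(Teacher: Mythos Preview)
Your proposal is correct and follows exactly the strategy of \cite[Proposition 7.1]{NZ2}, which is precisely what the paper invokes (the paper itself gives no proof, only the reference). Your computation $s^{n}.(f\circ\xi)(u,v,x_{0})=f(u\cdot\mathrm{Int}(s^{-n})(v)\cdot s^{-n}x_{0})$, the reduction to a Birkhoff average on $(X_{0},\lambda,s)$ via the contraction of $V$, and the closure argument for $\tilde L^{\infty}(X)$ are all right; the technicality you flag about upgrading fiberwise to joint convergence is genuine but routine, and your suggested approach (Fubini plus dominated convergence, bounded by $\|f\|_{\infty}$) handles it---alternatively, the a.s.\ limit of $\tilde{\mathcal L}(X)$-measurable functions is already $\tilde{\mathcal L}(X)$-measurable, which gives the $\tilde L^{\infty}(X)$ conclusion directly without passing through $L^{2}$.
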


This property allows to define a map 
\begin{align*}
\mathcal{E}_{s} & :C(X)\to\tilde{L}^{\infty}(X)\\
\mathcal{E}_{s}f\left(u,v,x_{0}\right) & =\lim_{N\to\infty}\frac{1}{N+1}\sum_{n=0}^{N}s^{n}.\left(f\circ\xi\right)\left(u,v,x_{0}\right)=\mathbb{E}_{\lambda}\left(\tilde{f}\left(u,\cdot\right)|\mathcal{F}^{s}\right)(x_{0}).
\end{align*}
We will refer to the map $\mathcal{E}_{s}$ as the Nevo-Zimmer operation
with $\left(s,U,V,W\right)$ on $C(X)$. The resulting function does
not depend on the $v$-coordinate, that is, $\mathcal{E}_{s}f\left(u,v,x_{0}\right)=\mathcal{E}_{s}f\left(u,e,x_{0}\right)$.
Note the following immediate properties:

\begin{lemma}\label{contract2}

In the setting above, we have that for $f\in C(X)$, 
\begin{description}
\item [{(i)}] $u.\mathcal{E}_{s}\left(f\right)=\mathcal{E}_{s}\left(u.f\right)$
for $u\in U$. It follows that $\int_{X}fdu.\lambda=\int_{X_{0}}\mathcal{E}_{s}f(u,e,x_{0})d\lambda(x_{0}).$ 
\item [{(ii)}] If $u\in U$ is such that $u.X_{0}=X_{0}$, then $\mathcal{E}_{s}f(u,e,x_{0})=\mathcal{E}_{s}f(e,e,u.x_{0})$. 
\item [{(iii)}] Suppose $h\in H$ is an element such that either ${\rm Int}(s)$
or ${\rm Int}(s^{-1})$ contracts $h$ to $e$, then $\mathcal{E}_{s}f\left(u,v,h.x_{0}\right)=\mathcal{E}_{s}f\left(u,e,x_{0}\right)$. 
\end{description}
\end{lemma}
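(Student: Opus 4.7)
The proof of Lemma~\ref{contract2} will treat the three parts in turn, using Proposition~\ref{contract1} in both of its guises: as the identification $\mathcal{E}_{s}f(u,v,x_0) = \mathbb{E}_{\lambda}[\tilde f(u,\cdot)\mid\mathcal{F}^s](x_0)$, and as the limit of Cesaro averages along $\langle s\rangle$. For part (i), the plan is to note that from the left action $u_1.(u,v,x_0)=(u_1u,v,x_0)$ one gets $(u_1.\mathcal{E}_sf)(u,v,x_0)=\mathcal{E}_sf(u_1^{-1}u,v,x_0)$, while directly $\widetilde{u_1.f}(u,x_0) = f(u_1^{-1}u.x_0)=\tilde f(u_1^{-1}u,x_0)$, so $\mathcal{E}_s(u_1.f)(u,v,x_0)$ equals the same conditional expectation. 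Integrating the identity $\mathcal{E}_{s}f(u,e,\cdot)=\mathbb{E}_{\lambda}[\tilde f(u,\cdot)\mid\mathcal{F}^s]$ against $\lambda$ and invoking the defining property of conditional expectation gives $\int_{X_0}\mathcal{E}_s f(u,e,x_0)\,d\lambda = \int_{X_0}f(u.x_0)\,d\lambda(x_0) = \int_X f\,d(u.\lambda)$.

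For part (ii), I would use the Cesaro-average form of Proposition~\ref{contract1}, which (unwinding $s^n.(u,v,x_0)=(u,\mathrm{Int}(s^n)(v),s^n.x_0)$ and letting $\mathrm{Int}(s^{-n})(v)\to e$) yields
\[
\mathcal{E}_sf(u,e,x_0) = \lim_{N\to\infty}\tfrac{1}{N+1}\sum_{n=0}^{N}f(u\cdot s^{-n}.x_0).
\]
By condition~(ii) of Assumption~\ref{NZ-assum}, $s$ commutes with every element of $U$, so $s^{-n}.u.x_0 = u.s^{-n}.x_0$; therefore $\mathcal{E}_sf(e,e,u.x_0)$, which equals $\lim_N\tfrac{1}{N+1}\sum f(s^{-n}.u.x_0)$, is the same sequence of Cesaro sums. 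The hypothesis $u.X_0=X_0$ is used only to ensure that the right-hand side is meaningful as a limit on the $\lambda$-support.

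For part (iii) note first that $\mathcal{E}_sf$ does not depend on $v$, so I may take $v=e$. Birkhoff's ergodic theorem for the measure-preserving $s$-action on $(X_0,\lambda)$ gives
\[
\mathbb{E}_\lambda[\tilde f(u,\cdot)\mid\mathcal{F}^s](y) = \lim_{N\to\infty}\tfrac{1}{N+1}\sum_{n=0}^{N} f(u\cdot s^{-n}.y),
\]
and, applied to $s^{-1}$ (using $\mathcal{F}^{s}=\mathcal{F}^{s^{-1}}$), the analogous equality with $s^{+n}$ in place of $s^{-n}$. Evaluating at $y=h.x_0$ and writing $s^{\mp n}h = \mathrm{Int}(s^{\mp n})(h)\cdot s^{\mp n}$, I would choose the direction of averaging so that $\mathrm{Int}(s^{\mp n})(h)\to e$ in $G$: negative powers when $\mathrm{Int}(s^{-1})$ contracts $h$, positive powers when $\mathrm{Int}(s)$ does. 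Uniform continuity of $f$ on the compact space $X$ then yields
\[
\bigl|f(u\cdot\mathrm{Int}(s^{\mp n})(h)\cdot s^{\mp n}.x_0) - f(u\cdot s^{\mp n}.x_0)\bigr| \longrightarrow 0
\]
uniformly in $x_0\in X_0$, so the two Cesaro averages share the same limit, namely $\mathcal{E}_sf(u,e,x_0)$.

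The only real subtlety is in (iii): one must notice that $\mathcal{F}^s=\mathcal{F}^{s^{-1}}$ so that Birkhoff averages in either time direction yield the same conditional expectation, which is what allows the direction of the average to be matched to whichever of $\mathrm{Int}(s)$, $\mathrm{Int}(s^{-1})$ contracts $h$. The $v$-variable causes no difficulty since $\mathcal{E}_sf$ already factors through $(u,v,x_0)\mapsto(u,x_0)$, so one never has to contend with the \emph{expanding} action of $\mathrm{Int}(s)$ on $V$.
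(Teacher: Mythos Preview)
Your proof is correct. Parts (i) and (ii) match the paper's argument: both rely on the identity $\tilde f(u,x_0)=f(u.x_0)$ and the commutation of $u$ with $s$, whether expressed via the conditional-expectation formula or via the Cesaro averages.

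For part (iii), however, you take a genuinely different route. The paper simply observes that $\mathcal{E}_sf(u,e,\cdot)$, as an element of $L^2(X_0,\lambda)$, is $\langle s\rangle$-invariant, and then invokes the generalized Mautner lemma \cite[Lemma~3.2]{MargulisBook} for the unitary $H$-representation on $L^2(X_0,\lambda)$ to conclude $h$-invariance. You instead give a direct argument: run the Birkhoff averages in whichever time direction makes ${\rm Int}(s^{\mp n})(h)\to e$, and compare termwise using uniform continuity of $f$ on the compact space $X$. This is essentially reproving the Mautner phenomenon in this concrete setting. Your approach is self-contained and avoids citing an external lemma; the paper's approach is shorter and isolates the mechanism (an $s$-fixed vector is automatically fixed by anything contracted by ${\rm Int}(s^{\pm 1})$) as a general principle that is reused elsewhere in the paper (e.g.\ in showing $\mathcal{F}^s\subseteq\mathcal{B}_W(X_0)$). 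Your observation that $\mathcal{F}^s=\mathcal{F}^{s^{-1}}$, so that Birkhoff averages in either direction compute the same conditional expectation, is exactly what makes the direct argument go through in both contraction cases.
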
 
\begin{proof}
(i). Since $s$ is assumed to commute with $U$, we have that for
$u\in U$, $us^{n}.(f\circ\xi)=s^{n}.\left((u.f)\circ\xi\right)$.
The statement then follows from Proposition \ref{contract1}.

(ii). This follows from $\left(f\circ\xi\right)\left(u,e,x_{0}\right)=\left(f\circ\xi\right)\left(e,e,u.x_{0}\right)$.

(iii). For a given $u\in U$, the function $\mathcal{E}_{s}f(u,e,\cdot)$
viewed as an element in $L^{2}(X_{0},\lambda)$, is invariant under
$\left\langle s\right\rangle $ in the sense that $\mathcal{E}_{s}f(u,e,x_{0})=\mathcal{E}_{s}f(u,e,s.x_{0})$.
Apply the generalized Mautner lemma (\cite[Lemma 3.2]{MargulisBook})
to the unitary representation of $H$ on $L^{2}(X_{0},\lambda)$,
we have that $\mathcal{E}_{s}f(u,e,\cdot)$ is invariant under $h$
as well. 
\end{proof}

\subsection{Three cases\label{subsec:three-cases}}

Suppose $\left(s,U,V,W\right)$ are such that conditions (i) - (iii)
are satisfied. When we apply the Nevo-Zimmer operation $\mathcal{E}_{s}$,
one of the following scenarios occurs. The first case is: 
\begin{description}
\item [{(I)}] The subgroup $U$ preserves the measure $\lambda$. 
\end{description}
As in the \cite[Section 8]{NZ2}, in the negation of (I), there are
two situations to consider, depending on whether $\left\langle s\right\rangle \curvearrowright\left(X_{0},\lambda\right)$
is ergodic. 
\begin{description}
\item [{(II1)}] There exists a function $f\in C(X)$ and $u\in U$ such
that $\int_{X}fd\lambda\neq\int_{X}fd\left(u.\lambda\right)$; and
for $m_{U}$-a.e. $u'\in U$, the function $x_{0}\mapsto\mathcal{E}_{s}f\left(u',e,x_{0}\right)$
is $\lambda$-constant. 
\end{description}
The remaining case is 
\begin{description}
\item [{(II2)}] $=\neg({\bf I}\vee{\bf II}1)$. There exists a function
$f\in C(X)$ such that $\int_{X}fd\lambda\neq\int_{X}fd\left(u.\lambda\right)$
for some $u\in U$. Moreover, for every such $f$, there is a $m_{U}$-positive
set of $u\in U$ where the function $x_{0}\mapsto\mathcal{E}_{s}f\left(u,e,x_{0}\right)$
is not $\lambda$-constant. 
\end{description}
\textcolor{black}{Case (II1) is treated in \cite[Proposition 9.2]{NZ2}.
We briefly describe the argument to show existence of a nontrivial
homogeneous factor in this case. Take a function $f\in C(X)$ }as
in the description of (II1). Since $x_{0}\mapsto\mathcal{E}_{s}f\left(u',e,x_{0}\right)$
is $\lambda$-constant, we have $\mathcal{E}_{s}f(u',e,x_{0})=\int_{X_{0}}\mathcal{E}_{s}f(u',e,x_{0})d\lambda(x_{0})=\int fdu'.\lambda$.
Recall that we have the map $\xi:U\times V\times X_{0}\to X$ where
$\xi\left(u,v,x_{0}\right)=uv.x_{0}$; and the projection map $p:U\times V\times X_{0}\to G/H$
given by $(u,v,x_{0})\mapsto uvH$. Similar to the sub-$\sigma$-field
$\tilde{\mathcal{L}}(X)$, denote by $\tilde{\mathcal{L}}(G/H)$ the
sub-$\sigma$-field of the Borel $\sigma$-field of $U\times V\times X_{0}$
that consists of lifts by $p$ of measurable subsets of $G/H$.
Then the function $(u,v,x_{0})\mapsto\mathcal{E}_{s}f(u,e,x_{0})=\int fdu.\lambda$
can be viewed as a non-constant function measurable with respect to
the intersection of $\sigma$-fields $\tilde{\mathcal{L}}(X)\cap\tilde{\mathcal{L}}(G/H)$.
Take the Mackey realization of $\tilde{\mathcal{L}}(X)\cap\tilde{\mathcal{L}}(G/H)$,
we obtain a nontrivial common factor of $(X,\nu)$ and $\left(G/H,m_{G/H}\right)$.
Thus in this case we conclude that $(X,\nu)$ has a nontrivial homogeneous
factor. 

Case (II2) is treated by considerations of the Gauss map in \cite{NZ2}.
By the second part of condition (iii), ${\rm Int}(s)$ acts as a contracting
automorphism on $W\vartriangleleft H$. As in the proof of \cite[Proposition 4.1]{NZ2},
apply the generalized Mautner lemma to the unitary representation
of $H$ on $L^{2}(X_{0},\lambda)$, we have that $s$-fixed functions
in $L^{2}(X_{0},\lambda)$ are fixed by $W$ as well. In $\mathcal{B}(X_{0})$
take the $W$-invariant sub-$\sigma$-algebra: 
\[
\mathcal{B}_{W}(X_{0}):=\{A\in\mathcal{B}(X_{0}):g.A=A\mbox{ for all }g\in W\}.
\]
In particular, $\mathcal{F}^{s}\subseteq\mathcal{B}_{W}(X_{0})$ by
the Mautner lemma. Since $W$ is assumed to be normal in $H$, we
have that if $A\in\mathcal{B}_{W}(X_{0})$, then for $h\in H$, $h.A\in\mathcal{B}_{W}(X_{0})$
as well. Denote by $X_{0}'$ the Mackey realization of the $\sigma$-algebra
$\mathcal{B}_{W}(X_{0})$, equipped with the measure $\lambda'$ which
corresponds to the restriction of $\lambda$ to $\mathcal{B}_{W}(X_{0})$.
Then $X_{0}'$ is an $H$-space and $W$ acts trivially on $X_{0}'$.
Note that by construction $X_{0}'={\rm supp}\lambda'$.

As in \cite[Section 6]{NZ2}, take the space $\left(X',\nu'\right)$,
which is the largest common $G$-factor of $(X,\nu)$ and $\left(G\times_{H}X_{0}',\nu_{0}\times\lambda'\right)$

\[
\xymatrix{G\times_{H}X_{0}\ar[r]\ar[d] & X\ar[d]\\
G\times_{H}X_{0}'\ar[r] & X'.
}
\]
The existence of a function $f$ as described in (II2) implies that
the sub-$\sigma$-field $\mathcal{F}^{s}$ is nontrivial, it follows
that $\left(X_{0}',\lambda'\right)$ and $\left(X',\nu'\right)$ are
nontrivial, see \cite[Section 8, Case I]{NZ2}. Note the following
property.

\begin{lemma}\label{II2-nonpreserv}

In Case (II2), the subgroup $U$ does not preserve the measure $\lambda'$,
which is viewed as a measure on $X'$.

\end{lemma}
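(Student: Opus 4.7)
The plan is to argue by contradiction. Suppose $U$ preserves $\lambda'$ viewed as a measure on $X'$. Since we are in Case (II2), in particular Case (I) fails, so we may pick $f\in C(X)$ and $u\in U$ with $\int_X f\,d\lambda\neq\int_X f\,du.\lambda$. I will construct a bounded measurable function $F^f$ on $X'$ whose integrals against $\lambda'$ and $u.\lambda'$ respectively coincide with these two distinct quantities, yielding the contradiction.

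The candidate function is the Nevo--Zimmer operation applied to $f$. By Proposition~\ref{contract1}, $\mathcal{E}_s f\in\widetilde{L}^\infty(X)$. The key point is that $\mathcal{E}_s f(u,v,x_0)=\mathbb{E}_\lambda[\tilde f(u,\cdot)\,|\,\mathcal{F}^s](x_0)$ is $\mathcal{F}^s$-measurable in $x_0$, hence $\mathcal{B}_W(X_0)$-measurable because $\mathcal{F}^s\subseteq\mathcal{B}_W(X_0)$ by the generalized Mautner lemma. Regarded as a function on $G\times_H X_0$, this places $\mathcal{E}_s f$ in the pullback $\sigma$-algebra from the natural projection $G\times_H X_0\to G\times_H X_0'$, that is, $\mathcal{E}_s f\in\widetilde{L}^\infty(G\times_H X_0')$. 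Combined with $\mathcal{E}_s f\in\widetilde{L}^\infty(X)$, the function lies in $\widetilde{\mathcal{L}}(X)\cap\widetilde{\mathcal{L}}(G\times_H X_0')$, whose Mackey realization is $X'$ by construction. Thus $\mathcal{E}_s f$ descends to a function $F^f\in L^\infty(X',\nu')$.

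The remaining step is a direct computation. The $G$-factor map $X\to X'$, restricted to $X_0\subseteq X$, factors as the projection $X_0\to X_0'$ followed by the embedding $X_0'\hookrightarrow X'$; consequently $\lambda$ pushes forward from $X$ to $\lambda'$ on $X'$, and $u.\lambda$ pushes forward to $u.\lambda'$. Denoting by $\widetilde F^f$ the pullback of $F^f$ to $X$, under the identification afforded by $\xi$ we have $\widetilde F^f(u'.x_0)=\mathcal{E}_s f(u',e,x_0)$ for $u'\in U$ and $x_0\in X_0$. Together with Lemma~\ref{contract2}(i) this yields
\[
\int_{X'}F^f\,d\lambda'=\int_{X_0}\mathcal{E}_s f(e,e,x_0)\,d\lambda(x_0)=\int_X f\,d\lambda,
\]
\[
\int_{X'}F^f\,du.\lambda'=\int_{X_0}\mathcal{E}_s f(u,e,x_0)\,d\lambda(x_0)=\int_X f\,du.\lambda.
\]
If $U$ preserved $\lambda'$ on $X'$, the two left-hand sides would agree, contradicting the choice of $f$ and $u$.

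The main conceptual step is the descent of $\mathcal{E}_s f$ from $X$ to $X'$: it requires combining the $\mathcal{F}^s$-measurability of the conditional expectation (built into the definition), the inclusion $\mathcal{F}^s\subseteq\mathcal{B}_W(X_0)$ supplied by Mautner, and the defining property of $X'$ as the largest common $G$-factor of $X$ and $G\times_H X_0'$. Once that descent is in place, the identification of the restrictions of the factor map and the resulting pushforward identities $\lambda\mapsto\lambda'$, $u.\lambda\mapsto u.\lambda'$ are routine, and Lemma~\ref{contract2}(i) closes the argument.
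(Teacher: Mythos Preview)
Your proof is correct and follows essentially the same strategy as the paper: pick $f$ witnessing the failure of Case~(I), use that $\mathcal{E}_s f$ is $\mathcal{F}^s$-measurable in the $x_0$-variable and hence (via Mautner) $\mathcal{B}_W(X_0)$-measurable, descend, and compare integrals against $\lambda'$ and $u.\lambda'$.

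The execution differs slightly. You descend the full function $\mathcal{E}_s f$ to $F^f\in L^\infty(X',\nu')$ using that $X'$ realizes $\tilde{\mathcal L}(X)\cap\tilde{\mathcal L}(G\times_H X_0')$, and then invoke $G$-equivariance of the factor map $X\to X'$ together with Lemma~\ref{contract2}(i). The paper instead descends only $x_0\mapsto\mathcal{E}_s f(e,e,x_0)$ to a function $\phi$ on $X_0'$, observes that the hypothesis $u.\lambda'=\lambda'$ forces $u.X_0=X_0$, and then applies Lemma~\ref{contract2}(ii) to identify $\mathcal{E}_s f(u,e,\cdot)$ with the lift of $u^{-1}.\phi$. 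Your route is arguably a bit cleaner in that it avoids the auxiliary step $u.\lambda'=\lambda'\Rightarrow u.X_0=X_0$; the paper's route stays closer to the level of $X_0'$ and avoids invoking the definition of $X'$ as a Mackey realization of an intersection of $\sigma$-fields. One small point: you write ``the embedding $X_0'\hookrightarrow X'$'', but the map $X_0'\to X'$ need not be injective in general; however only the pushforward identity $\lambda\mapsto\lambda'$ is used, so this does not affect the argument.
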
 
\begin{proof}
Take a function $f\in C(X)$ such that $\int_{X}fd\lambda\neq\int_{X}fd\left(u.\lambda\right)$
for some $u\in U$. The function $x_{0}\mapsto\mathcal{E}_{s}f(e,e,x_{0})$,
which is measurable with respect to $\mathcal{F}^{s}$, thus also
$\mathcal{B}_{W}(X_{0})$, can be viewed as the lift of a function
$\phi\in L^{2}(X_{0}',\lambda')$ to $L^{2}(X_{0},\lambda)$. Suppose
on the contrary $u.\lambda'=\lambda'$. Then in particular, $u.{\rm supp}\lambda'={\rm supp}\lambda'$
which implies $u.X_{0}=X_{0}$. Apply Lemma \ref{contract2}, we have
that $\mathcal{E}_{s}f(u,e,\cdot)$ is the lift of $u^{-1}.\phi$
to $L^{2}(X_{0},\lambda)$. Then 
\begin{align*}
\int_{X}f(x)d\lambda(x) & =\int_{X_{0}}\mathcal{E}_{s}f(e,e,x_{0})d\lambda(x_{0})=\int_{X'}\phi d\lambda'\\
 & =\int_{X'}\phi\left(x'\right)du.\lambda'(x')=\int_{X_{0}}\mathcal{E}_{s}f(u,e,x_{0})d\lambda(x_{0})=\int_{X}f(x)du.\lambda(x),
\end{align*}
which is a contradiction. 
\end{proof}
Since $W$ acts trivially on $X_{0}'$, we have that in the induced
system $G\times_{H}X_{0}'$, a point stabilizer contains a conjugate
of $W$. Passing to the factor $X'$, it follows that a stabilizer
${\rm Stab}_{G}(x')$ contains a conjugate of $W$ for $x'\in X'$.

\section{Mutual information and entropy formulae}

\label{app:info-ent}

Let $\mu$ be an admissible measure on an lcsc group $G$. Consider
$(B,\nu_{B})$ the Poisson boundary, and $(X,\eta)$ a standard $(G,\mu)$-system.
Throughout this appendix, we consider an intermediate factor of their
joining : 
\begin{equation}
\left(X\times B,\eta\varcurlyvee\nu_{B}\right)\overset{\psi}{\rightarrow}\left(Z,\lambda\right)\overset{\varrho}{\rightarrow}\left(X,\eta\right),\label{eq:inter-1}
\end{equation}
where the composition $\varrho\circ\psi$ is the natural coordinate
projection $X\times B\to X$. The definition of the joining of two
stationary systems was given in Section~\ref{subsec:Stationary-joining}.
See also \cite{FurstenbergGlasner} for a detailed treatment.

\subsection{A consequence of Birkhoff's ergodic theorem}

Let us write $\tilde{\psi}$ for the map $X\times G^{\mathbb{N}}\to(Z,\lambda)$
given by $\tilde{\psi}(x,\omega)=\psi(x,{\rm bnd}(\omega))$.

\begin{lemma}\label{standard-entropy} In the setting above, we have
\[
h_{\mu}(Z,\lambda)=\int_{X\times G^{\mathbb{N}}}\log\frac{d\omega_{1}\lambda}{d\lambda}\left(\tilde{\psi}(x,\omega)\right)d\left(\eta\varcurlyvee\mathbb{P}_{\mu}\right)
\]
and 
\[
h_{\mu}(X,\eta)=\int_{X\times G^{\mathbb{N}}}\log\frac{d\omega_{1}\eta}{d\eta}\left(x\right)d\left(\eta\varcurlyvee\mathbb{P}_{\mu}\right).
\]
\end{lemma}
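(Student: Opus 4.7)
The plan is to reduce both identities to a single fact about the joint law of $\omega_{1}$ with $x$ (resp.\ with $\tilde{\psi}(x,\omega)$) under $\eta\varcurlyvee\mathbb{P}_{\mu}$. Specifically, I will show:

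\emph{Key claim.} Under $\eta\varcurlyvee\mathbb{P}_{\mu}$, conditionally on $\{\omega_{1}=g\}$, the marginal law of $x$ is $g.\eta$ and the marginal law of $\tilde{\psi}(x,\omega)$ is $g.\lambda$.

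Granting the claim, the second identity is a one-line unfolding,
\[
\int\log\tfrac{d\omega_{1}\eta}{d\eta}(x)\,d(\eta\varcurlyvee\mathbb{P}_{\mu})=\int_{G}\int_{X}\log\tfrac{dg.\eta}{d\eta}(x)\,d(g.\eta)(x)\,d\mu(g)=\int_{G}D(g.\eta\parallel\eta)\,d\mu(g)=h_{\mu}(X,\eta),
\]
using the expression of $h_{\mu}$ as an average of KL-divergences recalled in the preliminaries. The first identity is the same computation with $(Z,\lambda,\tilde{\psi}(x,\omega))$ in place of $(X,\eta,x)$.

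To verify the key claim, I will use the disintegration $\eta\varcurlyvee\mathbb{P}_{\mu}=\int\eta_{\omega}\times\delta_{\omega}\,d\mathbb{P}_{\mu}(\omega)$ together with the shift structure of the walk: on the event $\{\omega_{1}=g\}$ one has $\omega_{n}=g\omega'_{n-1}$ for $n\geq 1$, where $\omega'_{m}:=\omega_{1}^{-1}\omega_{m+1}$ is an independent $\mu$-walk from $e$. Weak$^{*}$ continuity of the $G$-action on $\mathcal{P}(X)$ then yields $\eta_{\omega}=\lim_{n}\omega_{n}.\eta=g.\lim_{n}\omega'_{n-1}.\eta=g.\eta_{\omega'}$, and the barycenter identity $\int\eta_{\omega'}\,d\mathbb{P}_{\mu}(\omega')=\eta$ (i.e.\ $\mu$-stationarity of $\eta$) gives $\mathbb{E}[\eta_{\omega}\mid\omega_{1}=g]=g.\eta$. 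For the $Z$-part I combine $G$-equivariance of $\psi:X\times B\to Z$ with the cocycle relation ${\rm bnd}(\omega)=g.{\rm bnd}(\omega')$ to rewrite $\lambda_{\omega}=\psi_{*}(\eta_{\omega}\times\delta_{{\rm bnd}(\omega)})=g.\psi_{*}(\eta_{\omega'}\times\delta_{{\rm bnd}(\omega')})=g.\lambda_{\omega'}$; integrating over $\omega'$ and invoking $\lambda=\int\lambda_{\omega'}\,d\mathbb{P}_{\mu}(\omega')$ delivers $\mathbb{E}[\lambda_{\omega}\mid\omega_{1}=g]=g.\lambda$.

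The only delicate bookkeeping is verifying that the weak$^{*}$ limits defining $\eta_{\omega}$ and $\lambda_{\omega}$ commute with the action of $g$ and with $\psi_{*}$; in the compact-model/stationary-joining framework of Subsection~\ref{subsec:Stationary-joining} these are standard, so I do not expect a serious obstacle. The Birkhoff-ergodic-theorem flavor of the subsection heading is not strictly needed for this identity: it is really a consequence of Fubini plus the barycenter property, and it enters nontrivially only when this lemma is iterated along the skew transformation $T$ to derive the Shannon/random-walk entropy formula later in the appendix.
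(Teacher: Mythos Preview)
Your proof is correct and is essentially the same argument as the paper's, organized in the reverse direction. The paper starts from the definition $h_{\mu}(Z,\lambda)=\int_{G}\int_{Z}\log\frac{dg\lambda}{d\lambda}(g.z)\,d\lambda(z)\,d\mu(g)$, lifts the $Z$-integral through $\tilde{\psi}$, and then substitutes $(g,\tilde{\omega})\leftrightarrow(\omega_{1},T'\omega)$ using that these have the same joint law $\mu\times\mathbb{P}_{\mu}$; the equivariance identities $\omega_{1}.\tilde{\psi}(x,T'\omega)=\tilde{\psi}(\omega_{1}.x,\omega)$ and $\eta_{T'\omega}=\omega_{1}^{-1}.\eta_{\omega}$ then collapse the expression to the target integral. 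Your ``key claim'' that the conditional law of $\tilde{\psi}(x,\omega)$ given $\omega_{1}=g$ is $g.\lambda$ is exactly the probabilistic restatement of this change of variables, and your verification via $\eta_{\omega}=g.\eta_{\omega'}$, ${\rm bnd}(\omega)=g.{\rm bnd}(\omega')$, and the barycenter identity is the same content as the paper's equivariance step. Your remark that Birkhoff is not needed here is also accurate: the paper invokes it only in the subsequent Proposition~\ref{birkhoff}.
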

\begin{proof}
The Furstenberg entropy of $(Z,\lambda)$ is defined as 
\begin{align*}
h_{\mu}(Z,\lambda) & =\int_{G}\int_{Z}\log\frac{d\lambda}{dg^{-1}.\lambda}(z)d\lambda(z)d\mu(g)=\int_{G}\int_{Z}\log\frac{dg\lambda}{d\lambda}(g.z)d\lambda(z)d\mu(g)\\
 & =\int_{G}\int_{X\times G^{\mathbb{N}}}\log\frac{dg\lambda}{d\lambda}\left(g.\tilde{\psi}\left(x,\tilde{\omega}\right)\right)d\left(\eta\varcurlyvee\mathbb{P}_{\mu}\right)(x,\tilde{\omega})d\mu(g)\\
 & =\int_{G}\int_{G^{\mathbb{N}}}\int_{X}\log\frac{dg\lambda}{d\lambda}\left(g.\tilde{\psi}\left(x,\tilde{\omega}\right)\right)d\eta_{\tilde{\omega}}(x)d\mathbb{P}_{\mu}(\tilde{\omega})d\mu(g).
\end{align*}
Now for $\omega=(\omega_{1},\omega_{2},\dots)$ in $G^{\mathbb{N}}$,
set $T'\omega=(\omega_{1}^{-1}\omega_{2},\omega_{1}^{-1}\omega_{3},\dots)$.
When $\omega$ has law $\mathbb{P}_{\mu}$, then $(\omega_{1},T'\omega)$
has the same law $\mu\times\mathbb{P}_{\mu}$ as $(g,\tilde{\omega})$.
It follows that 
\begin{align*}
h_{\mu}(Z,\lambda)= & \int_{G^{\mathbb{N}}}\int_{X}\log\frac{d\omega_{1}\lambda}{d\lambda}\left(\omega_{1}.\tilde{\psi}\left(x,T'\omega\right)\right)d\eta_{T'\omega}(x)d\mathbb{P}_{\mu}(\omega)\\
= & \int_{G^{\mathbb{N}}}\int_{X}\log\frac{d\omega_{1}\lambda}{d\lambda}\left(\tilde{\psi}\left(\omega_{1}.x,\omega'\right)\right)d\eta_{\omega'}(\omega_{1}.x)d\mathbb{P}_{\mu}(\omega)\quad(\textrm{as }\eta_{\omega_{1}^{-1}\omega'}=\omega_{1}^{-1}\eta_{\omega'})\\
= & \int_{G^{\mathbb{N}}}\int_{X}\log\frac{d\omega_{1}\lambda}{d\lambda}\left(\tilde{\psi}\left(x,\omega\right)\right)d\eta_{\omega}(x)d\mathbb{P}_{\mu}(\omega).
\end{align*}
The second formula is proved similarly with $\varrho\circ\psi(x,\omega)=x$
in place of $\psi$. 
\end{proof}
\begin{proposition}[Consequence of Birkhoff's pointwise ergodic
theorem]\label{birkhoff}

Let $(Z,\lambda)$ be an intermediate factor in (\ref{eq:inter-1}).
If $(X,\eta)$ is an ergodic stationary system, then for $\eta\varcurlyvee\mathbb{P}_{\mu}$-a.e.
$\left(x,\omega\right)\in X\times G^{\mathbb{N}}$, we have 
\[
\lim_{n\to\infty}\frac{1}{n}\log\frac{d\omega_{n}\lambda}{d\lambda}\left(\psi\left(x,{\rm bnd}(\omega)\right)\right)=h_{\mu}(Z,\lambda).
\]

\end{proposition}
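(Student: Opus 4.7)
The strategy is to write the log-Radon--Nikodym derivative as a Birkhoff sum for the measure-preserving skew transformation $T$ on $(X\times G^{\mathbb{N}},\eta\varcurlyvee\mathbb{P}_{\mu})$ from Fact~\ref{pmp}, and then apply the pointwise ergodic theorem, identifying the ergodic average with $h_{\mu}(Z,\lambda)$ via Lemma~\ref{standard-entropy}.

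The first step is purely algebraic: using the cocycle identity
\[
\frac{dg_{1}g_{2}\lambda}{d\lambda}(z)=\frac{dg_{1}\lambda}{d\lambda}(z)\cdot\frac{dg_{2}\lambda}{d\lambda}(g_{1}^{-1}z),
\]
and the factorisation $\omega_{n}=s_{1}\cdots s_{n}$ where $s_{k}=\omega_{k-1}^{-1}\omega_{k}$ (with $\omega_{0}=e$), I obtain by induction
\[
\log\frac{d\omega_{n}\lambda}{d\lambda}(z)=\sum_{k=1}^{n}\log\frac{ds_{k}\lambda}{d\lambda}\bigl(\omega_{k-1}^{-1}z\bigr).
\]
Setting $z=\tilde{\psi}(x,\omega)$ and defining
\[
f(x,\omega):=\log\frac{d\omega_{1}\lambda}{d\lambda}\bigl(\tilde{\psi}(x,\omega)\bigr),
\]
I claim each summand equals $f\circ T^{k-1}(x,\omega)$. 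Indeed, $T^{k-1}(x,\omega)=(\omega_{k-1}^{-1}x,(\omega_{k-1}^{-1}\omega_{k},\omega_{k-1}^{-1}\omega_{k+1},\ldots))$, so its first trajectory coordinate is $s_{k}$; and by $G$-equivariance of $\psi$ together with $G$-equivariance of $\mathrm{bnd}$ (plus invariance of $\mathrm{bnd}$ under shifts of the trajectory), $\tilde{\psi}$ satisfies $\tilde{\psi}(T^{k-1}(x,\omega))=\omega_{k-1}^{-1}\tilde{\psi}(x,\omega)$, matching the required argument.

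The second step is to invoke Birkhoff's theorem. By Fact~\ref{pmp}, $T$ preserves the probability $\eta\varcurlyvee\mathbb{P}_{\mu}$, and is ergodic because $(X,\eta)$ is. By Lemma~\ref{standard-entropy},
\[
\int f\,d(\eta\varcurlyvee\mathbb{P}_{\mu})=h_{\mu}(Z,\lambda).
\]
Provided $f\in L^{1}(\eta\varcurlyvee\mathbb{P}_{\mu})$, the pointwise ergodic theorem yields
\[
\frac{1}{n}\sum_{k=0}^{n-1}f\circ T^{k}(x,\omega)\xrightarrow[n\to\infty]{}h_{\mu}(Z,\lambda)\quad\text{a.s.,}
\]
which is exactly the claimed limit.

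The delicate point I expect to be the main obstacle is verifying $f\in L^{1}$. The positive part has integral bounded by $h_{\mu}(Z,\lambda)+\int f^{-}d(\eta\varcurlyvee\mathbb{P}_{\mu})$, so the issue reduces to controlling $f^{-}$. Here one exploits that under $\eta\varcurlyvee\mathbb{P}_{\mu}$, the law of $\tilde{\psi}(x,\omega)$ is $\lambda$ itself (by the barycenter property of the boundary map), and a Jensen/reverse-KL estimate of the form
\[
\int\bigl(\log\tfrac{dg\lambda}{d\lambda}\bigr)^{-}d\lambda\le 1+\log\Bigl(\int\tfrac{d\lambda}{dg\lambda}\,d\lambda\Bigr)^{+}
\]
together with admissibility of $\mu$ bounds the negative contribution. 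Alternatively, decomposing $f=f^{+}-f^{-}$ and applying the one-sided Birkhoff theorem to each non-negative piece (with the integral possibly $+\infty$) avoids any $L^{1}$ hypothesis: the sum of the two limits is forced to be $h_{\mu}(Z,\lambda)$, and when this is finite both pieces are, recovering the desired a.e.\ convergence.
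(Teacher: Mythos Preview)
Your proof follows essentially the same route as the paper's: telescoping the Radon--Nikodym cocycle along increments, identifying each summand with $f\circ T^{k-1}$ for $f(x,\omega)=\log\frac{d\omega_{1}\lambda}{d\lambda}(\tilde{\psi}(x,\omega))$, and invoking Birkhoff for the ergodic skew transformation $T$ together with Lemma~\ref{standard-entropy}. The paper does not explicitly verify $f\in L^{1}$ either; your attention to this point is warranted, though your second workaround (splitting $f=f^{+}-f^{-}$ and applying one-sided Birkhoff) does not by itself rule out the case $\int f^{+}=\int f^{-}=+\infty$, so the Jensen-type bound on $f^{-}$ is the cleaner route.
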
 
\begin{proof}
The telescoping argument is adapted from the proof of \cite[Lemma 4.2]{KaimanovichHyperbolic}.
Given $\omega$, let $g_{k}=\omega_{k-1}^{-1}\omega_{k}$ be its $k$-th
increment, with $\omega_{0}=id$. The Radon-Nikodym derivative can
be rewritten as a product: 
\begin{align*}
\frac{d\omega_{n}\lambda}{d\lambda}\left(\psi\left(x,{\rm bnd}(\omega)\right)\right) & =\prod_{k=1}^{n}\frac{d\omega_{k}\lambda}{d\omega_{k-1}\lambda}\left(\psi\left(x,{\rm bnd}(\omega)\right)\right)\\
 & =\prod_{k=1}^{n}\frac{dg_{k}\lambda}{d\lambda}\left(\psi\left(\omega_{k-1}^{-1}.x,\omega_{k-1}^{-1}.{\rm bnd}(\omega)\right)\right)\ \mbox{(by equivariance)}\\
 & =\prod_{k=1}^{n}\frac{d\left(T^{k-1}(x,\omega)\right){}_{1}\lambda}{d\lambda}\left(\tilde{\psi}\left(T^{k-1}(x,\omega)\right)\right)
\end{align*}
where $T(x,\omega)=(\omega_{1}^{-1}x,(\omega_{1}^{-1}\omega_{2},\omega_{1}^{-1}\omega_{3},\dots))$
is the skew transformation defined in Section~\ref{subsec:Stationary-joining}.
Let $f(x,\omega):=\log\frac{d\omega_{1}\lambda}{d\lambda}\left(\tilde{\psi}(x,\omega)\right)$.
By Fact \ref{pmp}, we can apply Birkhoff's pointwise ergodic theorem
to $T$ and deduce almost surely, 
\begin{align*}
\lim_{n\to\infty}\frac{1}{n}\log\frac{d\omega_{n}\lambda}{d\lambda}\left(\psi\left(x,{\rm bnd}(\omega)\right)\right)=\lim_{n\to\infty}\frac{1}{n}\sum_{k=1}^{n}f\left(T^{k-1}\left(x,\omega\right)\right)=\int_{X\times G^{\mathbb{N}}}f(x,\omega)d\eta\varcurlyvee\mathbb{P}_{\mu}=h_{\mu}(Z,\lambda).
\end{align*}
\end{proof}

\subsection{KL-divergence and mutual information}

\label{sec:entropy-formulae}

We review the definition of KL-divergence, record a useful inequality,
and recall several facts about mutual information due to Derriennic
\cite{Derriennic}.

\subsubsection{KL-divergence}

Suppose $\alpha,\beta$ are probability measures on a measurable space
$(\Omega,\mathcal{B})$ and $\mathcal{P}_{n}$ is a refining sequence
of finite partitions whose union generates $\mathcal{B}$. Given a
finite measurable partition $\mathcal{P}$ of $X$, denote the relative
entropy of $\mathcal{P}$ with measure $\alpha$ with respect to $\beta$
as 
\[
H_{\alpha\parallel\beta}\left(\mathcal{P}\right):=\sum_{A\in\mathcal{P}}\alpha(A)\log\frac{\alpha(A)}{\beta(A)}.
\]
The Kullback-Leibler divergence of probability measures $\alpha$
and $\beta$ can be defined by (see e.g., \cite[Corollary 7.3]{Gray})
\[
D\left(\alpha\parallel\beta\right)=\sup_{n}H_{\alpha\parallel\beta}\left(\mathcal{P}_{n}\right).
\]
When $\alpha$ is absolutely continuous with respect to $\beta$,
we have $D(\alpha\parallel\beta)=\int_{M}\log\left(\frac{d\alpha}{d\beta}\right)d\alpha$
by \cite[Lemma 7.4]{Gray}. We refer to \cite[Chapter 7]{Gray} for
a detailed account.

In the proof of Proposition~\ref{KL-uni}, we will use the following
inequality, which is a consequence of the reverse Pinsker inequality
in \cite[Theorem 7]{Verdu}.

\begin{lemma}\label{KL-difference-1}

Let $\alpha,\beta,\alpha',\beta'$ be probability measures on the
space $X$ and let $\mathcal{P}$ be a finite partition of $X$. Denote
by $C=C_{\alpha,\beta}(\mathcal{P})=\max_{A\in\mathcal{P}}\left\{ \alpha(A)/\beta(A)\right\} $.
Then 
\[
H_{\alpha\parallel\beta}(\mathcal{P})-H_{\alpha'\parallel\beta'}(\mathcal{P})\le\log\left(\max_{A\in\mathcal{P}}\frac{\beta'(A)}{\beta(A)}\right)+2C^{1/2}\max_{A\in\mathcal{P}}\left|1-\frac{\alpha'(A)}{\alpha(A)}\right|.
\]
\end{lemma}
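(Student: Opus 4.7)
The plan is to split the difference as
\[
H_{\alpha\parallel\beta}(\mathcal{P}) - H_{\alpha'\parallel\beta'}(\mathcal{P}) = \bigl[H_{\alpha\parallel\beta}(\mathcal{P}) - H_{\alpha'\parallel\beta}(\mathcal{P})\bigr] + \bigl[H_{\alpha'\parallel\beta}(\mathcal{P}) - H_{\alpha'\parallel\beta'}(\mathcal{P})\bigr]
\]
and bound each bracket separately. The second bracket involves only a change of reference measure: it equals $\sum_{A} \alpha'(A) \log (\beta'(A)/\beta(A))$, which is immediately bounded by $\log \max_{A \in \mathcal{P}} \beta'(A)/\beta(A)$ since $\alpha'$ is a probability measure. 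This yields the first summand in the stated estimate.

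The first bracket is the main step, and I would handle it via convexity of $g(x) = x \log x$. Setting $r_A = \alpha(A)/\beta(A)$ and $r'_A = \alpha'(A)/\beta(A)$, convexity gives $g(r_A) - g(r'_A) \le g'(r_A)(r_A - r'_A)$ with $g'(x) = 1 + \log x$. Multiplying by $\beta(A)$, summing over $A \in \mathcal{P}$, and using $\sum_A \alpha(A) = \sum_A \alpha'(A) = 1$ to cancel the constant contribution yields
\[
H_{\alpha\parallel\beta}(\mathcal{P}) - H_{\alpha'\parallel\beta}(\mathcal{P}) \le \sum_{A\in\mathcal{P}} (\alpha(A) - \alpha'(A)) \log r_A \le \eta \sum_{A\in\mathcal{P}} \alpha(A)\, |\log r_A|,
\]
where $\eta = \max_{A\in\mathcal{P}} |1 - \alpha'(A)/\alpha(A)|$.

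It then remains to estimate $\sum_{A} \alpha(A)\,|\log r_A|$. Splitting according to whether $r_A \ge 1$ or $r_A < 1$ gives
\[
\sum_{A} \alpha(A)\, |\log r_A| = H_{\alpha\parallel\beta}(\mathcal{P}) + 2 \sum_{A:\, r_A < 1} \alpha(A) \log (1/r_A).
\]
The first summand is at most $\log C$ since $r_A \le C$ for all $A$. For the second, the elementary inequality $\log t \le t - 1$ applied to $t = 1/r_A$ gives $\alpha(A) \log(1/r_A) \le \beta(A) - \alpha(A)$, whose sum over $\{r_A < 1\}$ is at most $1$. Hence $\sum_A \alpha(A)\, |\log r_A| \le \log C + 2$, and the proof is completed by the elementary inequality $\log C + 2 \le 2 C^{1/2}$, valid for all $C > 0$: the function $C \mapsto 2\sqrt{C} - \log C - 2$ has vanishing derivative only at $C = 1$, where it attains its minimum value $0$. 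The main technical point is the convexity bound that extracts the factor $\eta$; everything else is bookkeeping with elementary inequalities.
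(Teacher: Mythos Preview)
Your proof is correct and follows the same decomposition as the paper: both split into the change-of-reference-measure term $II = \sum_A \alpha'(A)\log(\beta'(A)/\beta(A))$ and the term $I = H_{\alpha\parallel\beta}(\mathcal{P}) - H_{\alpha'\parallel\beta}(\mathcal{P})$, and both reduce $I$ to bounding $\eta \sum_A \alpha(A)\,|\log r_A|$ (your convexity step is exactly equivalent to the paper's algebraic rewriting followed by dropping the nonnegative $H_{\alpha'\parallel\alpha}(\mathcal{P})$).

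The only genuine difference is in how the factor $2\sqrt{C}$ is obtained. The paper bounds $\sum_A \alpha(A)\,|\log r_A| \le 2\sum_{\mathcal{P}_+}\alpha(A)\log r_A$ and then invokes a reverse Pinsker inequality of Verd\'u to get $\sum_{\mathcal{P}_+}\alpha(A)\log r_A \le \sqrt{C}\,d_{\mathrm{TV}}(\alpha,\beta) \le \sqrt{C}$. You instead bound $\sum_A \alpha(A)\,|\log r_A| \le \log C + 2$ directly and finish with the elementary inequality $\log C + 2 \le 2\sqrt{C}$. Your route is entirely self-contained and avoids the external citation; the paper's route retains the factor $d_{\mathrm{TV}}(\alpha,\beta)$ a bit longer, which is marginally sharper but not used downstream.
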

\begin{proof}
Write $M=M_{\alpha,\beta}$. The difference in relative entropies
is

\begin{align*}
H_{\alpha\parallel\beta}(\mathcal{P})-H_{\alpha'\parallel\beta'}(\mathcal{P}) & =\sum_{A\in\mathcal{P}}\alpha(A)\log\frac{\alpha(A)}{\beta(A)}-\alpha'(A)\log\frac{\alpha'(A)}{\beta'(A)}\\
 & =\left(\sum_{A\in\mathcal{P}}\alpha(A)\log\frac{\alpha(A)}{\beta(A)}-\alpha'(A)\log\frac{\alpha'(A)}{\beta(A)}\right)+\sum_{A\in\mathcal{P}}\alpha'(A)\log\frac{\beta'(A)}{\beta(A)}\\
 & =I+II.
\end{align*}
Rewrite $I$ as 
\begin{align*}
I & =\left(\sum_{A\in\mathcal{P}}\left(1-\frac{\alpha'(A)}{\alpha(A)}\right)\alpha(A)\log\frac{\alpha(A)}{\beta(A)}\right)-H_{\alpha'\parallel\alpha}\left(\mathcal{P}\right)\\
 & \le\sum_{A\in\mathcal{P}}\left(1-\frac{\alpha'(A)}{\alpha(A)}\right)\alpha(A)\log\frac{\alpha(A)}{\beta(A)}.
\end{align*}
Split the sum into two parts: $A$ is in $\mathcal{P}_{+}$ ($\mathcal{P}_{-}$
resp.) if $\alpha(A)/\beta(A)\ge1$ ($<1$ resp.). By the reverse
Pinsker inequality we have 
\begin{equation}
\sum_{A\in\mathcal{P}_{+}}\alpha(A)\log\frac{\alpha(A)}{\beta(A)}\le\sqrt{C}d_{{\rm TV}}(\alpha,\beta).\label{eq:reverse-pinsker}
\end{equation}
Note that \cite[Theorem 7]{Verdu} is stated as with $D(\alpha\parallel\beta)$
on the left-hand side in the inequality. For the convenience of the
reader, we briefly repeat its proof here to show (\ref{eq:reverse-pinsker}).
For $1\le z\le M$, $\frac{1-M^{-1}}{\log M}z\log z\le z-1$, summing
over $\mathcal{P}_{+}$, we have 
\[
\frac{1-M^{-1}}{\log M}\sum_{A\in\mathcal{P}_{+}}\beta(A)\frac{\alpha(A)}{\beta(A)}\log\frac{\alpha(A)}{\beta(A)}\le\sum_{A\in\mathcal{P}_{+}}\beta(A)\left(\frac{\alpha(A)}{\beta(A)}-1\right)=d_{{\rm TV}}(\alpha,\beta).
\]
For $x\in(0,1)$, $\sqrt{x}\le(x-1)/\log x$, then the inequality
(\ref{eq:reverse-pinsker}) follows.

Since $H_{\alpha\parallel\beta}(\mathcal{P})\ge0$, it follows that
$-\sum_{A\in\mathcal{P}_{-}}\alpha(A)\log\frac{\alpha(A)}{\beta(A)}\le\sum_{A\in\mathcal{P}_{+}}\alpha(A)\log\frac{\alpha(A)}{\beta(A)}$.
Plugging back in $I$, we have then 
\begin{align*}
I & \le\max_{A\in\mathcal{P}}\left|1-\frac{\alpha'(A)}{\alpha(A)}\right|\left(\sum_{A\in\mathcal{P}_{+}}\alpha(A)\log\frac{\alpha(A)}{\beta(A)}-\sum_{A\in\mathcal{P}_{-}}\alpha(A)\log\frac{\alpha(A)}{\beta(A)}\right)\\
 & \le2\sqrt{C}\max_{A\in\mathcal{P}}\left|1-\frac{\alpha'(A)}{\alpha(A)}\right|.
\end{align*}
The second part is bounded by 
\[
II=\sum_{A\in\mathcal{P}}\alpha'(A)\log\frac{\beta'(A)}{\beta(A)}\le\sum_{A\in\mathcal{P}}\alpha'(A)\log\left(\max_{A\in\mathcal{P}}\frac{\beta'(A)}{\beta(A)}\right)=\log\left(\max_{A\in\mathcal{P}}\frac{\beta'(A)}{\beta(A)}\right).
\]
\end{proof}

\subsubsection{Mutual information}

The mutual information of two random variables $X$ and $Y$ of laws
$P(X)$ and $P(Y)$ is the KL-divergence of their joint law with respect
to their product law: 
\[
\mathrm{I}(X,Y):=\mathrm{D}\left(P(X,Y)\bigparallel P(X)\otimes P(Y)\right).
\]

Given a probability space $(\Omega,\mathcal{B},\mathbb{P})$, a random
variable $X:\Omega\to S$ and a sub-$\sigma$-field $\mathcal{F}$
of $\mathcal{B}$, denote by $P(X|\mathcal{F})$ the conditional law
of $X$ given $\mathcal{F}$. The mutual information of $X$ and $\mathcal{F}$
is given by 
\[
\mathrm{I}(X,\mathcal{F})=\int_{\Omega}\int_{S}\log\frac{dP\left(X|\mathcal{F}\right)}{dP(X)}dP\left(X|\mathcal{F}\right)d\mathbb{P}.
\]
These formulae are consistent: denote by $\sigma(Y)$ the $\sigma$-field
generated by $Y$, then $I(X,Y)=I(X,\sigma(Y))$.

Recall from Subsection~\ref{subsec:formula} that $\left(\xi_{n}^{x}\right)_{n=0}^{\infty}$
is the Markov chain obtained from a Doob transformed trajectory of
law $\mathbb{P}_{\mu}^{\pi(x)}$ by taking the quotient map $G\to L_{x}\backslash G$.
Following \cite{Derriennic}, we consider the mutual information $\mathrm{I}(\xi_{1}^{x},\mathcal{T}_{x})$
between the chain at time one and its tail $\sigma$-field $\mathcal{T}_{x}$.
In the setting of Subsection \ref{sec:standard}, we collect some
known facts regarding entropy and mutual information.

\begin{lemma}\label{derriennic-basic}

Let $(Z,\lambda)$ be a Poisson bundle over a standard system $(X,\eta)\overset{\pi}{\to}\left(Y,\nu\right)$
satisfying assumption \hyperlink{AssumpS}{(\textbf{S})}. 
\begin{description}
\item [{(i)}] ${\rm I}\left(\xi_{1}^{x},\xi_{n}^{x}\right)\le{\rm I}\left(\mathbb{P}_{\mu,1}^{\pi(x)},\mathbb{P}_{\mu,n}^{\pi(x)}\right)$. 
\item [{(ii)}] Integrated over $(Y,\nu)$, we have 
\begin{equation}
\int_{Y}{\rm I}\left(\mathbb{P}_{\mu,1}^{y},\mathbb{P}_{\mu,n}^{y}\right)d\nu(y)={\rm I}\left(\mathbb{P}_{\mu,1},\mathbb{P}_{\mu,n}\right)-h_{\mu}(Y,\nu).\label{eq:doob-I}
\end{equation}
\item [{(iii)}] The sequence $\mathrm{I}(\xi_{1}^{x},\xi_{n}^{x})$ is
non-increasing and 
\begin{align}
\mathrm{I}(\xi_{1}^{x},\mathcal{T}_{x})=\inf_{n}\mathrm{I}(\xi_{1}^{x},\xi_{n}^{x})=\lim_{n}\mathrm{I}(\xi_{1}^{x},\xi_{n}^{x}).\label{DerriennicIII}
\end{align}
\item [{(iv)}] The mutual information can be written as 
\begin{align*}
\mathrm{I}(\xi_{1}^{x},\mathcal{T}_{x}) & =\int_{G^{\mathbb{N}}}\log\frac{d\left(\omega_{1}\lambda\right)_{x}}{d\lambda_{x}}(\psi_{x}(\omega))d\mathbb{P}_{\mu}^{\pi(x)}(\omega)\\
 & =\int_{G}\int_{Z_{x}}\log\frac{d(g\lambda)_{x}}{d\lambda_{x}}(z)d(g\lambda)_{x}(z)\varphi_{g}(\pi(x))d\mu(g).
\end{align*}
\end{description}
\end{lemma}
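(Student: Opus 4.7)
The plan is to handle the four parts separately, each by a classical argument from the Derriennic framework adapted to our fibered setting.

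For part (i), I would use the data processing inequality for mutual information. The random variables $\xi_1^x = L_x\omega_1$ and $\xi_n^x = L_x\omega_n$ are measurable functions of $\omega_1$ and $\omega_n$ respectively (applied pointwise), and the joint law of $(\omega_1,\omega_n)$ under $\mathbb{P}_\mu^{\pi(x)}$ is exactly $\mathbb{P}_{\mu,1,n}^{\pi(x)}$. Since mutual information decreases under jointly measurable maps, $\mathrm{I}(\xi_1^x,\xi_n^x) \le \mathrm{I}(\mathbb{P}_{\mu,1}^{\pi(x)},\mathbb{P}_{\mu,n}^{\pi(x)})$.

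For part (ii), the idea is the chain rule for mutual information coupled with the Markov property. Set $y = \boldsymbol{\beta}_Y(\omega)$, which is a tail variable. By the Markov property of the $\mu$-random walk, given $\omega_n$, the future $(\omega_{n+k})_{k\ge 0}$ (and hence $y$) is independent of $\omega_1$, so $\mathrm{I}(\omega_1;\, y \mid \omega_n)=0$. Applying the chain rule in two ways gives
\[
\mathrm{I}(\omega_1;\omega_n) = \mathrm{I}(\omega_1;\omega_n,y) = \mathrm{I}(\omega_1;y) + \mathrm{I}(\omega_1;\omega_n\mid y).
\]
By disintegration over $\nu$, the last term equals $\int_Y \mathrm{I}(\mathbb{P}_{\mu,1}^y,\mathbb{P}_{\mu,n}^y)\, d\nu(y)$. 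Meanwhile, a direct computation of $\mathrm{I}(\omega_1;y)$ using the Radon--Nikodym derivative $\varphi_g(y)=dg\nu/d\nu(y)$ (as in Lemma~\ref{standard-entropy} applied to $(Y,\nu)$) yields $\mathrm{I}(\omega_1;y)=h_\mu(Y,\nu)$. Rearranging produces~(\ref{eq:doob-I}).

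For part (iii), note first that under assumption \hyperlink{AssumpS}{(\textbf{S})}, Proposition~\ref{markov} asserts that $(\xi_n^x)$ is a Markov chain, so conditionally on $\xi_n^x$ the variable $\xi_1^x$ is independent of $\sigma(\xi_m^x:m\ge n)$; hence $\mathrm{I}(\xi_1^x,\xi_n^x) = \mathrm{I}(\xi_1^x,\sigma(\xi_m^x:m\ge n))$. The $\sigma$-fields $\mathcal{F}_n := \sigma(\xi_m^x:m\ge n)$ are decreasing with intersection the tail $\mathcal{T}_x$; the associated sequence of mutual informations is therefore non-increasing (data processing) and by the reverse martingale convergence theorem converges in $L^1$ of the Radon--Nikodym derivatives, so the KL integrals converge to $\mathrm{I}(\xi_1^x,\mathcal{T}_x)$. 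The monotonicity forces the limit to agree with the infimum.

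Part (iv) is the computational heart. Writing $\mathrm{I}(\xi_1^x,\mathcal{T}_x) = \int \log\bigl(dP(\mathcal{T}_x\mid \xi_1^x)/dP(\mathcal{T}_x)\bigr)\, dP$, the marginal law of $\mathcal{T}_x$ under $\mathbb{P}_\mu^{\pi(x)}$ pushes forward through $\psi_x$ to the fiber measure $\lambda_x$; conditioning on $\xi_1^x=L_x\omega_1$ and using translation equivariance (Lemma~\ref{translate-1}), the conditional law becomes $(\omega_1\lambda)_x$. This yields the Radon--Nikodym derivative $d(\omega_1\lambda)_x/d\lambda_x$, establishing the first formula. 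The second formula follows by Fubini: under $\mathbb{P}_\mu^{\pi(x)}$, the first step $\omega_1=g$ has marginal weight $\varphi_g(\pi(x))\,d\mu(g)$ and the corresponding tail law is $(g\lambda)_x$, giving the displayed integral.

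The main obstacle will be part (iv), specifically the clean identification of conditional laws of the tail with the translated fiber measures $(\omega_1\lambda)_x$; this rests on Lemma~\ref{translate-1} and the standard-system hypothesis \hyperlink{AssumpS}{(\textbf{S})} which ensures the disintegration measures of $\lambda$ over $X$ interact correctly with the $G$-action. The remaining parts are routine once this identification is in hand.
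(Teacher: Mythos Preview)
Your treatment of (i), (iii), and (iv) follows the paper's route: (i) is exactly the monotonicity of KL-divergence under passage to a sub-$\sigma$-field (the paper cites \cite[Corollary 7.2]{Gray}); (iii) is Derriennic's argument for Markov chains, which the paper simply cites; and (iv) matches the paper's use of the Derriennic formula together with Lemma~\ref{translate-1}. One point you skate over in (iv): the map $\psi_x$ realizes the \emph{invariant} $\sigma$-field $\mathcal{I}_x$, not the tail $\mathcal{T}_x$, so to identify the tail law with $\lambda_x$ and $(\omega_1\lambda)_x$ you must invoke Corollary~\ref{cor:tail=00003Dinv} (which uses assumption~\hyperlink{AssumpT}{(\textbf{T})}) to pass from $\mathcal{T}_x$ to $\mathcal{I}_x$. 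The paper makes this step explicit; you should too.

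Your argument for (ii) is genuinely different from the paper's and is a nice alternative. The paper computes directly: it writes the Radon--Nikodym derivative $\frac{dP^{y}}{dQ^{y}}(g,h)=\frac{dP}{dQ}(g,h)\,\varphi_{g}(y)^{-1}$ between the conditional joint and product laws, splits the logarithm, and integrates; the harmonicity of $\varphi$ collapses one term to $h_\mu(Y,\nu)$ via Lemma~\ref{standard-entropy}. Your chain-rule approach, using that $y=\boldsymbol{\beta}_Y(\omega)$ is tail-measurable so that $\mathrm{I}(\omega_1;y\mid\omega_n)=0$, is more conceptual and avoids the explicit density manipulation. Both work; the paper's computation makes the role of the Doob cocycle $\varphi_g$ visible, while yours makes the information-theoretic structure transparent.
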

\begin{proof}
(i). Recall that $\mathbb{P}_{\mu}^{\pi(x)}$ is the law of the Doob
transformed random walk (or rather Markov chain) on $G^{\mathbb{N}}$
conditioned by $\{\boldsymbol{\beta}_{Y}(\omega)=\pi(x)\}$. The law
of $\left(\xi_{1}^{x},\xi_{n}^{x}\right)$ can be viewed as the restriction
of the joint law $\left(\mathbb{P}_{\mu,1}^{\pi(x)},\mathbb{P}_{\mu,n}^{\pi(x)}\right)$
on $G\times G$ to the sub-$\sigma$-field of $L_{x}$-invariant subsets,
that is, measurable sets $A\subseteq G\times G$ such that $L_{x}A=A$.
It then follows from general properties of KL-divergence with respect
to restrictions of measures, see e.g., \cite[Corollary 7.2]{Gray}.

(ii). Denote by $P$ the joint law $\left(\mathbb{P}_{\mu,1},\mathbb{P}_{\mu,n}\right)$
and by $Q$ the product law $\mathbb{P}_{\mu,1}\times\mathbb{P}_{\mu,n}$
of times $1$ and $n$ of a random walk trajectory of law $\mathbb{P}_{\mu}$.
Similarly for $y$ in $Y$, denote by $P^{y}$ the joint law $\left(\mathbb{P}_{\mu,1}^{y},\mathbb{P}_{\mu,n}^{y}\right)$
and by $Q^{y}$ the product law $\mathbb{P}_{\mu,1}^{y}\times\mathbb{P}_{\mu,n}^{y}$
for a Doob transformed trajectory of law $\mathbb{P}_{\mu}^{y}$.
We have $\frac{dP^{y}}{dQ^{y}}(g,h)=\frac{dP}{dQ}(g,h)\frac{1}{\varphi_{g}(y)}$.
Thus 
\begin{align*}
\int_{Y}{\rm I}\left(\mathbb{P}_{\mu,1}^{y},\mathbb{P}_{\mu,n}^{y}\right)d\nu(y) & =\int_{Y}\int_{G\times G}\log\frac{dP}{dQ}dP^{y}d\nu(y)-\int_{Y}\int_{G\times G}\log\varphi_{g}(y)dP^{y}(g,h)d\nu(y)\\
 & =\int_{G\times G}\log\frac{dP}{dQ}dP-\int_{Y}\int_{G\times G}\log\varphi_{g}(y)d\mu(g)d\mu^{n-1}(g^{-1}h)\varphi_{h}(y)d\nu(y)\\
 & ={\rm I}\left(\mathbb{P}_{\mu,1},\mathbb{P}_{\mu,n}\right)-\int_{Y}\int_{G}\log\varphi_{g}(y)d\mu(g)dg.\nu(y)={\rm I}\left(\mathbb{P}_{\mu,1},\mathbb{P}_{\mu,n}\right)-\int_{Y}\log\varphi_{g}(y)d\nu(y).
\end{align*}
In the last line we used the harmonicity of $\varphi_{h}$, which
implies $\int_{G}\varphi_{h}(y)d\mu^{n-1}(g^{-1}h)=\varphi_{g}(y)$.
We conclude by Lemma~\ref{standard-entropy}.

(iii) By \cite[Section III]{Derriennic}, this is a consequence of
the fact that $\left(\xi_{n}^{x}\right)$ has the Markov property,
according to Proposition~\ref{markov}.

(iv). By \cite[Section V]{Derriennic} one has 
\begin{align}
\mathrm{I}(\xi_{1}^{x},\mathcal{T}_{x})=\int_{(L_{x}\backslash G)^{\mathbb{N}}}\log\frac{d\overline{\mathbb{P}}_{\mu,x,\mathrm{\xi_{1}^{x}}}|_{\mathcal{T}_{x}}}{d\overline{\mathbb{P}}_{\mu,x,e}|_{\mathcal{T}_{x}}}d\overline{\mathbb{P}}_{\mu,x,e},\label{DerriennicVprelim}
\end{align}
where the notation $Q|_{\mathcal{F}}$ denote the restriction of the
measure $Q$ to a sub-$\sigma$-field $\mathcal{F}$. By Corollary~\ref{cor:tail=00003Dinv},
we may replace in (\ref{DerriennicVprelim}) the tail $\sigma$-field
$\mathcal{T}_{x}$ by the invariant $\sigma$-field $\mathcal{I}_{x}$
of $\left(\xi_{n}^{x}\right)$. Lemma~\ref{translate-1} gives that
for any $g$ in $G$, 
\[
\overline{\mathbb{P}}_{\mu,x,g}|_{\mathcal{I}_{x}}=\theta(x,\cdot)_{\ast}\overline{\mathbb{P}}_{\mu,x,g}=(g\lambda)_{x}.
\]
Finally Lemma~\ref{standard} gives $\theta(x,\cdot)_{\ast}\mathbb{\overline{P}}_{\mu,x,e}=\theta(x,\cdot)_{\ast}\vartheta_{\ast}\mathbb{P}_{\mu}^{\pi(x)}=\psi(x,\cdot)_{\ast}\mathbb{P}_{\mu}^{\pi(x)}$.
It follows that (\ref{DerriennicVprelim}) implies 
\[
\mathrm{I}(\xi_{1}^{x},\mathcal{T}_{x})=\int_{G^{\mathbb{N}}}\log\frac{d(\omega_{1}\lambda)_{x}}{d\lambda_{x}}\left(\psi(x,\omega)\right)d\mathbb{P}_{\mu}^{\pi(x)}(\omega).
\]
The second equality follows by conditioning on the first step $\omega_{1}=g$
of the Doob transformed random walk, recalling that $(g\lambda)_{x}$
is the hitting distribution in the fiberwise Poisson boundary $Z_{x}$. 
\end{proof}

\subsubsection{Proof of Proposition \ref{entropy-formula}}
\begin{proof}[Proof of Proposition \ref{entropy-formula}]

Recall that $(Z,\lambda)$ fits into $\left(X\times G^{\mathbb{N}},\eta\varcurlyvee\mathbb{P}_{\mu}\right)\overset{\psi}{\rightarrow}\left(Z,\lambda\right)\overset{\varrho}{\rightarrow}\left(X,\eta\right)$.
By Lemma \ref{standard-entropy}, the Furstenberg entropy of $(Z,\lambda)$
is

\[
h(Z,\lambda)=\int_{G^{\mathbb{N}}}\int_{X}\log\frac{d\omega_{1}.\lambda}{d\lambda}\left(\psi\left(x,\omega\right)\right)d\eta_{\omega}(x)d\mathbb{P}_{\mu}(\omega).
\]

Next we disintegrate $\lambda$ over $\left(Z,\lambda\right)\overset{\varrho}{\rightarrow}\left(X,\eta\right)$
and denote it as $\lambda=\int_{X}\lambda_{x}d\eta(x)$. Then the
Radon-Nikodym derivative can be written as 
\[
\frac{dg.\lambda}{d\lambda}(\psi(x,\omega))=\frac{dg.\eta}{d\eta}(x)\frac{d\left(g.\lambda\right)_{x}}{d\lambda_{x}}\left(\psi(x,\omega)\right).
\]
Thus we have 
\begin{align*}
h(Z,\lambda)=\int_{G^{\mathbb{N}}}\int_{X}\log\frac{d\omega_{1}.\eta}{d\eta}\left(x\right)d\eta_{\omega}(x)d\mathbb{P}_{\mu}(\omega)+ & \int_{G^{\mathbb{N}}}\int_{X}\log\frac{d\left(\omega_{1}\lambda\right)_{x}}{d\lambda_{x}}\left(\psi(x,\omega)\right)d\eta_{\omega}(x)d\mathbb{P}_{\mu}(\omega)={\rm I}+{\rm II}.
\end{align*}
By Lemma \ref{standard-entropy} and as $\pi:X\to Y$ is measure preserving,
we have that ${\rm I}=h_{\mu}(X,\eta)=h_{\mu}(Y,\nu)$. By Lemma \ref{standard},
we have 
\[
{\rm II}=\int_{X}\int_{G^{\mathbb{N}}}\log\frac{d\left(\omega_{1}\lambda\right)_{x}}{d\lambda_{x}}\left(\psi_{x}(\omega)\right)d\mathbb{P}_{\mu}^{\pi(x)}(\omega)d\eta(x),
\]
where $\mathbb{P}_{\mu}^{y}$ is the law of the Doob transformed random
walk on $G$ conditioned on $\{\boldsymbol{\beta}_{Y}(\omega)=y\}$.
By Lemma~\ref{derriennic-basic} (iv), we have ${\rm II}=\int_{X}\mathrm{I}\left(\xi_{1}^{x},\mathcal{T}_{x}\right)d\eta(x)=\mathrm{I}\left(\xi_{1},\mathcal{T}|X,\eta\right)$. 
\end{proof}

\subsection{Entropy formulae and Shannon's theorem for countable groups\label{subsec:random-walk-entropy}}

In this subsection we assume that $G$ is a countable group. For a
convolution $\mu$-random walk $\left(\xi_{n}\right)_{n=0}^{\infty}$
on a discrete group, it is classical that 
\[
{\rm I}(\xi_{1},\xi_{n})=H\left(\mu^{(n)}\right)-H\left(\mu^{(n-1)}\right)
\]
where $H(p)=\sum_{g\in G}p(g)\log p(g)$ is the Shannon entropy of
the discrete probability measure $p$ on $G$. In the case of a countable
group $G$ endowed with a finite entropy probability measure $\mu$,
we will obtain a bundle version of this formula, stated in Theorem
\ref{thm:entropy}.

\subsubsection{Proof of Theorem~\ref{thm:entropy}}

Let us denote 
\[
I(\xi_{1},\xi_{n}|X):=\int_{X}I(\xi_{1}^{x},\xi_{n}^{x})d\eta(x)\quad\textrm{and}\quad H(\xi_{n}|X):=\int_{X}H(\xi_{n}^{x})d\eta(x).
\]

\begin{lemma}\label{lem-mutual}Assume $G$ is a countable group
endowed with a probability measure $\mu$ of finite Shannon entropy.
Let $(X,\eta)$ be a standard system satisfying assumption \hyperlink{AssumpS}{(\textbf{S})}. Then
\[
\mathrm{I}\left(\xi_{1},\xi_{n}|X\right)=H\left(\xi_{n}|X\right)-H\left(\xi_{n-1}|X\right).
\]
\end{lemma}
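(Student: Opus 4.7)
The plan is to combine the definitional identity for mutual information with the invariance of $\eta \varcurlyvee \mathbb{P}_\mu$ under the skew transformation~$T$ from Fact~\ref{pmp}. First I would note that all entropies in sight are finite: integrating the fiberwise chain against $\eta$ and applying concavity of Shannon entropy to the decomposition $\mu^{(n)} = \int_Y \mathbb{P}_{\mu,n}^y d\nu(y)$ (Lemma~\ref{standard}) gives $H(\xi_n|X,\eta) \le H(\mu^{(n)}) \le n H(\mu) < \infty$. Hence, using the standard identity $\mathrm{I}(\xi_1^x,\xi_n^x) = H(\xi_n^x) - H(\xi_n^x|\xi_1^x)$ fiberwise and integrating over $\eta$, the claim reduces to
\[
\int_X H(\xi_n^x\mid\xi_1^x) \, d\eta(x) \;=\; H(\xi_{n-1}|X,\eta).
\]

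The key step is to recognize each side as the $\eta \varcurlyvee \mathbb{P}_\mu$-integral of a function related by the skew transformation $T$. By the fiberwise Markov property (Proposition~\ref{markov}), for fixed $x$, the conditional law of $\xi_n^x$ given $\xi_1^x = L_x\omega_1$ is the Markov kernel $P_{\mu,x}^{n-1}(L_x\omega_1, \cdot)$; hence
\[
\int_X H(\xi_n^x\mid\xi_1^x)\,d\eta(x) \;=\; \int_{X\times G^{\mathbb{N}}} -\log P_{\mu,x}^{n-1}\bigl(L_x\omega_1,\,L_x\omega_n\bigr)\, d(\eta \varcurlyvee \mathbb{P}_\mu),
\]
and similarly $H(\xi_{n-1}|X,\eta) = \int -\log P_{\mu,x}^{n-1}(L_x,\,L_x\omega_{n-1})\, d(\eta\varcurlyvee\mathbb{P}_\mu)$, after using Lemma~\ref{standard} to realize the fiberwise coset trajectory as the image of $\mathbb{P}_\mu^{\pi(x)}$.

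The main technical input — and the step I expect to be trickiest to write cleanly — is the translation identity for the kernel $P_{\mu,x}^{n-1}$ provided by Lemma~\ref{translate-1}. Applied with $g = \omega_1^{-1}$, it gives
\[
P_{\mu,x}^{n-1}(L_x\omega_1,\,L_x\omega_n) \;=\; P_{\mu,\,\omega_1^{-1}.x}^{n-1}\bigl(L_{\omega_1^{-1}.x},\,\omega_1^{-1}L_x\omega_n\bigr),
\]
and the $G$-equivariance $L_{\omega_1^{-1}.x} = \omega_1^{-1} L_x \omega_1$ lets us rewrite $\omega_1^{-1}L_x\omega_n = L_{\omega_1^{-1}.x}(\omega_1^{-1}\omega_n)$. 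Writing $T(x,\omega) = (x',\omega')$ with $x' = \omega_1^{-1}.x$ and $\omega'_{n-1} = \omega_1^{-1}\omega_n$, this identifies the integrands as
\[
-\log P_{\mu,x}^{n-1}(L_x\omega_1,L_x\omega_n) \;=\; \bigl(-\log P_{\mu,x'}^{n-1}(L_{x'},L_{x'}\omega'_{n-1})\bigr)\circ T.
\]

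The conclusion is then immediate: since $T$ preserves $\eta\varcurlyvee\mathbb{P}_\mu$ by Fact~\ref{pmp}, integrating the above identity yields $\int_X H(\xi_n^x|\xi_1^x)\,d\eta(x) = H(\xi_{n-1}|X,\eta)$, and combining with $\mathrm{I}(\xi_1,\xi_n|X) = H(\xi_n|X,\eta) - \int_X H(\xi_n^x|\xi_1^x)\,d\eta(x)$ proves the lemma. The only care needed is to justify pulling out the conditional entropy as an integral on the joint coset space, which is routine given countability of $G$ and finiteness of the entropies involved.
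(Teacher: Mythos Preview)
Your proof is correct and follows the same approach as the paper: split the mutual information as $H(\xi_n^x)-H(\xi_n^x\mid\xi_1^x)$, use the kernel equivariance (\ref{eq:invariance}) (established in the proof of Lemma~\ref{translate-1}, not its statement) to rewrite the conditional term at the shifted base point $\omega_1^{-1}.x$, and integrate. The only difference is cosmetic: you package the final integration as $T$-invariance of $\eta\varcurlyvee\mathbb{P}_\mu$ via Fact~\ref{pmp}, whereas the paper carries out the equivalent change of variables by hand using the Doob weights $\mu(s)\varphi_s(\pi(x))$ and the disintegration of $\eta$ over $Y$ --- your packaging is in fact the one the paper itself uses for the analogous subadditivity step in Proposition~\ref{shannon}.
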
 The proof follows standard calculations as in \cite[Section I]{Derriennic}. 
\begin{proof}
The sequence $(\xi_{n}^{x})_{n=0}^{\infty}$ is a Markov chain in
the coset space $L_{x}\backslash G$, started at the identity coset
$L_{x}$ and with transition probabilities given by Proposition~\ref{markov}.
By definition, we have

\[
\mathrm{I}\left(\xi_{1}^{x},\xi_{n}^{x}\right)=\sum_{\xi_{1}^{x}}\sum_{\xi_{n}^{x}}\log\left(\frac{P_{\mu,x}^{n-1}(\xi_{1}^{x},\xi_{n}^{x})}{P_{\mu,x}^{n}(L_{x},\xi_{n}^{x})}\right)P_{\mu,x}^{1}(L_{x},\xi_{1}^{x})P_{\mu,x}^{n-1}(\xi_{1}^{x},\xi_{n}^{x})
\]
We can split the logarithm to get $\mathrm{I}\left(\xi_{1}^{x},\xi_{n}^{x}\right)={\rm I}_{x}+{\rm II}_{x}$,
where ${\rm I}_{x}=H(\xi_{n}^{x})$, so $\int_{X}{\rm I}_{x}d\eta(x)=H\left(\xi_{n}|X\right)$.
There remains to show $\int_{X}{\rm II}_{x}d\eta(x)=-H\left(\xi_{n-1}|X\right)$,
where 
\[
{\rm II}_{x}=\sum_{s\in G}\mu(s)\frac{ds\nu}{d\nu}(\pi(x))\sum_{\xi_{n}^{x}\in L_{x}\backslash G}\log\left(P_{\mu,x}^{n-1}(L_{x}s,\xi_{n}^{x})\right)P_{\mu,x}^{n-1}(L_{x}s,\xi_{n}^{x})
\]
as the transition probabilities satisfy 
\[
P_{\mu,x}^{1}(L_{x},\xi_{1}^{x})=\sum_{\{s:\xi_{1}^{x}=L_{x}s\}}\mu(s)\frac{ds\nu}{d\nu}(\pi(x)).
\]
Now by equivariance, see (\ref{eq:invariance}) in the proof of Proposition~\ref{markov},
\[
P_{\mu,x}^{n-1}(L_{x}s,L_{x}h)=P_{\mu,s^{-1}x}^{n-1}\left(L_{s^{-1}.x},L_{s^{-1}.x}s^{-1}h\right).
\]
This is the law of $\xi_{n-1}^{s^{-1}.x}$, in other terms of the
position at time $n-1$ of a coset trajectory in the fiber over $s^{-1}.x$.
We get 
\[
{\rm II}_{x}=-\sum_{s\in G}\mu(s)\frac{ds\nu}{d\nu}(\pi(x))H\left(\xi_{n-1}^{s^{-1}.x}\right).
\]
Use the disintegration $\eta=\int_{Y}\eta^{y}d\nu(y)$ over the measure
preserving extension $\pi:X\to Y$ to get 
\[
\int_{X}{\rm II}_{x}d\eta(x)=-\sum_{s\in G}\mu(s)\int_{Y}\int_{\pi^{-1}(y)}H\left(\xi_{n-1}^{s^{-1}.x}\right)d\eta^{y}(x)\frac{ds\nu}{d\nu}(y)d\nu(y)=-\int_{X}H(\xi_{n-1}^{x})d\eta(x).
\]
\end{proof}
Lemma \ref{lem-mutual} implies the random walk entropy formula stated
in Theorem \ref{thm:entropy}:
\begin{proof}[Proof of Theorem \ref{thm:entropy}]

By Lemma~\ref{derriennic-basic} (iii), we have 
\[
{\rm I}(\xi_{1},\mathcal{T}|X)=\int_{X}{\rm I}(\xi_{1}^{x},\mathcal{T}_{x})d\eta(x)=\int_{X}\lim_{n\to\infty}{\rm I}(\xi_{1}^{x},\xi_{n}^{x})d\eta(x)=\lim_{n\to\infty}\int_{X}{\rm I}(\xi_{1}^{x},\xi_{n}^{x})d\eta(x)
\]
as the convergence is dominated by the integrable function 
\[
H(\xi_{1}^{x})={\rm I}(\xi_{1}^{x},\xi_{1}^{x})\ge\inf_{n}{\rm I}(\xi_{1}^{x},\xi_{n}^{x})=\lim_{n\to\infty}{\rm I}(\xi_{1}^{x},\xi_{n}^{x}).
\]
Then Proposition \ref{entropy-formula} and Lemma \ref{lem-mutual}
give the first formula. The second formula follows by Cesaro average
since $H(\xi_{n}|X)=\sum_{k=1}^{n}H(\xi_{k}|X)-H(\xi_{k-1}|X)$ and
$H(\xi_{k}|X)-H(\xi_{k-1}|X)={\rm I}(\xi_{1},\xi_{k}|X)$ is a non-increasing
sequence tending to ${\rm I}(\xi_{1},\mathcal{T}|X)$. 
\end{proof}

\subsubsection{Shannon's theorem}

In the case of a countable group $G$, Kingman's subadditive ergodic
theorem implies:

\begin{proposition}[Shannon's theorem]\label{shannon} Assume $G$
is a countable group endowed with a probability measure $\mu$ of
finite Shannon entropy. Let $(Z,\lambda)$ be a standard system satisfying
assumption \hyperlink{AssumpS}{(\textbf{S})}. Then 
\[
\lim_{n\to\infty}-\frac{1}{n}\log P_{\mu,{x}}^{n}\left(L_{x},L_{x}\omega_{n}\right)=\lim_{n\to\infty}\frac{1}{n}H(\xi_{n}|X)=h_{\mu}(Z,\lambda)-h_{\mu}(X,\eta),
\]
$\eta\varcurlyvee\mathbb{P}_{\mu}$-a.s. and in $L^{1}$ limits.

\end{proposition}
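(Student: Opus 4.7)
The plan is to apply Kingman's subadditive ergodic theorem to the cocycle $f_n(x,\omega) := -\log P_{\mu,x}^n(L_x, L_x\omega_n)$ defined on $(X \times G^{\mathbb{N}}, \eta\varcurlyvee\mathbb{P}_{\mu})$, with respect to the skew transformation $T$ from Subsection~\ref{subsec:Stationary-joining}. Once this is set up, the three convergence statements to be proved all drop out of the ergodic theorem together with the random walk entropy formula of Theorem~\ref{thm:entropy}.

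The key point is to verify the subadditivity relation $f_{m+n}(x,\omega) \le f_m(x,\omega) + f_n(T^m(x,\omega))$. Keeping only the single term corresponding to passage through the coset $L_x\omega_m$ at time $m$ in the Chapman--Kolmogorov decomposition of the fiberwise Markov chain from Proposition~\ref{markov} yields
\[
P_{\mu,x}^{m+n}(L_x, L_x\omega_{m+n}) \;\ge\; P_{\mu,x}^m(L_x, L_x\omega_m)\cdot P_{\mu,x}^n(L_x\omega_m, L_x\omega_{m+n}).
\]
Then the equivariance identity~(\ref{eq:invariance}) established in the proof of Lemma~\ref{translate-1} rewrites the second factor as $P_{\mu,\omega_m^{-1}.x}^n\!\bigl(L_{\omega_m^{-1}.x}, L_{\omega_m^{-1}.x}\omega_m^{-1}\omega_{m+n}\bigr)$, which is precisely $\exp\!\bigl(-f_n(T^m(x,\omega))\bigr)$. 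Taking $-\log$ gives the subadditivity. Integrability of $f_1$ is automatic: $\int f_1\,d(\eta\varcurlyvee\mathbb{P}_\mu) = H(\xi_1\mid X,\eta)\le H(\mu)<\infty$ by concavity of Shannon entropy combined with Lemma~\ref{standard} (so that $\xi_1^x$ has law $\mathbb{P}_{\mu,1}^{\pi(x)}$ projected to the coset space).

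By Fact~\ref{pmp}, the skew transformation $T$ preserves $\eta\varcurlyvee\mathbb{P}_\mu$ and is ergodic under the standing ergodicity assumption on $(X,\eta)$. Kingman's subadditive ergodic theorem then delivers both almost sure and $L^1$ convergence of $\tfrac{1}{n}f_n$ to the constant $\ell := \inf_n \tfrac{1}{n}\int f_n\,d(\eta\varcurlyvee\mathbb{P}_\mu) = \inf_n \tfrac{1}{n}H(\xi_n\mid X,\eta)$. In particular, $L^1$ convergence gives the middle equality $\lim_n \tfrac{1}{n}H(\xi_n\mid X,\eta) = \ell$ of the proposition.

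To identify $\ell$ with the entropy difference, apply the random walk entropy formula of Theorem~\ref{thm:entropy}, which states exactly $\ell = h_\mu(Z,\lambda) - h_\mu(Y,\nu)$; and $h_\mu(Y,\nu) = h_\mu(X,\eta)$ because $\pi:(X,\eta)\to(Y,\nu)$ is a measure preserving extension. The main work is the subadditivity verification, which is where the fiberwise Markov property and the equivariance of the transition kernels interact nontrivially with the skew transformation; every other step is a direct appeal to a result already in place.
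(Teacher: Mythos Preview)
Your proof is correct and essentially identical to the paper's: both apply Kingman's subadditive ergodic theorem to $-\log P_{\mu,x}^n(L_x,L_x\omega_n)$ on $(X\times G^{\mathbb N},\eta\varcurlyvee\mathbb P_\mu)$ under the skew transformation $T$, verify subadditivity via Chapman--Kolmogorov together with the equivariance identity~(\ref{eq:invariance}), and identify the limit through $\int f_n\,d(\eta\varcurlyvee\mathbb P_\mu)=H(\xi_n\mid X)$ and Theorem~\ref{thm:entropy}. The only cosmetic difference is that you take the negative log at the outset and spell out the integrability check $H(\xi_1\mid X,\eta)\le H(\mu)$ explicitly.
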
 
\begin{proof}
By Fact \ref{pmp}, the skew transformation 
\[
T:\left(x,\left(\omega_{1},\omega_{2},\ldots\right)\right)\mapsto\left(\omega_{1}^{-1}.x,\left(\omega_{1}^{-1}\omega_{2},\omega_{1}^{-1}\omega_{3},\ldots\right)\right)
\]
is ergodic p.m.p. on the space $\left(X\times G^{\mathbb{N}},\eta\varcurlyvee\mathbb{P}_{\mu}\right)$.
Consider the functions $f_{n}:X\times G^{\mathbb{N}}\to\mathbb{R}$
given by $f_{n}(x,\omega)=P_{\mu,x}^{n}\left(L_{x},L_{x}\omega_{n}\right)$.
Then 
\begin{align*}
f_{n+m}(x,\omega) & \ge P_{\mu,{x}}^{n}\left(L_{x},L_{x}\omega_{n}\right)P_{\mu,{x}}^{m}\left(L_{x}\omega_{n},L_{x}\omega_{n+m}\right)\\
 & =P_{\mu,{x}}^{n}\left(L_{x},L_{x}\omega_{n}\right)P_{\mu,{\omega_{n}^{-1}.x}}^{m}\left(L_{\omega_{n}^{-1}.x},L_{\omega_{n}^{-1}.x}\omega_{m}\right)\ \mbox{by }(\text{\ref{eq:invariance}})\\
 & =f_{n}(x,\omega)f_{m}\left(T^{n}\left(x,\omega\right)\right).
\end{align*}
Kingman's subadditive ergodic theorem applied to $-\log f_{n}$, gives
a constant $h$ with $\lim_{n\to\infty}-\frac{1}{n}\log f_{n}=h$,
both $\mbox{\ensuremath{\eta\varcurlyvee\mathbb{P}_{\mu}}}$-a.s.
and in $L^{1}$. Moreover, Lemma~\ref{standard} gives 
\begin{align*}
\int_{X\times G^{\mathbb{N}}}-\log f_{n}d\eta\varcurlyvee\mathbb{P}_{\mu} & =\int_{Y}\int_{\pi^{-1}(y)}\int_{G^{\mathbb{N}}}-\log\left(P_{\mu,x}^{n}(L_{x},L_{x}\omega_{n})\right)d\mathbb{P}_{\mu}^{y}(\omega)d\eta^{y}(x)d\nu(y)\\
 & =\int_{Y}\int_{\pi^{-1}(y)}\sum_{\omega_{n}\in G}-\log\left(P_{\mu,x}^{n}(L_{x},L_{x}\omega_{n})\right)P_{\mu,x}^{n}(L_{x},L_{x}\omega_{n})d\eta^{y}(x)d\nu(y)\\
 & =\int_{X}H(\xi_{n}^{x})d\eta(x)=H(\xi_{n}|X).
\end{align*}
Therefore 
\[
h=\lim_{n\to\infty}\frac{1}{n}\int_{X\times G^{\mathbb{N}}}-\log f_{n}d\eta\varcurlyvee\mathbb{P}_{\mu}=\lim_{n\to\infty}\frac{1}{n}H(\xi_{n}|X)=h_{\mu}(Z,\lambda)-h_{\mu}(X,\eta).
\]
by Theorem~\ref{thm:entropy}. 
\end{proof}
Note that the proof above shows subadditivity of the sequence $n\mapsto H(\xi_{n}|X)$.

\subsection{A proof of the ray criterion\label{subsec:shannon-ray}}
\begin{proof}[Proof of Ray approximation Theorem \ref{ray} ]

It suffices to show that $h(M,\bar{\lambda})=h(Z,\lambda)$. Suppose
on the contrary $h\left(Z,\lambda\right)-h\left(M,\bar{\lambda}\right)=\delta>0$.
Take $\epsilon=\frac{1}{3}\delta$, we will omit the reference to
$\epsilon$ in the notation for $A_{n}^{\epsilon}$ in what follows.
By Corollary \ref{doob-zero}, for any $p>0$, there is a subset $V$
of $W_{\Omega}$ with measure $\overline{\mathbb{P}}_{\mu}(V)>1-p$
and a constant $N=N_{\epsilon,p}<\infty$ such that for all $\left(x,L_{x}\omega\right)\in V$
and $n\ge N$, we have 
\begin{align}
P_{\mu,x}^{\zeta,n}\left(L_{x},L_{x}\omega_{n}\right)\le e^{-(\delta-\epsilon)n}=e^{-2n\epsilon}.\label{0entropy}
\end{align}

For short, let us denote $(x,\zeta)=\theta_{M}(x,L_{x}\omega)$. Take
a union bound over time $m$ in the expression of probability in (i),
we have 
\begin{multline*}
\overline{\mathbb{P}}_{\mu}\left(\exists m\ge n:\ L_{x}\omega_{m}\in A_{n}(x,\zeta)|(x,\zeta)\in U\right)\\
\le\frac{1}{\bar{\lambda}(U)}\overline{\mathbb{P}}_{\mu}\left(\exists m\ge n:\ L_{x}\omega_{m}\in A_{n}\left(x,\zeta\right),(x,\zeta)\in U,(x,L_{x}\omega)\in V\right)\\
+\frac{1}{\bar{\lambda}(U)}\overline{\mathbb{P}}_{\mu}\left(\left(x,\zeta\right)\notin V\right)\\
\le\frac{p}{\bar{\lambda}(U)}+\sum_{m=n}^{\infty}\overline{\mathbb{P}}_{\mu}\left(L_{x}\omega_{m}\in A_{n}\left(x,\zeta\right),(x,L_{x}\omega)\in V|\left(x,\zeta\right)\in U\right).
\end{multline*}

To estimate the conditional probabilities that appear as summands,
we use the disintegration $\overline{\mathbb{P}}_{\mu}=\int_{M}\overline{\mathbb{P}}_{\mu,x}^{\zeta}d\bar{\lambda}(x,\zeta)$.
We have for $(x,\zeta)\in U$ and $m\ge N$, 
\begin{align*}
\overline{\mathbb{P}}_{\mu} & \left(L_{x}\omega_{m}\in A_{n}\left(x,\zeta\right),(x,L_{x}\omega)\in V|(x,\zeta)\in U\right)\\
 & \le\frac{1}{\bar{\lambda}(U)}\int_{M}{\bf 1}_{U}(x,\zeta)\overline{\mathbb{P}}_{\mu,x}^{\zeta}\left(L_{x}\omega_{m}\in A_{n}\left(x,\zeta\right),(x,L_{x}\omega)\in V\right)d\bar{\lambda}(x,\zeta)\\
 & \le\left|A_{n}\left(x,\zeta\right)\right|e^{-2m\epsilon}\quad\textrm{by }(\ref{0entropy}).
\end{align*}
Condition (ii) gives the upper bound $\left|A_{n}\left(x,\zeta\right)\right|\le e^{\frac{3}{2}n\epsilon}$
for large $n$. Then 
\[
\sum_{m=n}^{\infty}\overline{\mathbb{P}}_{\mu}\left(L_{x}\omega_{m}\in A_{n}\left(x,\zeta\right),(x,L_{x}\omega)\in V|\left(x,\zeta\right)\in U\right)\le ce^{-\frac{1}{2}\epsilon n}.
\]
Since $p$ can be arbitrarily small and $\bar{\lambda}(U)>0$, we
get the limit 
\[
\lim_{n\to\infty}\overline{\mathbb{P}}_{\mu}\left(\exists m\ge n:\ L_{x}\omega_{m}\in A_{n}(x,\zeta)|(x,\zeta)\in U\right)=0.
\]
This contradicts the strictly positive limit in (i). 
\end{proof}
 \bibliographystyle{alpha}
\bibliography{IRS}
\textsc{\newline J\'er\'emie Brieussel --- Universit\'e de Montpellier 
} --- jeremie.brieussel@umontpellier.fr

\textsc{\newline Tianyi Zheng --- UC San Diego
} --- tzheng2@math.ucsd.edu

\end{document}